\newtheorem{theorem}{Theorem}[section]
\newtheorem{proposition}[theorem]{Proposition}
\newtheorem{lemma}[theorem]{Lemma}
\theoremstyle{remark}
\newtheorem{remark}[theorem]{Remark}
\theoremstyle{definition}
\numberwithin{equation}{section}
\newcommand{\nn}{\nabla}
\newcommand{\R}{\mathbb{R}}
\newcommand{\onn}{\quad \hbox{on }}
\newcommand{\C}{\mathbb{C}}
\newcommand{\N}{\mathbb{N}}
\newcommand{\Z}{\mathbb{Z}}
\newcommand{\norm}{\|}
\newcommand{\be}{\begin{equation}}
\newcommand{\ee}{\end{equation}}
\newcommand{\abs}[2][]{#1\lvert #2 #1\rvert}
\newcommand{\equ}[1]{(\ref{#1})}
\newcommand{\A}{1}
\newcommand{\Ae}{}
\newcommand{\vortfun}{\gamma}
\let\Re\relax
\let\Im\relax
\DeclareMathOperator{\Re}{Re}
\DeclareMathOperator{\Im}{Im}
\DeclareMathOperator{\supp}{supp}
\begin{document}

\title{Overhanging solitary water waves}

\author[J.~D\'avila]{Juan D\'avila}
\address{\noindent J.~D\'avila: Department of Mathematical Sciences University of Bath, Bath BA2 7AY, United Kingdom}
\email{jddb22@bath.ac.uk}

\author[M.~del Pino]{Manuel del Pino}
\address{\noindent M.~del Pino: Department of Mathematical Sciences University of Bath, Bath BA2 7AY, United Kingdom}
\email{m.delpino@bath.ac.uk}

\author[M.~Musso]{Monica Musso}
\address{\noindent M.~Musso: Department of Mathematical Sciences University of Bath, Bath BA2 7AY, United Kingdom.}
\email{m.musso@bath.ac.uk}

\author[M.~H.~Wheeler]{Miles H.~Wheeler}
\address{\noindent M.~H.~Wheeler: Department of Mathematical Sciences University of Bath, Bath BA2 7AY, United Kingdom.}
\email{mw2319@bath.ac.uk}

\begin{abstract}
We provide the first construction of overhanging gravity water waves having the approximate form of a disk joined to a strip by a thin neck. The waves are solitary with constant vorticity, and exist when an appropriate dimensionless gravitational constant $g>0$ is sufficiently small. Our construction involves combining three explicit solutions to related problems: a disk of fluid in rigid rotation, a linear shear flow in a strip, and a rescaled version of an exceptional domain discovered by Hauswirth, H\'elein, and Pacard \cite{hauswirth-helein-pacard}. The method developed here is related to the construction of constant mean curvature surfaces through gluing.
\end{abstract}

\maketitle

\section{Introduction}

\subsection{Statement of the problem}
In this paper, we examine the classical water wave problem for an incompressible inviscid fluid occupying a time-dependent domain $\Omega(t) \subset \mathbb{R}^2$, whose boundary consists of a fixed horizontal bed represented by
$$\mathcal B = \{ (x_1,x_2)  \ |\  x_2=-A\}, \quad A>0, $$
together with an unknown free boundary $\mathcal{S}(t)$ separating the fluid from the air outside the confining region; see Figure~\ref{fig:setup}.
Inside the fluid, the velocity ${\bf u} = (u_1(x_1,x_2,t),u_2(x_1,x_2,t))$ and pressure $p(x_1,x_2,t)$ obey the incompressible Euler equations
\begin{align}
   \label{euler}
    \frac{\partial \bf u}{\partial t} + ({\bf u} \cdot \nabla) {\bf u} = -\nabla p + (0, -g), \quad \operatorname{div} {\bf u} = 0 \quad \text{in} \ \Omega(t),
\end{align}
where we have assumed that the fluid has constant density $1$ and that gravity acts as a constant vertical force $(0, -g)$ with $g > 0$. The so-called \emph{kinematic boundary condition} requires that there is no flow across either boundary component or that fluid particles on the boundary remain there at all times. Finally, the \emph{dynamic boundary condition} requires that the pressure be constant along $\mathcal S(t)$, say $p = P_{\mathrm{atm}}$ where $P_{\mathrm{atm}}$ represents the constant atmospheric pressure.  

In particular, we are interested in
traveling waves, solutions that appear steady in a reference frame moving horizontally at some constant speed $c>0$. In this case, the velocity field 
and the pressure take the form 
 $${\bf u}(x_1,x_2,t) = U(x_1 - ct, x_2) +c\, (1,0), \quad p(x_1,x_2,t) = P(x_1 - ct, x_2) $$ while  the domain and its free boundary are given by
$$\Omega(t) = \Omega + ct \, (1,0), \quad \mathcal{S}(t) = \mathcal{S} + ct \, (1,0),$$
where $\Omega \subset \mathbb{R}^2$ is a fixed domain and $\mathcal{S} \subset \mathbb{R}^2$ is a fixed curve.  With this ansatz, \eqref{euler} reduces to the standard stationary Euler problem for $U(x_1,x_2)$ and $P(x_1,x_2)$,
\begin{align}
   \label{euler1}
    ( U \cdot \nabla) {U} = -\nabla P + (0, -g), \quad \operatorname{div} {U} = 0 \quad \text{in} \ \Omega.
\end{align}
Incompressibility is equivalent to the existence of a stream function $\Psi(x_1,x_2)$, determined up to a constant by
\begin{align}
\nonumber
    \nabla^\perp\Psi := (\partial_{x_2}\Psi,-\partial_{x_1}\Psi) = U .
\end{align}
The velocity field is then tangent to level sets of the stream function $\Psi$, called streamlines.
The kinematic boundary condition is therefore equivalent to $\Psi$ being constant respectively on $\mathcal{B}$ and $\mathcal{S}$ and we henceforth normalize $\Psi$ by requiring $\Psi=0$ on $\mathcal{S}$.
Similarly, $\Psi = \beta $ for some constant $\beta$ on 
$\mathcal B$.  We will later give the constant $\beta$ a convenient fixed value. 

Equations \eqref{euler1} can be written as 
\[
\nabla\left[ \tfrac{1}{2}|U|^2+gx_2+P \right]-\omega U^\perp=0\quad\text{in }\Omega
\]
where 
\[
\omega := \partial_{x_1}U_2-\partial_{x_2}U_1 = -\Delta \Psi
\] 
is the vorticity of the fluid.
This leads to Bernoulli's law, which says that $\tfrac{1}{2}|U|^2+gx_2+P $ is constant along streamlines.
In particular, the dynamic boundary condition
$P=P_{\mathrm{atm}}$ on $\mathcal S$ is equivalent to
\begin{align}
\label{C}
\tfrac 12 |\nabla\Psi|^2 +   gx_2  = C  \onn \mathcal S 
\end{align}
for some constant $C$.

Taking curl in both sides of \eqref{euler1} we arrive at the equation 
\begin{equation} \label{bracket}
\nn ^\perp \Psi\cdot \nn \omega = 0 
\end{equation}
which is satisfied whenever $\omega(x) = \vortfun(\Psi(x))$ for some fixed function $\vortfun$. 
Moreover, if \eqref{bracket} holds, then \eqref{euler1} can be uniquely solved for the pressure $P$, up to an additive constant.
In summary, any solution $\Psi(x)$ of an overdetermined elliptic problem of the form
\begin{align}
    \label{wv}
    \left \{
    \begin{aligned}
    -\Delta \Psi  &= \vortfun(\Psi(x)) &&\text{in } \Omega \\
    \Psi &=0 &&\text{on } \mathcal S \\
    \tfrac 12 |\nabla\Psi|^2 +   g x_2 &=    C &&\text{on }\mathcal S\\
    \Psi &=  \beta &&\text{on } \mathcal B
    \end{aligned}\right.
\end{align}
leads to a solution of the traveling water wave problem, where here $C,\beta$ and $g>0$ are constants and $\vortfun$ is a fixed function.

For the rest of the paper, we restrict attention to the case where the function $\vortfun$ in \eqref{wv}, and hence the vorticity $\omega = \vortfun(\Psi)$, are constant. We further specialize to \emph{solitary wave} solutions of \eqref{wv} for which, as $x_1 \to \pm \infty$, the region $\Omega$ becomes asymptotic to the strip 
\begin{align}
    \label{strip1}
    \Omega^S = \{ (x_1,x_2) \ | \  {-A}< x_2 < 0 \} 
\end{align}
and correspondingly, the stream function $\Psi(x_1,x_2)$ approaches a solution in the strip, which we assume to be 
\begin{align}
\label{psiS1}
    \Psi^S(x_2) =  -cx_2 - \tfrac{1}{2} \omega x_2^2 . 
\end{align}
Thus $A$ is the asymptotic depth of the fluid, while $c$ is the speed of the wave as measured in the (unique) reference frame where the fluid velocity vanishes along the surface at infinity. Sending $x_1 \to \pm\infty$ in \eqref{wv}, the dynamic boundary condition \eqref{C} yields $C = \tfrac 12 c^2$, while the kinematic condition on $\mathcal B$ becomes $\beta =  \Psi^S(-A) =  A  -\tfrac{1}{2} \omega A^2$.

Finally, we non-dimensionalize the problem by choosing $c$ as a velocity scale and $A$ as a length scale. In these units $c=1$ and $A=1$, and our problem becomes
\begin{align}
    \label{overd00}
    \left\{
    \begin{aligned}
    -\Delta \Psi &= \omega && \text{in }\Omega
    \\
    \Psi &= 0 &&\text{on } \mathcal{S}
    \\
    \tfrac{1}{2} |\nabla \Psi|^2 + g   x_2 &= \tfrac{1}{2} && \text{on } \mathcal{S} 
    \\
    \Psi &= \beta && \text{on } \mathcal{B}
    \\
    \Psi(x_1,x_2) &\to\Psi^S(x_2)   &&  \text{as } x_1\to\pm\infty ,
    \end{aligned}
    \right.
\end{align}
where $g > 0$ and $\omega \in \R$ are now dimensionless parameters and $\beta$ is given by
\begin{align}
    \label{def:B}
    \beta =  1  -\tfrac{1}{2} \omega.
\end{align}

\begin{figure}
    \centering
    \includegraphics[scale=1,page=1]{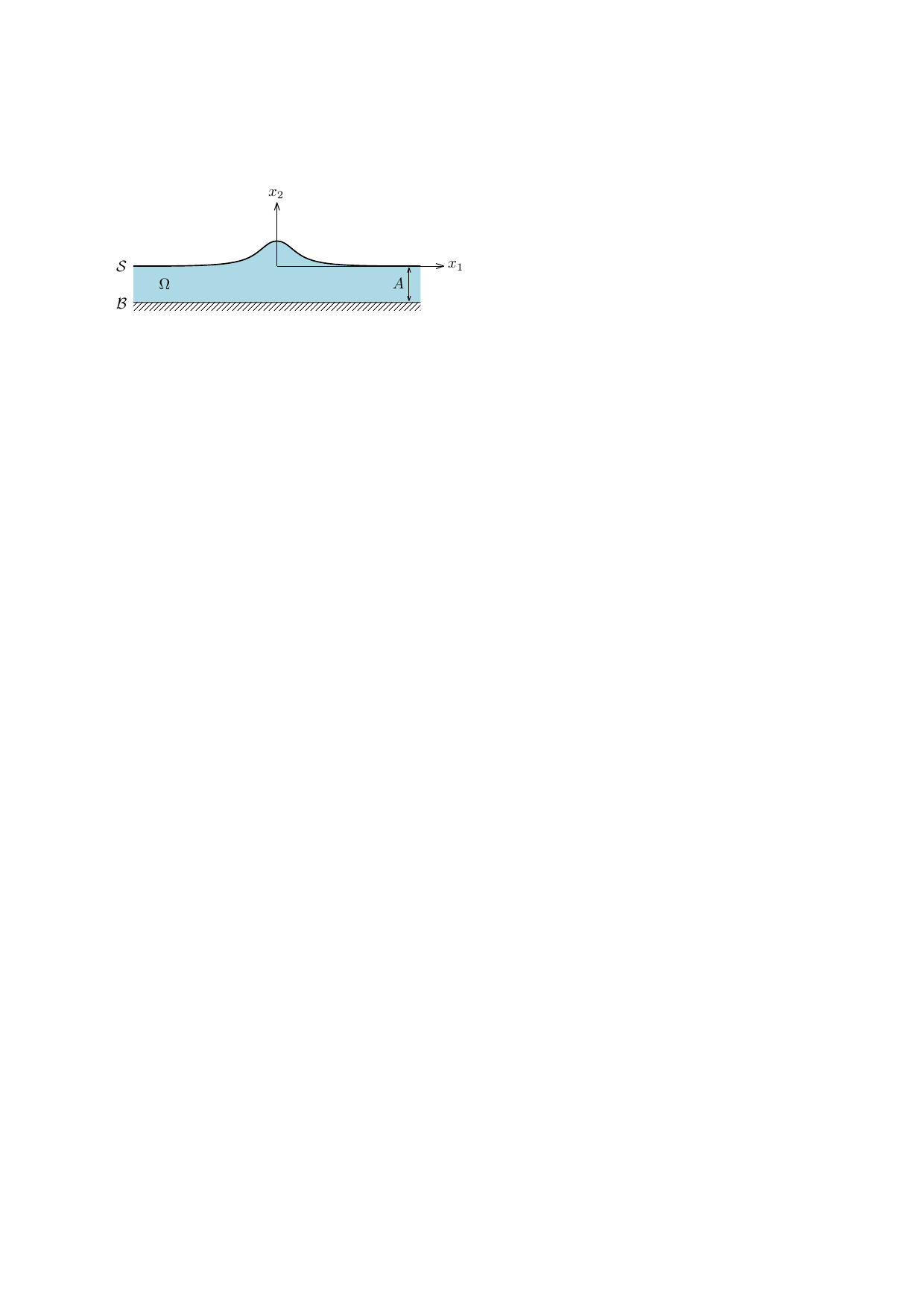}
    \caption{Fluid region $\Omega$ for a solitary wave. There are two boundary conditions on the unknown free surface $\mathcal S$, and only one on the fixed bed $\mathcal B$.}
    \label{fig:setup}
\end{figure}

We are interested in the existence of \emph{overhanging waves}, that is, solutions of \eqref{overd00} where $\mathcal{S}$ is not the graph of a function; see Figure~\ref{fig:overhanging}.
In our main theorem we will show that if $\omega>0$, overhanging waves with thin necks of the type shown in Figure~\ref{fig:overhanging}(c) exist for $g>0$ sufficiently small.

\subsection{Historical discussion}

The steady water wave problem \equ {wv} is classical, with an enormous literature spanning more than two centuries and including contributions from mathematicians such as Laplace, Lagrange, Cauchy, Poisson, and Levi-Civita. 
See \cite{darrigol-2003,craik-2004,craik-2005} for modern historical accounts and \cite{toland-1996,groves-2004,strauss-2010,2022-haziot-hur-strauss-toland-wahlen-walsh-wheeler-survey} for surveys of more recent developments. We note that \cite{2022-haziot-hur-strauss-toland-wahlen-walsh-wheeler-survey} includes 270 references.
The overwhelming majority of works concern the \emph{irrotational} case $\omega=0$ and waves which are periodic in the $x_1$-variable, known as \emph{Stokes waves}.
Stokes~\cite{stokes-1847} considered the problem in infinite depth, and conjectured the existence of a branch of nontrivial periodic waves bifurcating from a half-space and limiting to an extreme ``wave of greatest height''. 
This is a wave which saturates the bound $gx_2 \le \frac 12$ implied by \eqref{C} at its crest, which is therefore a \emph{stagnation point} where $\nabla \Psi = 0$. 
Stokes further conjectured that such a wave would have a singularity at its crest, namely a corner with an interior angle of 120 degrees.
The existence of a wave of greatest height was established by Toland \cite{toland-1978}, and the 120 degree angle was proved by Amick, Fraenkel and Toland \cite{amick-fraenkel-toland} and Plotnikov \cite{plotnikov-1982,plotnikov-1983}. See also \cite{amick-toland-1981a,amick-toland-1981b} for the solitary case. 
These constructions rely heavily on the assumption of zero vorticity, which allows for deep reformulations of the problem using complex variables including the Nekrasov~\cite{kuznetsov-2021} and Babenko~\cite{babenko-1987,longuet-higgins-1978} equations.

The surfaces $\mathcal S$ of the irrotational waves in the above constructions are all graphs. Moreover, the horizontal velocity $U_1 = \partial_{x_2}\Psi$ is non-vanishing inside the fluid, which implies that the other level curves of $\Psi$ (called \emph{streamlines}) are also graphs and that there are no interior stagnation points. 
Remarkably, Spielvogel showed that this holds not only for these waves but for arbitrary solutions satisfying some additional hypotheses~\cite{spielvogel,toland-1996}. The same result for 
waves with ``favorable'' constant vorticity $\omega < 0$ was proved by Constantin, Strauss, and V\u{a}rv\u{a}ruc\u{a}~\cite{constantin-strauss-varvaruca-2021}. In the irrotational case, Amick further showed that the slope of any streamline cannot exceed $31.15$ degrees~\cite{amick-1987}; 
see \cite{strauss-wheeler-2016,strauss-so-2018} for related but weaker results when $\omega < 0$ or $0 < \omega \ll 1$.

\begin{figure}
    \centering
    \includegraphics[scale=1,page=1]{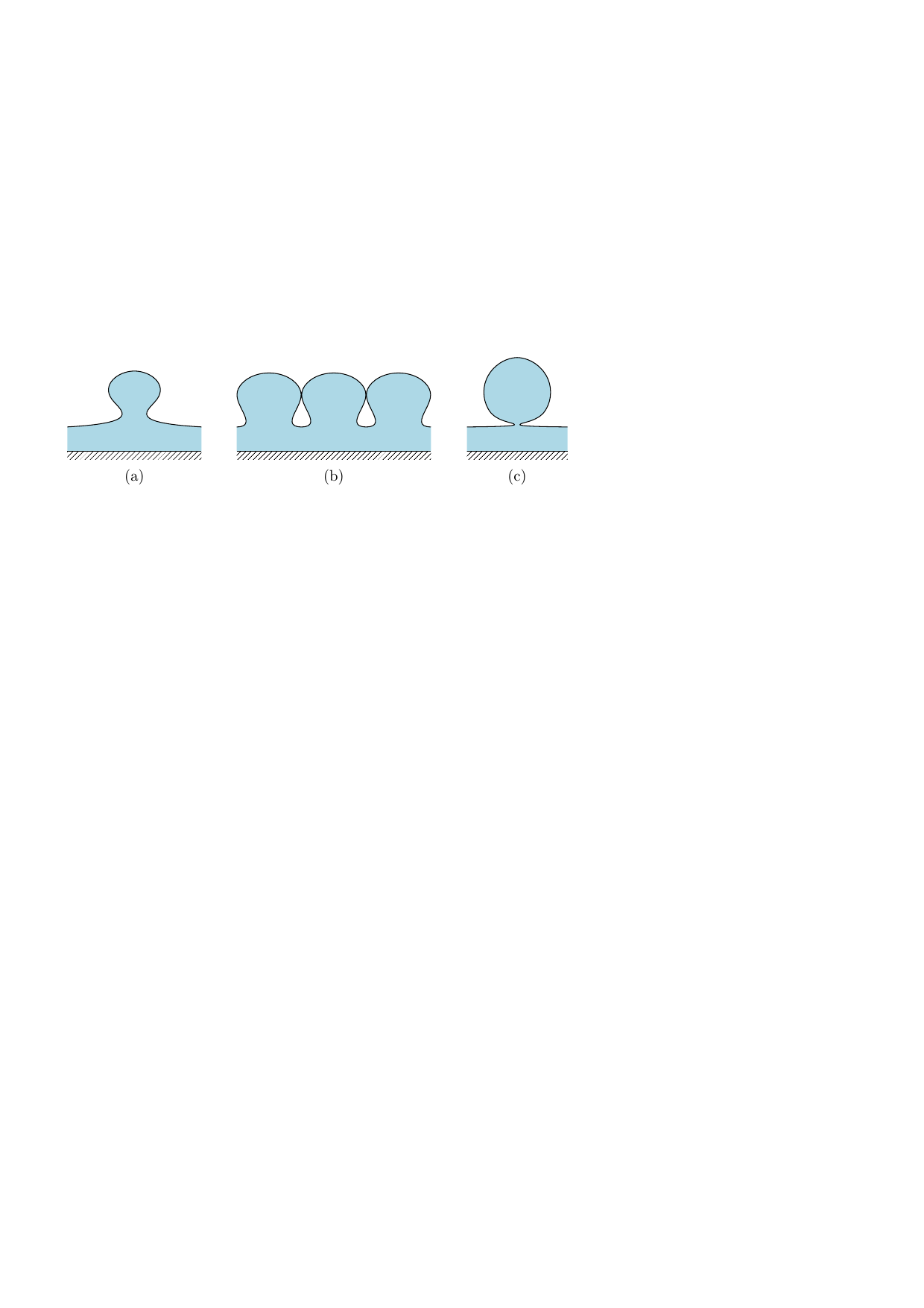}
    \caption{(a) An overhanging wave whose surface $\mathcal S$ is not a graph. (b) A ``touching wave'' whose surface self-intersects tangentially above each trough. (c) A wave with a thin neck connecting an approximate disk and an approximate strip.}
    \label{fig:overhanging}
\end{figure}

For large ``adverse'' vorticity $\omega > 0$, the situation changes dramatically. Numerical calculations for periodic waves in infinite depth by Simmen and Saffman \cite{simmen-saffman} show branches of solutions which terminate at ``touching waves'' whose surfaces $\mathcal S$ self-intersect tangentially above the trough as in Figure~\ref{fig:overhanging}(b). This is a new type of extreme wave which does not involve a corner singularity. A few years later, Teles da Silva and Pergrine \cite{teles-peregrine} calculated solutions in finite depth, and conjectured a third and extremely singular type of extreme wave in the limit $g \to 0$: the union of a disk and a strip. A nearly-limiting wave is sketched in Figure~\ref{fig:overhanging}(c); we will call this a wave with a thin neck. Subsequent numerics by Vanden-Broeck \cite{vandenbroeck-1994} for solitary waves indeed found such waves with small $g>0$, including even more exotic versions with multiple disks \cite{vandenbroeck-1995}. More recently, Dyachenko and Hur \cite{dyachenko-hur-sapm,dyachenko-hur-jfm} have computed periodic thin-necked waves for small $g$ involving as many as five disks per period. 

Rigorously establishing the existence of such overhanging waves is extremely difficult as they are not small-amplitude perturbations of a strip or half-space. 
Global bifurcation branches of periodic solutions emanating from a strip have been constructed by Constantin, Strauss, and V\u{a}rv\u{a}ruc\u{a} \cite{constantin-strauss-varvaruca-2016}, using a formulation which allows for overhanging solutions.
While it seems extremely likely that some of these branches contain overhanging waves, proving this remains an important open problem. See \cite{haziot-wheeler-2023} for a related result for solitary waves and \cite{wahlen-weber-2024} for periodic waves with general vorticity functions $\omega = \gamma(\psi)$.
We also mention that small-amplitude non-overhanging waves with interior stagnation points have been constructed, first in the periodic case \cite{wahlen-2009} and more recently in the solitary case \cite{kozlov-kuznetsov-lokharu-2020} under the additional assumption that $0 < g \ll 1$.

An alternative approach has opened up in recent years thanks to the discovery of a family of explicit overhanging periodic waves with zero gravity in infinite depth \cite{dyachenko-hur-sapm,dyachenko-hur-jfm,hur-vanden-broeck-2020,hur-wheeler-jfm}. The surfaces of these waves are the same as for Crapper's famous irrotational capillary waves \cite{crapper} (although the stream function $\Psi$ is entirely different), so that in particular they limit to a ``touching wave'' as in Figure~\ref{fig:overhanging}(b).
By perturbing these explicit solutions to $0 < g \ll 1$, Hur and Wheeler \cite{hur-wheeler-jde} gave the first rigorous construction of overhanging solutions to \eqref{wv}, and this argument has subsequently been refined and extended by Carvalho~\cite{carvalho}.
Crowdy \cite{crowdy} provides an elegant theory which simultaneously explains the explicit zero-gravity solutions of \cite{hur-wheeler-jfm} as well as solutions involving point vortices~\cite{crowdy-roenby-2014}. 
While the zero-gravity point vortex problem has many additional explicit solutions, including thin-necked waves resembling Figure~\ref{fig:overhanging}(c), related formulas seem to yield explicit solutions to \eqref{wv} only in the Crapper-type case covered by \cite{hur-wheeler-jfm}. 
It therefore appears that new ideas are needed in order to rigorously construct thin-necked waves with constant vorticity.

\subsection{Statement of the main result}
The numerical studies~\cite{vandenbroeck-1994,vandenbroeck-1995} suggest that the problem \eqref{overd00} has solutions $(\Omega_g,\Psi_g)$ with small necks as in Figure~\ref{fig:overhanging}(c), and that the asymptotic shape of $\Omega_g$ as $g\to 0$ is the union of the strip $\Omega^S$ and a disk 
\begin{align}
    \label{OmegaD0}
    \Omega^D = \big\{ \, (x_1,x_2) \in \R^2 \, | \, x_1^2+(x_2-R)^2=R^2\,\big\}
\end{align}
with radius $R>0$ whose boundary passes through the origin.
Let 
\begin{align*}
    \Psi^D(x) = \frac{\omega}{4}(R^2-|x-(0,R)|^2 ) .
\end{align*}
If $\omega R=2$, then $\Psi^D$ satisfies
\begin{align*}
    \left\{
    \begin{aligned}
    -\Delta \Psi^D &= \omega && \text{in }\Omega^D
    \\
    \Psi^D &=0 && \text{on }\partial\Omega^D
    \\
    |\nabla\Psi^D|^2 &=1 && \text{on }\partial\Omega^D .
    \end{aligned}
    \right.
\end{align*}
Thus the pair $(\Omega_*,\Psi_*)$, where
\begin{align*}
    \Omega_*=\Omega^S\cup\Omega^D,\quad
    \Psi_*(x)=\begin{cases}
    \Psi^D(x) & x \in \Omega^D
    \\
    \Psi^S(x) & x\in\Omega^S
    \end{cases}
    ,
\end{align*}
is formally a singular solution to the problem \eqref{overd00} when $g=0$.

The shape of the neck of $\Omega_g$ for small $g$ can be inferred from the following consideration. Let $\varepsilon>0$ be the width of the neck.
After possibly a small shift so that the neck is at the center of coordinates, let us assume that $\varepsilon^{-1} \Omega_g$ approaches a domain $\Omega^H$ and that 
\begin{align}
    \nonumber
    \lim_{g\to0} \frac{1}{\varepsilon}\Psi_g(\varepsilon y)= \psi^H(y) \quad\text{exists}.
\end{align}
Then $\psi^H$ should satisfy
\begin{align}
    \label{hairpin00}
    \left\{
    \begin{aligned}
    \Delta\psi^H&=0&&\text{in }\Omega^H
    \\
    \psi^H&=0&&\text{on }\partial \Omega^H
    \\
    |\nabla\psi^H|&=1&&\text{on }\partial \Omega^H .
    \end{aligned}
    \right.
\end{align}
A domain $\Omega^H$ and a function $\psi^H$ satisfying \eqref{hairpin00} and with the correct shape were already found by Hauswirth, H\'elein, and Pacard \cite{hauswirth-helein-pacard}. The domain
\begin{align}
    \label{def:hairpin0}
    \Omega^H = F(S)
\end{align}
is the image of the strip
\begin{align*}
    S = \Bigl\{ \, w\in \C \  \Big| \  \lvert \Re(w) \rvert  < \frac{\pi}{2} \, \Bigr\} 
\end{align*}
under the conformal mapping
\begin{align*}
    F(w)=w+\sin(w),
\end{align*}
while $\psi^H$ is given by
\begin{align}
    \label{psiH-0}
    \psi^H(y) = \Re (\cos (w)), \quad y = F(w).
\end{align}
Hauswirth, H\'elein, and Pacard called $\Omega^H$ an exceptional domain if there is $\psi^H$ satisfying \eqref{hairpin00} and $\psi^H>0$. A classification of exceptional domains in the plane whose boundary has a finite number of components was obtained by Traizet \cite{traizet}. Using this classification, Jerison and Kamburov \cite{jerison-kamburov} analyzed the structure of one-phase free boundaries in the plane. Their result, though probably requiring non-trivial adaptations, could potentially justify the previous formal limit as $g\to 0$, provided the solution is known to exist.

Our main result is the following.
\begin{theorem}
\label{thm}
Assume that  $\omega R=2$ and $\omega \Ae<1$.
For $g>0$ sufficiently small, problem \eqref{overd00} has a solution $(\Omega_g$, $\Psi_g)$ such that for any $\delta>0$, $\Omega_g$ approaches $\Omega_*$ and $\Psi_g \to \Psi_*$ in $\R^2\setminus B_\delta(0)$ as $g\to 0$. Moreover there is $c_0>0$ and 
$\varepsilon = c_0 g(1+o(1))$ as $g\to0$ such that $\frac1\varepsilon \Omega_g \, \cap \, K $ approaches $\Omega^H \cap K$ for any compact set $K$   and 
\begin{align*}
\lim_{g\to0} \frac{1}{\varepsilon}\Psi_g(\varepsilon y)= \psi^H(y),
\end{align*}
uniformly on compact sets of $\Omega^H$.
Both $\Omega_g$ and $\Psi_g$ are even with respect to $x_1$.
\end{theorem}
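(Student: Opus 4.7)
The plan is a singular-perturbation gluing argument modeled on constant mean curvature surface constructions. First I construct an approximate solution $(\Omega_\varepsilon,\Psi_\varepsilon)$ depending on a small parameter $\varepsilon>0$ (eventually $\varepsilon\sim c_0 g$) by joining the three explicit models: place the disk $\Omega^D$ of radius $R=2/\omega$ tangent from above to $\{x_2=0\}$ at the origin, keep the strip $\Omega^S=\{-1<x_2<0\}$ just below it, and interpolate through a rescaled hairpin $\varepsilon\Omega^H$ centered at the origin. The stream function $\Psi_\varepsilon$ is built from $\Psi^D$, $\Psi^S$, and $\varepsilon\psi^H(\cdot/\varepsilon)$ using smooth cutoffs in an intermediate annulus $\varepsilon\ll\rho\ll 1$. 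The two infinite ends of $\Omega^H$ are asymptotic to parallel half-strips of width $\pi$, and I would verify by direct expansion that, after scaling by $\varepsilon$, the hairpin matches $\Psi^D$ on its upper end and $\Psi^S$ on its lower end modulo controllable terms; the leading-order balancing of the Bernoulli defect (which is of order $g$ in the outer region and of order $\varepsilon$ in the neck) singles out the relation $\varepsilon=c_0 g(1+o(1))$ for an explicit constant $c_0$.

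Next I parameterize the true free boundary as $\mathcal S=\{y+h(y)\nu(y):y\in\mathcal S_\varepsilon\}$ for an unknown normal displacement $h$, and write $\Psi=\Psi_\varepsilon+\phi$ on $\Omega_\varepsilon$. Taylor-expanding the three conditions on $\mathcal S$ about $\mathcal S_\varepsilon$ produces, schematically,
\begin{align*}
\phi + h\,\partial_\nu\Psi_\varepsilon &= N_1(h,\phi), \\
\nabla\Psi_\varepsilon\cdot\nabla\phi + h\,\partial_\nu\!\left(\tfrac12|\nabla\Psi_\varepsilon|^2+gx_2\right) &= N_2(h,\phi),
\end{align*}
on $\mathcal S_\varepsilon$, coupled with $-\Delta\phi=E_\varepsilon$ in $\Omega_\varepsilon$ (with $E_\varepsilon$ the interior residue, supported in the transition annulus), $\phi=0$ on $\mathcal B$, and $\phi\to 0$ as $|x_1|\to\infty$. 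Since $|\nabla\Psi_\varepsilon|=|\partial_\nu\Psi_\varepsilon|$ is bounded below on $\mathcal S_\varepsilon$ (each model satisfies Bernoulli with the same constant $\tfrac12$), the first equation solves for $h$ in terms of $\phi|_{\mathcal S_\varepsilon}$; substitution reduces the system to a single nonlocal Dirichlet-to-Neumann type equation for the trace of $\phi$, coupled to the interior Laplace equation.

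The crux is to invert this projected linear problem. Each model operator is well understood: the linearization around $\Psi^D$ on $\Omega^D$ has a finite-dimensional kernel generated by rigid motions and a dilation, of which translation in $x_1$ is killed by imposing evenness and the dilation is killed by the pinning $\omega R=2$; the linearization around $\Psi^S$ on the strip has decaying modes at the rate dictated by the linearized Bernoulli condition, which matches the required asymptotic $\Psi\to\Psi^S$; the linearization around $\psi^H$ on $\Omega^H$ from the exceptional-domain theory has a small kernel further reduced by evenness. Gluing the three inverses through cutoffs in weighted H\"older spaces adapted to the three length scales should yield a bounded right inverse with at most a $|\log\varepsilon|$ loss, after elimination of two residual scalar obstructions by varying the vertical position of the disk and the value of $\varepsilon$. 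With this inverse, the approximation error of order $\varepsilon|\log\varepsilon|$ and the quadratic bounds on $N_1,N_2$ close a standard contraction in a small ball of the weighted space, yielding simultaneously $(h,\phi)$ and the compatibility choice $\varepsilon=c_0 g(1+o(1))$. The hard part is precisely this linear theory: after elimination of $h$, the overdetermined free-boundary structure becomes a nonlocal problem on a curve that straddles three very different geometries (circle, line, hairpin), and designing weighted norms that handle all three scales simultaneously, enforce the solitary-wave decay at infinity, and make the combined kernel manageable is the technical core of the argument, and is the analogue of the linear analysis in CMC gluing constructions that the authors invoke as inspiration.
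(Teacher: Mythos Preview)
Your outline matches the paper's strategy closely: approximate solution built from disk, strip, and rescaled hairpin; elimination of the normal displacement $h$ in favour of a single scalar unknown; linear theory assembled by gluing the three model inverses in weighted spaces; contraction. Two points, however, are off.

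First, the kernel of the linearized Robin problem on the disk consists only of the two translations $Z_1^D,Z_2^D$ (Section~2 of the paper); there is no dilation mode, and the constraint $\omega R=2$ plays no role in killing a kernel element---it is simply what makes $|\nabla\Psi^D|=1$ on $\partial\Omega^D$ so that the disk is an exact $g=0$ solution.

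Second, and more substantively, the paper does not use the vertical position of the disk as the free parameter that removes the surviving $Z_2^D$ obstruction. The shifts $d^D,d^S$ are fixed once and for all in the construction of the approximation (to match the $\log$ asymptotics of the hairpin), and are of size $\varepsilon|\log\varepsilon|$. Instead, the paper treats $g$ itself as the Lagrange multiplier: the Bernoulli term $g(x_2+d^S)$ restricted to $\partial\Omega^D$ is, up to a constant, proportional to $Z_2^D$, so solving the projected linear problem on the disk determines a unique $g$ for each right-hand side (Proposition~7.1 and equation~(3.8)). At leading order this solvability condition forces $g_0=1/R^2$, which is exactly the origin of the constant $c_0=R^2$ in $\varepsilon=c_0 g(1+o(1))$. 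Your alternative (free disk height, $\varepsilon$ chosen from a Bernoulli balance) might be made to work, but it obscures this clean mechanism and you would then need a separate argument to recover the $\varepsilon$--$g$ relation.
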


The method developed in this paper is constructive and does not rely on conformal maps, although complex variables do play an important role in the analysis of some building blocks.

There are a variety of directions in which Theorem~\ref{thm} might be generalized. 
For instance, one could consider the analogous problem for non-constant vorticity functions $\omega=\vortfun(\Psi)$ associated with more complicated shear flows $\Psi^S$ and radial vortices $\Psi^D$.
Rather than considering a single disk, another possibility is to consider periodic solutions with an infinite horizontal array of disks. 
Yet another possibility suggested by the numerics \cite{vandenbroeck-1995,dyachenko-hur-jfm} is to glue disks to the crests of smooth zero-gravity waves, for instance the explicit solutions in \cite{hur-wheeler-jfm}.
It seems likely that scheme presented in this paper can be applied in all of the above cases.
Treating solutions with multiple disks stacked vertically, which are also seen numerically \cite{vandenbroeck-1995,dyachenko-hur-jfm}, appears more challenging, and our approach would require significant adaptations.

\subsection{Analogy with constant mean-curvature surfaces}
We are inspired by a deep connection between solutions of overdetermined problems like \eqref{overd00} and the theory of constant mean-curvature (CMC) surfaces. This connection is already present when comparing the result of Alexandrov \cite{alexandrov}, which establishes that compact embedded CMC hypersurfaces must be spheres, with Serrin's \cite{serrin} symmetry result. More precisely, Serrin considered the problem
\begin{align}
    \label{overdCMC}
    \left\{
    \begin{aligned}
    -\Delta \Psi &= \gamma (\Psi) && \text{in }\Omega
    \\
    \Psi&>0 && \text{in }\Omega
    \\
    \Psi &= 0 &&\text{on } \partial\Omega
    \\
    |\nabla \Psi|&= 1 && \text{on } \partial\Omega
    \end{aligned}
    \right.
\end{align}
where $\gamma$ is a given Lipschitz function. He proved that if $\Omega$ is bounded and connected, and there exists $\Psi$ satisfying \eqref{overdCMC}, then $\Omega$ must be a ball. It is interesting to note that Serrin's result was partly motivated by fluid mechanics. 

Extensions of \cite{serrin} to unbounded domains were obtained in \cite{reichel,aftalion-busca,berestycki-caffarelli-nirenberg}. This led Berestycki, Caffarelli, and Nirenberg \cite{berestycki-caffarelli-nirenberg} to conjecture that for any solution of \eqref{overdCMC} with $\Psi$ bounded and the complement of $\Omega$ connected, $\Omega$ must be a half space, ball, complement of a ball, or a circular cylinder domain of the form $\R^j \times B$, with $B$ a ball.
Traizet \cite{traizet} obtained a classification for the case $n=2$ and $\gamma =0$, that is, 
\begin{align}
    \label{overdMS}
    \left\{
    \begin{aligned}
    \Delta \Psi &= 0 && \text{in }\Omega
    \\
    \Psi&>0 && \text{in }\Omega
    \\
    \Psi &= 0 &&\text{on } \partial\Omega
    \\
    |\nabla \Psi| &= 1 && \text{on } \partial\Omega,
    \end{aligned}
    \right.
\end{align}
assuming that $\partial\Omega$ has a finite number of components, or that $\Omega$ is periodic in one direction. He obtained the classification by establishing a correspondence between solutions of \eqref{overdMS} and a class of minimal surfaces in $\R^3$. Under this correspondence the hairpin solution $(\Omega^H,\psi^H)$ in \eqref{def:hairpin0} is mapped to the catenoid. In particular, Traizet  rediscovered a family of periodic exceptional domains found in~\cite{baker-saffman-sheffield}.

Counterexamples to the conjecture in \cite{berestycki-caffarelli-nirenberg} were obtained by Sicbaldi \cite{sicbaldi} for $\gamma(\Psi)=\lambda\Psi$ and and Fall, Minlend, and Weth \cite{fall-minlend-weth} for $\gamma \equiv\text{const}\not=0$. 
In \cite{delpino-pacard-wei-2015} a counterexample was found for a corresponding conjecture for epigraphs raised in \cite{berestycki-caffarelli-nirenberg} in large dimensions. 

The Delaunay unduloids \cite{Delaunay1841} are CMC surfaces in $\R^3$  which come parametrized as a smooth family $\{\mathcal{D}_\tau\}_{\tau\in[0,1)}$ such that $\mathcal{D}_0$ corresponds to a cylinder and, as $\tau$ approaches 1, $\mathcal{D}_\tau$ converges to an infinite union of touching spheres arranged periodically along an axis, say the ``vertical direction''.
Under this analogy, the constructions in \cite{sicbaldi,fall-minlend-weth} correspond to Delaunay surfaces $\mathcal{D}_\tau$ with $\tau>0$ small, in the sense that the domains obtained are perturbations of the straight solid cylinder in $\R^j\times B$ which are rotationally symmetric and periodic in the vertical direction. A rigorous correspondence between general nondegenerate CMC surfaces (including the unduloid) and the overdetermined problem for a class of  Lipschitz nonlinearities $\gamma(\Psi)$, was also established in \cite{delpino-pacard-wei-2015}.

So far, the analogy between CMC surfaces and solutions to overdetermined problems includes symmetry results, Traizet's rigorous correspondence, and the existence of unduloid-type domains by bifurcation from cylindrical ones. In this paper we explore a new aspect of this analogy: constructing new solutions using gluing techniques.
The behavior of the numerical solutions $(\Omega_g,\Psi_g)$ of \eqref{overd00} is illustrated in Figure~\ref{fig:overhanging}(c),
where one can observe a singular limit and a profile upon zooming in, evoking the Costa--Hoffman--Meeks surfaces when the genus tends to infinity \cite{hoffman-meeks-1990}. This analysis, in turn, is related to the construction of new CMC surfaces by desingularization, 
through the ``gluing'' of building blocks, such as spheres and pieces of Delaunay surfaces, by Kapouleas \cite{kapouleas-annals-1990, kapouleas-1997}, Mazzeo, Pacard, and Pollack \cite{mazzeo-pacard-pollack}, and many others \cite{davila-delpino-nguyen,haskins-kapouleas,pacard-scherk,kapouleas-doubling}. 
Some similarities and differences with \cite{mazzeo-pacard-pollack} are discussed later on in Remark~\ref{rem:pacard}.

In this work we develop a related desingularization strategy in the context of the overdetermined problem \eqref{overd00} and also \eqref{overdCMC}, which consists in gluing some known building blocks. 
This method allows us to construct the domain in Theorem~\ref{thm} and also to find Delaunay-type domains analogous to $\mathcal D_\tau$  satisfying \eqref{overdCMC} for all $\tau$ close to 1. Let $R>0$ and
\begin{align*}
\Omega_*&=
\bigcup_{k\in\Z} B_R(0,2Rk)\subset\R^2,
\\
\Psi_*(x) &= \frac{1}{2R}(R^2-|x|^2),\quad x\in B_R(0),
\end{align*}
extended periodically to $\Omega_*$.
Using the same strategy as in Theorem~\ref{thm}, we obtain:
\begin{theorem}
\label{thm2}
Let $\gamma(\Psi)=\omega$ where $\omega$ is a positive constant and $R=\frac{2}{\omega}$.
For all $\varepsilon>0$ small there exist a connected open set  $\Omega_\varepsilon\subset\R^2$ and $\Psi_\varepsilon$ satisfying \eqref{overdCMC} such that:
\begin{itemize}
\item
$\Omega_\varepsilon$, $\Psi_\varepsilon$ are periodic in $x_2$ with period $T_\varepsilon = 2R+O(\varepsilon\lvert\log\varepsilon|)$, and they are even in $x_1$.
\item
For any $\delta>0$, $\Omega_\varepsilon$ approaches $\Omega_*$ and $\Psi_\varepsilon \to \Psi_*$ in $\R^2\setminus  \bigcup_{k\in\Z} B_\delta(0,T_\varepsilon/2+T_\varepsilon k)$ as $\varepsilon\to 0$.
\item
We have
\begin{align*}
\lim_{\varepsilon\to0} \frac{1}{\varepsilon}\Psi_\varepsilon((0,T_\varepsilon/2)+\varepsilon y)= \psi^H(y),
\end{align*}
uniformly for $y$ on compact sets.
\end{itemize}
\end{theorem}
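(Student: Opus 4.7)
The plan is to adapt the gluing construction underlying Theorem~\ref{thm} to the periodic setting of \eqref{overdCMC}. By the symmetry assumptions (periodicity in $x_2$ and evenness in $x_1$) it suffices to work on one fundamental cell, say $\R \times (-T_\varepsilon/2, T_\varepsilon/2)$, which should contain one approximate disk centered at the origin joined to its two vertical neighbours through rescaled hairpin necks centered at $(0, \pm T_\varepsilon/2)$. The period $T_\varepsilon$ is left as a tuning parameter, to be adjusted a posteriori.

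I would first build an approximate solution $(\Omega_\varepsilon^{\mathrm{app}}, \Psi_\varepsilon^{\mathrm{app}})$ by gluing three explicit pieces: in the bulk region with $|x|$ bounded away from $R$, the disk solution $\Omega^D$, $\Psi^D(x)=\frac{1}{2R}(R^2-|x|^2)$; in the neck region near $(0, T_\varepsilon/2)$ at scale $\varepsilon$, the rescaled hairpin $(0, T_\varepsilon/2) + \varepsilon\, \Omega^H$ with stream function $\varepsilon\, \psi^H((x-(0,T_\varepsilon/2))/\varepsilon)$; and a smooth cutoff interpolation between them in an intermediate annular transition layer. The matching is organised as follows: near its north pole the disk solution has the expansion
\begin{align*}
\Psi^D(x) = \tfrac{1}{R}(R-x_2) + O\bigl(x_1^2 + (R-x_2)^2\bigr),
\end{align*}
so it is asymptotically linear with slope $1/R$, while along each of its two ends the rescaled hairpin is asymptotic to the linear profile associated with a half-strip of width $\pi\varepsilon$, with a subleading logarithmic correction coming from the catenoidal tail of $\psi^H$ at infinity. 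Matching these leading linear profiles fixes the leading-order length of the gap between consecutive disks, and the logarithmic correction produces the claimed $T_\varepsilon = 2R + O(\varepsilon |\log\varepsilon|)$. The resulting error of $(\Omega_\varepsilon^{\mathrm{app}}, \Psi_\varepsilon^{\mathrm{app}})$ in \eqref{overdCMC} is supported in the transition annulus and is small in suitable weighted norms.

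The remaining step is to perturb the approximate solution into an exact one. I would write the free boundary as a normal graph of unknown height $h$ over $\partial \Omega_\varepsilon^{\mathrm{app}}$ and the stream function as $\Psi_\varepsilon^{\mathrm{app}} + \phi$ for an unknown interior correction $\phi$. Substituting into \eqref{overdCMC} yields a nonlinear elliptic system for $(\phi, h)$ whose linearization is, to leading order, $-\Delta \phi = 0$ in $\Omega_\varepsilon^{\mathrm{app}}$ with the Dirichlet data $\phi = -h$ on $\partial \Omega_\varepsilon^{\mathrm{app}}$, together with a second-order Jacobi-type boundary equation for $h$ obtained by differentiating $|\nabla \Psi|^2 = 1$. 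The main difficulty, and the central technical step, is the construction of a right inverse for this linearized operator with norm bound uniform in $\varepsilon$, in weighted H\"older spaces adapted to the two geometric scales. The obstruction is the presence of approximate Jacobi fields: rigid translations of the disk, translations and the scaling mode of the hairpin, and a neutral mode corresponding to changes of period. The imposed symmetries (evenness in $x_1$, symmetric placement of the disk and necks) kill all of these except the one tied to the scaling mode of the hairpin, and that single remaining kernel element is cancelled by a Lyapunov--Schmidt reduction in which $T_\varepsilon$ itself plays the role of the free parameter. Once the uniform inverse is established, the nonlinear problem is solved by a standard contraction-mapping argument, and the stated asymptotics follow directly from the matching used in the construction. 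Compared with Theorem~\ref{thm}, the periodic/compact character of the problem removes the need to analyse a strip or half-plane at infinity and simplifies the linear theory; nonetheless, the hardest part of the argument is still the uniform-in-$\varepsilon$ invertibility of the linearization near the neck.
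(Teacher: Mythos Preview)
Your overall strategy is the same as the paper's: the paper does not give a separate proof of Theorem~\ref{thm2} but states that it ``requires only minor modifications'' of the machinery built for Theorem~\ref{thm}, namely the approximate-solution construction of Section~\ref{sec:approx}, the reformulation as a nonlinear Robin problem in Section~\ref{sect:operators}, and the linear theory and fixed-point argument of Sections~\ref{sec:LinH}--\ref{sec:proof}. Your description of the gluing and contraction-mapping scheme matches this.

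Two points in your sketch are off, however. First, the leading expansion of the disk stream function near its pole is $\Psi^D(x) = (R-x_2) + O\bigl(x_1^2+(R-x_2)^2\bigr)$, with coefficient $1$, not $1/R$; this is forced by the Neumann condition $|\nabla\Psi^D|=1$ on $\partial\Omega^D$ and is what makes the matching with $\varepsilon\psi^H(\cdot/\varepsilon)\sim |x_2-T_\varepsilon/2|-\varepsilon\log(2|x|/\varepsilon)$ work. Second, your account of the kernel is not how it plays out in the paper's framework. The relevant approximate Jacobi fields are the translation modes $Z_1^D,Z_2^D$ on the disk and $Z_1^H,Z_2^H$ on the hairpin (see \eqref{Zdisc} and Section~\ref{sec:LinH}); there is no ``scaling mode of the hairpin'' entering as an obstruction, since in the weighted spaces $\|\cdot\|_{\sharp,\Omega^H}$ of \eqref{nH3} only the decaying mode $Z_1^H$ produces a solvability condition (Lemma~\ref{lemmaH1}). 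In the periodic problem one has, in addition to evenness in $x_1$, the reflection symmetry $x_2\mapsto -x_2$ about the disk center and (via periodicity) about the neck center. These kill $Z_2^D$ and $Z_2^H$ as well, so no projection parameter is needed: the situation is actually \emph{simpler} than in Theorem~\ref{thm}, where the gravity parameter $g$ was required to handle the $Z_2^D$ solvability condition on the disk (Proposition~\ref{propDisc1} and Proposition~\ref{prop:main-linear}). The period $T_\varepsilon$ is fixed at the approximate-solution stage by the logarithmic matching, exactly as the shifts $d^D,d^S$ are fixed by \eqref{dDeps} and \eqref{dSeps}; it is not a Lyapunov--Schmidt parameter.
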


The strategy of the proof of Theorems~\ref{thm} and \ref{thm2} consists in building an approximate solution to \eqref{overd00} and then perturbing it into a true solution. All the sections below are focused towards a proof of Theorem~\ref{thm}. The proof of Theorem~\ref{thm2} requires only minor modifications.

\subsection{Outline of the paper}
In Section~\ref{sectFormal}, we discuss the formal linearization of \eqref{wv}, showing it to be the Poisson equation with a Robin boundary condition. 
Calculations of this general type appear in various forms throughout the water wave literature, especially for irrotational perturbations of strips and half-spaces; see for instance \cite{stoker-1957,mclean-1982,francius-kharif-2017,kozlov-lokharu-2023}.
We briefly discuss this linearization around some simple solutions to \eqref{wv}, which are later used as the building blocks of the construction. 
In Section~\ref{sec:approx}, we construct the approximate solution $(\Omega_0,\psi_0)$ to \eqref{wv} in the scaled variable $y=\frac{x}{\varepsilon}$, where the neck size is $O(1)$ and $\varepsilon>0$ is a small parameter. This approximation is formed by combining, with cut-off functions, a scaled version of the strip $\Omega^S$ \eqref{strip1}, a scaled disk $\Omega^D$ \eqref{OmegaD0}, and the hairpin $\Omega^H$ \eqref{def:hairpin0}. The strip and disk also require vertical shifts, which in the expanded variable $y=\frac{x}{\varepsilon}$ are $O(\lvert\log\varepsilon|)$.
In Section~\ref{sect:operators}, we show how solving a nonlinear Robin problem leads to a solution of the overdetermined problem. In Section~\ref{sec:LinH} we solve the linearized equation about the hairpin. We also introduce a perturbation of the hairpin and study the Robin problem there. In Sections~\ref{sec:TildeOD} and \ref{sect:S} we introduce perturbations of the disk and strip and study the associated Robin problems. The purpose of these perturbed domains is to help reduce the Robin problem in $\Omega_0$ to a system of equations. This is done in Section~\ref{sec:linear}, where the linearized problem about $(\Omega_0,\psi_0)$ is analyzed. How well $(\Omega_0,\psi_0)$ approximates a solution of the scaled version of \eqref{wv} is studied in Section~\ref{sect:error}. Finally in Section~\ref{sec:proof} we give the proof of Theorem~\ref{thm}. Much finer information on the solution $(\Omega,\psi)$ is provided in the proof. Finally, appendix~\ref{sect-p2} contains some computations that were used in previous sections.

\section{Formal linearization}
\label{sectFormal}

In this section we formally obtain the linearization of the overdetermined problem 
\begin{align}
    \label{overd1ax}
    \left\{
    \begin{aligned}
    -\Delta \Psi &= \omega && \text{in }\Omega
    \\
    \Psi &= 0 && \text{on } \partial\Omega
    \\
    |\nabla \Psi|^2 &= 1 && \text{on } \partial\Omega  ,
    \end{aligned}
    \right.
\end{align}
assuming for simplicity that $\Omega$ is bounded and $\partial\Omega$ is connected.
The resulting Robin problem~\eqref{robin3a} is posed in the unperturbed domain $\Omega_0$ and involves a single unknown function, features which will be shared by our reformulation of the full nonlinear problem in Section~\ref{sect:operators}.
Formal linearzations of the irrotational water wave problem about strips and half-spaces are  classical \cite{stoker-1957}, and linearizations about more general solutions often appear in numerical studies of stability \cite{mclean-1982,francius-kharif-2017}; also see \cite[Section~5]{kozlov-lokharu-2023}.

Suppose that $\Omega_0$, $\Psi_0$ is a solution of \eqref{overd1ax} with $\omega = \omega_0$ a constant, that is,
\begin{align}
    \label{overd1a}
    \left\{
    \begin{aligned}
    -\Delta \Psi_0 &= \omega_0 && \text{in }\Omega_0
    \\
    \Psi_0 &= 0 && \text{on } \partial\Omega_0
    \\
    |\nabla \Psi_0|^2 &= 1 && \text{on } \partial\Omega_0  .
    \end{aligned}
    \right.
\end{align}
We introduce an artificial small parameter $\delta$, and consider a perturbation of \eqref{overd1a} of the form
\begin{align}
    \label{overd1}
    \left\{
    \begin{aligned}
    -\Delta \Psi &= \omega(x,\delta) && \text{in }\Omega
    \\
    \Psi &= 0 && \text{on } \partial\Omega
    \\
    |\nabla \Psi|^2 &=  f(x,\delta) && \text{on } \partial\Omega ,
    \end{aligned}
    \right.
\end{align}
where $f(x,\delta)$ and $\omega(x,\delta)$ are smooth functions with
\begin{align*}
    \omega(x,0) &= \omega_0,
    \quad
    f(x,0)=1.
\end{align*}
Moreover, we assume that \eqref{overd1} has a smooth family of solutions $(\Omega_\delta, \Psi_\delta)$ passing through $(\Omega_0,\Psi_0)$ at $\delta = 0$.

Let $\gamma\colon [0,L]\to \R^2$ be an arc-length parametrization of $\partial \Omega_0$, periodic of period $L>0$. 
We choose an orientation for $\gamma$ so that 
\[
    \nu(s) = \gamma'(s)^\perp 
\]
points out of $\Omega_0$ where $(a,b)^\perp = (b,-a)$ is rotation by $\frac{\pi}{2}$ in the clockwise direction. We write $T(s) = \gamma'(s)$. The second derivative of $\gamma$ gives the curvature $\kappa$ of $\partial \Omega_0$, and we take as definition
\begin{align}
    \label{curvature}
    \gamma'' = T'(s) = - \kappa \nu ,
\end{align}
so that if $\Omega_0$ is a disk of radius $R$, then $\kappa=\frac{1}{R}>0$. 

Next we prove a pair of identities for $\Psi_0$. From the two boundary conditions in \eqref{overd1a}, we see that 
\begin{align}
    \label{psi0pm}
    \nabla\Psi_0 = \pm \nu 
    \quad \text{on } \partial\Omega_0 .
\end{align}
Moreover, differentiating the Dirichlet condition twice and using \eqref{curvature} yields 
\begin{align*}
    0 = \frac{d^2}{ds^2} \Psi_0(\gamma(s))
    = D^2 \Psi_0 T \cdot T - \kappa \nabla\Psi_0 \cdot \nu
    = ( - \omega_0 - D^2 \Psi_0 \nu \cdot \nu) - \kappa\nabla\Psi_0 \cdot \nu
    \quad \text{on } \partial\Omega_0,
\end{align*}
where $D^2 \Psi_0$ is the Hessian matrix of $\Psi_0$ and in the last step we have used the fact that 
$-\Delta\Psi_0 = \omega_0$.
Solving for $D^2 \Psi_0 \nu \cdot \nu$ yields
\begin{align}
    \label{psi0nunu}
    D^2 \Psi_0 \nu \cdot \nu 
    =  -  \omega_0  - \kappa\nabla\Psi_0 \cdot \nu
    = - \omega_0 \mp \kappa
\end{align}
where the $\mp$ refers to the sign in \eqref{psi0pm}.

To describe the domains $\Omega_\delta$,
for $h \colon \partial\Omega_0 \to \R$ sufficiently small, let $\Omega_h$ be the domain close to $\Omega_0$ with boundary given by 
\[
    \partial \Omega_h = \{ \, x - \nu(x) h(x) \, | \, x \in \partial \Omega_0 \, \},
\]
where $\nu$ is the exterior unit normal vector to $\Omega_0$. We assume that $\Omega_\delta = \Omega_{h_\delta}$ for a smooth family of functions $h_\delta$ on $\partial\Omega_0$.
Since $h_\delta$ is a function on $\partial \Omega_0$, we can regard it also as a function of $s$, so that 
\begin{align}
    \label{defm-bdy}
    s \mapsto \gamma(s) - \nu(s) h_\delta(s)
\end{align}
is a parametrization of $\partial \Omega_\delta$.

Writing $\Psi_\delta(x)$, $h_\delta(x)$ as $\Psi(x,\delta)$, $h(x,\delta)$,  our objective is to formally compute 
\begin{align}
    \label{psi1h1}
    \Psi_1 = \frac{\partial \Psi}{\partial \delta} \Big|_{\delta=0}, \quad
    h_1  = \frac{\partial h}{\partial \delta} \Big|_{\delta=0}.
\end{align}
Differentiating $ - \Delta \Psi(x,\delta) = \omega(x,\delta) $ is straightforward, and gives
\begin{align*}
    -\Delta \Psi_1= \frac{\partial \omega}{\partial \delta}(x,0) \quad \text{in } \Omega_0 .
\end{align*}
The boundary conditions are more subtle.
Writing the Dirichlet condition in \eqref{overd1} as
\[
    \Psi\big(\gamma(s) - \nu(s) h(s,\delta),\delta\big)=0
\]
and differentiating with respect to $\delta$ at $\delta=0$, we find
\begin{align}
\nonumber
    - \nabla \Psi_0 \cdot \nu h_1 + \Psi_1 = 0
    \quad \text{on }\partial \Omega_0,
\end{align}
with the abuse of notation $h_1(x)= h_1(s)$ for $x = \gamma(s) \in \partial \Omega_0$.
Using \eqref{psi0pm}, this simplifies to
\begin{align}
    \label{linear-rel-psi-h}
    h_1 = \pm\Psi_1 \quad \text{on }\partial \Omega_0.
\end{align}

Finally we turn to the Neumann condition in \eqref{overd1}, which reads
\[
    \bigl|\nabla_x \Psi(\gamma(s) - \nu(s) h(s,\delta),\delta)\bigr|^2 = f(\gamma(s) - \nu(s) h(s,\delta),\delta).
\]
Differentiating in $\delta$ and setting $\delta=0$ gives
\[
    \nabla \Psi_0 \cdot ( - D^2 \Psi_0 \nu h_1 + \nabla \Psi_1) = - \frac 12 \nabla f \cdot \nu h_1 + \frac 12 \frac{\partial f}{\partial \delta}(x,0)   \quad \text{on }\partial \Omega_0,
\]
where we have divided by a factor of $2$.
Since $f(x,0)=1$, the first term on the right hand side vanishes. Inserting \eqref{linear-rel-psi-h} and then \eqref{psi0nunu} into the left hand side we find
\begin{align*}
    \nabla \Psi_0 \cdot ( - D^2 \Psi_0 \nu h_1 + \nabla \Psi_1)
    =
    (\kappa \pm \omega_0 ) \Psi_1 
    \pm \nu \cdot \nabla \Psi_1 
    =
    \pm \Big( \frac{\partial\Psi_1}{\partial \nu}
     + \big(\kappa + (\nabla \Psi_0 \cdot \nu)\omega_0 \big) \Psi_1 \Big).
\end{align*}
Putting everything together, we  obtain the Robin problem
\begin{align}
    \label{robin3a}
    \left\{
    \begin{aligned}
    - \Delta \Psi_1 &= f_1 \quad \text{in }\Omega_0
    \\
    \frac{\partial \Psi_1}{\partial \nu} + (\kappa +(\nabla \Psi_0 \cdot \nu) \omega_0 ) \Psi_1  &= f_2
    \quad \text{on }\partial \Omega_0 ,
    \end{aligned}
    \right.
\end{align}
where $f_1 = \partial \omega/\partial \delta$ and $f_2 = \pm \partial f/\partial \delta$ are fixed functions. 
The variation in the domain is captured by $h_1$, defined in \eqref{psi1h1} and given in terms of $\Psi_1$ by  \eqref{linear-rel-psi-h}.

Note that replacing $\Psi_0$ by $-\Psi_0$ and $\omega_0$ by $-\omega_0$ leaves both \eqref{overd1a} and the left hand side of \eqref{robin3a} unchanged.

\medskip
Let us consider some examples.

\subsection{The disk}
If $\Omega_0 =B_R(0)$ is a disk of radius $R$, centered at the origin and $\omega_0\not=0$ is a constant, then 
\[
\Psi_0(x) = \frac{\omega_0}{4}(R^2 - |x|^2) ,
\]
satisfies \eqref{overd1a}.
To achieve $|\nabla \Psi_0|=1$ on $\partial B_R(0)$ we need $\frac{1}{2} |\omega_0| R =1$. In this case \eqref{robin3a} takes the form:
\begin{align}
\label{robin3}
\left\{
\begin{aligned}
- \Delta \Psi_1 &= f_1 \quad \text{in }B_R(0)
\\
\frac{\partial \Psi_1}{\partial \nu} - \frac{1}{R} \Psi_1  &= f_2
\quad \text{on }\partial B_R(0) .
\end{aligned}
\right.
\end{align}
This linear problem has a kernel. Indeed 
\begin{align}
\label{Zdisc}
Z_1^D = \frac{\partial \Psi_0}{\partial x_1}
= -\frac{\omega_0}{2}x_1, 
\quad 
Z_2^D = \frac{\partial \Psi_0}{\partial x_2} 
= -\frac{\omega_0}{2}x_2 
\end{align}
satisfy the homogeneous version of \eqref{robin3}.
For sufficiently regular $f_1$ and $f_2$, \eqref{robin3} has a solution if and only if
\begin{align*}
\int_{B_R(0)} f_1 Z_j^D = \int_{\partial B_R(0)} f_2 Z_j^D, \quad j=1,2 .
\end{align*}

\subsection{The strip}
Let us consider the strip $\Omega^S$ defined in \eqref{strip1} (with $A=1$) with boundary $\partial \Omega^S $ given by the lines $x_2=-\A$ and $x_2=0$. The function $\Psi^S$ defined in \eqref{psiS1} (with $c=1$) satisfies
\begin{align}
\label{strip00}
\left\{
\begin{aligned}
-\Delta \Psi^S &= \omega \quad \text{in } \Omega^S
\\
\Psi^S &= 0 \quad \text{on }  x_2=0
\\
|\nabla \Psi^S|& =1\quad \text{on } x_2=0
\\
\Psi^S &= \beta  \quad \text{on }  x_2=-\A,
\end{aligned}
\right.
\end{align}
where $\beta$ is given by \eqref{def:B}.
The linearized version of \eqref{strip00} is
\begin{align}
\label{strip0}
\left\{
\begin{aligned}
-\Delta \phi &= f_1 && \text{in } \Omega^S
\\
\frac{\partial \phi}{\partial \nu} - \omega \phi &= f_2 && \text{on } x_2=0
\\
\phi &= 0  && \text{on }  x_2=-\A.
\end{aligned}
\right.
\end{align}
This linear problem is well behaved under the assumption $\omega < 1$.
Let us fix $\mu>0$ such that
\begin{align}
\label{eq:mu}
\omega < \frac{\mu}{\tan(\mu)}.
\end{align}
We claim that if $f_1$ and $f_2$ have exponential decay $\exp(-\mu |x_1|)$ then there is a solution $\phi$ of \eqref{strip0} with the same exponential decay. This is proved using the barrier
\begin{align}
\label{superstrip}
\bar u(x_1,x_2)= \sin(\lambda (x_2+\A)) \exp(-\mu |x_1|),
\end{align}
where $\lambda>\mu$ is such that 
\begin{align*}
\omega \Ae < \frac{\lambda \Ae }{\tan(\lambda \Ae)}.
\end{align*}

\subsection{The hairpin}
Let $\Omega^H$ be the domain \eqref{def:hairpin0} and $\psi^H$ be given by \eqref{psiH-0}.
The linearized version of \eqref{hairpin00} is
\begin{align}
\label{linearH0}
\left\{
\begin{aligned}
\Delta_y \phi  &= f_1 \quad 
\text{in }  \Omega^H 
\\
\frac{\partial \phi}{\partial  \nu_y} 
+ \kappa \phi
& =f_2 
\quad \text{on } \partial \Omega^H  ,
\end{aligned}
\right.
\end{align}
where $\kappa$ is the curvature of $\partial\Omega^H$. 

The homogeneous version of \eqref{linearH0} has two independent solutions
\begin{align}
\nonumber
Z_1^H = \frac{\partial \psi^H}{\partial y_1} , 
\quad 
Z_2^H = \frac{\partial \psi^H}{\partial y_2} ,
\end{align}
which satisfy
\begin{alignat*}{2}
Z_1^H(y) &\to 0 &\quad& \text{as }|y|\to\infty\\
Z_2^H(y_1,y_2) &\to \pm1 &\quad& \text{as }y_2\to\pm\infty .
\end{alignat*}
In the rest of the paper we work assuming even symmetry with respect to the horizontal direction so that orthogonality of the right hand sides with respect to $Z_1^H$ is automatic.
We give a construction of solutions to \eqref{linearH0} for right hand sides with algebraic decay in Lemma~\ref{lemmaH1}.

\section{Approximate solution}
\label{sec:approx}

In this section we construct an approximate solution $\Omega_0$, $\psi_0$ in the scaled variable $y=\frac{x}{\varepsilon}$, that is, we construct them so that
\begin{align*}
\begin{aligned}
\Delta_y \psi_0 + \varepsilon \omega &\approx 0 && \text{in }\Omega_0
\\
\psi_0 &\approx 0 && \text{on } \mathcal{S}_0
\\
\frac{1}{2} |\nabla_y \psi_0 |^2 + g  (\varepsilon y_2+d^S) - \frac{1}{2} & \approx 0 && \text{on }  \mathcal{S}_0
\\
\psi_0 &= \frac{\beta}{\varepsilon} && \text{on }  \mathcal{B}_0
\end{aligned}
\end{align*}
in a sense to be defined later on.
The domains and stream functions constructed here are all symmetric (even) with respect to $x_1$.
The parameter $d^S$ corresponds to a vertical shift that will be defined later.

The construction is essentially by gluing 3 domains together with their stream functions. These domains are the hairpin $\Omega^H$ defined in \eqref{def:hairpin0}, which can also be written as 
\begin{align}
\label{defHairpin}
\Omega^H = \Bigl\{  y \in \R^2 \ \Big| \ |y_1|< \frac{\pi}{2}+ \cosh y_2 \Bigr\},
\end{align}
the disk 
\begin{align}
\label{OmegaD}
\Omega^D = \{ x\in \R^2 \ | \ |x-(0,R+d^D)|< R \},
\end{align}
where $d^D>0$ is to be chosen later on (see \eqref{dDeps}), and the strip
\begin{align}
\label{def:OmegaS0}
\Omega^S = \{ (x_1,x_2) \ | \ -d^S- \A < x_2 < -d^S\}
\end{align}
with boundary $\partial\Omega^S=\mathcal{S}\cup\mathcal{B}$, 
\begin{align}
\label{pOmegaS}
\mathcal{S}&=\{ \ x_2=-d^S\ \}, \quad
\mathcal{B}=\{ \ x_2=-\A-d^S\ \},
\end{align}
where $d^S$ is chosen later (see \eqref{dSeps}).
We note that $\Omega^S$ has been shifted vertically by the amount $d^S$ compared to the definition in \eqref{strip1}, and also the disk $\Omega^D$ has been shifted vertically by $d^D$ compared to \eqref{OmegaD0}.

The stream function $\psi^H$ associated to $\Omega^H$ is defined by
\begin{align}
\label{psiH}
\psi^H(y) = \Re (\cos (w)), \quad y = F(w) .
\end{align}
The stream function $\psi^D$ corresponding to $\Omega^D$ is
\begin{align}
\label{PsiD}
\Psi^D(x) = \frac{\omega}{4}(R^2 - | x- (0,R+d^D)|^2) .
\end{align}
Assuming that $\omega R = 2$, we find
\[
|\nabla \Psi^D(x)|=1,\quad x\in \partial \Omega^D.
\]
The stream function associated to the strip is
\[
\Psi^S(x)  = -\tfrac{1}{2}\omega (x_2+d^S)^2 - (x_2+d^S) , \quad x=(x_1,x_2).
\]

The expanded version of the disk is 
\begin{align}
\nonumber
\frac{1}{\varepsilon} \Omega^D = \{ y \in \R^2 \ | \ |y-(0,(R+d^D) \varepsilon^{-1})|< R \varepsilon^{-1} \} 
\end{align}
with the rescaled stream function
\begin{align}
\label{psiD}
\psi^D(y) := 
\frac{1}{\varepsilon}\Psi^D(\varepsilon y)
=\frac{1}{4}\omega \varepsilon  (R^2 \varepsilon^{-2} - | y- (0,R+d^D) \varepsilon^{-1} |^2).
\end{align}
The scaled strip and its stream function are given by 
\begin{align}
\nonumber
\frac{1}{\varepsilon}\Omega^S = \{ (y_1,y_2) \ | \ {}- \varepsilon^{-1} d^S- \varepsilon^{-1}\Ae < y_2 < -\varepsilon^{-1}d^S\}
\\
\label{psiS}
\psi^S(y)= \frac{1}{\varepsilon} \Psi^S(\varepsilon y) = - \frac{1}{2}\varepsilon\omega ( y_2 + \varepsilon^{-1} d^S)^2- ( y_2 + \varepsilon^{-1} d^S) .
\end{align}

The construction of the approximate solution has the following steps:
\begin{itemize}
\item[(i)]
We compute the expansion of $\psi^H(y)$ as $|y|\to\infty$. 
\item[(ii)]
We introduce $\tilde\psi^H(y)$, an improved version of $\psi^H(y)$, to achieve $-\Delta_y \tilde \psi^H=\varepsilon \omega$, and $\tilde\psi^D$, $\tilde\psi^S$, which are improved versions of $\psi^D$ and $\psi^S$, to achieve a better matching of  $\tilde\psi^H(y)$ and $\tilde\psi^D(y)$, $\tilde\psi^S(y)$ in the regime $1\ll |y| \ll\infty$.
The modifications of the stream functions go together with modifications of the domains, which are denoted by $\Omega^H_\varepsilon$, $\Omega^D_\varepsilon$ and $\Omega^S_\varepsilon$.
\item[(iii)]
We construct the approximate domain and stream function by interpolating the corrected domains and stream functions with cut-offs in the region $\varepsilon^{-b}\leq|y|\leq 2 \varepsilon^{-b}$ where $b$ is a constant in the range
\begin{align*}
\tfrac{1}{2}<b<1.
\end{align*}
\end{itemize}

It is convenient to regard $\psi^H$ defined by \eqref{psiH} extended analytically to a region larger than $\Omega^H$.
For $0 < \delta<\frac{\pi}{2}$ let us define
\begin{align*}
\Omega^{H,+}_\delta  &= \Bigl\{ z \in \C \  \Big| \  z = w+\sin(w), \ 
\Im(w)>0 , \ \Re(w)  \in \Bigl(-\frac{\pi}{2}-\delta,\frac{\pi}{2}+\delta\Bigr) \Bigr\} ,
\\
\Omega^{H,-}_\delta &= \Bigl\{ z \in \C \  \Big| \  z = w+\sin(w), \ 
\Im(w)<0 , \ \Re(w)  \in \Bigl(-\frac{\pi}{2}-\delta,\frac{\pi}{2}+\delta\Bigr)\Bigr\}.
\end{align*}
\begin{lemma}
Let $0 < \delta<\frac{\pi}{2}$.
Then $\psi^H$ has a unique harmonic extension to $\Omega^{H,+}_\delta $ and another unique harmonic extension to $\Omega^{H,-}_\delta $.
\end{lemma}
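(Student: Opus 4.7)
The plan is to transfer the problem from $z$-coordinates to $w$-coordinates, where everything becomes trivial: in the $w$-plane, $\psi^H$ is just $\Re(\cos w)$, which is an entire harmonic function. Concretely, I would define the extension by
\[
\tilde\psi^H(z) := \Re(\cos w), \qquad z = F(w), \qquad w \in U_\delta^{\pm},
\]
where $U_\delta^{\pm} := \{\, w \in \C \mid \pm\Im(w) > 0,\ \lvert\Re(w)\rvert < \tfrac{\pi}{2}+\delta\,\}$, so that by definition $\Omega^{H,\pm}_\delta = F(U_\delta^\pm)$. With this definition, harmonicity is automatic (real part of a holomorphic function), agreement with $\psi^H$ on $\Omega^H \cap \Omega^{H,\pm}_\delta = F(S \cap U_\delta^\pm)$ is immediate from the matching formula, and uniqueness reduces to the identity principle once the domain is seen to be connected. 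The whole proof therefore reduces to a single nontrivial point: showing that $F$ is injective on each of $U_\delta^\pm$, so that the formula above is single-valued.

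For injectivity, note first that $F'(w)=1+\cos w$ vanishes only at $w \in \pm\pi + 2\pi\Z$, whose real parts all have modulus at least $\pi$; since $\delta < \pi/2$, none of these critical points lie in $\overline{U_\delta^\pm}$, so $F$ is a local diffeomorphism on the closed half-strip. For the global step, the cleanest algebraic route is to suppose $F(w_1)=F(w_2)$ with $w_1\neq w_2$ in $U_\delta^+$ and to set $u=\tfrac{1}{2}(w_1+w_2)$, $v=\tfrac{1}{2}(w_1-w_2)$. The identity $\sin w_1 - \sin w_2 = 2\cos(u)\sin(v)$ reduces $F(w_1)=F(w_2)$ to
\[
v + \cos(u)\sin(v) = 0,
\]
after which one derives a contradiction from the constraints $\Im(u)>0$ and $\lvert\Re(u)\rvert<\tfrac{\pi}{2}+\delta$ by comparing $|v/\sin v|$ to $|\cos u|$ (the separation of real and imaginary parts, using $\cos(a+ib)=\cos a\cosh b - i\sin a\sinh b$, makes the two sides fall into disjoint regions of $\C$). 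An alternative, more topological route is to exhaust $U_\delta^+$ by rectangles $R_N$ of finite height, check that $F|_{\partial R_N}$ is a simple closed curve (the bottom side maps to a real interval by strict monotonicity of $F'>0$ there; the vertical sides, by an explicit computation of real and imaginary parts, give simple arcs issuing from the real axis; the top side closes them off at large height), and conclude injectivity on $R_N$ by the argument principle, then pass to $N\to\infty$.

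Once injectivity is established, $F\colon U_\delta^\pm \to \Omega^{H,\pm}_\delta$ is a conformal bijection, so $F^{-1}$ is holomorphic, $\tilde\psi^H = \Re(\cos\circ F^{-1})$ is harmonic on $\Omega^{H,\pm}_\delta$, and it extends $\psi^H$. For uniqueness, observe that $\Omega^{H,\pm}_\delta$ is connected as the continuous image of the connected set $U_\delta^\pm$, and that the nonempty open subset $F(S\cap U_\delta^\pm)$ is contained in it; the identity theorem for harmonic functions then forces any two harmonic extensions to coincide.

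The main obstacle is precisely the global injectivity of $F$ on the enlarged half-strip; the remainder of the proof is a routine consequence of $F$ being a conformal bijection together with the identity principle.
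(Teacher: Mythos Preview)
Your overall structure is identical to the paper's: reduce to injectivity of $F$ on the enlarged half-strip, then define the extension as $\Re(\cos\circ F^{-1})$ and appeal to the identity principle for uniqueness. The paper's actual proof of injectivity is precisely your ``alternative, more topological route'': it exhausts $U_\delta^+$ by finite-height rectangles $S_M$, observes that $F'\ne 0$ on $\overline{S_M}$, checks that $F(\partial S_M)$ is a simple closed curve with degree one, and concludes by degree theory (equivalently the argument principle) that $F$ is injective on $S_M$; it also cites the Darboux--Picard theorem as a packaged version of the same idea.

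Your primary ``algebraic'' route via $u=\tfrac12(w_1+w_2)$, $v=\tfrac12(w_1-w_2)$ and the reduction to $v+\cos(u)\sin(v)=0$ is a genuinely different idea, but as written it is only a sketch: the claim that comparing $|v/\sin v|$ with $|\cos u|$ yields a contradiction needs a concrete inequality, and it is not obvious this works uniformly for all $0<\delta<\tfrac\pi2$ (when $\delta$ is close to $\tfrac\pi2$, $\Re(u)$ can be close to $\pm\pi$ and $\cos u$ close to $-1$, while $-v/\sin v\to -1$ as $v\to 0$). If you want to keep this route you should supply the missing estimate; otherwise your topological alternative already matches the paper and suffices.
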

\begin{proof}

For $M>0$ let $S_M = \{ w\in \C \, | \, |\Re(w)| < \frac{\pi}{2}+\delta,\ 0 < \Im(w) < M\, \}$. It is sufficient to verify that $F$ is injective on $S_M$.

Note that $F'(w) \not= 0$ for all $w$ in a neighborhood of $\overline{S}_M$. The curve $F(\partial S_M)$ is closed, simple, and it is easy to check that the degree of $F\colon \partial S_M \to F(\partial S_M)$ is one. 
The region $F(S_M)$ is bounded, open, and connected. Hence the degree $d(F, S_M, z) = 1$ for all $z \in F(S_M)$. Since $F$ is analytic, for any $z \in F(S_M)$, there is only one $w \in S_M$ such that $z = F(\omega)$.

One can also argue using the Darboux-Picard theorem, see for example \cite[p.~310]{burckel}.
\end{proof}

In particular, $\psi^H$ has a harmonic extension to the set $\R^2 \setminus \{ \, (y_1,0)\, | \, |y_1| \geq \pi \,\}$, which is the one we use in the rest of the paper.

\medskip

We need the asymptotic expansion of $\psi^H(y)$ as $|y|\to\infty$. This is the content of the next lemma, whose proof is given in Appendix~\ref{sect-p2}.

\begin{lemma}
\label{lemma:estGradPsiH}
Let $\psi^H$ be the harmonic extension to $\Omega^{H,+}_\delta$ of the function  \eqref{psiH}. 
For $0 < \delta<\frac{\pi}{2}$ there is $C$ such that for $z \in \Omega^{H,+}_\delta$ with $|z|\geq 1$, the following estimates hold
\begin{align}
    \label{est0}
    \left| \psi(z) - [z_2-\log(2|z|)] \right|
    &
    \leq C\frac{\log |z|}{|z|} 
    \\
    \label{est1a}
    \Bigl| \partial_{z_1} \psi^H(z) + \frac{z_1}{|z|^2} \Bigr|
    &
    \leq C \frac{\log|z|}{|z|^2}
    \\
    \label{est2a}
    \Bigl| \partial_{z_2} \psi^H(z) - 1  \Bigr| 
    &
    \leq  \frac{z_2}{|z|^2} \Bigl( 1 + C\frac{\log |z|}{|z|^2} \Bigr)
    \\
    \label{est3}
    \Bigl| \partial_{z_2} \psi^H(z) - 1  - \frac{z_2}{|z|^2}\Bigr| 
    &
    \leq  C\frac{\log |z|}{|z|^{2}}
    \\
    \label{D2psiH-1}
    \Bigl| D^2_z( \psi^H(z) - z_2  - \log|z|)\Bigr| 
    &
    \leq  C\frac{\log |z|}{|z|^{3}}
    \\
    \label{D2psi}
    |D^k_z \psi^H(z)|
    &
    \leq \frac{C}{|z|^k}, \quad k=2,3,4.
\end{align}
\end{lemma}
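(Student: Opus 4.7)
The plan is to work in the conformal variable $w = u+iv$ with $z = F(w) = w + \sin w$. Combining Euler's formula $e^{iw} = \cos w + i\sin w$ with the relation $\sin w = z - w$ yields the key algebraic identity $\cos w = e^{iw} - i(z - w)$, whose real part gives the exact representation
\[
\psi^H(z) = z_2 - v + e^{-v}\cos u, \qquad w = u+iv, \ z = F(w).
\]
This reduces the asymptotic analysis of $\psi^H$ to understanding the inverse map $w = F^{-1}(z)$ for large $|z|$.

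To invert $F$ asymptotically, I expand $z_1 = u + \sin u\cosh v$ and $z_2 = v + \cos u\sinh v$ and compute
\[
|z|^2 = \sinh^2 v + \cos^2 u + 2(v\cos u + u\sin u)\sinh v + u^2 + v^2,
\]
which gives $|z| = e^v/2 + v\cos u + u\sin u + O(v^2 e^{-v})$ for bounded $u$. Taking the logarithm produces the inversion
\[
v = \log(2|z|) + O\Bigl(\tfrac{\log|z|}{|z|}\Bigr) \qquad \text{as } |z|\to\infty.
\]
Substituting into the exact formula for $\psi^H$, and using $e^{-v}\cos u = O(1/|z|)$, establishes \eqref{est0}.

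For the first-derivative estimates I exploit the holomorphic structure: $\psi^H = \Re\Phi$ where $\Phi(z) := \cos(F^{-1}(z))$, so that $\partial_{z_1}\psi^H = \Re\Phi'(z)$ and $\partial_{z_2}\psi^H = -\Im\Phi'(z)$. The chain rule together with $F'(w) = 1+\cos w = 2\cos^2(w/2)$ produces the compact formula
\[
\Phi'(z) = \frac{-\sin w}{1+\cos w} = -\tan(w/2) = \frac{i(e^{iw}-1)}{e^{iw}+1}.
\]
Setting $q = e^{iw}$ and expanding the geometric series gives $\Phi'(z) = -i + 2iq - 2iq^2 + O(q^3)$, with $|q| = e^{-v} = O(1/|z|)$. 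The asymptotic $z \sim \sin w \sim ie^{-iw}/2 = i/(2q)$ derived in the previous step yields $q = i/(2z) + O(\log|z|/|z|^2)$, and hence
\[
\Phi'(z) = -i - \frac{1}{z} + O\Bigl(\tfrac{\log|z|}{|z|^2}\Bigr).
\]
Taking real and imaginary parts and using $\Re(1/z) = z_1/|z|^2$, $\Im(1/z) = -z_2/|z|^2$ then yields \eqref{est1a}, \eqref{est2a}, \eqref{est3}.

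For the higher-derivative bounds I iterate $dw/dz = 1/(2\cos^2(w/2))$ to obtain $\Phi''(z) = -1/(4\cos^4(w/2))$ and, more generally, that $\Phi^{(k)}(z)$ is a rational expression in $\sin(w/2), \cos(w/2)$ with denominator $\cos^{2k}(w/2)$. Since $|\cos(w/2)| \geq c\, e^{v/2}$ for large $v$ and bounded $|u|$, this immediately gives $|D^k\psi^H(z)| \leq C|z|^{-k}$, proving \eqref{D2psi}. For the sharper \eqref{D2psiH-1}, a refined computation yields $\Phi''(z) = -4q^2 + O(q^3) = 1/z^2 + O(\log|z|/|z|^3)$, so $\Phi''(z) - 1/z^2 = O(\log|z|/|z|^3)$, which translates to the Hessian bound asserted in \eqref{D2psiH-1}. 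The main technical obstacle throughout is the bookkeeping of error terms: the cancellations behind \eqref{est1a}, \eqref{est3}, and especially \eqref{D2psiH-1} rely on expanding $\Phi^{(k)}$ and the explicit rational functions of $z$ to matching order and then re-expressing everything via the inversion formula for $v$. Uniformity for $\delta < \pi/2$ follows because $\cos(w/2)$ stays bounded away from zero on the extended strip.
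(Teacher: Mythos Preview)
Your approach is essentially the paper's: invert $F$ asymptotically to obtain $v = \log(2|z|) + O(\log|z|/|z|)$, then exploit the holomorphic structure via $\Phi'(z) = -\sin w/(1+\cos w)$. Your packaging through $q = e^{iw}$ and the half-angle identity $\Phi' = -\tan(w/2)$ is a clean variant of the paper's direct real/imaginary-part computation of $df/dz$. The one genuine methodological difference lies in the higher derivatives: you compute $\Phi^{(k)}$ explicitly as rational functions of $\sin(w/2),\cos(w/2)$, whereas the paper bootstraps from \eqref{est1a} by applying interior gradient estimates for harmonic functions to $\partial_{z_1}(\psi^H + \log|z|)$ to obtain \eqref{D2psiH-1}, and only then integrates along rays from infinity to deduce \eqref{est3}. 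Your route is more computational but self-contained; the paper's is slicker but relies on \eqref{est1a} already being in hand. Two minor slips in your $|z|^2$ expansion: the term $\cos^2 u$ should be $\sin^2 u$, and the cross term $2u\sin u$ multiplies $\cosh v$ rather than $\sinh v$; neither affects the leading asymptotics.
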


To define the approximate stream function we first add a correction to $\psi^H$ 
\begin{align}
\label{tildePsiH}
\tilde \psi^H(y) = \psi^H(y) - \frac{1}{4} \varepsilon \omega  |y|^2 \eta^+_{1}(y) - \frac{1}{2} \varepsilon \omega y_2^2 \eta^-_{1}(y)
\end{align}
where the cut-off functions $\eta_{1}^\pm$ are defined as 
\begin{align}
\label{eta1pm}
\eta_1^+(y) = 1-\eta_0(y_2) , \quad \eta_1^-(y) = 1-\eta_0(-y_2) ,
\end{align}
for some $\eta_0 \in C^\infty(\R)$, $0 \leq \eta_0\leq 1$ which is even and satisfies
\begin{align}
\label{eta0}
\eta_0(s) = 1 \quad s \leq 1, \quad \eta_0(s) = 0 \quad s \geq 2.
\end{align}
In \eqref{tildePsiH} the corrections $- \frac{1}{4} \varepsilon \omega  |y|^2 $, $ \frac{1}{2} \varepsilon \omega y_2^2 $ are explicit quadratic functions to achieve $-\Delta_y \tilde \psi^H=\varepsilon \omega$ (far from $y_2=0$), and the choices in the upper and lower halves $y_2>0$ and $y_2<0$ are to better match the functions $\psi^D$ and $\psi^S$.

\medskip

We would like to construct $\psi_0$ by interpolating $\tilde \psi^ H$ defined in \eqref{tildePsiH} with the scaled stream functions $\psi^D$ and $\psi^S$ \eqref{psiD}, \eqref{psiS}. But doing this directly creates an error which is too big for the scheme to work. Indeed, note that $\psi^D$ \eqref{psiD} can be written as
\begin{align}
\label{ab1}
\psi^D(y)
&=y_2 - \frac{1}{4} \varepsilon\omega|y|^2 + \frac{1}{2} \omega d^D y_2 -\frac{d^D}{\varepsilon}-\frac{1}{4} \omega \frac{(d^D)^2}{\varepsilon} .
\end{align}
On the other hand, Lemma~\ref{lemma:estGradPsiH} gives
\[
\psi^H(y) = |y_2|-\log(2|y|) + O\Bigl( \frac{\log |y|}{|y|}\Bigr), \quad \text{as }|y|\to\infty, \ y \in \Omega^H,
\]
so that from \eqref{tildePsiH}
\begin{align}
\label{ab2}
\tilde \psi^H(y)  = y_2- \frac{1}{4} \varepsilon \omega  |y|^2 -\log(2|y|) + O\Bigl( \frac{\log |y|}{|y|}\Bigr)  , \quad \text{as }|y|\to\infty , \ y_2>0.
\end{align}
Comparing \eqref{ab1} and \eqref{ab2} we see that the term $-\log(2|y|)$ is not present in \eqref{ab1}. 
To obtain a correction to $\psi^D$, we solve the linearized problem in the disk, introduced in Section~\ref{sectFormal}
\begin{align}
\label{eqPsi1D}
\left\{
\begin{aligned}
\Delta \Psi_1^D &= 0 \quad \text{in } \Omega^D = B_R(0,R+d^D)\\
\frac{\partial\Psi_1^D }{\partial \nu} - \frac{1}{R}\Psi_1^D &=  \pi\delta_{P_D} + g_0 (x_2+d^S)  \quad \text{on } \partial\Omega^D ,
\end{aligned}
\right.
\end{align}
where $P_D=(0,d^D)$ is the lowest point in $\partial\Omega^D$ and $g_0$ is chosen so that 
\begin{align}
\nonumber
\int_{\partial \Omega^D } ( \pi\delta_{P_D} + g_0 (x_2+d^S)  ) Z_2^D =0,
\end{align}
where
\begin{align}
\label{Z2D}
Z_2^D(x) = x_2-(R+d^D)
\end{align}
is the same as the one defined in  \eqref{Zdisc}, except a translation and multiplication by a constant. The orthogonality condition with respect to $Z_1^D(x)=x_1$ is automatic. 
It turns out that
\begin{align}
\label{def-g0}
g_0=\frac{1}{R^2}.
\end{align}
Indeed, by centering the disk at the origin, we have
\begin{align*}
g_0 =-\frac{\pi \int_{\partial B_R(0)}\delta_{(0,-R)}x_2}{\int_{\partial B_R(0)}(x_2+R+d^D+d^S)x_2}
=\frac{\pi R}{\int_{\partial B_R(0)}x_2^2}
=\frac{1}{R^2}.
\end{align*}

\begin{lemma}
\label{lem:sol-disc2}
Let $g_0$ be given by \eqref{def-g0}. Then problem \eqref{eqPsi1D} has a solution,
which is even in $x_1$ and such that 
\begin{align}
\label{expPsi1D}
\Psi_1^D(x) &= -\log(|x-P_D|) + O(|x-P_D| |\log|x-P_D||),
\\
\label{expNPsi1D}
\nabla \Psi_1^D(x) &= - \frac{x-P_D}{|x-P_D|^2} +  O( |\log|x-P_D||)
\\
\nonumber
|D^2 \Psi_1^D(x) |& \leq   \frac{C}{|x-P_D|^2} 
\end{align}
as $x\to P_D$.
The solution $\Psi_1^D$ can be extended to a harmonic function on $\R^2$ minus the half line $\{ (0,t) \, | \, t < d^D\}$, and this extension satisfies that 
$\Psi_1^D(x) +  \log(|x-P_D|)$ has a continuous limit at each side of the half line  $\{ (0,t) \, | \, t \leq d^D\}$.
\end{lemma}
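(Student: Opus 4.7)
The plan is to subtract off the explicit singular part $W(x) := -\log|x - P_D|$, which is harmonic in $\Omega^D$ and which will be shown to account for the Dirac mass $\pi\delta_{P_D}$ in the boundary operator. First I would verify the pointwise identity $\partial_\nu W = -\tfrac{1}{2R}$ on $\partial\Omega^D \setminus \{P_D\}$, which follows from $(x - C) \cdot (x - P_D) = \tfrac{1}{2}|x-P_D|^2$ on the circle (with $C = (0,R+d^D)$ the center). Applying Green's identity to $W$ and a test function $\phi$ on $\Omega^D \setminus B_\varepsilon(P_D)$ and letting $\varepsilon \to 0$ then produces an additional concentrated contribution $\pi\phi(P_D)$ from the half-arc $\partial B_\varepsilon(P_D) \cap \Omega^D$ (on which the inward normal derivative equals $1/\varepsilon$ and whose length is $\pi\varepsilon + o(\varepsilon)$), so that as distributions on $\partial\Omega^D$,
\[
\partial_\nu W - R^{-1}W = -\tfrac{1}{2R} + R^{-1}\log|x-P_D| + \pi\delta_{P_D}.
\]

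Writing $\Psi_1^D = W + V$ then reduces \eqref{eqPsi1D} to finding $V$ harmonic in $\Omega^D$ with Robin data
\[
\partial_\nu V - R^{-1} V = g_0(x_2+d^S) + \tfrac{1}{2R} - R^{-1}\log|x-P_D|\quad \text{on }\partial\Omega^D,
\]
which now lies in $L^p(\partial\Omega^D)$ for every $p<\infty$. This data is even in $x_1$, so orthogonality to $Z_1^D$ is automatic; orthogonality to $Z_2^D$ is equivalent to that of the original right-hand side of \eqref{eqPsi1D}, which was arranged by $g_0 = 1/R^2$, after using the explicit evaluation $\int_{\partial\Omega^D}\log|x-P_D|(x_2-R-d^D)\,dS = \pi R^2$. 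Standard Fredholm theory for the Robin problem on the disk modulo the two-dimensional kernel $\mathrm{span}\{Z_1^D,Z_2^D\}$ then produces $V$; the $x_1$-evenness requirement removes the $Z_1^D$-freedom, and since $Z_2^D(P_D) = -R \neq 0$, an appropriate multiple of $Z_2^D$ normalizes $V(P_D) = 0$.

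For the local expansion near $P_D$, I would straighten the boundary by a local conformal map to the upper half-plane and exploit the explicit harmonic model $V_0(x,y) = -y\log|z| - x\arg z$ (where $z = x+iy$), which satisfies $-\partial_y V_0 = 1 + \log|x|$ on $\{y=0\}$. A logarithmic forcing in the Robin data for $V$ therefore generates a leading correction of order $|x-P_D|\,|\log|x-P_D||$, yielding $V(x) = O(|x-P_D|\log|x-P_D|)$; standard interior harmonic estimates then give $|\nabla V| = O(|\log|x-P_D||)$ and $|D^2 V| = O(|\log|x-P_D||/|x-P_D|)$, and combining with the explicit derivatives of $-\log|x-P_D|$ yields \eqref{expPsi1D}, \eqref{expNPsi1D}, and the Hessian bound. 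Finally, the Robin data for $V$ is real-analytic on $\partial\Omega^D \setminus \{P_D\}$, so by a Cauchy--Kowalevski-type reflection $V$ extends harmonically across each such arc and, by analytic continuation, to all of $\R^2$; adding back $-\log|x-P_D|$ yields the stated extension of $\Psi_1^D$ to $\R^2$ minus the downward half-line (the natural branch cut of $\log(z-P_D)$), with $V$ continuous across the closed half-line. The chief obstacle is isolating in the first step the distributional $\pi\delta_{P_D}$ part from what becomes regular Robin data for $V$ and then extracting the sharp $|x-P_D|\log|x-P_D|$ rate rather than a weaker power; once these are in place, the remaining estimates follow from routine elliptic theory on the disk.
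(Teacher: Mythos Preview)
Your approach is correct for the existence and the local asymptotics, but it differs substantially from the paper's, and the final extension step is under-argued.

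The paper works in complex variables. After rescaling to the unit disk, it writes $u=\Re f$ with $f$ holomorphic and observes that the Robin condition becomes the first-order ODE $zf'(z)-f(z)=H(z)$ on $\partial B_1$, where $\Re H=h$. This is solved explicitly, either by power series $f(z)=\sum_{n\neq1}\tfrac{H_n}{n-1}z^n$ or by the integrating-factor formula $f(z)=z\int_0^z H(\zeta)\,\zeta^{-2}\,d\zeta$. For the specific data one starts from $f_0(z)=-\log(i(z+i))$, computes $zf_0'-f_0$, and absorbs the remainder into a second piece $f_1$ solving the ODE with an $H$ that is holomorphic on the slit plane. The formula for $f_1$ then immediately shows that $u$ extends harmonically to $\R^2$ minus the branch cut of $\log(i(z+i))$, which is exactly the half-line in the statement, and the continuity of $u+\log|z+i|$ up to each side of the cut is read off directly.

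Your route --- subtracting $W=-\log|x-P_D|$, checking $(x-C)\cdot(x-P_D)=\tfrac12|x-P_D|^2$ on the circle to get $\partial_\nu W=-\tfrac{1}{2R}$, identifying the $\pi\delta_{P_D}$ via a half-arc limit, then solving a regular Robin problem for $V$ by Fredholm theory and extracting the $|x-P_D|\log|x-P_D|$ rate from the explicit half-plane model $V_0=-\Im(z\log z)$ --- is a perfectly good real-variable proof of the existence and of \eqref{expPsi1D}--\eqref{expNPsi1D}. It has the advantage of being the sort of argument that generalises to non-circular domains.

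The weak point is the last paragraph. Schwarz reflection for a Robin condition is not the plain reflection principle; one must first pass to a derived quantity satisfying a Dirichlet or Neumann condition (the paper does essentially this in the strip case). More importantly, ``analytic continuation to all of $\R^2$'' for $V$ is too optimistic: the Robin data for $V$ contains $\log|x-P_D|$, so the local harmonic extensions of $V$ across arcs on either side of $P_D$ need not patch to a single-valued function through the exterior; the half-line obstruction in the statement is genuine, not merely the branch cut of the subtracted $W$. The paper sidesteps this entirely because the integral formula for $f$ is manifestly holomorphic wherever $H$ is, i.e.\ on the slit plane, and the half-line appears for free. If you want to keep your approach, you should either carry out the reflection more carefully and track the monodromy, or, more simply, switch to the complex formulation just for this extension step.
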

The proof of this lemma is standard, but we give a self-contained version in Appendix~\ref{sect-p2} below, to show that $\Psi_1^D$ is defined in a domain larger than $\Omega^D$, as stated in Lemma~\ref{lem:sol-disc2}. Note that $d^D$ and $d^S$ have not been defined yet, but the solution $\Psi_1^D$ depends on $d^D$ only by a vertical shift, and on $d^S$ by an additive constant.
The value of $g_0$ defined by \eqref{def-g0} does not depend on $d^D$ or $d^S$.

\medskip
The addition of $\varepsilon \Psi_1^D$ to $\Psi^D$ leads us naturally to decompose the parameter $g$ in the dynamic boundary condition on $\mathcal S$ as 
\begin{align}
\label{decomp-g}
g = \varepsilon g_0 + g_1
\end{align}
where $g_0$ is defined by \eqref{def-g0}. We will later solve for $g_1$.

\medskip

The corrected stream function on the disk is given by $\Psi^D(x)+\varepsilon\Psi_1^D(x)$, where $\Psi^D$ is defined in \eqref{PsiD} and $\Psi_1^D$ in \eqref{eqPsi1D}. In the scaled domain we define
\begin{align}
\label{tildepsiD}
\tilde \psi^D(y) = \psi^D(y) + \Psi_1^D(\varepsilon y). 
\end{align}
Then from \eqref{ab1} and \eqref{expPsi1D} we get
\begin{align}
\nonumber
\tilde \psi^D(y)
&=y_2 - \frac{1}{4} \varepsilon\omega|y|^2 + \frac{1}{2} \omega d^D y_2 -\frac{d^D}{\varepsilon}-\frac{1}{4} \omega \frac{(d^D)^2}{\varepsilon} 
\\
\label{ab3}
& \quad + |\log\varepsilon| - \log \Bigl| y - \frac{P_D}{\varepsilon}\Bigr| + O\Bigl( \varepsilon |\log\varepsilon| \Bigl| y - \frac{P_D}{\varepsilon}\Bigr| \Bigr),
\end{align}
as $|y|\to \infty$, $y_2>0$.
From \eqref{ab2} and \eqref{ab3}, there is a good matching of the functions $\tilde \psi^H$ and $\tilde \psi^D$ in the region $1\ll|y|\ll\varepsilon^{-1}$, $y_2>0$ (including the $\log|y|$ and constant) if we choose  $d^D$ such that 
\begin{align*}
-\frac{1}{2} \omega \frac{R d^D}{\varepsilon}
-\frac{1}{4} \omega \frac{ (d^D)^2}{\varepsilon} + |\log\varepsilon| &= - \log(2).
\end{align*}
Since by assumption $\frac{1}{2} \omega R = 1$, this gives
\begin{align}
\label{dDeps}
d^D &= \varepsilon |\log\varepsilon| + \varepsilon \log(2)+ O(\varepsilon^2 |\log \varepsilon |^2) ,
\end{align}
as $\varepsilon\to0$.
From \eqref{ab2} and \eqref{ab3} and the choice of $d^D$ we get
\begin{align}
\label{diffpsiHpsiD}
|\tilde\psi^H(y)-\tilde\psi^D(y)| \leq C \varepsilon |\log\varepsilon| |y| + C \frac{|\log\varepsilon|}{|y|}.
\end{align}

The solution to \eqref{eqPsi1D} introduces a correction to the disk $\Omega^D$,
denoted by $\Omega^D_\varepsilon$, such that $\partial\Omega^D_\varepsilon$ is described by 
\begin{align}
\label{pOmegaDeps}
\partial\Omega^D_\varepsilon = \big\{\,  x -\nu_{\partial\Omega^D}(x) h_1^D(x) \, | \, x\in \partial\Omega^D \setminus\{P_D\}\,\big\},
\end{align}
(as in \eqref{defm-bdy}) where $\nu_{\partial\Omega^D}$ is the outward unit normal to $\partial \Omega^D$ and 
\[
h_1^D(x) = - \varepsilon \Psi_1^D(x) ,
\]
see \eqref{linear-rel-psi-h}.
Note that with this definition, $\Omega^D_\varepsilon$ is unbounded, but we will later cut off the unbounded part.

Let 
\begin{align}
\label{gD1}
g_D^\pm(y_1) = \frac{R+d^D}{\varepsilon} \pm \sqrt{\frac{R^2}{\varepsilon^2}-y_1^2} 
\end{align}
be the functions whose graph gives the upper/lower parts of the boundary of the dilated disk $\frac{1}{\varepsilon}\Omega^D$.
The domain $\frac{1}{\varepsilon}\Omega^D_\varepsilon$ has boundary parametrized by
\begin{align}
\label{bdyOmegaeps}
y + \nu_{\frac{1}{\varepsilon}\partial\Omega^D}(y) \Psi_1^D(\varepsilon y) , \quad y \in \frac{1}{\varepsilon} (\partial \Omega^D\setminus\{P_D\}),
\end{align}
where $\nu_{\frac{1}{\varepsilon}\partial\Omega^D}$ is the outer unit normal to $\frac{1}{\varepsilon}\partial\Omega^D$.

Using the implicit function theorem, we can write the part of the curve defined by \eqref{bdyOmegaeps}, which lies between $1 \leq |y_1| \leq \frac{R}{2\varepsilon}$,  $y_2 \leq C \varepsilon y_1^2$, as a graph 
\[
(y_1 , g_{\varepsilon,D}^-(y_1)) , \quad 1 \leq |y_1| \leq \frac{R}{2\varepsilon} ,
\]
where the function $ g_{\varepsilon,D}^-(y_1) $ is smooth.
More precisely, $g_{\varepsilon,D}^-(y_1)$ is computed from the relation
\begin{align*}
(\tilde y_1,g_{\varepsilon,D}^-(\tilde y_1))  = (y_1,g_D^-(y_1) ) + \nu_{\frac{1}{\varepsilon}\partial\Omega^D}(y_1) \Psi_1^D(\varepsilon y_1,\varepsilon g_D^-(y_1) )
\end{align*}
using that 
\begin{align*}
\nu_{\frac{1}{\varepsilon}\partial\Omega^D}(y_1) = \frac{ ( (g_D^-)'(y_1),-1) }{\sqrt{1+(g_D^-)'(y_1)^2}} .
\end{align*}
Equivalently,
\begin{align}
\label{gepsDm}
g_{\varepsilon,D}^-(\tilde y_1)
&= 
g_D^-(y_1)
- \frac{ 1 }{\sqrt{1+(g_D^-)'(y_1)^2}}  \Psi_1^D(\varepsilon y_1,\varepsilon g_D^-(y_1) ),
\\
\nonumber
\tilde y_1 &= y_1+\frac{  (g_D^-)'(y_1)}{\sqrt{1+(g_D^-)'(y_1)^2}}  \Psi_1^D(\varepsilon y_1,\varepsilon g_D^-(y_1) ) .
\end{align}
From this, $g_{\varepsilon,D}^-$ has the expansion
\begin{align}
\label{gepsDm2}
g_{\varepsilon,D}^-(y_1) = g_D^-(y_1) - \Psi_1^D(\varepsilon y_1,\varepsilon	g_D^-(y_1)) + O(\varepsilon^2 |\log\varepsilon| y_1^2 ).
\end{align}

\medskip

For the strip $\Omega^S$ we perform a similar procedure. 
We add a correction to $\psi^S$ by solving
\begin{align}
\label{eqPsi1S}
\left\{
\begin{aligned}
\Delta \Psi_1^S &= 0 && \text{in } \Omega^S = \{ \, -\A-d^S < x_2 < -d^S \, \}\\
\frac{\partial\Psi_1^S }{\partial \nu} - \omega \Psi_1^S &= \pi\delta_{P_S}  && \text{on } x_2=-d^S
\\
 \Psi_1^S &=0 && \text{on } x_2=-\A-d^S ,
\end{aligned}
\right.
\end{align}
with $P_S = (0,-d^S)$.
Again this problem and its solution depend on $d^S$, which has not been defined yet, but the dependence is only through a vertical shift.

\begin{lemma}
\label{lem:sol-strip2}
Let $0<\omega<1$.
Problem \eqref{eqPsi1S} has a solution,
which is even in $x_1$ and such that 
\begin{align}
\label{expPsi1S}
\Psi_1^S(x) = - \log(|x-P_S|) + C^S +  O(|x-P_S| |\log|x-P_S||),\quad \text{as }x\to P_S ,
\end{align}
where $C^S \in \R$ is a constant.
Moreover, for any $\mu>0$ satisfying \eqref{eq:mu}
and $n\in\N$ there is $C$ such that 
\begin{align*}
|D^k \Psi_1^S(x)| \leq C e^{-\mu|x|},\quad|x|>1 ,\ 0\leq k\leq n.
\end{align*}
The solution $\Psi_1^S$ can be extended to a harmonic function on $ \{ \, (x_1,x_2) \, |  \,  -\A-d^S<x_2<\A-d^S \, \}$ minus the half line $\{ (0,t) \, | \, t \geq -d^S\}$, and this extension satisfies that $\Psi_1^S(x) +  \log(|x-P_S|)$ has a continuous limit at each side of the half line  $\{ (0,t) \, | \, t \geq -d^S\}$.
\end{lemma}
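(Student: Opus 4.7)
The plan is to imitate the construction of Lemma~\ref{lem:sol-disc2}: split $\Psi_1^S$ into an explicit singular piece carrying the Dirac mass and a regular remainder that is solved by the strip theory of Section~\ref{sectFormal}. Fix a radially symmetric cutoff $\chi\in C_c^\infty(\R^2)$ with $\chi\equiv 1$ on $B_{\rho/2}(P_S)$ and $\chi\equiv 0$ outside $B_\rho(P_S)$, with $\rho$ small enough that $\supp\chi\subset\{x_2>-d^S-\A\}$. Radial symmetry forces $\partial_{x_2}\chi|_{x_2=-d^S}=0$, so a direct distributional calculation, using that $\partial_{x_2}(-\log|x-P_S|)$ tends to $\pi\delta_{P_S}$ as one approaches $x_2=-d^S$ from below, gives
\[
\partial_{x_2}\bigl(-\chi(x)\log|x-P_S|\bigr)\big|_{x_2=-d^S}=\pi\delta_{P_S},\qquad -\chi\log|x-P_S|\big|_{x_2=-d^S}=-\chi(x_1,-d^S)\log|x_1|.
\]
Writing $\Psi_1^S(x)=-\chi(x)\log|x-P_S|+\phi(x)$ reduces the problem to
\[
\left\{
\begin{aligned}
-\Delta\phi&=\Delta\bigl(\chi\log|x-P_S|\bigr)&&\text{in }\Omega^S,\\
\partial_\nu\phi-\omega\phi&=-\omega\,\chi(x_1,-d^S)\log|x_1|&&\text{on }x_2=-d^S,\\
\phi&=0&&\text{on }x_2=-d^S-\A,
\end{aligned}\right.
\]
whose interior right-hand side is smooth and compactly supported and whose Robin datum lies in $L^p_{\mathrm{loc}}$ for every $p<\infty$ and is compactly supported in $x_1$.

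To solve this reduced problem I would combine the variational formulation with the barrier \eqref{superstrip}. The bilinear form $(u,v)\mapsto\int_{\Omega^S}\nabla u\cdot\nabla v-\omega\int_{\{x_2=-d^S\}}uv$ is coercive on $\{u\in H^1(\Omega^S):u=0\text{ on }x_2=-d^S-\A\}$ under the hypothesis $\omega<1$ (and the data define a continuous linear functional on this space), so Lax--Milgram yields a weak solution $\phi\in H^1(\Omega^S)$. Symmetrizing $\phi$ in $x_1$ preserves the equations. Outside the support of the right-hand sides the problem is homogeneous, and comparison against a multiple of $\bar u$ from \eqref{superstrip} with any admissible $\mu>0$ satisfying \eqref{eq:mu} gives $|\phi|\le Ce^{-\mu|x_1|}$; standard interior Schauder and boundary regularity then bootstrap this to the estimates $|D^k\phi(x)|\le Ce^{-\mu|x|}$ for every $k$.

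The expansion \eqref{expPsi1S} follows from a boundary-regularity analysis of $\phi$ at $P_S$. Since the Robin datum is locally in $L^p$ for every $p<\infty$, elliptic regularity at a smooth boundary point gives $\phi\in C^{0,\alpha}$ near $P_S$, so we may define $C^S:=\phi(P_S)$. To capture the sharper remainder $O(|x-P_S||\log|x-P_S||)$, I would flatten the boundary and use a local Poisson representation for the half-plane Robin problem at $P_S$: the contribution of the smooth part of the data is Lipschitz at $P_S$, while the $\chi(x_1,-d^S)\log|x_1|$ piece integrates against the Poisson/Robin kernel to produce precisely a remainder of the asserted $r\log r$ order. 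Combined with the identity $\Psi_1^S=-\log|x-P_S|+\phi$ on $B_{\rho/2}(P_S)$, this yields \eqref{expPsi1S}.

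Finally, the harmonic extension is obtained by real-analyticity. Away from $P_S$ the boundary data are real-analytic, so applying Cauchy--Kovalevskaya to the real-analytic Cauchy pair $(\Psi_1^S,\partial_{x_2}\Psi_1^S)$ on $\{x_2=-d^S\}\setminus\{P_S\}$, $\Psi_1^S$ continues harmonically across the top of the strip, giving an extension into the larger strip $\{-\A-d^S<x_2<\A-d^S\}$ off the singular point. The only obstruction to single-valuedness is the multi-valuedness of $\log(x-P_S)$ inherent in the singular part; cutting along the ray $\{(0,t):t\ge-d^S\}$ makes a branch of $\log(x-P_S)$ single-valued on the slit domain, and consequently $\Psi_1^S+\log|x-P_S|$ extends continuously up to either side of the cut. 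The delicate step of the whole proof is the boundary expansion of the previous paragraph; once the logarithmic part has been isolated, everything else---existence, decay, evenness, and analytic continuation---follows from the standard Robin theory of Section~\ref{sectFormal} and Cauchy--Kovalevskaya.
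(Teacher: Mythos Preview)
Your construction of $\Psi_1^S$ via the splitting $\Psi_1^S=-\chi\log|x-P_S|+\phi$, together with Lax--Milgram for existence and the barrier \eqref{superstrip} for decay, is correct and essentially parallel to the paper's argument. The paper inserts one extra layer (a half-plane Neumann solve absorbing the $\omega\log$ term so that the final remainder carries continuous rather than logarithmic boundary data), but this is only a technical convenience and your $L^p$ boundary-regularity route is a legitimate alternative.

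The genuine gap is in the analytic continuation. Cauchy--Kovalevskaya with Cauchy data $(\Psi_1^S,\partial_{x_2}\Psi_1^S)$ on $\{x_2=-d^S\}\setminus\{P_S\}$ yields only a \emph{local} harmonic extension, whose width in $x_2$ is controlled by the analyticity radius of the data in $x_1$. Since those data are singular at $x_1=0$, this radius is of order $|x_1|$, so near the ray you recover only a wedge-shaped region, not the full doubled strip $\{-A-d^S<x_2<A-d^S\}$ asserted in the lemma. The paper obtains the global extension differently: it writes $u=\Re f$ with $f$ holomorphic in the original strip, observes that the homogeneous Robin condition away from $P_S$ forces a first-order holomorphic combination of $f$ and $f'$ to be real on the line $x_2=-d^S$, and then applies the Schwarz reflection principle separately on $\{x_1>0\}$ and on $\{x_1<0\}$. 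This reflects each half-strip globally in one stroke and recovers $f$ (hence $u$) on the doubled strip minus the ray. Your CK step would have to be replaced or supplemented by exactly this reflection argument to reach the stated conclusion. A related imprecision in your last paragraph: $\log|x-P_S|$ is single-valued, so the ``branch cut'' discussion conflates the complex logarithm with its modulus; what is actually needed is that the reflected $f$ is holomorphic on each slit half-strip and that $\Re f+\log|x-P_S|$ stays bounded up to the slit from either side.
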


The proof is in Appendix~\ref{sect-p2}.

Define
\begin{align*}
\psi_1^S(y) = \Psi_1^S(\varepsilon y)
\end{align*}
and
\begin{align}
\label{tildePsiS}
\tilde \psi^S = \psi^S + \psi_1^S. 
\end{align}
From \eqref{psiS}  and \eqref{expPsi1S} we get
\begin{align}
\nonumber
\tilde \psi^S(y)
& = - \frac{1}{2}\varepsilon\omega ( y_2 + \varepsilon^{-1} d^S)^2- ( y_2 + \varepsilon^{-1} d^S)  
\\
\label{ab4}
& \quad + |\log\varepsilon| + C^S - \log \Bigl| y - \frac{P_S}{\varepsilon}\Bigr| 
+O\Bigl( \varepsilon |\log\varepsilon| \Bigl| y - \frac{P_S}{\varepsilon}\Bigr| \Bigr),
\end{align}
as $|y|\to \infty$, $y_2<0$. The definition \eqref{tildePsiH} implies that 
\begin{align}
\label{ab2b}
\tilde \psi^H(y)  = -y_2-\log(2|y|) + O\Bigl( \frac{\log |y|}{|y|}\Bigr) - \frac{1}{2} \varepsilon \omega  y_2^2  , \quad \text{as }|y|\to\infty , \ y_2<0.
\end{align}
From \eqref{ab2b} and \eqref{ab4}, there is a good matching of the functions $\tilde \psi^H$ and $\tilde \psi^S$ in the region $1\ll|y|\ll\varepsilon^{-1}$, $y_2<0$ if we choose $d^S$ such that 
\begin{align}
\label{dSeps}
-\frac{1}{2} \omega \frac{(d^S)^2}{\varepsilon}
-  \frac{d^S}{\varepsilon}
+ |\log\varepsilon| + C^S + \log 2 =0
\end{align}
which gives
\begin{align*}
d^S = \varepsilon|\log\varepsilon| + \varepsilon  (C^S+\log(2))  + O(\varepsilon^2 |\log(\varepsilon)|^2) .
\end{align*}

As for the disk, the solution $\Psi_1^S$ to  \eqref{eqPsi1S} introduces a correction $\Omega_\varepsilon^S$ to the strip $\Omega^S$ such that
\begin{align}
\label{pOmegaSeps}
\partial\Omega^S_\varepsilon = \mathcal{S}^S_\varepsilon \cup \mathcal{B}
\end{align}
where
\begin{align}
\label{Seps}
\mathcal{S}^S_\varepsilon = \{\,  x -\nu_{\Omega^S}  h_1^S(x) \, | \, x_2 = -d^S, x_1\not=0\,
\},
\end{align}
(as in \eqref{defm-bdy}) where $\nu_{\Omega^S}$ is the outward unit normal and 
\begin{align}
\label{h1S}
h_1^S(x_1) = - \varepsilon \Psi_1^S(x_1,-d^S) ,
\end{align}
see \eqref{linear-rel-psi-h}. The lower boundary $\mathcal{B}$ is the same one of the strip $\Omega^S$ \eqref{pOmegaS}.
More explicitly, the scaled upper boundary $\frac{1}{\varepsilon}\mathcal{S}^S_\varepsilon$ is the graph of the function
\begin{align}
\label{gepsSpl}
g_{\varepsilon,S}^+(y_1) = \Psi_1^S(\varepsilon y_1,-d^S) , \quad y_1\not=0.
\end{align}
Note that with this definition $\Omega^S_\varepsilon$ is unbounded, but we will later cut off the unbounded part. 

Let $\chi_0$ be a radial cut-off function in $\R^2$ such that 
\begin{align}
\label{chi0}
\chi_0(x) = 1 \quad |x| \leq 1,
\quad
\chi_0(x) = 0 \quad |x| \geq 2,
\end{align}
and let $\eta_1^+$ and $ \eta_1^-$ be defined as in \eqref{eta1pm}, \eqref{eta0}.
We define 
\begin{align}
\nonumber
\psi_0(y) 
&= \tilde\psi^H(y) \chi_0(\varepsilon^b y) 
+ \tilde\psi^D(y) [1-\chi_0(\varepsilon^b y)] \eta_1^+(y_2)
\\
\label{psi0}
& \quad + \tilde\psi^S(y) [1-\chi_0(\varepsilon^b y)] \eta_1^-(y_2).
\end{align}

\medskip

Finally we define $\Omega_0$.
To do this we make an approximation of the level set $\tilde \psi^H(y)=0$.
First we write $\partial \Omega^H$ as the graph of functions:
\[
y_2 = g_H^\pm(y_1), \quad |y_1|\geq \frac{\pi}{2}-1,
\]
where
\begin{align}
\label{bdyHairpin}
g_H^\pm(y_1) 
& = \pm \mathop{\textrm{arcosh}}\Bigl(y_1-\frac{\pi}{2}\Bigr)
= \pm \log\Bigl( y_1-\frac{\pi}{2}
+\sqrt{\bigl(y_1-\frac{\pi}{2}\bigr)^2-1} \Bigr).
\end{align}
We define
\begin{align}
\label{tgplus}
\tilde g_H^+(y_1) = g_H^+(y_1) + \frac{1}{4} \varepsilon\omega y_1^2 (1-\eta_0(|y_1|-10)),
\end{align}
where $\eta_0$ is the cut-off function defined in \eqref{eta0}.

The motivation for this approximation of the level set $\{ \tilde \psi^H=0 \}$, is that 
\[
\partial_{y_2} \psi^H(y_1,y_2) \approx 1
\]
in the upper half of the hairpin (see by Lemma~\ref{lemma:estGradPsiH}). Then, if we try to solve for $t $ in $\tilde \psi^H(y_1,t) = 0$ and expand
\begin{align*}
\tilde \psi^H(y_1,t) 
& \approx
\psi^H(y_1,g_H^+(y_1)) 
+ t-g_H^+(y_1)
- \frac{1}{4} \varepsilon \omega  \bigl( y_1^2+t^2 \bigr) ,
\end{align*}
we find that $t = g_H^+(y_1)+ \frac{1}{4} \varepsilon \omega y_1^2 $ at main order.

With the lower part of $\partial\Omega^H$ there is no need to do a modification. So we can define an $\varepsilon$-modification of $\Omega^H$ as the domain $\Omega^H_\varepsilon$ such that its boundary is given by
\begin{align}
\label{OmegaHeps}
\partial\Omega^H_\varepsilon = 
\Bigl\{ \, \big(y_1,\tilde g_H^+(y_1)\big) \, \Big| \, |y_1|\geq\frac{\pi}{2}-1 \,\Bigr\}\cup \Bigl\{ \, \big(y_1,g_H^-(y_1)\big) \, \Big| \, |y_1|\geq\frac{\pi}{2}-1 \,\Bigr\}.
\end{align}

\medskip

The function \eqref{tgplus} is well defined for all $|y_1|\geq \frac{\pi}{2}-1$.
Let $\frac{1}{2}<b<1$. In the region $\varepsilon^{-b}<|y_1|<2\varepsilon^{-b}$ and $y_2>0$ we transition from $g_H^+$ to $g_{\varepsilon,D}^-$ \eqref{gepsDm} using smooth cut-offs.
Let $\eta_0(s)$ be the cut-off described in \eqref{eta0}.
Define 
\begin{align}
\label{upperbdy}
g^+(y_1)=\tilde g_H^+(y_1) \eta_0(\varepsilon^b y_1) + g_{\varepsilon,D}^-(y_1) (1- \eta_0(\varepsilon^b y_1)),
\quad \frac{\pi}{2}-1\leq |y_1|\leq \frac{R}{2\varepsilon},
\end{align}
with $\tilde g_H^+$ the function defined in \eqref{tgplus}.
Similarly, in the lower part of the hairpin we define a transition from $g_H^-$ to  $g_{\varepsilon,S}^+$ \eqref{gepsSpl}, by 
\begin{align}
\label{defgm}
g^-(y_1) = g_H^-(y_1) \eta_0(\varepsilon^b y_1)  +g_{\varepsilon,S}^+(y_1)  (1- \eta_0(\varepsilon^b y_1)),
\quad \frac{\pi}{2}-1\leq |y_1|\leq \frac{1}{\varepsilon}.
\end{align}

Let $0<\delta<\min(1,\frac{R}{2})$ be a fixed constant.
We define $\Omega_0$ so that inside the ball $B_{\frac{\delta}{2\varepsilon}}(0)$ the boundary of $\Omega_0$ is given by the two curves $y_2 = g^\pm(y_1)$ in
\eqref{upperbdy}, \eqref{defgm}.
In the region of the upper half plane outside $B_{\frac{\delta}{2\varepsilon}}(0)$, we let $\Omega_0$ be the part corresponding to $\frac{1}{\varepsilon} \Omega^D_\varepsilon$, and in the region of the lower half plane outside $B_{\frac{\delta}{2\varepsilon}}(0)$, we let $\Omega_0$ be the part corresponding to $\frac{1}{\varepsilon}\Omega^S_\varepsilon$. Figure~\ref{fig:Omega0} shows a representation of $\Omega_0$.

\begin{figure}
\centering
\includegraphics[scale=1,page=1]{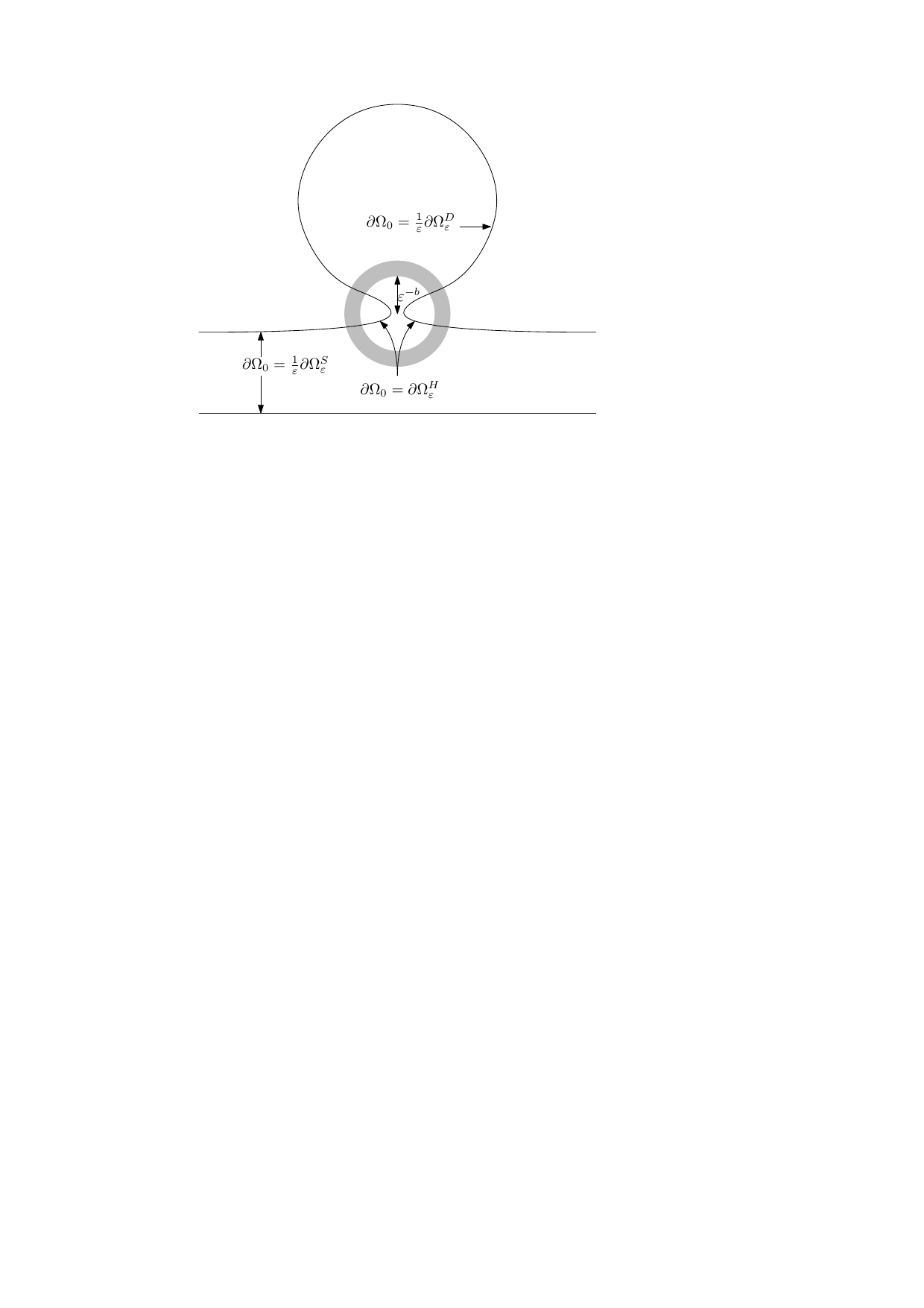}
\caption{Approximate domain $\Omega_0$.
The grey annulus represents the region of transition from the modified hairpin $\Omega^H_\varepsilon$ \eqref{OmegaHeps} to either the modified scaled disk $\frac{1}{\varepsilon}\Omega^D_\varepsilon$ \eqref{pOmegaDeps} in the upper half or the modified scaled strip $\frac{1}{\varepsilon}\Omega^S_\varepsilon$ \eqref{pOmegaSeps} in the lower half.}
\label{fig:Omega0}
\end{figure}

\begin{remark}
\label{rem:pacard}
In \cite{mazzeo-pacard-pollack} two nondegenerate, orientable, immersed,
compact constant mean curvature surfaces $\Sigma_1$, $\Sigma_2$, with nonempty boundary, such that they touch a certain point $P$, are desingularized into a new CMC by replacing the touching point with a catenoid. The approximation involves solving the Jacobi operator on each surface equal to $\varepsilon \delta_P$. This solution, which has a $\log$ singularity that matches the shape of the catenoid, is used to deform each surface in the normal direction. This procedure is comparable to solving \eqref{eqPsi1D} and \eqref{eqPsi1S}, whose solutions enter as a corrections of the main term of the outer stream functions and define deformations of the disk and strip that involve a $\log$ singularity. In our case, the solvability condition of the linearized problem on the disk requires a first adjustment of the parameter $g$, while in \cite{mazzeo-pacard-pollack} this phenomenon is not present by nondegeneracy.
\end{remark}

\section{Setting up the problem}
\label{sect:operators}

In this section we describe a scheme that converts the overdetermined problem
\begin{align}
    \label{overd0}
    \left\{
        \begin{alignedat}{2}
        -\Delta_y \psi &= \varepsilon \omega &\quad& \text{in }\Omega
        \\
        \psi &= 0 &\quad& \text{on } \mathcal{S}
        \\
        \frac{1}{2} |\nabla_y \psi|^2 + g  ( \varepsilon y_2 + d^S) &= \frac{1}{2} &\quad& \text{on } \mathcal{S} 
        \\
        \psi &= \frac{\beta}{\varepsilon} &\quad& \text{on } \mathcal{B}, 
        \end{alignedat}
    \right.
\end{align}
for $(\Omega, \psi)$ close to the approximate solution $(\Omega_0, \psi_0)$ constructed in Section~\ref{sec:approx}, into the nonlinear elliptic equation
\begin{align}
\label{main-problem0}
\left\{
\begin{aligned}
\Delta_y u + Q_1[u] &=0 \quad \text{in }\Omega_0
\\
\frac{\partial u}{\partial \nu} + (\kappa - \varepsilon\omega) u + E(g) + Q_2[u,g] &= 0 
\quad \text{on } \mathcal S_0
\\
u &= 0 \quad \text{on } \mathcal B_0
\end{aligned}
\right.
\end{align}
in the unknowns $u$, $g$.
The main result of this section is that solving \eqref{main-problem0} yields a solution to the overdetermined problem \eqref{overd0}. In equation \eqref{main-problem0}, $E(g)$ is a function independent of $u$ (defined in \eqref{def:E1}), and $Q_1$ and $Q_2$ are nonlinear operators defined below in \eqref{newQ1} and \eqref{newQ2}, respectively. In the boundary condition, $\kappa$ represents the curvature of the boundary with an appropriate sign convention. Additionally, there exists a link between the solution $u$ and the solution $(\Omega, \psi)$ of \eqref{overd0}, as described in \eqref{Omega-psi}.

\begin{proposition}
\label{prop:change}
If \eqref{main-problem0} has a solution $u$ such that the condition \eqref{diffeo} below is satisfied, then $\Omega$, $\psi$ defined by \eqref{Omega-psi} below is a solution to \eqref{overd0}.
\end{proposition}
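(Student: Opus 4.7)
The plan is a direct verification: starting from the definitions of $(\Omega,\psi)$ in \eqref{Omega-psi} and of the operators $Q_1$, $Q_2$, $E(g)$ in \eqref{newQ1}, \eqref{newQ2}, \eqref{def:E1}, I would check that each of the four equations of \eqref{overd0} is equivalent to one of the equations of \eqref{main-problem0}. Following the formal linearization of Section~\ref{sectFormal}, the underlying geometric picture is: parametrize $\mathcal{S}$ as a normal graph over $\mathcal{S}_0$ with displacement $h$ determined by $u$ via the analogue of \eqref{linear-rel-psi-h}, and define $\psi$ on the resulting domain $\Omega$ by pulling back $\psi_0 + u$ through a diffeomorphism $\Phi\colon \Omega_0\to \Omega$ that equals the identity on $\mathcal{B}_0$ and sends $\mathcal{S}_0$ onto $\mathcal{S}$. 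Assumption \eqref{diffeo} is precisely the non-degeneracy condition that makes $\Phi$ a genuine diffeomorphism, so that $(\Omega,\psi)$ is well defined.

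With the set-up in place, three of the four identities are routine. The bed condition $\psi = \beta/\varepsilon$ on $\mathcal{B}$ follows immediately from the fact that $\Phi$ equals the identity near $\mathcal{B}_0$, from $\psi_0 = \beta/\varepsilon$ on $\mathcal{B}_0$, and from the Dirichlet condition $u=0$ on $\mathcal{B}_0$ in \eqref{main-problem0}. The condition $\psi = 0$ on $\mathcal{S}$ is true by construction, because $h$ is chosen so that $(\psi_0 + u)\circ \Phi^{-1}$ vanishes on $\Phi(\mathcal{S}_0) = \mathcal{S}$. The Poisson equation $-\Delta_y \psi = \varepsilon\omega$ on $\Omega$ pulls back, through $\Phi$, to an equation on $\Omega_0$ of the form $\Delta_y(\psi_0+u) = -\varepsilon\omega$ plus a quasilinear remainder produced by the non-trivial geometry of $\Phi$ near $\mathcal{S}_0$; the operator $Q_1$ in \eqref{newQ1} is designed to absorb both this remainder and the interior error $\Delta_y \psi_0 + \varepsilon\omega$ of the approximate solution (to be controlled in Section~\ref{sect:error}), so that the Poisson equation in $\Omega$ is literally equivalent to $\Delta u + Q_1[u] = 0$ in $\Omega_0$.

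The delicate piece is the dynamic boundary condition $\tfrac{1}{2}|\nabla_y \psi|^2 + g(\varepsilon y_2 + d^S) = \tfrac{1}{2}$ on $\mathcal{S}$. The plan is to Taylor-expand this identity in the outward normal coordinate around $\mathcal{S}_0$ and substitute the algebraic identities \eqref{psi0pm}--\eqref{psi0nunu} applied to $\psi_0$; exactly as in the derivation of \eqref{robin3a} with $\omega_0 = \varepsilon\omega$ and $\nabla\psi_0 \cdot \nu = -1$ along $\mathcal{S}_0$, the part linear in $u$ collapses to $\partial_\nu u + (\kappa - \varepsilon\omega)u$. The remaining $u$-independent terms, namely the boundary error of $\psi_0$ together with the gravity contribution carried by $g$, are collected into $E(g)$, and the genuinely super-linear leftovers into $Q_2[u,g]$.

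The main obstacle in this verification is the careful bookkeeping of signs, of the change-of-variable Jacobian, and of the Taylor-expansion coefficients — in particular, ensuring that the orientation of $\nu$, the sign choice in \eqref{linear-rel-psi-h}, and the explicit formula relating $h$ to $u$ are mutually consistent so that the resulting Robin-type identity reproduces the boundary equation of \eqref{main-problem0} without stray factors. Once this bookkeeping is pinned down, each equation of \eqref{overd0} becomes literally equivalent to one of the equations of \eqref{main-problem0}, and the proposition follows.
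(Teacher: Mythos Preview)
Your high-level plan—verify each equation of \eqref{overd0} from the corresponding piece of \eqref{main-problem0}—is exactly what Section~\ref{sect:operators} does, and the paper's proof is indeed a one-line appeal to those computations. But you have misread the definition \eqref{Omega-psi}, and this makes the verification you describe break down.

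In \eqref{Omega-psi} the stream function is $\psi = \psi_0 + \Psi[h]$ on $\Omega_h$, where $\Psi[h]$ is the solution of the auxiliary Dirichlet problem \eqref{eqPsi}; it is \emph{not} obtained by pushing $\psi_0+u$ forward through a diffeomorphism. With this definition the Poisson equation, the condition $\psi=0$ on $\mathcal{S}_h$, and the bed condition are all automatic from \eqref{eqPsi}. The link between $\psi$ and $u$ is the affine change of unknown \eqref{tildePsi1}, namely $u = \Psi[h]\circ F_h - \Psi[0] + \nabla_y\Psi[0]\cdot\bigl((\nu\eta h)\circ X_0^{-1}\bigr)$, which carries the extra pieces $\Psi[0]$ and $\nabla\Psi[0]$ that your pullback picture omits. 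The interior equation $\Delta u + Q_1[u]=0$, the condition $u=0$ on $\mathcal{B}_0$, and the relation $h=\mathbf{h}(u)$ built into \eqref{def:a} (which is \eqref{bcD}) together force the given $u$ to coincide with $\tilde\Psi_1[h]$; once that holds, the Robin boundary condition becomes the dynamic boundary condition via \eqref{bcN}.

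If instead you set $\psi=(\psi_0+u)\circ F_h^{-1}$ as you propose, then $\psi=0$ on $\mathcal{S}_h$ reads $\psi_0+u=0$ on $\mathcal{S}_0$, a constraint that is independent of $h$ and not among the equations of \eqref{main-problem0}; and pulling back the Poisson equation produces a nonlinear remainder different from the $Q_1$ of \eqref{Q1}, which explicitly contains terms built from $\tilde\psi_0[h]$, $\tilde\Psi[0]$, and $\nabla\Psi[0]\cdot\nu\eta h$. So the bookkeeping you anticipate cannot be made to match the operators actually defined in \eqref{newQ1}--\eqref{newQ2}.
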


Regarding the solvability of \eqref{main-problem0}, the smallness of the error $E$ depends on how close $(\Omega_0, \psi_0)$ is to solving the problem. The terms $Q_1$ and $Q_2$ contain either quadratic or small linear operators of $u$. Thus, if $E$ is small, \eqref{main-problem0} can be treated as a perturbation of the linearized problem \eqref{robin3a} around $(\Omega_0, \psi_0)$ (where, for simplicity, we did not include a boundary component with Dirichlet boundary condition). The condition \eqref{diffeo} says that a change of variables defined in terms of $u$ is a diffeomorphism and it holds if $u$ is sufficiently small. Thus, it can be checked after  \eqref{main-problem0} has been solved.

\medskip

The rest of the section is devoted to introduce some notation and give a proof of Proposition~\ref{prop:change}.

Let $\gamma\colon \R \to \R^2$ be an arc-length parametrization
of  $\mathcal{S}_0$ and let us write $\nu(s)$ the exterior unit normal vector to $\partial\Omega_0$ at $\gamma(s)$ and $T(s)= \gamma'(s)$. 
As in Section~\ref{sectFormal},
we choose orientations so that 
\[
\nu = T^\perp  ,
\]
where $(a,b)^\perp = (b,-a)$ and define the curvature of $\kappa$ of $\mathcal{S}_0$ as 
\begin{align*}
\frac{d T}{ds}= - \kappa \nu.
\end{align*}

As in the discussion of the linearization of \eqref{overd0} in Section~\ref{sectFormal}, given a small differentiable function $h\colon \mathcal{S}_0\to\R$, we would like to construct a domain $\Omega_h$ close to $\Omega_0$ such that its upper boundary $\mathcal{S}_h$ is parametrized by  $ \gamma(s) -  h(\gamma(s)) \nu(s)$. This leads us to consider coordinates $(s,t)$ given by 
\[
\gamma(s) - ( t + h(s) )  \nu(s).
\]
We would like to cut-off the effect of $h$ in these coordinates when we are sufficiently far from $\mathcal{S}_h$, as described by a distance function $\rho(s)$, where $s$ is the arc-length parameter of $\mathcal{S}_0$. In constructing this function it is convenient to notice that 
\begin{align}
\label{est-kappa}
|\kappa(y)|\leq C\Bigl( \frac{1}{1+|y|^2}+ \varepsilon\Bigr),\quad y\in\partial\Omega_0.
\end{align}
Considering this estimate for the curvature, we let $\rho \in C^\infty(\R)$ be such that
\begin{align}
\nonumber
\left\{
\begin{aligned}
&\text{if $|\gamma(s)|\geq \varepsilon^{-\frac{1}{2}}$, $\rho(s)=\varepsilon^{-1} d_0$}
\\
&\text{if $|\gamma(s)|\leq \varepsilon^{-\frac{1}{2}}$, then}
\\
& \qquad c_1  (|\gamma(s)|+1)^{2} \leq \rho(s) \leq c_2  (|\gamma(s)|+1)^{2}
\\
&|\rho^{(k)}(s)| \leq C  (|\gamma(s)|+1)^{2-k},
\quad k=1,2,
\\
&|\rho^{(3)}(s)| \leq C \sqrt{\varepsilon},
\end{aligned}
\right.
\end{align}
where $d_0>0$ is a small constant so that $d_0<\frac{1}{10 R}$, $d_0<\frac{\A}{10}$ and $0<c_1<c_2$ are again small constants.
With this definition, $\rho(s)^{-1}$ follows the bound \eqref{est-kappa} for $\kappa$.
More precisely, given any $\bar\delta>0$, by adjusting the constants $c_1$, $c_2$, $d_0$, we achieve
\begin{align}
\label{kappa-d}
|\kappa(s)|\, \rho(s)\leq \bar\delta.
\end{align}

Let $\eta$ be defined by 
\begin{align}
\label{est-eta}
\eta(s,t) = \eta_0\Big( \frac{t}{\rho(s)} \Bigr) ,
\end{align}
where $\eta_0$ is as in \eqref{eta0}.
We define $h$-dependent $(s,t)$ coordinates by
\begin{align}
\label{Xh}
X_h(s,t ) = \gamma(s) - ( t + h(s) \eta(s,t) )  \nu(s).
\end{align}
Next, we define $\tilde \rho\in C^\infty(\R)$ such that 
\begin{align}
\label{properties-tilded}
\left\{
\begin{aligned}
&\text{if $|s|\geq \varepsilon^{-1}$, $\tilde \rho(s)=\varepsilon^{-1} d_0$}
\\
&\text{if $|s|\leq \varepsilon^{-1}$, then}
\\
& \qquad c_1  (\log(|\gamma(s)|+2) + \varepsilon|\gamma(s)|^2)
 \leq \tilde \rho(s) \leq c_2  (\log(|\gamma(s)|+2) + \varepsilon|\gamma(s)|^2)
\end{aligned}
\right.
\end{align}
where $d_0>0$ and $0<c_1<c_2$ are constants.
This function follows the separation of the upper and lower parts of the boundary of $\Omega_0$ (Section~\ref{sec:approx}).

By direct computation we get the following result.
\begin{lemma}
\label{lem:injective}
By taking the constants $c_1$, $c_2$ and $d_0$ small, the function $X_0$ is injective on the set 
$\{(s,t)\,|\, -5 \tilde \rho(s)\leq t\leq 5 \rho(s)\}$.
\end{lemma}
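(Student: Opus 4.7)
My plan is to prove injectivity in two stages: first show $X_0$ is a local diffeomorphism on the set in question by computing its Jacobian, and then rule out long-range collisions using the smallness of the allowed normal reach relative to the domain geometry.

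First I would compute the Jacobian directly. Using $\nu = T^\perp$ and the Frenet-type relations $T'=-\kappa\nu$, $\nu'=\kappa T$ (which follow from $(a,b)^{\perp\perp}=-(a,b)$), we get
\[
\partial_s X_0(s,t) = (1 - t\kappa(s))\,T(s), \qquad \partial_t X_0(s,t) = -\nu(s),
\]
so $\det DX_0 = \pm(1 - t\kappa(s))$. Next I would bound $|t\kappa(s)|$ uniformly. On the inward side $0 \le t \le 5\rho(s)$, estimate \eqref{kappa-d} yields $|t\kappa(s)| \le 5\bar\delta$. On the outward side $-5\tilde\rho(s)\le t\le 0$, combining the curvature bound \eqref{est-kappa} with the definition \eqref{properties-tilded} gives, for $|\gamma(s)|\le\varepsilon^{-1}$,
\[
|\kappa(s)|\,\tilde\rho(s) \le C\!\left(\tfrac{1}{1+|\gamma(s)|^2}+\varepsilon\right)\!\bigl(\log(|\gamma(s)|+2)+\varepsilon|\gamma(s)|^2\bigr),
\]
which is bounded by a constant times $(c_2+d_0+\varepsilon)$ and thus arbitrarily small if $c_1,c_2,d_0$ are chosen small; the regime $|\gamma(s)|\ge\varepsilon^{-1/2}$ is easier since $|\kappa|\le C\varepsilon$ and $\tilde\rho\le d_0/\varepsilon$, giving $|\kappa|\tilde\rho\le Cd_0$. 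Fixing $\bar\delta<\tfrac1{10}$, we conclude $|1-t\kappa(s)|\ge\tfrac12$ throughout the set, so $X_0$ is a local diffeomorphism there.

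For global injectivity, suppose $X_0(s_1,t_1)=X_0(s_2,t_2)$. Then $|\gamma(s_1)-\gamma(s_2)|\le |t_1|+|t_2|\le 5\bigl(\max(\rho,\tilde\rho)(s_1)+\max(\rho,\tilde\rho)(s_2)\bigr)$. I would split into cases based on the structure of $\mathcal{S}_0$ described in Section~\ref{sec:approx}: in the asymptotic strip regime ($|\gamma(s_i)|\gtrsim\varepsilon^{-1/2}$ with $\gamma(s_i)$ on the strip-top portion of $\mathcal{S}_0$), $\mathcal{S}_0$ is a nearly horizontal graph and its normal foliation is injective for $|t|\lesssim \varepsilon^{-1}$ by an explicit calculation; in the disk regime, $\mathcal{S}_0$ is a small perturbation of a circle of radius $R/\varepsilon$ and the normals are injective up to a fraction of this radius, which is the content of $5\rho(s)\le 5c_2/\varepsilon^2\cdot(R/\varepsilon)^2\cdot\ldots < R/\varepsilon$; in the neck/hairpin regime $|\gamma(s)|\lesssim\varepsilon^{-b}$, the two branches $y_2=g^+(y_1)$ and $y_2=g^-(y_1)$ are separated by a vertical distance comparable to $\tilde\rho$, while the inward reach $5\rho\le 5c_2(|\gamma|+1)^2$ is much smaller provided $c_2$ is small, so a normal from one branch cannot reach the other. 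Finally, mixed cases where $\gamma(s_1)$ and $\gamma(s_2)$ lie in different regions of $\mathcal{S}_0$ are immediate, since their Euclidean distance is then at least $\Omega(\varepsilon^{-b})$, far exceeding $5(\rho+\tilde\rho)$.

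The main obstacle will be the hairpin throat in the third subcase: here $\rho\sim 1$ and the two sides of the hairpin come within $O(1)$ of each other, so the argument hinges on quantitatively choosing $c_2$ small enough (relative to the throat width $\pi+2$ determined by $\Omega^H$) to keep the inward normals from each point of $\mathcal{S}_0$ strictly inside its own half of the hairpin. Everything else reduces to routine manipulations of the explicit graphs $g^\pm$, $g^\pm_H$, $g^\pm_{\varepsilon,D}$, $g^+_{\varepsilon,S}$ introduced in Section~\ref{sec:approx}.
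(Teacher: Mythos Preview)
The paper offers no proof beyond the sentence ``By direct computation we get the following result'', so your plan already supplies far more than the paper does; the two-stage strategy of bounding $|1-t\kappa(s)|\ge\tfrac12$ for local injectivity and then ruling out long-range collisions region by region is exactly the natural way to carry out such a computation, and it works.

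Two details in your case analysis should be corrected. First, in the disk regime $|\gamma(s)|\gtrsim\varepsilon^{-1/2}$ the definition of $\rho$ gives simply $\rho(s)=d_0/\varepsilon$, not the $c_2$-formula you wrote; the required check is $5d_0/\varepsilon<R/\varepsilon$, which follows from the standing assumption $d_0<\tfrac{R}{10}$. Second, in the hairpin subcase you have the roles of $\rho$ and $\tilde\rho$ reversed. The graphs $y_2=g^+(y_1)$ and $y_2=g^-(y_1)$ on the same side of the neck are separated by \emph{air}, so it is the \emph{outward} reach $5\tilde\rho$ (the range $t<0$) that must be compared to the vertical gap $g^+-g^-$; since $\tilde\rho$ was built precisely to track this gap with prefactor $c_2$, one gets $10\tilde\rho<g^+-g^-$ once $c_2$ is small. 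The \emph{inward} reach $5\rho$ is not bounded by that vertical gap (indeed $\rho\sim c_2(|\gamma|+1)^2$ can be much larger), but this is harmless because inward normals from $g^+$ and $g^-$ point away from each other into the water. The inward reach matters only at the throat, where the left and right components of $\partial\Omega^H$ are distance $\pi+2$ apart through the water and $\rho\sim c_2(\tfrac\pi2+2)^2$, giving $10\rho<\pi+2$ for small $c_2$---which is the scenario you correctly flag in your final paragraph.
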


Consider the open neighborhood 
\begin{align}
\nonumber
U = \{ \, X_0(s,t) \, | \,-2 \tilde \rho(s)< t< 2 \rho(s) \,\}
\end{align}
of $\mathcal{S}_0$.
Then $X_0^{-1}$ is well defined on $U$ and this allows us to define
\begin{align}
\label{def:Fh}
F_h(y) = X_h \circ X_0^{-1}(y),\quad y\in U.
\end{align}
Note that $X_h \circ X_0^{-1}(y)=y$ for 
\[
y\in\tilde U:=\{ \, X_0(s,t) \, | \,0< t< 4 \rho(s) \,\}
\]
and therefore
\[
F_h(y)
=\begin{cases}
X_h \circ X_0^{-1}(y) & y \in \overline\Omega_0\cap U
\\
y& y \in \Omega_0\setminus U
\end{cases}
\]
defines a $C^1$ function $F_h \colon\overline \Omega_0 \to \R^2$ (assuming $h$ is $C^1$).

Let
\begin{align}
\label{def:Omegah}
\Omega_h = F_h(\Omega_0) .
\end{align}
Note that for $h$ small (to be made precise later), the boundary of $\Omega_h$ decomposes into its upper part, which is denoted by $\mathcal{S}_h$ and its bottom part, which coincides with $\mathcal{B}_0$.

We consider \eqref{overd0} with $\Omega=\Omega_h$ and the problem becomes to find $h$ such that the overdetermined problem
\begin{align}
\label{overd}
\left\{
\begin{aligned}
-\Delta_y \psi &= \varepsilon \omega && \text{in }\Omega_h
\\
\psi &= 0 && \text{on } \mathcal{S}_h
\\
\frac{1}{2} |\nabla_y \psi|^2 + g  (\varepsilon y_2+d^S) &= \frac{1}{2} && \text{on }\mathcal{S}_h
\\
\psi &= \frac{\beta}{\varepsilon} && \text{on } \mathcal{B}_0,
\end{aligned}
\right.
\end{align}
has a solution.

\medskip

Define $\Psi[h]$ as the solution to the problem
\begin{align}
\label{eqPsi}
\left\{
\begin{aligned}
-\Delta \Psi[h] &= \varepsilon \omega + \Delta \psi_0 && \text{in }\Omega_h
\\
\Psi[h] &= -\psi_0 && \text{on } \mathcal{S}_h
\\
\Psi[h] &= 0 && \text{on } \mathcal{B}_0,
\end{aligned}
\right.
\end{align}
where we have used that the approximate solution $\psi_0$ satisfies $\frac{\beta}{\varepsilon}$ on the lower boundary on $\mathcal{B}_0$.
The idea is to write the solution $\psi$ of \eqref{overd} as $\psi = \psi_0 + \Psi[h]$.
Then we reformulate the problem \eqref{overd} as: find $h$ such that 
\begin{align}
\label{prob3}
\frac{1}{2} |\nabla_y( \Psi[h] + \psi_0)|^2 + g (\varepsilon y_2+d^S) = \frac{1}{2} \quad \text{on }\mathcal{S}_h.
\end{align}

We use the following notation. For a function $\psi$ defined in $\Omega_h$ we let
\[
\tilde \psi (y)= \psi \circ F_h (y)=  \psi \circ X_h \circ X_0^{-1} (y), \quad y \in \Omega_0 ,
\]
\begin{align}
\label{hat-psi}
\hat\psi(s,t)  = \psi \circ X_h(s,t) = \tilde \psi \circ X_0(s,t)
\end{align}
so that 
\[
\tilde \psi  = \hat\psi \circ X_0^{-1} .
\]

Let us write \eqref{eqPsi} in the domain $\Omega_0$ by changing variables. Let
\begin{align}
\nonumber
\tilde \psi_0[h] = \psi_0 \circ F_h, 
\end{align}
\[
\tilde \Psi[h](y) = \Psi[h] ( F_h (y)).
\]
Then $\tilde \Psi[h]$ satisfies
\begin{align}
\label{eqTildePsi}
\left\{
\begin{aligned}
- L_h [\tilde \Psi] &= \varepsilon \omega + L_h [\tilde \psi_0[h]] && \text{in }\Omega_0
\\
\tilde  \Psi &= - \tilde \psi_0[h] && \text{on } \mathcal{S}_0 ,
\\
\tilde  \Psi &= 0 && \text{on } \mathcal{B}_0 ,
\end{aligned}
\right.
\end{align}
where $L_h$ is defined as 
\[
L_h[\tilde \psi] = \Delta_y( \tilde \psi\circ F_h^{-1} ) \circ F_h .
\]
The operator $L_h$ can be computed as follows. In $ \Omega_0 \setminus U $, $L_h = \Delta_y$. 
Let 
\[
\hat L_h [\hat\psi]  = \Delta ( \hat\psi \circ X_h^{-1}) \circ X_h.
\]
Then, in $\Omega_0 \cap U$ we have
\begin{align}
\label{change}
L_h[\tilde \psi] &= \Delta_y( \tilde \psi\circ X_0 \circ X_h^{-1} ) \circ X_h \circ X_0^{-1}
= \hat L_h[ \tilde \psi \circ X_0] \circ X_0^{-1},
\end{align}
where $\hat L_h$ is given by
\begin{align*}
\hat L_h  = \hat L_{1,h} + \hat L_{2,h} + \hat L_{3,h}  
\end{align*}
and
\begin{align*}
\hat L_{1,h} &= \frac{1}{(1-\kappa( t + h \eta))^2 } \partial_{ss} 
- 2 \frac{(h \eta)_s}{(1-\kappa( t + h \eta))^2(1+h \eta_t)}\partial_{st} 
\\
& \quad 
+ \Bigl[
\frac{1}{(1+h \eta_t)^2 } + \frac{ ((h \eta)_s)^2 }{(1-\kappa( t + h \eta))^2(1+h \eta_t)^2}
\Bigr] \partial_{tt} 
\\
\hat L_{2,h}
&= \Bigl[ \partial_s \Bigl( \frac{1}{(1-\kappa( t + h \eta))^2 } \Bigr)
- \partial_t  \Bigl( \frac{(h \eta)_s}{(1-\kappa( t + h \eta))^2(1+h \eta_t) } \Bigr)\Bigr]
\partial_s 
\\
& \quad 
+ \Bigl[
- \partial_s 
\Bigl(
\frac{(h \eta)_s}{(1-\kappa( t + h \eta))^2(1+h \eta_t)}\Bigr)
+ \partial_t\Bigl( \frac{1}{(1+h \eta_t)^2}\Bigr)
\\
& \qquad 
+ \partial_t \Bigl( \frac{ ( (h \eta)_s )^2 }{(1-\kappa( t + h \eta))^2(1+h \eta_t)^2} \Bigr)
\Bigl] \partial_t
\\
\hat L_{3,h}&=
\Bigl[
\frac{\partial_s ( (1-\kappa(t+h\eta)) (1+h\eta_t)) }{ (1-\kappa(t+h\eta)) (1+h\eta_t)) }
\frac{1}{ (1-\kappa(t+h\eta))^2}
\\
& \qquad 
-\frac{\partial_t( (1-\kappa(t+h\eta)) (1+h\eta_t)) }{ (1-\kappa(t+h\eta)) (1+h\eta_t)) }
\frac{(h\eta)_s}{(1-\kappa(t+h\eta))^2 (1+h\eta_t)}
\Bigr] \partial_s
\\
& \quad
\Bigl[
-\frac{\partial_s ( (1-\kappa(t+h\eta)) (1+h\eta_t)) }{ (1-\kappa(t+h\eta)) (1+h\eta_t) }
\frac{(h\eta)_s}{(1-\kappa(t+h\eta))^2 (1+h\eta_t)}
\\
& \qquad
+\frac{\partial_t( (1-\kappa(t+h\eta)) (1+h\eta_t)) }{ (1-\kappa(t+h\eta)) (1+h\eta_t) }
\Bigl( \frac{1}{(1+h\eta_t)^2}
\\
& \qquad 
+ \frac{((h\eta)_s)^2}{(1-\kappa(t+h\eta))^2 (1+h\eta_t)^2}
\Bigr)
\Bigr] \partial_t.
\end{align*}
The terms $\hat L_{j,h}$, $j=1,2,3$ denote a splitting of the operator $\hat L_h$ without any particular meaning.

We expand these operators with respect to $h$:
\begin{align*}
\hat L_{1,h} &= \hat  L_{1}^{(0)} +  \hat L_{1,h}^{(1)} +  \hat L_{1,h}^{(2)} 
\\
\hat L_{1}^{(0)} &= \frac{1}{(1-\kappa t)^2} \partial_{ss}  + \partial_{tt}
\\
\hat L_{1,h}^{(1)} &= \frac{2 \kappa h \eta}{(1-\kappa t)^3} \partial_{ss}  
- 2 \frac{(h\eta)_s}{(1-\kappa t)^2}  \partial_{st}
-2 h \eta_t \partial_{tt}
\end{align*}
and $\hat L_{1,h}^{(2)}$ is defined as 
\[
\hat L_{1,h}^{(2)} = \hat L_{1,h} - \hat  L_{1}^{(0)} - \hat L_{1,h}^{(1)} .
\]
Similarly we split
\begin{align*}
\hat L_{2,h} &= \hat L_{2}^{(0)} +  \hat L_{2,h}^{(1)} +  \hat L_{2,h}^{(2)} 
\\
\hat L_{2}^{(0)} &= 2\frac{\kappa' t}{(1-\kappa t)^3} \partial_s
\\
\hat L_{2,h}^{(1)} &=  \Bigl[ 2\frac{\kappa' h \eta }{(1-\kappa t)^3}
+ 6 \frac{t \kappa \kappa' h \eta}{(1-\kappa t)^3}
- \frac{(h\eta)_{st}}{(1-\kappa t)^2}
\Bigr]\partial_s
+ 
\Bigl[
-\frac{(h\eta)_{ss}}{(1-\kappa t)^2}
- 2 h \eta_{tt}
\Bigr] \partial_t
\end{align*}
and
\begin{align*}
\hat L_{h,3} &= \hat L_{3,h}^{(0)} + \hat L_{3,h}^{(1)} + \hat L_{3,h}^{(2)} 
\\
\hat L_{3}^{(0)} &=  -\frac{\kappa' t}{(1-\kappa t)^3} \partial_s   - \frac{\kappa}{1-\kappa t} \partial_t 
\\
\hat L_{3,h}^{(1)} &= 
\Bigl[
-\frac{\kappa' h \eta}{(1-\kappa t)^3}
-\frac{\kappa (h\eta)_s}{(1-\kappa t)^3}
-3 \frac{t \kappa \kappa'  h}{(1-\kappa t)^4}
+ \frac{\kappa' (h\eta)_s}{(1-\kappa t)^3}
+\frac{\kappa' (h\eta)_s}{(1-\kappa t)^3}
\Bigr] \partial_s u
\\
& \quad 
+ \Bigl[
\frac{\kappa'(h\eta)_s}{(1-\kappa t)^3} 
-\frac{\kappa^2 h \eta}{(1-\kappa t)^2}
+ \frac{\kappa h \eta_t}{1-\kappa t}
\Bigr] \partial_t.
\end{align*}

Let us define
\begin{align*}
\hat L^{(0)} &=  \hat L_{1}^{(0)}+\hat L_{2}^{(0)}+\hat L_{3}^{(0)} 
\\
\hat  L_h^{(1)} &=  \hat L_{1,h}^{(1)}  +   \hat L_{2,h}^{(1)}  +   \hat L_{3,h}^{(1)}  
\\
\hat L_h^{(2)} &=  \hat L_{1,h}^{(2)}  +  \hat  L_{2,h}^{(2)}  +  \hat  L_{3,h}^{(2)}  ,
\end{align*}
and using \eqref{change}, we define
\begin{align*}
L^{(0)}[\tilde \psi] &= \hat L^{(0)}[ \tilde \psi \circ X_0] \circ X_0^{-1} 
\\
L_h^{(1)}[\tilde \psi] &= \hat L_h^{(1)}[ \tilde \psi \circ X_0] \circ X_0^{-1}
\\
L_h^{(2)}[\tilde \psi] &= \hat L_h^{(2)}[ \tilde \psi \circ X_0] \circ X_0^{-1} ,
\end{align*}
so that 
\[
L_h  = L^{(0)}+ L_h^{(1)} + L_h^{(2)} .
\]
We observe that $L^{(0)} = \Delta_y$, $L_h^{(1)}$ depends linearly on $h$ and $L_h^{(2)}$ is quadratic in $h$. Although this decomposition has been defined using a change of variables in $\Omega_0\cap U$, it is natural to extend it to $\Omega_0 \setminus U$, by setting
\[
L^{(0)}[\tilde \psi]  = \Delta_y \tilde \psi, \quad
L_h^{(1)}[\tilde \psi]  = 0, \quad
L_h^{(2)}[\tilde \psi]  = 0 \quad \text{in } \Omega_0 \setminus U.
\]

To rewrite  \eqref{prob3}, we let
\[
B_h(\tilde \psi_1,\tilde \psi_2) =  ( \nabla_y (\tilde \psi_1\circ F_h^{-1} )  \circ F_h ) \cdot  ( \nabla_y (\tilde \psi_2\circ F_h^{-1} )  \circ F_h ) 
\]
where the resulting expression is evaluated at $t=0$,
and note that 
\[
B_h(\tilde \psi_1,\tilde \psi_2) = \hat B_h(\tilde \psi_1 \circ X_0 ,\tilde \psi_2 \circ X_0 ) \circ X_0^{-1} 
\]
where
\begin{align*}
\hat B_h(\hat\psi_1,\hat\psi_2) 
& =  ( \nabla_y (\hat\psi_1 \circ X_h^{-1} )  \circ X_h ) \cdot ( \nabla_y (\hat\psi_2 \circ X_h^{-1} )  \circ X_h ) 
\\
&= \frac{1}{(1-\kappa h)^2 } \partial_s \hat\psi_1 \partial_s \hat\psi_2 
-\frac{h'}{(1-\kappa h)^2}  \partial_s \hat\psi_1 \partial_t \hat  \psi_2
-\frac{h'}{(1-\kappa h)^2}  \partial_s \hat\psi_2 \partial_t \hat  \psi_1
\\
& \quad 
+ \Bigl[ 1+\frac{(h')^2}{(1-\kappa h)^2}  \Bigr] \partial_t \hat\psi_1\partial_t \hat\psi_2 .
\end{align*}

We use the notation
\[
\hat \Psi[h] = \Psi[h]\circ X_h= \tilde \Psi[h]\circ X_0
\]
\begin{align}
\nonumber
\hat\psi_0[h](s,t) = \psi_0(X_h(s,t)),
\end{align}
which is consistent with the notation \eqref{hat-psi} but is explicit regarding the dependence on $h$.
Then problem \eqref{prob3} becomes to find $h$ such that the solution to \eqref{eqTildePsi} satisfies
\begin{align}
\label{bc1}
\hat B_h( \hat \Psi[h] + \hat\psi_0[h], \hat \Psi[h] + \hat\psi_0[h]) + 2 g (\varepsilon\tilde y_2[h] + d^S) = 1
 , \quad \text{on } t=0 .
\end{align}
where 
\[
\tilde y_2[h](s) = (\gamma(s) - \nu(s) h(s))\cdot {\bf e}_2, \quad  {\bf e}_2 = (0,1),
\]
is at main order the $y_2$ coordinate on $\mathcal{S}_0$ with a linear perturbation term in $h$.

We consider $\tilde \Psi_1\colon  \Omega_0\to \R$ defined by 
\begin{align}
\label{tildePsi1}
\tilde \Psi_1[h] = \tilde \Psi[h]  -  \tilde \Psi[0]  + \nabla_y \Psi[0] \cdot ( (\nu \eta h)\circ X_0^{-1}) ,
\end{align}
that is, $\nu$, $\eta$, $h$ are expressed in terms of the $(s,t)$ coordinates defined by $X_0$ and $\Psi[0]$ is the function defined in \eqref{eqPsi} for $h=0$, which is defined in $\Omega_0$.
Note that $ (\nu \eta h) \circ X_0^{-1}$ is well defined in $\Omega_0$ thanks to the cut-off $\eta$.

Next we derive a problem equivalent to \eqref{eqTildePsi}, \eqref{bc1}  but written in terms of $\tilde \Psi_1$.
First we note that for $y\in \mathcal{S}_0$, writing $y = \gamma(s)$, we have
\begin{align}
\nonumber
\tilde \Psi_1[h](y) &= -\psi_0(F_h(y)) + \psi_0(y) + \nabla_y \Psi[0]\cdot \nu(s) h(s)
\\
\nonumber
&= -\psi_0(y-\nu(s) h(s)) + \psi_0(y) + \nabla_y \Psi[0]\cdot \nu(s) h(s)
\\
\nonumber
&= \nabla_y \psi_0(y) \cdot \nu(s) h(s)
-  [\psi_0(y-\nu(s) h(s)) - \psi_0(y) +\nabla_y \psi_0(y) \cdot \nu(s) h(s)]
\\
\nonumber
& \quad 
+ \nabla_y \Psi[0]\cdot \nu(s) h(s) .
\end{align}
This is a nonlinear relation between $\tilde \Psi_1[h](y) $ and $h(s)$, which at main order is $\tilde \Psi_1[h](y) \approx  \nabla_y \psi_0(y) \cdot \nu(s) h(s)\approx - h(s)$.

Then we compute
\begin{align*}
L_h [\tilde \Psi[h] ] + \varepsilon \omega + L_h [\tilde \psi_0[h]] 
&= L_h\left[ \tilde \Psi_1[h] +  \tilde \Psi[0] -  \nabla \Psi[0] \cdot \nu \eta h\right]
 + \varepsilon \omega + L_h [\tilde \psi_0[h]] 
\\
&=
L^{(0)}\left[  \tilde \Psi_1[h] +  \tilde \Psi[0] -  \nabla \Psi[0] \cdot \nu \eta h \right]
\\
& \quad 
+ L_h^{(1)} \left[  \tilde \Psi_1[h] +  \tilde \Psi[0] -  \nabla \Psi[0] \cdot \nu \eta h \right]
\\
& \quad 
+ L_h^{(2)} \left[ \tilde \Psi_1[h] +  \tilde \Psi[0] -  \nabla \Psi[0] \cdot \nu \eta h \right]
\\
& \quad 
+ L^{(0)} [\tilde \psi_0[h]] 
+ L_h^{(1)} [\tilde \psi_0[h]] 
+ L_h^{(2)} [\tilde \psi_0[h]] 
\\ 
& \quad 
+ \varepsilon \omega .
\end{align*}
Let us expand $\tilde \psi_0[h]$ in terms of $h$
\begin{align*}
\tilde \psi_0[h] = \tilde \psi_0[0] - \nabla_y \tilde \psi_0[0] \cdot \nu \eta h 
+  \tilde \psi_0[h]  - \tilde \psi_0[0] + \nabla_y \tilde \psi_0[0] \cdot \nu \eta h  .
\end{align*}
Using that $L^{(0)}=\Delta_y$ and the equation \eqref{eqPsi} for $h=0$ we have
\[
L^{(0)}[\tilde \Psi[0] ] + L^{(0)}[ \tilde \psi_0[0] ] + \varepsilon\omega =0.
\]
Therefore
\begin{align}
\nonumber
L_h [\tilde \Psi[h] ] + \varepsilon \omega + L_h [\tilde \psi_0[h]] 
&=
L^{(0)}[ \tilde \Psi_1[h] ] - L^{(0)}[ \nabla_y \Psi[0] \cdot \nu \eta h ]
\\
\nonumber
& \quad 
+ L_h^{(1)}[ \tilde \Psi_1[h] ] +  L_h^{(1)}[\tilde \Psi[0] ] - L_h^{(1)}[ \nabla_y \Psi[0] \cdot \nu \eta h ]
\\
\nonumber
& \quad 
+ L_h^{(2)}[ \tilde \Psi_1[h] ] +  L_h^{(2)}[\tilde \Psi[0] ] - L_h^{(2)}[ \nabla_y \Psi[0] \cdot \nu \eta h ]
\\
\nonumber
& \quad 
- L^{(0)} [  \nabla_y   \tilde \psi_0[0] \cdot \nu \eta h ]
\\
\nonumber
& \quad 
+ L^{(0)}[ \tilde \psi_0[h]  - \tilde \psi_0[0] + \nabla_y   \tilde \psi_0[0] \cdot \nu \eta h]
\\
\label{eq1}
& \quad 
+ L_h^{(1)} [\tilde \psi_0[h]] 
+ L_h^{(2)} [\tilde \psi_0[h]] .
\end{align}
Next we claim that 
\begin{align}
\label{formula1}
- L^{(0)}[ \nabla_y \Psi[0] \cdot \nu \eta h ]
- L^{(0)} [  \nabla_y   \tilde \psi_0[0] \cdot \nu \eta h ]
+  L_h^{(1)}[\tilde \Psi[0] ]
+ L_h^{(1)} [\tilde \psi_0[0]] =0.
\end{align}
To see this we consider equation \eqref{eqTildePsi}
\[
L_{\tau h}[\tilde \Psi[\tau h] ] + L_{\tau h}[\tilde \psi_0[\tau h] ] + \varepsilon\omega = 0 \quad \text{in }\Omega_0
\]
and differentiate it with respect to $\tau$ at $\tau=0$. 
We get
\begin{align*}
L^{(0)}\left[ {\textstyle \frac{d}{d\tau}\big|_{\tau=0} \tilde \Psi[\tau h] } \right]
+ L_h^{(1)} [ \tilde \Psi[0] ]
+L^{(0)}\left[ {\textstyle \frac{d}{d\tau}\big|_{\tau=0} \tilde \psi_0[\tau h] } \right]
+ L_h^{(1)} [ \tilde \psi_0[0] ] = 0.
\end{align*}
But
\[
\tilde \Psi[\tau h](y) = \Psi[\tau h]( X_{\tau h} ( X_0^{-1}(y) )
\]
and writing $(s,t) = X_0^{-1}(y)$ we see that 
\begin{align*}
\frac{d}{d\tau}\Big|_{\tau=0} \tilde \Psi[\tau h]( y) = 
\frac{d}{d\tau}\Big|_{\tau=0}  \Psi[\tau h](y)  - \nabla_y \Psi[0]\cdot \nu(s) h(s) \eta(s,t) .
\end{align*}
Therefore
\begin{align*}
L^{(0)}\left[  {\textstyle \frac{d}{d\tau}\big|_{\tau=0} \tilde \Psi[\tau h] } \right]
=L^{(0)}\left[  {\textstyle \frac{d}{d\tau}\big|_{\tau=0}  \Psi[\tau h] } \right]
- L^{(0)}[\nabla_y \Psi[0]\cdot \nu h \eta] .
\end{align*}
But $L^{(0)}= \Delta$ and by \eqref{eqPsi} $\Delta \Psi[h] = -\varepsilon\omega - \Delta \psi_0$ is independent of $h$. Hence $L^{(0)}\left[  {\textstyle \frac{d}{d\tau}\big|_{\tau=0}  \Psi[\tau h] } \right]=0$ and so
\begin{align*}
L^{(0)}\left[  {\textstyle \frac{d}{d\tau}\big|_{\tau=0} \tilde \Psi[\tau h] } \right]
=- L^{(0)}[\nabla_y \Psi[0]\cdot \nu h \eta].
\end{align*}
Also 
\begin{align*}
\frac{d}{d\tau}\Big|_{\tau=0} \tilde \psi_0[\tau h](y) = 
\frac{d}{d\tau}\Big|_{\tau=0} \psi_0( X_h(X_0^{-1}(y))
=
 - \nabla_y \Psi[0]\cdot \nu  h \eta.
\end{align*}

Using \eqref{formula1} in \eqref{eq1} we get
\begin{align}
\nonumber
L_h [\tilde \Psi[h] ] + \varepsilon \omega + L_h [\tilde \psi_0[h]] 
&=
L^{(0)}[ \tilde \Psi_1[h] ]  + \tilde Q_1(\tilde \Psi_1[h] ,h) 
\end{align}
where 
\begin{align}
\nonumber
\tilde Q_1(\tilde \Psi_1[h] ,h) &=  L_h^{(1)}[ \tilde \Psi_1[h] ] - L_h^{(1)}[ \nabla_y \Psi[0] \cdot \nu \eta h ]
\\
\nonumber
& \quad 
+ L_h^{(2)}[ \tilde \Psi_1[h] ] +  L_h^{(2)}[\tilde \Psi[0] ] - L_h^{(2)}[ \nabla_y \Psi[0] \cdot \nu \eta h ]
\\
\nonumber
& \quad 
+ L^{(0)}[ \tilde \psi_0[h]  - \tilde \psi_0[0] + \nabla_y   \tilde \psi_0[0] \cdot \nu \eta h]
\\
\label{Q1}
& \quad 
+ L_h^{(1)} [\tilde \psi_0[h] - \tilde \psi_0[0]] 
+ L_h^{(2)} [\tilde \psi_0[h]]  .
\end{align}

Let
\[
\hat \Psi_1[h] =\tilde \Psi_1 [h]\circ X_0
\]
so that
\[
\hat  \Psi[h]  = \hat \Psi_1[h] +  \hat \Psi[0]  -  \nabla_y \Psi[0]\circ X_0 \cdot \nu \eta h .
\]
We want to write the boundary condition \eqref{bc1} in terms of $\hat \Psi_1[h]$:
\begin{align*}
\hat B_h( & \hat \Psi_1[h] +  \hat \Psi[0]  -  \nabla_y \Psi[0]\circ X_0 \cdot \nu  h + \hat\psi_0[h], 
\\
& \hat \Psi_1[h] +  \hat \Psi[0]  -  \nabla_y \Psi[0]\circ X_0 \cdot \nu  h + \hat\psi_0[h]) + 2 g (\varepsilon \tilde y_2[h]+d^S) = 1,
\end{align*}
since $\eta=1$ in a neighborhood of $t=0$.
Using the bilinearity and symmetry of $\hat B_h$ this is the same as
\begin{align}
\nonumber
& \hat B_h ( \hat \Psi_1[h] ,\hat \Psi_1[h] )
+ \hat B_h ( \hat \Psi[0]  ,  \hat \Psi[0] )
+  \hat B_h( \nabla_y \Psi[0]\circ X_0 \cdot \nu  h , \nabla_y \Psi[0]\circ X_0 \cdot \nu  h) 
\\
\nonumber
& \quad 
+  \hat B_h( \hat\psi_0[h], \hat\psi_0[h] )
\\
\nonumber
& \quad 
+ 2 \hat B_h( \hat \Psi_1[h]  ,  \hat \Psi[0] )
- 2 \hat B_h( \hat \Psi_1[h] ,  \nabla_y \Psi[0]\circ X_0 \cdot \nu  h)
+  2  \hat B_h( \hat \Psi_1[h]  , \hat\psi_0[h] )
\\
\nonumber
& \quad 
- 2 \hat B_h( \hat \Psi[0] ,\nabla_y \Psi[0]\circ X_0 \cdot \nu  h)
+ 2 \hat B_h( \hat \Psi[0] , \hat\psi_0[h] )
\\
\nonumber
& \quad 
-  2 \hat B_h( \nabla_y \Psi[0]\circ X_0 \cdot \nu  h ,  \hat\psi_0[h] )
\\
\label{bc2}
& \quad + 2 g (\varepsilon \tilde y_2[h]+d^S) = 1.
\end{align}

We want to expand this expression with respect to $h$. 
First we expand the operator $\hat B_h$:
\begin{align}	
\label{expansion-B}
\hat B_h = \hat B^{(0)} +  \hat B_h^{(1)} +  \hat B_h^{(2)}
\end{align}
where
\begin{align*}
\hat B^{(0)} (\hat\psi_1,\hat\psi_2) 
&= \partial_s \hat\psi_1 \partial_s \hat\psi_2 + \partial_t \hat\psi_1\partial_t \hat\psi_2
\\
\hat B_h^{(1)} (\hat\psi_1,\hat\psi_2) 
&=2 \kappa h  \partial_s \hat\psi_1 \partial_s \hat\psi_2 
- h'  \partial_s \hat\psi_1 \partial_t \hat  \psi_2
- h' \partial_s \hat\psi_2 \partial_t \hat  \psi_1,
\end{align*}
and ${}'=\frac{d}{ds}$.
In \eqref{bc2} we also expand $\hat\psi_0[h]$ with respect to $h$:
\begin{align}
\label{expansion-hatpsi0h}
\hat\psi_0[h]=\hat\psi_0[0]-(\nabla_y\psi_0)\circ X_0 \cdot \nu h  +q_0(h)
\end{align}
where
\[
q_0(h) = \hat\psi_0[h] -  \hat\psi_0[0] + (\nabla_y\psi_0)\circ X_0 \cdot \nu h .
\]
When expanding all terms with respect to $h$ in \eqref{bc2}, we identify the following terms as independent of $h$:
\begin{align}
\label{E}
E_1(g) = \hat B^{(0)}( \hat\psi_0[0] + \hat \Psi[0]  ,  \hat\psi_0[0]+ \hat \Psi[0] )  
+ 2 g (\varepsilon  y_2 +d^S) - 1.
\end{align}

Next, we look at the terms in \eqref{bc2} that are largest as functions of $h$, based on the form of the linearized operator that we expect from the computations in Section~\ref{sectFormal}.
We expand, using \eqref{expansion-B} and \eqref{expansion-hatpsi0h}
\begin{align}
\label{exp1}
\hat B_h( \hat\psi_0[h], \hat\psi_0[h] )
&=\hat B^{(0)}( \hat\psi_0[h], \hat\psi_0[h] )
+\hat B_h^{(1)}( \hat\psi_0[h], \hat\psi_0[h] )
+\hat B_h^{(2)}( \hat\psi_0[h], \hat\psi_0[h] )
\end{align}
and 
\begin{align*}
& \hat B^{(0)}( \hat\psi_0[h], \hat\psi_0[h] )
\\
&\quad =
\hat B^{(0)}( \hat\psi_0[0]-(\nabla_y\psi_0)\circ X_0 \cdot \nu h  +q_0(h), \hat\psi_0[0]-(\nabla_y\psi_0)\circ X_0 \cdot \nu h  +q_0(h))
\\
&\quad =
\hat B^{(0)}( \hat\psi_0[0], \hat\psi_0[0])
+
\hat B^{(0)}( (\nabla_y\psi_0)\circ X_0 \cdot \nu h,(\nabla_y\psi_0)\circ X_0 \cdot \nu h)
\\
& \qquad 
+ \hat B^{(0)}( q_0(h),q_0(h))
-2\hat B^{(0)}( \hat\psi_0[0],(\nabla_y\psi_0)\circ X_0 \cdot \nu h)
\\
&\qquad
+2\hat B^{(0)}( \hat\psi_0[0],q_0(h))
-2\hat B^{(0)}((\nabla_y\psi_0)\circ X_0 \cdot \nu h,q_0(h)).
\end{align*}
The first term in this expression is part of \eqref{E}.
We have that 
\begin{align*}
\hat B^{(0)} ( \hat\psi_0[0] ,\nabla_y \psi_0\circ X_0\cdot \nu  h ) 
&= \partial_s  \hat\psi_0[0]  \partial_s (\nabla_y \psi_0\circ X_0) \cdot \nu  h
+  \partial_s  \hat\psi_0[0]  \nabla_y \psi_0\circ X_0\cdot \nu' h
\\
& \quad 
+  \partial_s  \hat\psi_0[0]  \nabla_y \psi_0\circ X_0\cdot \nu h'
+ \partial_t  \hat\psi_0[0] \partial_t ( \nabla_y \psi_0\circ X_0 \cdot \nu )  h,
\end{align*}
and note that on $t=0$
\begin{align*}
\partial_t ( \nabla_y \psi_0\circ X_0 \cdot \nu ) 
&= -\partial_{tt} \hat\psi_ 0[0]
= -(\Delta_y \psi_0)\circ X_0 + \partial_{ss} \hat\psi_0[0] - \kappa \partial_t \hat\psi_0[0],
\end{align*}
so that
\begin{align*}
&-2\hat B^{(0)}( \hat\psi_0[0],(\nabla_y\psi_0)\circ X_0 \cdot \nu h
\\
&\quad=
-2\partial_s  \hat\psi_0[0]  \partial_s (\nabla_y \psi_0\circ X_0) \cdot \nu  h
-2  \partial_s  \hat\psi_0[0]  \nabla_y \psi_0\circ X_0\cdot \nu' h
\\
& \qquad 
-2 \partial_s  \hat\psi_0[0]  \nabla_y \psi_0\circ X_0\cdot \nu h'
\\
&\qquad
+2 \partial_t  \hat\psi_0[0]
((\Delta_y \psi_0)\circ X_0 - \partial_{ss} \hat\psi_0[0] + \kappa \partial_t \hat\psi_0[0]) h,
\end{align*}
which provides the linear term of order 0 in \eqref{robin3a}. Other linear terms are present in $\hat B_h(\hat\psi_0[h], \hat\psi_0[h])$, such as $\hat B_h^{(1)}(\hat\psi_0[0], \hat\psi_0[0])$, but we will verify that they are small.

The other important linear term in \eqref{bc2} comes from $2\hat B_h( \hat\Psi_1[h],\hat\psi_0[h])$. Treating $\hat \Psi_1[h]$ as unknown, using \eqref{expansion-B} we get
\begin{align}
\label{exp2}
\hat B_h(\hat\Psi_1[h],\hat\psi_0[h])
&=
\hat B^{(0)}(\hat\Psi_1[h],\hat\psi_0[h])
+\hat B_h^{(1)}(\hat\Psi_1[h],\hat\psi_0[h])
+\hat B_h^{(2)}(\hat\Psi_1[h],\hat\psi_0[h])
\end{align}
and we expand, using \eqref{expansion-hatpsi0h}
\begin{align*}
\hat B^{(0)}(\hat\Psi_1[h],\hat\psi_0[h])
&=\hat B^{(0)}(\hat\Psi_1[h],\hat\psi_0[0])
-\hat B^{(0)}(\hat\Psi_1[h],(\nabla_y\psi_0)\circ X_0 \cdot \nu h )
\\
&\quad +\hat B^{(0)}(\hat\Psi_1[h],q_0(h)).
\end{align*}
Then we note that
\begin{align*}
\hat B^{(0)} ( \hat \Psi_1[h],\hat\psi_0[0] ) 
&= \partial_s   \hat \Psi_1[h]  \partial_s  \hat\psi_0[0] +
\partial_t   \hat \Psi_1[h]  \partial_t  \hat\psi_0[0].
\end{align*}
Thus \eqref{bc2} contains the term
\[
2 \partial_t\hat \Psi_1[h]\partial_t\hat\psi_0[0],
\]
which essentially corresponds to the normal derivative term in \eqref{robin3a}.

Let us rename
\begin{align}
\label{rename-u}
u = \tilde \Psi_1[h],\quad\text{and}\quad
\hat u=u\circ X_0.
\end{align}
Then \eqref{bc2} takes the form
\begin{align*}
0&=
2 \partial_t\hat\psi_0[0]\left[\partial_t\hat u-(\Delta_y \psi_0\circ X_0 + \kappa) h \right] + E_1 + \hat Q_{2}(\hat u,h,g) ,
&&\text{on } t=0
\end{align*}
where $\hat Q_{2}$ collects all the terms that have been left out. This includes quadratic terms in $u$, $h$ as well as small linear terms. Explicitly, $\hat Q_{2}$ is given by
\begin{align}
\label{hatQ2}
\hat Q_{2}
&=\hat Q_{2,a}+\hat Q_{2,b}+\hat Q_{2,c}+\hat Q_{2,d}+\hat Q_{2,e}
+\hat Q_{2,f}
\end{align}
where
\begin{align}
\nonumber
\hat Q_{2,a}(\hat u,h,g)
&=\hat B_h(\hat u,\hat u)
+\hat B_h( \nabla_y \Psi[0]\circ X_0 \cdot \nu h, \nabla_y \Psi[0]\circ X_0 \cdot \nu h)+ 2 \hat B_h( \hat u ,  \hat \Psi[0] ) 
\\
\nonumber
&\quad - 2 \hat B_h(\hat u,  \nabla_y \Psi[0]\circ X_0 \cdot \nu  h)
- 2 \hat B_h( \hat \Psi[0] ,\nabla_y \Psi[0]\circ X_0 \cdot \nu  h)
\\
\label{Q2a}
&\quad-  2 \hat B_h( \nabla_y \Psi[0]\circ X_0 \cdot \nu  h ,  \hat\psi_0[h]),
\end{align}
contain terms directly from \eqref{bc2},
\begin{align}
\nonumber
\hat Q_{2,b}(\hat u,h,g)
&=\hat B_h^{(1)}( \hat\psi_0[h], \hat\psi_0[h] )
+\hat B_h^{(2)}( \hat\psi_0[h], \hat\psi_0[h] )
\\
\nonumber
&\quad 
+
\hat B^{(0)}( (\nabla_y\psi_0)\circ X_0 \cdot \nu h,(\nabla_y\psi_0)\circ X_0 \cdot \nu h)
\\
\nonumber
& \quad +
\hat B^{(0)}( q_0(h),q_0(h))
\\
\label{Q2b}
&\quad
+2\hat B^{(0)}( \hat\psi_0[0],q_0(h))
-2\hat B^{(0)}((\nabla_y\psi_0)\circ X_0 \cdot \nu h,q_0(h))
\end{align}
contains the terms originating in $\hat B_h( \hat\psi_0[h], \hat\psi_0[h] )$, see \eqref{exp1},
\begin{align}
\nonumber
\hat Q_{2,c}(\hat u,h,g)
&=
2\hat B_h^{(1)}(\hat u,\hat\psi_0[h])
+2\hat B_h^{(2)}(\hat u,\hat\psi_0[h])
\\
\nonumber
&\quad
-2\hat B^{(0)}(\hat u,(\nabla_y\psi_0)\circ X_0 \cdot \nu h )
\\
\label{Q2c}
&\quad +2\hat B^{(0)}(\hat u,q_0(h))
+2\partial_s\hat u\partial_s\hat\psi_0[0]
\end{align}
contains the terms originating in $2\hat B_h( \hat\Psi_1[h],\hat\psi_0[h])$, see \eqref{exp2},
\begin{align}
\label{Q2d}
\hat Q_{2,d}(\hat u,h,g)
&=
\hat B_h^{(1)} ( \hat \Psi[0]  ,  \hat \Psi[0] )
+\hat B_h^{(2)} ( \hat \Psi[0]  ,  \hat \Psi[0] )
\end{align}
contains terms originating in $\hat B_h^{(1)} ( \hat \Psi[0]  ,  \hat \Psi[0] )$, with $\hat B^{(0)} ( \hat \Psi[0]  ,  \hat \Psi[0] )$ removed (it is part of \eqref{E}),
\begin{align}
\nonumber
\hat Q_{2,e}(\hat u,h,g)
&=
2 \hat B_h^{(1)}( \hat \Psi[0] , \hat\psi_0[h] )
+ 2 \hat B_h^{(2)}( \hat \Psi[0] , \hat\psi_0[h] )
\\
\label{Q2e}
&\quad
+ 2 \hat B_h^{(0)}( \hat \Psi[0] , -(\nabla_y\psi_0)\circ X_0 \cdot \nu h   )
+ 2 \hat B_h^{(0)}( \hat \Psi[0] , q_0(h) )
\end{align}
contains terms that originate in $2 \hat B_h( \hat\Psi[0]+\hat\psi_0[h] , \hat\Psi[0]+\hat\psi_0[h] )$, but where $2 \hat B^{(0)}( \hat\Psi[0] , \hat\psi_0[0])$ was removed (it is part of \eqref{E}), and
\begin{align}
\label{Q2f}
\hat Q_{2,f}(\hat u,h,g)
=-2 \varepsilon g \nu\cdot e_2 h,
\end{align}
where $e_2=(0,1)$.

At this point, problem \eqref{bc1} can be written as follows:
Find $h$ such that $u=\tilde \Psi_1[h]$ satisfies 
\begin{align}
\label{eq2}
0&= \Delta_y u + \tilde Q_1(u,h) , && \text{in }\Omega_0 ,
\\
\label{bcD}
u(y) &= \nabla_y \psi_0(x) \cdot \nu(s) h(s) + Q_3(h(s),s) ,&& y = \gamma(s) \in \mathcal{S}_0
\\
\nonumber
0&=
2 \partial_t\hat\psi_0[0]\left[\partial_t\hat u-( (\Delta_y \psi_0)\circ X_0 - \partial_{ss} \hat\psi_0[0] + \partial_t \hat\psi_0[0] \kappa) h \right] 
\\
\label{bcN}
&\qquad + E_1(g)  + \hat Q_{2}(\hat u, h) ,
&&\text{on } t=0
\\
\label{bcB0}
0&=u&&\text{on }\mathcal{B}_0
\end{align}
where $\tilde Q_1$ is defined in \eqref{Q1}, $\hat Q_2$ in \eqref{hatQ2}, $E_1$ in \eqref{E}, and 
\begin{align}
\nonumber
Q_3(h,s) &=  -[\psi_0(y-\nu(s) h(s)) - \psi_0(y) +\nabla_y \psi_0(y) \cdot \nu(s) h(s)]
\\
\label{Q3}
&\quad
+ \nabla_y \Psi[0]\cdot \nu(s) h(s).
\end{align}
The condition \eqref{bcB0} follows directly from the definition \eqref{tildePsi1} since $\tilde \Psi[h]=\tilde \Psi[0]=0$ and
$\nabla_y \Psi[0] \cdot ( (\nu \eta h)\circ X_0^{-1})=0$ on $\mathcal{B}_0$.

Assuming that $u$ is sufficiently small on $\mathcal{S}_0$, there is a unique small solution $h(s)$ of \eqref{bcD},
which we denote by the function
\begin{align}
\label{def:a}
h(y) = \mathbf{h}( u(y),y ).
\end{align}
Imposing \eqref{bcD} through \eqref{def:a}, we formulate the problem \eqref{eq2}, \eqref{bcD}, \eqref{bcN}, \eqref{bcB0} as the following nonlinear Robin problem in the unknown $u$:
\begin{align}
\label{main-problem}
\left\{
\begin{aligned}
\Delta_y u + Q_1[u]&=0 \quad \text{in }\Omega_0
\\
\frac{\partial u}{\partial \nu}
+ (\kappa - \varepsilon\omega) u + E + Q_2[u,g] &= 0 
\quad \text{on } \mathcal{S}_0
\\
u&=0\quad \text{on } \mathcal{B}_0
\end{aligned}
\right.
\end{align}
where
\begin{align}
\label{newQ1}
Q_1[u](y)&=\tilde Q_1(u,\mathbf{h}(u,y)) 
\end{align}
\begin{align}
\label{def:E1}
E(g) = \frac{1}{2\partial_t \hat\psi_0[0]} E_1(g)
\end{align}
with $E_1$ defined in \eqref{E}, and
\begin{align}
\nonumber
Q_2[u,g]
&= - \frac{\hat Q_{2}(\hat u, \mathbf{h}(u,y))}{2 \partial_t\hat\psi_0[0]} 
\\
\nonumber
&\quad-(\kappa-\varepsilon\omega)\left(u\Bigl(\frac{1}{\nabla_y \psi_0 \cdot \nu}+1\Bigr)-\frac{Q_3(\mathbf{h}(u,y),s)}{\nabla_y \psi_0 \cdot \nu}\right)
\\
\label{newQ2}
&\quad
-( \Delta_y \psi_0 +\varepsilon\omega- \partial_{ss} \hat\psi_0[0] + (\partial_t \hat\psi_0[0]-1) \kappa) \mathbf{h}(u,y)
\end{align}
with $\hat Q_2$ defined in \eqref{hatQ2} and $Q_3$ in \eqref{Q3}.

\begin{proof}[Proof of Proposition~\ref{prop:change}]
If we find a solution $u$ to \eqref{main-problem}, then we define $h$ by \eqref{def:a} and $F_h$ by \eqref{def:Fh}. Then we let  $\Omega_h$ be defined by \eqref{def:Omegah} and $\psi=\psi_0+\Psi[h]$. 
We summarize these definitions as
\begin{align}
\label{Omega-psi}
h(y) = \mathbf{h}(u(y),y ) , \quad
\Omega_h = F_h(\Omega_0) , \quad
\psi=\psi_0+\Psi[h] .
\end{align}
By the preceding computations, if
\begin{align}
\label{diffeo}
\text{$F_h$ is a diffeomorphism from $\overline\Omega_0$ onto $\overline\Omega_h$}
\end{align}
then $(\Omega_h, \psi)$ satisfies \eqref{overd}.
\end{proof}

We conclude this section by clarifying the sense in which $h$ and $u$ must be small.

We begin with $u$, the unknown in the nonlinear problem \eqref{main-problem}. To ensure the problem's well-formulation, we must be able to uniquely solve for $h$ in equation \eqref{bcD}. This equation is nearly quadratic, and to smoothly solve for $h(y)$ in terms of $u(y)$ and $y$, we require
\begin{align}
\label{small-nl}
|D_y^2\psi_0(y)| |u(y)|\leq \bar\delta,\quad y\in\partial\Omega_0,
\end{align}
where $\bar\delta>0$ is a small constant. But from the definition of $\psi_0$ \eqref{psi0} we get
\begin{align*}
|D_y^2 \psi_0(y)|\leq C \Bigl(\frac{1}{|y|^2}+\varepsilon \Bigr).
\end{align*}
We will work with $u$ in a class where 
\begin{align}
\label{hyp-u}
|u(y)| \leq \varepsilon^\theta 
\begin{cases}
(1+|y|)^{1-\sigma}&\quad |y| \leq \frac{\delta_1}{\varepsilon}
\\
(\frac{\delta_1}{\varepsilon})^{1-\sigma }
&\quad |y| \geq  \frac{\delta_1}{\varepsilon},
\end{cases}
\quad y \in \Omega_0,
\end{align}
for some $\theta>0$ and $0<\sigma<1$. 
Under assumption \eqref{hyp-u} on $u$ we get \eqref{small-nl} for $\varepsilon>0$ small. Solving for $h$ in \eqref{bcD} we get that 
\begin{align}
\label{est-h}
|h(y)| \leq C \varepsilon^\theta 
\begin{cases}
(1+|y|)^{1-\sigma}&\quad |y| \leq \frac{\delta_1}{\varepsilon}
\\
(\frac{\delta_1}{\varepsilon})^{1-\sigma }
&\quad |y| \geq  \frac{\delta_1}{\varepsilon},
\end{cases}
\quad y \in\mathcal{S}_0.
\end{align}

After solving for $h$, we need to check that the operators that define $\tilde Q_1$ are well defined, since they involve coefficients of the form 
\begin{align}
\nonumber
 \frac{1}{(1-\kappa(t+h\eta))^a (1+h\eta_t)^b}
\end{align}
for some integers $a,b\geq 0$.
These coefficients have to be estimated in a neighborhood of $\partial\Omega_0$ of the form $t \leq 2 \rho(s)$, where $s$, $t$ are the $(s,t)$ coordinates defined by $X_0$ in \eqref{Xh}.

We use that $\kappa$ satisfies \eqref{est-kappa}
and the following estimates derived from \eqref{est-eta}, for $y=X_0(s,t)$ with $|y|\leq\frac{\delta_1}{\varepsilon}$:
\begin{align*}
|\partial_t \eta(s,t)|&\leq \frac{C}{1+|y|},
&
|\partial_{tt} \eta(s,t)|&\leq \frac{C}{1+|y|^2}
\\
|\partial_s \eta(s,t)|&\leq \frac{C}{1+|y|},
&
|\partial_{ss} \eta(s,t)|&\leq \frac{C}{1+|y|^2} .
\end{align*}
These derivatives are supported in the region $\rho(s)\leq t\leq 2\rho(s)$.

By \eqref{kappa-d} and \eqref{est-h} we see that if $u$ satisfies \eqref{hyp-u} and $h$ is given by \eqref{def:a}, for $\varepsilon>0$ small 
\begin{align*}
| \kappa(s)(t+h(s)\eta(s,t))|\leq \tfrac{1}{2},
\end{align*}
for $s\in \R$, $t\leq 2\rho(s)$. Similarly, by taking $\varepsilon$ small, if $u$ satisfies \eqref{hyp-u} and $h$ given by \eqref{def:a}, 
\begin{align*}
|h\eta_t| \leq \tfrac{1}{2}
\quad s\in \R, \ t\leq 2\rho(s).
\end{align*}

\section{Estimate of the error}
\label{sect:error}

In this section we estimate the size of the error in problem \eqref{overd0} produced by the approximation $\Omega_0$, $\psi_0$ constructed in Section \ref{sec:approx}.

\medskip

The following norms are suitable for the right hand sides in problem \eqref{main-problem}.
Let $\Omega \subset \R^2$ be an open set with $C^2$ boundary. Let $0<\sigma,\alpha <1$, $\mu>0$ as in \eqref{eq:mu} and $\delta_1>0$. We assume that $\mu$ satisfies \eqref{eq:mu}.
For a function $f$ defined on $\emptyset\not=D\subset \R^2$ we use the notation
\[
[f]_{\alpha,D}
= \mathop{\sup_{y_1,y_2 \in D}}_{y_1\not=y_2}
\frac{|f(y_1)-f(y_2)|}{|y_1-y_2|^\alpha} .
\]
For $f_1 \in C^\alpha(\overline\Omega)$ we let
\begin{align}
\label{defNf1Omega0}
\| f_1 \|_{**,\Omega} & = \text{the least $M$ such that for $y \in \Omega$}
\\
\nonumber
& |f_1(y)|+(1+|y|)^\alpha [f_1]_{\alpha,B(y,\frac{|y|}{10})\cap\Omega}
\leq (1+|y|)^{-1-\sigma} M && \textstyle\text{for } |y| \leq \frac{\delta_1}{\varepsilon}
\\
\nonumber
& 
\textstyle
|f_1(y)| +(\frac{\delta_1}{\varepsilon})^\alpha [f_1]_{\alpha,B(y,\frac{\delta_1}{10 \varepsilon})\cap\Omega}
\leq (\frac{\varepsilon}{\delta_1} )^{1+\sigma} e^{-\mu \varepsilon |y|} M
&&  \textstyle \text{for } |y| \geq \frac{\delta_1}{\varepsilon} .
\end{align}
\index{$\norm \  \norm_{**} $, norm for the RHS of the linearized problem in $\Omega_0$}
For $f_2 \in C^{1,\alpha}(\partial \Omega)$ we let
\begin{align}
\label{defNf2Omega0}
\| f_2 \|_{**,\partial\Omega} &= \text{the least $M$ such that for $y \in \partial\Omega$}
\\
\nonumber
& \textstyle |f_2(y)|
+(1+|y|) |f_2'(y)|
+(1+|y|)^{1+\alpha} [f_2']_{\alpha,B(y,\frac{|y|}{10})\cap\partial\Omega}
\leq (1+|y|)^{-\sigma} M 
\\
\nonumber
& \qquad \qquad \qquad \qquad \qquad \qquad \qquad \qquad \qquad \qquad \qquad \qquad  \textstyle \text{for } |y| \leq \frac{\delta_1}{\varepsilon}
\\
\nonumber
& 
\textstyle
|f_2(y)| 
+\frac{\delta_1}{\varepsilon} |f_2'(y)|
+(\frac{\delta_1}{\varepsilon})^{1+\alpha} [f_2']_{\alpha,B(y,\frac{\delta_1}{10 \varepsilon})\cap\partial\Omega}
\leq (\frac{\varepsilon}{\delta_1})^{\sigma} e^{-\mu \varepsilon |y|} M
\\
\nonumber
& \qquad \qquad \qquad \qquad \qquad \qquad \qquad \qquad \qquad \qquad \qquad \qquad  \textstyle \text{for }  |y| \geq \frac{\delta_1}{\varepsilon} ,
\end{align}
where $f_2'$ denotes the derivative of $f_2$ with respect to arc length.
\index{$\norm \  \norm_{*} $, norm for the RHS on the boundary of the linearized problem in $\Omega_0$}

\medskip
Recall the definition of $E$ \eqref{def:E1} and also that $g=\varepsilon g_0+g_1$ \eqref{decomp-g} where $g_0$ is the constant defined in \eqref{def-g0}. Then it is natural to define
\begin{align}
\label{def:E0}
E_0(y;g_1) = E(y;\varepsilon g_0+g_1) - g_1 ( \varepsilon y_2+d^S).
\end{align}
The idea is to isolate the main order contribution of the term $g_1 ( \varepsilon y_2+d^S)$ from the error, which is taken care of by the linear theory. The cancellation is not exact and $E_0(y;g_1) $ has still a small dependence on $g_1$.
We assume that
\begin{align}
\label{hg}
|g_1| \leq C |\varepsilon|^{1+m},
\end{align}
for some constant $0<m<1$.

\begin{proposition}
\label{prop:est-error-bdy}
Assume $\frac12<b<\frac23$, \eqref{hg}, and that $\mu$ satisfies \eqref{eq:mu}.
Then
\begin{align*}
\|E_0(g_1)\|_{**, \partial \Omega_0 }\leq C \varepsilon^{\ell_0} |\log\varepsilon|
\end{align*}
where 
\begin{align*}
\ell_0 = \min(1-\sigma b, 4(1-b) - \sigma b, 1+m+b(1-\sigma) ).
\end{align*}
\end{proposition}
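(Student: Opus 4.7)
Since $\Psi[0]$ solves \eqref{eqPsi} with Dirichlet datum $-\psi_0$ on $\mathcal{S}_0$, the combination $\psi_0+\Psi[0]$ vanishes on $\mathcal{S}_0$, so its tangential derivative is zero and $\hat B^{(0)}(\hat\psi_0[0]+\hat\Psi[0],\hat\psi_0[0]+\hat\Psi[0])|_{t=0} = (\partial_\nu(\psi_0+\Psi[0]))^2$. Thus $E_1(g)$ is literally the Bernoulli boundary defect $(\partial_\nu(\psi_0+\Psi[0]))^2 + 2g(\varepsilon y_2+d^S) - 1$ of the approximate stream function. Writing $g=\varepsilon g_0+g_1$, the subtraction in \eqref{def:E0} removes the linear piece $g_1(\varepsilon y_2+d^S)$, so that $E_0$ retains $g_1$-dependence only through $\Psi[0]$ and the geometry of $\mathcal{S}_0$. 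A further simplifying fact is that $\Delta\psi_0+\varepsilon\omega\equiv 0$ in the outer disk and strip pieces: the quadratic correction in \eqref{tildePsiH} was designed to cancel $\varepsilon\omega$ against $\Delta\tilde\psi^H$, while $\Delta\tilde\psi^D=-\varepsilon\omega$ by the explicit formula and $\Delta\Psi_1^D=0$; the same for the strip. The bulk source for $\Psi[0]$ is therefore concentrated in the cut-off regions, and $\partial_\nu\Psi[0]|_{\mathcal{S}_0}$ is driven primarily by the pointwise Dirichlet mismatch $\psi_0|_{\mathcal{S}_0}$.

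\textbf{Region-by-region pointwise bounds.} I would split $\mathcal{S}_0$ into a hairpin core $\{|y|\leq\varepsilon^{-b}\}$, a transition annulus $\{\varepsilon^{-b}\leq|y|\leq 2\varepsilon^{-b}\}$, outer disk/strip pieces $\{|y|\geq 2\varepsilon^{-b},\,\pm y_2>0\}$, and the far strip $\{|y|\geq\delta_1/\varepsilon\}$. In the outer disk region, a Taylor expansion of $\Psi^D$ on the perturbed boundary $\frac{1}{\varepsilon}\partial\Omega^D_\varepsilon$ (using that $\psi^D$ vanishes on $\frac{1}{\varepsilon}\partial\Omega^D$ but not on its perturbation) gives $\tilde\psi^D|_{\mathcal{S}_0}=O(\varepsilon)$ pointwise, and expanding $|\nabla\tilde\psi^D|^2 - 1 + 2\varepsilon g_0(\varepsilon y_2+d^S)$ together with the Robin identity that $\Psi_1^D$ satisfies by design of \eqref{eqPsi1D}, \eqref{def-g0} shows that the cross-term cancels on $\partial\Omega^D_\varepsilon$ away from $P_D$, leaving a Bernoulli defect of order $\varepsilon^2$. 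The strip side is analogous, with the barrier \eqref{superstrip} providing exponential decay on the far strip. In the hairpin core, Lemma~\ref{lemma:estGradPsiH} combined with the modified upper boundary $\tilde g_H^+$ in \eqref{tgplus} gives a Dirichlet mismatch of size $\varepsilon|\log\varepsilon|^2$ near $|y|\sim\varepsilon^{-b}$, controlled by $\varepsilon(\log|y|)^2$ pointwise. In the transition annulus, derivatives of the cutoffs $\chi_0(\varepsilon^b y)$ and $\eta_1^\pm$ act on the matching error $\tilde\psi^H-\tilde\psi^{D,S}$, whose size is given by \eqref{diffpsiHpsiD}; the choices of $d^D$, $d^S$ in \eqref{dDeps}, \eqref{dSeps} were made so as to cancel the leading logarithmic mismatch, and careful bookkeeping of cross-term cancellations in the expansion of $|\nabla\psi_0|^2$ converts the surviving residue into a Bernoulli defect of order $\varepsilon^{4(1-b)}|\log\varepsilon|^2$.

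\textbf{Elliptic estimate for $\Psi[0]$ and assembly.} To upgrade the Dirichlet mismatch into a bound for $\partial_\nu\Psi[0]$ on $\mathcal{S}_0$, I would apply a weighted barrier/Schauder argument to \eqref{eqPsi}: polynomial barriers on the disk side, the exponential barrier \eqref{superstrip} on the strip side, with the bulk right-hand side $\Delta\psi_0+\varepsilon\omega$ supported in the cut-off annuli. The H\"older component of $\|\cdot\|_{**,\partial\Omega_0}$ is recovered by rescaled interior and boundary Schauder estimates on balls of radius comparable to $\rho(s)$ and $\tilde\rho(s)$. Converting pointwise bounds to the weighted norm multiplies by $(1+|y|)^\sigma$, which at the critical radius $|y|\sim\varepsilon^{-b}$ equals $\varepsilon^{-\sigma b}$; this produces the three terms of $\ell_0$: the outer disk/strip defect contributes $1-\sigma b$, the transition annulus contributes $4(1-b)-\sigma b$, and the residual $g_1$-dependence under \eqref{hg}, entering via the perturbation of $\mathcal{S}_0$ and through the $g_0$-source in \eqref{eqPsi1D}, contributes $1+m+b(1-\sigma)$. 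The main obstacle is the transition annulus: the naive estimate of cutoff-derivative terms, weighted by $\varepsilon^{-\sigma b}$, is too large, and one must track precisely how the matching built into \eqref{dDeps} and \eqref{dSeps} suppresses the leading contributions and how the squaring structure of the Bernoulli condition converts the Dirichlet-level cancellation into the claimed exponent.
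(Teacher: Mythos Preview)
Your overall strategy matches the paper's: decompose the Bernoulli defect, analyze it region by region, and control $\partial_\nu\Psi[0]$ through barriers and Schauder estimates (the paper splits $\Psi[0]=u_0+u_1$ with $u_0$ harmonic carrying the Dirichlet mismatch and $u_1$ carrying the bulk source, Lemmas~\ref{lem:error-u1}--\ref{lem:error-u0}). However, there is a genuine gap in your treatment of the $g_1$-dependence, and a misattribution of which region produces which exponent.

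\textbf{The $g_1$-dependence.} You write that the subtraction in \eqref{def:E0} ``removes the linear piece $g_1(\varepsilon y_2+d^S)$, so that $E_0$ retains $g_1$-dependence only through $\Psi[0]$ and the geometry of $\mathcal{S}_0$.'' But $\Psi[0]$ is defined by \eqref{eqPsi} with $h=0$, and both it and $\mathcal{S}_0$ are completely independent of $g_1$. The point you are missing is the denominator $2P=2\partial_t\hat\psi_0[0]$ in \eqref{def:E1}: the term inside $E$ containing $g_1$ is $\frac{1}{P}g_1(\varepsilon y_2+d^S)$, and subtracting $g_1(\varepsilon y_2+d^S)$ leaves the explicit residue $\bigl(\tfrac{1}{P}-1\bigr)g_1(\varepsilon y_2+d^S)$. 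This is exactly the paper's term $E_{0,2}/(2P)$ with $E_{0,2}=2(1-P)g_1(\varepsilon y_2+d^S)$. The exponent $1+m+b(1-\sigma)$ then comes from Lemma~\ref{lem:normal-psi0}, which gives $1-P=O(\varepsilon|\log\varepsilon|)+O(|y|^{-1})$ in the outer region; at $|y|\sim\varepsilon^{-b}$ this is $O(\varepsilon^{b})$, multiplied by $|g_1|\leq\varepsilon^{1+m}$ and the weight $\varepsilon^{-\sigma b}$. Your proposed mechanism (``entering via the perturbation of $\mathcal{S}_0$ and through the $g_0$-source in \eqref{eqPsi1D}'') does not exist.

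\textbf{Attribution of the other exponents.} The outer disk region does not produce $1-\sigma b$: there the Bernoulli defect is $O(\varepsilon^2/|x|^2)$ (see \eqref{abc2}), which after weighting gives $\varepsilon^{b(2-\sigma)}>\varepsilon^{1-\sigma b}$ since $b>\tfrac12$. Both $1-\sigma b$ and $4(1-b)-\sigma b$ arise in the hairpin and transition regions $R_1^\pm\cup R_2^\pm$, from terms of type $\varepsilon\log|y|$ and $\varepsilon^4|y|^4$ respectively in the expansion of $|\nabla\tilde\psi^H|^2-1$ (see \eqref{A} and \eqref{gr1}); the exponent $1-\sigma b$ also reappears in the $\Psi[0]$ estimates of Lemmas~\ref{lem:error-u1}--\ref{lem:error-u0}. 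This misattribution is less serious than the $g_1$ issue, since your region-by-region scheme would eventually locate these terms correctly, but as written your bookkeeping is off.
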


The proof is divided into several computations that we state as separate lemmas 
which are proved later in the section.

Recall that $E$ is defined in \eqref{def:E1} where $E_1$ is given by \eqref{E}.
Let us write
\begin{align*}
P(y) = \partial_t \hat\psi_0[0]|_{t=0}(y) = - \nabla_y \psi_0\cdot \nu (y)
\end{align*}
and decompose
\begin{align*}
E_0 &= \frac{E_{0,1}}{2P}+\frac{E_{0,2}}{2P}+\frac{E_{0,3}}{2P} ,
\end{align*}
where
\begin{align*}
E_{0,1} &=
|\nabla_y \psi_0|^2-1+ 2 \varepsilon g_0  (\varepsilon y_2 +d^S) 
\\
E_{0,2}&=
2(1-P) g_1 (\varepsilon y_2+d^S)
\\
E_{0,3}&=
2 \nabla_y \psi_0 \cdot \nabla_y \Psi[0] + |\nabla_y \Psi[0]|^2 .
\end{align*}

We start with some properties of $P$.
\begin{lemma}
\label{lem:normal-psi0}
For $\varepsilon>0$ sufficiently small
\begin{align*}
\frac{1}{2}\leq  P(y) \leq 2, \quad y\in \partial\Omega_0,
\end{align*}
\begin{align}
\label{estP}
P(y)=1+O(\varepsilon|\log\varepsilon|)+O\Bigl(\frac{1}{|y|}\Bigr), \quad 
|y|\geq 2\varepsilon^{-b} ,
\end{align}
and for $k=1,2$
\begin{align*}
(1+|y|)^k |D^k_y P(y)| \leq C \Bigl( \frac{1}{1+|y|} + \varepsilon (1+|y|)\Bigr) , \quad |y| \leq \frac{\delta_1}{\varepsilon} .
\end{align*}
\end{lemma}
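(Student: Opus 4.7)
The proof splits $\partial\Omega_0$ into the three natural pieces determined by the cut-off $\chi_0(\varepsilon^b y)$ in \eqref{psi0}. My plan is to compute $P=-\nabla_y\psi_0\cdot\nu$ in each piece by inserting the piecewise expression for $\psi_0$ together with the description of $\partial\Omega_0$ via $g^\pm$ in \eqref{upperbdy}–\eqref{defgm} (or via $\frac{1}{\varepsilon}\partial\Omega^D_\varepsilon$, $\frac{1}{\varepsilon}\partial\Omega^S_\varepsilon$ outside the transition annulus).

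In the inner region $|y|\le\varepsilon^{-b}$, I use $\psi_0=\tilde\psi^H$ and the fact that $|\nabla\psi^H|\equiv1$ on $\partial\Omega^H$ with $\nabla\psi^H=-\nu_H$ (inward), which gives $-\nabla\psi^H\cdot\nu_H\equiv 1$ identically. The correction $\tilde\psi^H-\psi^H$ is a quadratic polynomial of size $O(\varepsilon|y|^2)$ with gradient $O(\varepsilon|y|)$ (coming from \eqref{tildePsiH}), and the boundary modification $\tilde g_H^+-g_H^+=\tfrac14\varepsilon\omega y_1^2(1-\eta_0(|y_1|-10))$ changes $\nu$ by $O(\varepsilon|y|)$. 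A Taylor expansion around $\partial\Omega^H$, using $|\nabla\psi^H\cdot\nu_H|=1$ and the exact normal from \eqref{bdyHairpin}, then yields $P=1+O(\varepsilon|y|)$, so $\frac12\le P\le 2$ throughout Region I. In the transition annulus $\varepsilon^{-b}\le|y|\le 2\varepsilon^{-b}$, I bound the contribution from the cut-off of $\psi_0$ by estimate \eqref{diffpsiHpsiD} (and its analog for $\tilde\psi^S$ from \eqref{ab2b}, \eqref{ab4}): differentiating the identity $\psi_0=\tilde\psi^H+(1-\chi_0)(\tilde\psi^D-\tilde\psi^H)$ (upper half) and controlling $\nabla\chi_0(\varepsilon^b y)=O(\varepsilon^b)$ against $|\tilde\psi^H-\tilde\psi^D|=O(\varepsilon^{1-b}|\log\varepsilon|)$ gives total error $O(\varepsilon|\log\varepsilon|+\varepsilon^{2b}|\log\varepsilon|)$, matching the same $O(\varepsilon^{1-b}|\log\varepsilon|)$ contribution from the interpolation of the boundary curves between $\tilde g_H^\pm$ and $g_{\varepsilon,D}^-$ or $g_{\varepsilon,S}^+$.

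In the outer region $|y|\ge 2\varepsilon^{-b}$, $y_2>0$, I use $\psi_0=\tilde\psi^D$ and $\partial\Omega_0=\frac{1}{\varepsilon}\partial\Omega^D_\varepsilon$. Writing $\nabla_y\tilde\psi^D(y)=(\nabla_x\Psi^D+\varepsilon\nabla_x\Psi_1^D)(\varepsilon y)$ and using the linearization from Section~\ref{sectFormal} (the Robin condition \eqref{eqPsi1D} is precisely the first-order content of $|\nabla(\Psi^D+\varepsilon\Psi_1^D)|^2=1$ on $\partial\Omega^D_\varepsilon$ away from $P_D$), the asymptotic \eqref{expNPsi1D} controls the quadratic remainder: $\varepsilon|\nabla_x\Psi_1^D(\varepsilon y)|\lesssim 1/|\varepsilon y-P_D|=O(1/|y|)$ since $|P_D/\varepsilon|=O(|\log\varepsilon|)\ll|y|$ in this region, while the leading $O(\varepsilon g_0 R)=O(\varepsilon|\log\varepsilon|)$ linear term comes from the Neumann data $g_0(x_2+d^S)$ in \eqref{eqPsi1D} (recalling $d^S\sim\varepsilon|\log\varepsilon|$). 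This gives $P=1+O(\varepsilon|\log\varepsilon|)+O(1/|y|)$, proving \eqref{estP}. The strip side is entirely analogous, with the exponential bound on $\Psi_1^S$ from Lemma~\ref{lem:sol-strip2} making that contribution even smaller.

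For the derivative bounds, I differentiate $P$ along $\partial\Omega_0$ using $\partial_s P=-D^2\psi_0\,T\cdot\nu+\kappa\nabla\psi_0\cdot T$. On the exact hairpin, differentiating $|\nabla\psi^H|^2=1$ along the boundary yields $D^2\psi^H T\cdot\nabla\psi^H=0$, so $D^2\psi^H T\cdot\nu=0$, and since $\nabla\psi^H\cdot T=0$, we get $\partial_s P\equiv 0$ on $\partial\Omega^H$; the corrections contribute $\partial_s P=O(\varepsilon)+O(1/(1+|y|)^2)$. Similar direct computations using the quadratic form of $\psi^D,\psi^S$ and the pointwise bounds \eqref{expNPsi1D} on $D^k\Psi_1^D$ (plus the exponential decay on the strip) give the bound $|D^k_y P(y)|\le C(1/(1+|y|)^{k+1}+\varepsilon/(1+|y|)^{k-1})$, which rearranges to the stated estimate. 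The main obstacle is the careful accounting in the transition annulus and matching the $|\log\varepsilon|$ and $1/|y|$ terms in Region III against the Robin data; once the correct linearized structure is in place, each estimate is a mechanical application of the asymptotic expansions of Lemmas~\ref{lemma:estGradPsiH}, \ref{lem:sol-disc2}, and \ref{lem:sol-strip2}.
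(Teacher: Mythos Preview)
Your approach is the paper's: the same three-region split by the cut-off $\chi_0(\varepsilon^b y)$, the exact identity $-\nabla\psi^H\cdot\nu_H\equiv 1$ on $\partial\Omega^H$ in the inner piece, and the change to the $x$-variable with $\Psi=\Psi^D+\varepsilon\Psi_1^D$ on $\frac1\varepsilon\partial\Omega^D_\varepsilon$ (resp.\ the strip analogue) in the outer piece. The paper computes $\nu_{\partial\Omega^D_\varepsilon}$ and $\nabla\Psi$ directly rather than invoking the Robin structure of \eqref{eqPsi1D}, but this is a cosmetic difference.

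One point in your Region~III argument is off. You attribute the $O(\varepsilon|\log\varepsilon|)$ in \eqref{estP} to the Neumann data via ``$O(\varepsilon g_0 R)=O(\varepsilon|\log\varepsilon|)$, recalling $d^S\sim\varepsilon|\log\varepsilon|$''. But $\varepsilon g_0 R=\varepsilon/R=O(\varepsilon)$, and $x_2+d^S=O(1)$ on $\partial\Omega^D_\varepsilon$, so that contribution is only $O(\varepsilon)$. The $|\log\varepsilon|$ enters through the boundary displacement: parametrizing $\partial\Omega^D_\varepsilon$ by $x=z+\varepsilon\Psi_1^D(z)\nu_{\partial\Omega^D}(z)$ with $z\in\partial\Omega^D$, one has $|\varepsilon\Psi_1^D(z)|=O(\varepsilon|\log|z-P_D||)=O(\varepsilon|\log\varepsilon|)$ in this region, and Taylor-expanding $\nabla\Psi^D$ across this displacement (with $|D^2\Psi^D|=O(1)$) produces the $O(\varepsilon|\log\varepsilon|)$ in $\nabla\Psi(x)+\nu_{\partial\Omega^D}(z)$ and hence in $P-1$. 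The $O(1/|y|)$ part, from $\varepsilon|\nabla\Psi_1^D|\approx 1/|y|$ and from the tangential tilt of $\nu_{\partial\Omega^D_\varepsilon}$, you have identified correctly. Your stated bound is right; only the bookkeeping behind the $|\log\varepsilon|$ needs this correction.
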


\begin{lemma}
\label{lem:error1}
Assume that $\mu$ satisfies \eqref{eq:mu}.
Then for $k=0,1,2$:
\begin{align}
\label{estE1k1}
(1+|y|)^k |D^k_y E_{0,1}(y)|
\leq 
C \varepsilon^{\ell_1}
(1+|y|)^{-\sigma} , \quad  |y| \leq \frac{\delta_1}{\varepsilon}
\end{align}
\begin{align}
\label{estE1k2}
|D^k_y E_{0,1}(y)| \leq C \varepsilon^{\ell_1}
(\frac{\varepsilon}{\delta_1})^{k+\sigma} e^{-\mu \varepsilon|y|}, \quad  |y| \geq \frac{\delta_1}{\varepsilon}
\end{align}
where $\ell_1$ is given by 
\begin{align}
\label{ell1}
\ell_1 = \min(1-\sigma b, 4(1-b) - \sigma b) .
\end{align}
\end{lemma}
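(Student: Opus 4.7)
I split $\partial \Omega_0$ into the hairpin region $|y| \leq \varepsilon^{-b}$, the transition annulus $\varepsilon^{-b} \leq |y| \leq 2\varepsilon^{-b}$, and the outer region $|y| \geq 2\varepsilon^{-b}$ (subdivided into the disk piece $y_2 > 0$ and the strip piece $y_2 < 0$), and estimate $E_{0,1}$ and its first two derivatives region by region.

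\emph{Hairpin region.} Here $\psi_0 = \tilde\psi^H$ and $\partial\Omega_0 = \partial\Omega^H_\varepsilon$. In the upper half, $\tilde\psi^H = \psi^H - \tfrac14 \varepsilon \omega |y|^2$, so
\[
|\nabla \tilde\psi^H(y^*)|^2 = |\nabla\psi^H(y^*)|^2 - \varepsilon\omega\, y^* \cdot \nabla\psi^H(y^*) + \tfrac14 \varepsilon^2 \omega^2 |y^*|^2.
\]
For a boundary point $y^* = (y_1, \tilde g_H^+(y_1))$ on $\partial\Omega^H_\varepsilon$ and the nearby reference point $y = (y_1, g_H^+(y_1)) \in \partial\Omega^H$ (at vertical distance $\tfrac14 \varepsilon \omega y_1^2$), I combine $|\nabla\psi^H|^2 = 1$ on $\partial\Omega^H$ with the asymptotics of Lemma~\ref{lemma:estGradPsiH} to obtain $|\nabla\psi^H(y^*)|^2 = 1 + O(\varepsilon)$ and $y^* \cdot \nabla\psi^H(y^*) = y_2^* - 1 + O(\log|y|/|y|)$. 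The putatively dominant $O(\varepsilon y_1^2)$ contribution from $-\varepsilon\omega y_2^*$ is then exactly cancelled by the $\tfrac14 \varepsilon^2 \omega^2 (y_1^*)^2$ piece of the last term, using $y_2^* \approx \tfrac14 \varepsilon\omega (y_1^*)^2$. What remains is $-\varepsilon\omega(\log(2|y|)-1) + \tfrac14 \varepsilon^2 \omega^2 (y_2^*)^2 + \text{smaller}$, to which the gravitational contribution $2\varepsilon g_0(\varepsilon y_2 + d^S) = O(\varepsilon^2 |y|) + O(\varepsilon^2 |\log \varepsilon|)$ is added without disturbing the dominant terms. Multiplied by $(1+|y|)^\sigma$ and evaluated at $|y| = \varepsilon^{-b}$, the two residual terms convert to $\varepsilon^{1 - \sigma b}$ (modulo a logarithm absorbed in the weight $\sigma$) and $\varepsilon^{4(1-b) - \sigma b}$, precisely the two exponents defining $\ell_1$. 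The lower half, where $\tilde\psi^H = \psi^H - \tfrac12 \varepsilon \omega y_2^2$ and the boundary is unmodified, is handled analogously and only produces the first contribution.

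\emph{Outer region.} The correctors $\Psi_1^D$ and $\Psi_1^S$ were designed so that the leading-order Neumann error cancels. Since $\Psi^D$ is quadratic, Taylor expansion on the perturbed disk boundary $\partial\Omega^D_\varepsilon$ (normal shift $\varepsilon\Psi_1^D$) gives exactly
\[
|\nabla \tilde\psi^D|^2 - 1 = \omega \varepsilon \Psi_1^D - 2 \varepsilon \partial_\nu \Psi_1^D + O(\varepsilon^2 |\Psi_1^D|^2 + \varepsilon^2 |\nabla \Psi_1^D|^2).
\]
Substituting the Robin equation $\partial_\nu \Psi_1^D = \tfrac{1}{R} \Psi_1^D + g_0(x_2 + d^S) + \pi \delta_{P_D}$ from \eqref{eqPsi1D}, using $\omega R = 2$ and noting that $P_D$ is far from the outer region (distance $\geq 2\varepsilon^{1-b}$ on the rescaled variable), the $O(\varepsilon)$ terms cancel against the added $2\varepsilon g_0(\varepsilon y_2 + d^S)$, leaving only $O(\varepsilon^2 |\log \varepsilon|^2)$ via Lemma~\ref{lem:sol-disc2}. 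The strip analogue, using \eqref{eqPsi1S} and the exponential decay of $\Psi_1^S$ from Lemma~\ref{lem:sol-strip2}, produces the same leading-order cancellation plus an exponentially decaying residual, yielding \eqref{estE1k2} in $|y| \geq \delta_1/\varepsilon$ where only the strip boundary extends. In the transition annulus, the cutoff $\chi_0(\varepsilon^b y)$ in \eqref{psi0} generates commutator terms bounded by the matching estimate \eqref{diffpsiHpsiD}, giving a contribution of order $\varepsilon^{2b} |\log\varepsilon|^2$ that is absorbed in $\varepsilon^{\ell_1}$.

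\emph{Derivatives and main obstacle.} The bounds for $k = 1, 2$ follow by differentiating the same expansions: each derivative of $\psi^H$ loses a power of $|y|^{-1}$ by Lemma~\ref{lemma:estGradPsiH}, and each derivative of $\Psi_1^D$ or $\Psi_1^S$ loses a power of $|x - P_D|^{-1}$ or decays exponentially, which matches the weight $(1+|y|)^k$ on the left side of \eqref{estE1k1}. The principal technical obstacle is the hairpin cancellation: verifying by careful Taylor bookkeeping that the putatively dominant $\varepsilon^{3-2b}$ term, which would arise from $-\varepsilon\omega y_2^*$ together with $\tfrac14 \varepsilon^2 \omega^2 (y_1^*)^2$ on the shifted boundary, indeed cancels algebraically so that only the quadratic residual $\tfrac14 \varepsilon^2 \omega^2 (y_2^*)^2 \sim \varepsilon^{4(1-b)}$ survives. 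Once this cancellation is in hand, both exponents in $\ell_1 = \min(1-\sigma b, 4(1-b)-\sigma b)$ are clearly visible and are the sharp contributions from the logarithmic and quadratic residuals respectively.
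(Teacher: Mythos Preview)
Your proposal is correct and follows essentially the same route as the paper: the same three-region decomposition, the same Taylor expansion of $|\nabla\tilde\psi^H|^2$ with the crucial algebraic cancellation $-\varepsilon\omega\cdot\tfrac14\varepsilon\omega y_1^2 + \tfrac14\varepsilon^2\omega^2 y_1^2 = 0$ on the shifted hairpin boundary, and the same use of the Robin equations \eqref{eqPsi1D}, \eqref{eqPsi1S} in the outer region. One small correction: on the strip side the Robin equation \eqref{eqPsi1S} has no $g_0(x_2+d^S)$ term, so the gravitational piece $2\varepsilon g_0(\varepsilon y_2+d^S)$ is \emph{not} cancelled there---instead it is small because $x_2+d^S=O(\varepsilon|\log|x||)$ on the perturbed strip boundary (this is how the paper handles \eqref{estR3m1}--\eqref{estR3m2}).
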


\medskip

\begin{lemma}
\label{lem:error1-2}
Let $\frac12<b<\frac23$.
Assume that $g_1$ satisfies \eqref{hg} and that $\mu$ satisfies \eqref{eq:mu}.
Then for $k=0,1,2$:
\begin{align}
\label{estE1k1-2}
(1+|y|)^k |D^k_y E_{0,2}(y)|
\leq 
C \varepsilon^{1+m+b(1-\sigma)}|\log\varepsilon|
(1+|y|)^{-\sigma} , \quad  |y| \leq \frac{\delta_1}{\varepsilon}
\end{align}
\begin{align}
\nonumber
|D^k_y E_{0,2}(y)| \leq C \varepsilon^{1+m+b(1-\sigma)}|\log\varepsilon|
(\frac{\varepsilon}{\delta_1})^{k+\sigma} e^{-\mu \varepsilon|y|}, \quad  |y| \geq \frac{\delta_1}{\varepsilon}.
\end{align}
\end{lemma}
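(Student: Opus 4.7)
The structure $E_{0,2} = 2(1-P)g_1(\varepsilon y_2 + d^S)$ invites estimating each factor separately. By hypothesis \eqref{hg}, $|g_1| \le C\varepsilon^{1+m}$, and since $d^S = O(\varepsilon|\log\varepsilon|)$ by \eqref{dSeps}, we have $|\varepsilon y_2 + d^S| \le \varepsilon(1+|y|) + C\varepsilon|\log\varepsilon|$ for every $y \in \partial\Omega_0$. The essential work is obtaining a sharp estimate on $|1-P|$. For $|y| \ge 2\varepsilon^{-b}$, Lemma~\ref{lem:normal-psi0}, specifically \eqref{estP}, delivers $|1-P| \le C(\varepsilon|\log\varepsilon| + (1+|y|)^{-1})$ directly. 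For the inner region $|y| \le 2\varepsilon^{-b}$ I would argue from the construction in Section~\ref{sec:approx}: up to cut-off contributions, $\psi_0$ coincides with $\tilde\psi^H = \psi^H - \tfrac14\varepsilon\omega|y|^2\eta_1^+ - \tfrac12\varepsilon\omega y_2^2\eta_1^-$, so $\nabla\psi_0$ differs from $\nabla\psi^H$ by $O(\varepsilon(1+|y|))$. Since $|\nabla\psi^H| = 1$ on $\partial\Omega^H$ and $\partial\Omega_0$ lies within $O(\varepsilon(1+|y|)^2)$ of $\partial\Omega^H$ with normals differing by $O(\varepsilon(1+|y|))$, a first-order expansion shows $|P|^2 = 1 + O(\varepsilon(1+|y|))$, hence $|1-P| \le C\varepsilon(1+|y|)$ throughout $\partial\Omega_0\cap\{|y|\le 2\varepsilon^{-b}\}$. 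In the transition annulus, the interpolation error from \eqref{diffpsiHpsiD} and the logarithmic correction $\Psi_1^D$ contribute additional terms that are dominated by $C\varepsilon(1+|y|)$ once $b>1/2$.

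Combining these bounds in the inner region gives
\begin{align*}
|E_{0,2}(y)| \le C\varepsilon(1+|y|) \cdot \varepsilon^{1+m} \cdot \varepsilon(1+|y|+|\log\varepsilon|) \le C\varepsilon^{3+m}(1+|y|)^2|\log\varepsilon|,
\end{align*}
which at $|y|\sim\varepsilon^{-b}$ becomes $C\varepsilon^{3+m-2b}|\log\varepsilon|$. The target value at this location equals $C\varepsilon^{1+m+b}|\log\varepsilon|$, and the comparison $3+m-2b\ge 1+m+b$ reduces to $b\le 2/3$, matching the hypothesis. At smaller $|y|$ the bound follows easily from the $|\log\varepsilon|$ contribution alone. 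In the outer range $2\varepsilon^{-b}\le |y|\le \delta_1/\varepsilon$, the estimate $|1-P|\le C(\varepsilon|\log\varepsilon| + (1+|y|)^{-1})$ combined with the factors $\varepsilon^{1+m}$ and $\varepsilon(|y|+|\log\varepsilon|)$ yields the target profile $\varepsilon^{1+m+b(1-\sigma)}|\log\varepsilon|(1+|y|)^{-\sigma}$ using only $b<1$.

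Derivative estimates for $k=1,2$ follow by applying $D^k_y$ to $E_{0,2}$ and using the bounds on $D^k_y P$ supplied by Lemma~\ref{lem:normal-psi0} in the outer region and by direct differentiation of the construction in the inner region, with each derivative contributing an extra factor of $(1+|y|)^{-1}$ compatible with the norm \eqref{defNf2Omega0}. For $|y|\ge \delta_1/\varepsilon$, $\psi_0$ reduces to $\tilde\psi^S = \psi^S + \psi_1^S$; since $|\nabla\psi^S|=1$ on the upper boundary of $\Omega^S$, the deviation $1-P$ is controlled by $\nabla\psi_1^S(y) = \varepsilon\nabla\Psi_1^S(\varepsilon y)$, which by Lemma~\ref{lem:sol-strip2} enjoys exponential decay $O(\varepsilon e^{-\mu\varepsilon|y|})$; this translates to the required exponential decay of $E_{0,2}$, with any small loss in the exponential rate absorbed by choosing a slightly larger $\mu$ still satisfying \eqref{eq:mu}. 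The principal obstacle is the sharp tracking of $|1-P|$ in the transition region $|y|\sim\varepsilon^{-b}$, which is precisely what forces the restriction $b<2/3$.
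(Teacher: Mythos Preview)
Your approach is correct and reaches the same conclusion as the paper, but via a slightly different trade-off in the inner region $|y|\le 2\varepsilon^{-b}$. The paper simply uses the crude bound $|1-P|\le C$ from Lemma~\ref{lem:normal-psi0} together with the sharp geometric fact that on $\partial\Omega_0$ one has $|y_2|\le C(\log|y|+\varepsilon|y|^2)$, so that $|\varepsilon y_2+d^S|\le C(\varepsilon|\log\varepsilon|+\varepsilon^2|y|^2)$; this already yields $(1+|y|)^\sigma|E_{0,2}|\le C\varepsilon^{3+m-2b-\sigma b}$ in $R_1^+\cup R_2^+$, and the condition $b<\tfrac23$ ensures this exponent exceeds $1+m+b(1-\sigma)$. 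You instead invoke the sharper bound $|1-P|\le C\varepsilon(1+|y|)$ but pair it with the crude $|\varepsilon y_2+d^S|\le C\varepsilon(1+|y|)$. The products coincide, so both routes give the same estimate at the transition scale.

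One caution: your justification of $|1-P|\le C\varepsilon(1+|y|)$ is a bit loose. Writing ``$|P|^2=1+O(\varepsilon(1+|y|))$'' risks conflating $P^2=(\nabla\psi_0\cdot\nu)^2$ with $|\nabla\psi_0|^2$; these differ by the tangential part $(\partial_s\psi_0)^2$. The claim is true, but Lemma~\ref{lem:normal-psi0} as stated only gives $P=1+O(|y|^{-1}+\varepsilon|y|)$, so you would need a separate computation exploiting that $\nabla\psi^H\cdot\nu_{\Omega^H}=-1$ \emph{exactly} on $\partial\Omega^H$ and tracking the perturbation carefully. The paper avoids this by relying on the readily available bound on $|y_2|$ along $\partial\Omega_0$ instead, which makes for a shorter argument using only ingredients already in hand.
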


\medskip

To find an estimate for $E_{0,3}$ let us write
\begin{align*}
	\Psi_0[0] = u_0 + u_1
\end{align*}
where $u_0$ is the solution to 
\begin{align}
	\label{equ0}
	\left\{
	\begin{aligned}	
		\Delta u_0 &= 0  && \text{in } \Omega_0, 
		\\
		u_0 &= \psi_0  && \text{on } \mathcal{S}_0,
		\\
		u_0 &= 0  && \text{on } \mathcal{B}_0,
	\end{aligned}
	\right.
\end{align}
and $u_1$ is the solution to 
\begin{align}
	\label{equ1}
	\left\{
	\begin{aligned}
		\Delta u_1 &= \Delta \psi_0 + \varepsilon\omega && \text{in } \Omega_0,
		\\
		u_1 &= 0 && \text{on } \partial\Omega_0 = \mathcal{B}_0\cup\mathcal{S}_0.
	\end{aligned}
	\right.
\end{align}
When we say the solution to either \eqref{equ0} or \eqref{equ1} we mean the unique bounded solution to the problem.

\begin{lemma}
	\label{lem:error-u1}
	Let $u_1$ be the bounded solution to \eqref{equ1}.
	Then 
	\begin{align*}
		\Bigl\|  \frac{\partial u_1}{\partial \nu}
		\Bigr\|_{**,\partial\Omega_0} 
		\leq C\varepsilon^{1-\sigma b}|\log\varepsilon|.
	\end{align*} 
\end{lemma}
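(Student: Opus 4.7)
First, I estimate the source $f:=\Delta\psi_0+\varepsilon\omega$ pointwise. By the design of $\tilde\psi^H$, $\tilde\psi^D$, $\tilde\psi^S$ in Section~\ref{sec:approx}, each individually satisfies $\Delta+\varepsilon\omega=0$ on its natural domain away from the $\eta_1^{\pm}$ cutoff region, so $f$ is supported essentially in the transition annulus $A_\varepsilon:=\{\varepsilon^{-b}\leq|y|\leq 2\varepsilon^{-b}\}$ together with a bounded region near $\{|y_2|\leq 2\}$. On $A_\varepsilon\cap\{y_2>0\}$, writing $\psi_0=\tilde\psi^D+\chi_0(\varepsilon^b\cdot)(\tilde\psi^H-\tilde\psi^D)$ and using $\Delta\tilde\psi^H=\Delta\tilde\psi^D=-\varepsilon\omega$ there, one gets
\begin{equation*}
f = \Delta\chi_0(\varepsilon^b\cdot)\,(\tilde\psi^H-\tilde\psi^D) + 2\nabla\chi_0(\varepsilon^b\cdot)\cdot\nabla(\tilde\psi^H-\tilde\psi^D).
\end{equation*}
Combining \eqref{diffpsiHpsiD} with the asymptotic expansions from Lemma~\ref{lemma:estGradPsiH} and \eqref{expNPsi1D} gives $|\tilde\psi^H-\tilde\psi^D|\leq C\varepsilon^{1-b}|\log\varepsilon|$ and $|\nabla(\tilde\psi^H-\tilde\psi^D)|\leq C\varepsilon|\log\varepsilon|$ on $A_\varepsilon$ (using $b>1/2$), and together with the cut-off bounds $|\Delta\chi_0(\varepsilon^b\cdot)|\leq C\varepsilon^{2b}$, $|\nabla\chi_0(\varepsilon^b\cdot)|\leq C\varepsilon^b$, this yields $|f|\leq C\varepsilon^{1+b}|\log\varepsilon|$ on $A_\varepsilon$. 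The symmetric estimate in the lower half uses Lemma~\ref{lem:sol-strip2}, and the compact cutoff region near $|y_2|\leq 2$ contributes only $O(\varepsilon)$.

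Next, I would convert this pointwise bound on $f$ into the $\|\cdot\|_{**,\partial\Omega_0}$-norm estimate for $\partial_\nu u_1$. The key structural observation is that $f = \Delta g$ where $g := \chi_0(\varepsilon^b\cdot)\left[(\tilde\psi^H-\tilde\psi^D)\eta_1^+ + (\tilde\psi^H-\tilde\psi^S)\eta_1^-\right]$ is compactly supported, so $\int_{\R^2} f = 0$ modulo the small $\eta_1^{\pm}$ contribution. This cancellation makes the Newtonian potential of $f$ decay algebraically at infinity rather than logarithmically. Combining it with the exponential-decay barrier \eqref{superstrip} for the strip direction, the Poisson-kernel representation on the scaled disk $\frac{1}{\varepsilon}\Omega^D_\varepsilon$, and the hairpin linear theory from Section~\ref{sec:LinH} (in particular Lemma~\ref{lemmaH1}), one constructs piecewise matched supersolutions in the three building-block regions. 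Boundary Schauder estimates applied in balls of size proportional to $(1+|y|)$ (or, near the transition, the natural scale $\varepsilon^{-b}$) then convert the resulting $L^\infty$ bounds on $u_1$ into the pointwise bound on $\partial_\nu u_1$, yielding the algebraic decay $(1+|y|)^{-\sigma}$ on $\{|y|\leq\delta_1/\varepsilon\}$ and exponential decay for $|y|\geq\delta_1/\varepsilon$.

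The main obstacle is to achieve the correct algebraic profile $(1+|y|)^{-\sigma}$ uniformly on the boundary: a naive Green's function estimate gives only $|u_1(y)|\leq C\varepsilon^{1-b}|\log\varepsilon|$ near the neck, which translates to a bound on $|\partial_\nu u_1|$ that exceeds the target $\varepsilon^{1-\sigma b}|\log\varepsilon|$ by the factor $\varepsilon^{-b(1-\sigma)}$. Closing this gap requires a finer multi-pole expansion of the Newtonian potential that exploits the additional cancellations (for instance $\int y_1 f = 0$ from the $y_1$-evenness of $f$, and $\int y_2 f\approx 0$ from $f=\Delta g$ with $g$ compactly supported), together with a careful matching of the correctors across the hairpin/disk/strip interfaces so that neither algebraic growth nor extra logarithmic factors accumulate across the three regions.
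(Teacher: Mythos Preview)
Your pointwise estimate on $f=\Delta\psi_0+\varepsilon\omega$ is correct and coincides with the paper's intermediate lemma: $|f|\leq C\varepsilon$ on a fixed ball near the neck and $|f|\leq C\varepsilon^{1+b}|\log\varepsilon|$ on the transition annulus, zero elsewhere.

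However, your route from this to the boundary estimate is misguided, and the ``main obstacle'' you identify is not real. You propose to control $u_1$ via the free-space Newtonian potential of $f$, and then argue that multipole cancellations (from $f=\Delta g$ with $g$ compactly supported, evenness in $y_1$, etc.) are needed to upgrade the naive bound $\varepsilon^{1-b}|\log\varepsilon|$ near the neck to the target $\varepsilon^{1-\sigma b}|\log\varepsilon|$. None of this is necessary. The crucial point you are missing is that $u_1$ satisfies a \emph{zero Dirichlet condition} on the boundary of a \emph{thin} domain: in the hairpin geometry (conformally a strip) the Dirichlet Green's function decays exponentially along the strip direction, so there is no logarithmic pile-up to cancel.

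Concretely, the paper repackages the pointwise bound as the single weighted inequality
\[
|f(y)|\leq C\,\varepsilon^{1-\sigma b}|\log\varepsilon|\,(1+|y|)^{-1-\sigma},
\]
which holds both on the bounded support of $f_1$ and on the annulus (check: at $|y|\sim\varepsilon^{-b}$ the right side is $\varepsilon^{1-\sigma b}|\log\varepsilon|\,\varepsilon^{b(1+\sigma)}=\varepsilon^{1+b}|\log\varepsilon|$). One then invokes the Dirichlet linear theory on $\tilde\Omega^H$ (Lemma~\ref{lem:lh2}, Remark~\ref{rem:u1}) to produce a positive function $\bar u_1$ with $-\Delta\bar u_1=\varepsilon^{1-\sigma b}|\log\varepsilon|(1+|y|)^{-1-\sigma}$, $\bar u_1=0$ on $\partial\Omega_0$, and two-sided bounds $\bar u_1\sim\varepsilon^{1-\sigma b}|\log\varepsilon|(1+|y|)^{1-\sigma}$. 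Truncating $\bar u_2=\min(\bar u_1,K\varepsilon^{1-b}|\log\varepsilon|)$ gives a weak supersolution on all of $\Omega_0$, and the maximum principle yields $|u_1|\leq\bar u_2$. Exponential decay for $|y|\geq\delta_1/\varepsilon$ comes from the strip barrier \eqref{superstrip}, and the gradient bound then follows from standard boundary Schauder estimates in balls of radius $\sim(1+|y|)$. No cancellation, multipole expansion, or matching across interfaces is used.
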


\begin{lemma}
	\label{lem:error-u0}
	Assume
	\begin{align}
		\label{restricb1}
		\frac{1}{2}<b<\frac{2}{3}.
	\end{align}
	Let $u_0$ be the bounded solution to \eqref{equ0}.
	Then
	\begin{align*}
		\Bigl\|  \frac{\partial u_0}{\partial \nu}
		\Bigr\|_{**,\partial\Omega_0} \leq C \varepsilon^{1-b \sigma}|\log\varepsilon|.
	\end{align*}
\end{lemma}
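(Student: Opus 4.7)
Since $u_0$ is the bounded harmonic function with boundary data $\psi_0$ on $\mathcal{S}_0$ and $0$ on $\mathcal{B}_0$, the plan is to first quantify the smallness of $\psi_0|_{\mathcal{S}_0}$ in a weighted $C^{2,\alpha}$-norm compatible with $\|\cdot\|_{**,\partial\Omega_0}$, and then apply elliptic boundary regularity to convert this into an estimate of $\partial u_0/\partial\nu$. The point is that $\psi_0$ was constructed as an approximate solution of \eqref{overd0}, so its Dirichlet trace on $\mathcal{S}_0$ is automatically small.

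The trace estimate is obtained by splitting $\mathcal{S}_0$ into the three regions used in Section~\ref{sec:approx}. In the hairpin core $|y|\le \varepsilon^{-b}$, $\psi_0=\tilde\psi^H$ and $\mathcal{S}_0=\{y_2=\tilde g_H^+(y_1)\}$. The shift $\tilde g_H^+=g_H^++\frac14\varepsilon\omega y_1^2$ in \eqref{tgplus} was engineered precisely so that the leading term in a Taylor expansion of $\tilde\psi^H$ about $g_H^+$ cancels; combining $\psi^H|_{\partial\Omega^H}=0$ with the asymptotics \eqref{est3}--\eqref{D2psi} yields a residual of order $\varepsilon\log^2|y|+\varepsilon^2 y_1^2$ on $\mathcal{S}_0$, which when multiplied by $(1+|y|)^\sigma$ peaks at the outer edge $|y|\sim\varepsilon^{-b}$ at the desired level. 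In the outer region $|y|\ge 2\varepsilon^{-b}$, $\psi_0$ coincides with $\tilde\psi^D$ above and $\tilde\psi^S$ below; the graphs $g_{\varepsilon,D}^-$ and $g_{\varepsilon,S}^+$ were constructed from $h_1^D=-\varepsilon\Psi_1^D$ and $h_1^S=-\varepsilon\Psi_1^S$ using the first-order relation \eqref{linear-rel-psi-h} exactly so that $\tilde\psi^D$ and $\tilde\psi^S$ vanish to first order on them, leaving only a quadratic Taylor remainder in $\varepsilon\Psi_1^{D/S}$, which by Lemmas~\ref{lem:sol-disc2} and \ref{lem:sol-strip2} is of order $\varepsilon^2|\log\varepsilon|^2$ near the tip and exponentially small in the strip far field. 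In the transition annulus $\varepsilon^{-b}\le|y|\le 2\varepsilon^{-b}$, $\psi_0$ is the convex combination $\tilde\psi^H\chi_0(\varepsilon^b y)+\tilde\psi^D(1-\chi_0(\varepsilon^b y))$ (or the analogous combination for the strip) evaluated on a boundary that interpolates $\tilde g_H^+$ and $g_{\varepsilon,D}^-$; thanks to the identity $\omega R=2$ the quadratic parts of these two graphs agree, so the geometric mismatch is only logarithmic, and \eqref{diffpsiHpsiD} bounds the interpolation residual by the same quantity that controls the hairpin region. Derivatives and Hölder seminorms are treated by differentiating these explicit expressions, using the derivative asymptotics of Lemma~\ref{lemma:estGradPsiH} and the boundary regularity of $\Psi_1^D$, $\Psi_1^S$.

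With the weighted $C^{2,\alpha}$-estimate on $\psi_0|_{\mathcal{S}_0}$ at hand, the Dirichlet problem \eqref{equ0} is handled in a standard way: subtract from $u_0$ a smooth extension of the boundary data with the same weighted norm to reduce to a Dirichlet problem with zero boundary data and small interior forcing; obtain a pointwise-weighted bound on $u_0$ via a barrier argument, using \eqref{superstrip} outside $B_{\delta_1/\varepsilon}$ together with a rescaled Poisson-kernel comparison in the disk and near-neck regions; and finally apply interior Schauder estimates on balls of radius comparable to $|y|$ (or $\delta_1/\varepsilon$ in the outer region) to extract the bound on $\partial u_0/\partial\nu$, its tangential derivative, and its Hölder seminorm in the $\|\cdot\|_{**,\partial\Omega_0}$ norm.

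\emph{Expected obstacle.} The most delicate step is the transition annulus, where differentiating $\chi_0(\varepsilon^b y)$ introduces factors $\varepsilon^b$ which must be balanced against the mismatch \eqref{diffpsiHpsiD} and the difference $g_{\varepsilon,D}^--\tilde g_H^+$. This is precisely where the restriction $\frac12<b<\frac23$ enters: $b>\frac12$ is needed so that the hairpin quadratic correction $\varepsilon y_1^2=\varepsilon^{1-2b}$ stays small throughout $|y|\le\varepsilon^{-b}$, while $b<\frac23$ keeps the quadratic remainder of the disk expansion subordinate to the main error. Tracking log powers carefully, and verifying that the cutoff derivatives do not worsen the exponent, is the technical bottleneck of the argument.
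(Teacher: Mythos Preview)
Your proposal is essentially correct and follows the same two-step strategy as the paper: first obtain weighted pointwise and derivative bounds on $\psi_0|_{\mathcal{S}_0}$ via the three-region decomposition (hairpin core, transition annulus, outer), then upgrade these to bounds on $u_0$ and $\partial u_0/\partial\nu$ via a barrier argument followed by elliptic estimates. Your reading of where the restriction $b<\tfrac{2}{3}$ enters is also right---it controls the term $\varepsilon^3|y|^4$ coming from the quartic remainder of $g_D^-$ in the annulus.

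One tactical difference worth noting: rather than subtracting a smooth extension of the boundary data and invoking a Poisson-kernel comparison, the paper constructs the barrier directly from the hairpin Dirichlet theory (Lemma~\ref{lem:lh2}). It solves $\Delta\bar u_0=0$ in $\tilde\Omega^H$ with boundary data $\varepsilon^{1-b\sigma}|\log\varepsilon|(1+|y|^2)^{(1-\sigma)/2}$, obtaining a function with matching \emph{two-sided} bounds $C_1(1+|y|)^{1-\sigma}\le\bar u_0\le C_2(1+|y|)^{1-\sigma}$; then $\bar u_2=\min(\bar u_0,K\varepsilon^{1-b}|\log\varepsilon|)$ is a globally defined weak supersolution. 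This is cleaner than an extension-plus-forcing reduction because the hairpin is thin (width $\sim\log|y|$ at distance $|y|$), so naive Schauder on balls of radius $\sim|y|$ is not directly available; the hairpin theory, built via the conformal map to a strip, already encodes the correct scaling. Your approach would work, but you would need to be careful that your ``interior Schauder on balls of radius $|y|$'' really means Schauder after conformal straightening (or equivalently an appeal to Lemma~\ref{lem:lh2}), not literal Euclidean balls.
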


\begin{proof}[Proof of Lemma~\ref{lem:normal-psi0}]
	We consider the regions:
	\begin{align}
		\label{regions}
		\begin{aligned}
			R_1 &= \{ y\in\partial\Omega_0, \ |y|\leq \varepsilon^{-b} \}
			\\
			R_2 &= \{ y\in\partial\Omega_0, \ \varepsilon^{-b}\leq |y|\leq 2\varepsilon^{-b} \}
			\\
			R_3 &= \{ y\in\partial\Omega_0, \  |y|\geq 2\varepsilon^{-b} \}
		\end{aligned}
	\end{align}
	and also distinguish $y_2\geq 0$ or $y_2\leq 0$, which we write as $R_j^{\pm}$ respectively, where $y=(y_1,y_2)$.
	
	If $L>0$ is fixed, then
	\begin{align*}
		P(y) = 1 + O(\varepsilon)\quad \text{for }y\in\partial\Omega_0, \ |y|\leq L.
	\end{align*}
	
	Next we estimate in the rest of $R_1^+$.
	For $y\in R_1^+$
	\begin{align*}
		\nu(y_1) = \frac{( (g^+)'(y_1),-1)}{\sqrt{1+( (g^+)'(y_1))^2}}
	\end{align*}
	where $g^+=\tilde g^+$ \eqref{tgplus}, \eqref{upperbdy}.
	Therefore for $y = (y_1,g^+(y_1)) \in \partial\Omega_0$, 
	\begin{align*}
		P(y) = - \nabla_y \psi_0(y)\cdot \nu (y) 
		&=  - \frac{ (\tilde g^+)'(y_1)}{\sqrt{1+( (\tilde g^+)'(y_1))^2}} \Bigl( \partial_{y_1} \psi^H - \frac{1}{2} \varepsilon \omega y_1 \Bigr)
		\\
		& \quad 
		+\frac{1}{\sqrt{1+( (\tilde g^+)'(y_1))^2}} \Bigl( \partial_{y_2} \psi^H - \frac{1}{2} \varepsilon \omega y_2 \Bigr)
		\\
		&= 1+O\Big( \frac{1}{|y|}+\varepsilon |y|\Bigr),
	\end{align*}
	so that the statement $\frac{1}{2}\leq P(y) \leq 2$ holds for $y\in R_1^+$.
	
	Let us consider $y\in R_3^+$. It is more convenient to work in the variable $x$, by considering
	\begin{align*}
		\tilde P(x) = P\Bigl(\frac{x}{\varepsilon}\Bigr) .
	\end{align*}
	Then
	\begin{align*}
		\tilde P(x) = -\nabla_x \Psi(x) \cdot \nu_{\partial \Omega^D_\varepsilon}(x),\quad x\in\partial\Omega^D_\varepsilon,
	\end{align*}
	where 
	\begin{align}
		\label{def:Psi}
		\Psi =  \Psi^D + \varepsilon \Psi_1^D
	\end{align}
	with $\Psi^D$ and $\Psi_1^D$ defined in \eqref{PsiD}, \eqref{eqPsi1D} respectively, and $\nu_{\partial \Omega^D_\varepsilon}$ denotes the unit outer normal to $\partial \Omega^D_\varepsilon$ \eqref{pOmegaDeps}.
	If $x\in\partial\Omega^D_\varepsilon$ we write
	\[
	x = z+\varepsilon\Psi_1^D(z)\nu_{\partial\Omega^D}(z)
	\]
	with $z\in \partial\Omega^D$ and $\nu_{\partial\Omega^D}$ the unit outer normal to $\partial\Omega^D$. Then we compute
	\begin{align*}
		\nu_{\partial\Omega^D_\varepsilon}(x) 
		& = \frac{1+\varepsilon R \Psi_1^D(z)}{\sqrt{(1+\varepsilon R \Psi_1^D(z))^2+\varepsilon^2 (\nabla \Psi_1^D(z) \cdot \nu_{\partial\Omega^D}^\perp(z))^2}}\nu_{\partial\Omega^D}(z) 
		\\
		& \quad - \frac{\varepsilon \nabla \Psi_1^D(z) \cdot \nu_{\partial\Omega^D}^\perp(z)}{\sqrt{(1+\varepsilon R \Psi_1^D(z))^2+\varepsilon^2 (\nabla \Psi_1^D(z) \cdot \nu_{\partial\Omega^D}^\perp(z))^2}}\nu_{\partial\Omega^D}^\perp(z) .
	\end{align*}
	We also compute
	\begin{align*}
		\nabla\Psi(x)
		&= \nabla \Psi^D(z+\varepsilon\Psi_1^D(z)\nu_{\partial\Omega^D}(z)) 
		+ \varepsilon \Psi_1^D(z+\varepsilon\Psi_1^D(z)\nu_{\partial\Omega^D}(z)) 
		\\
		&= -\nu_{\partial\Omega^D}(z)+O\Bigl(\frac{\varepsilon}{|z|}\Bigr) ,
	\end{align*}
	by Lemma~\ref{lem:sol-disc2}. 
	This gives
	\begin{align*}
		P(y)=1+O(\varepsilon|\log\varepsilon|)+O\Bigl(\frac{1}{|y|}\Bigr)
	\end{align*}
	for $y\in R_3^+$ and in particular $\frac{1}{2}\leq P(y)\leq 2$ in this region for $\varepsilon$ small. 
	The estimate $\frac{1}{2}\leq P(y)\leq 2$ in the other regions and the estimate for the derivatives of $P$ are obtained similarly.
\end{proof}

\begin{proof}[Proof of Lemma~\ref{lem:error1}]
	
	We use the regions $R_j^\pm$ defined in \eqref{regions}.
	
	\medskip
	\noindent
	{\bf Estimate in $R_1^+$.}
	We claim that 
	\begin{align}
		\label{estR1p}
		\sup_{y\in R_1^+}
		(1+|y|)^{\sigma} \,
		\left| |\nabla_y \psi_0|^2 -1  + 2 \varepsilon g_0(\varepsilon y_2 + d^S)  \right| 
		\leq C \varepsilon^{\ell_1} |\log\varepsilon|,
	\end{align}
	where $\ell_1$ is given by \eqref{ell1}.
	Indeed, in the region $R_1^+$ we estimate
	\begin{align*}
		\left| |\nabla_y \psi_0|^2 -1  + 2 \varepsilon g_0(\varepsilon y_2 + d^S)  \right| 
		\leq 
		\left| |\nabla_y \psi_0|^2 -1   \right| 
		+
		2 \left|   \varepsilon g_0(\varepsilon y_2 + d^S)  \right|  .
	\end{align*}
	Using \eqref{dSeps} and that for $y\in R_1^+$, $|y_2| \leq C( |\log\varepsilon|+ \varepsilon y_1^2)$ we get
	\begin{align*}
		\sup_{y\in R_1^+}  (1+|y|)^\sigma |\varepsilon g_0(\varepsilon y_2 + d^S) |
		& \leq C 
		\sup_{y\in R_1^+} (1+|y|)^\sigma ( \varepsilon^2|\log\varepsilon| + \varepsilon^3 |y|^2)
		\\
		& \leq C  \varepsilon^{3-2b-\sigma b}.
	\end{align*}
	
	Next we estimate $(1+|y|)^{\sigma} \,\left| |\nabla_y \psi_0|^2 -1   \right| $ in the region $R_1^+$. We note that for any $K>0$
	\begin{align*}
		(1+|y|)^{\sigma} \,\left| |\nabla_y \psi_0|^2 -1   \right| \leq C \varepsilon,
		\quad |y|\leq K, \ y\in\partial\Omega_0.
	\end{align*}
	Next consider the region
	\begin{align}
		\label{reg1b}
		y \in \partial \Omega_0 , \quad 
		10 \leq  |y|\leq  \varepsilon^{-b},
		\quad 
		y_2>0 .
	\end{align}
	Here we have $ \psi_0(y) = \tilde \psi^H(y)=\psi^H(y) - \frac{1}{4}\varepsilon \omega |y|^2 $ and so
	\begin{align}
		\label{grad2Psi}
		|\nabla_y \tilde \psi^H|^2 
		= |\nabla_y \psi^H|^2
		-  \varepsilon \omega \partial_{y_1} \psi^H y_1
		-  \varepsilon \omega \partial_{y_2} \psi^H y_2
		+ \frac{1}{4} \varepsilon^2 \omega |y|^2 .
	\end{align}
	
	We have 
	\begin{align*}
		\partial_{y_j} \psi^H\Bigl(y_1,g_H^+(y_1) + \frac{1}{4}\varepsilon \omega y_1^2 \Bigr)
		& = 
		\partial_{y_j} \psi^H(y_1,g_H^+(y_1) )
		+ \frac{1}{4}\partial_{y_j,y_2} \psi^H(y_1,g_H^+(y_1) + \xi )
		\varepsilon \omega y_1^2 
	\end{align*}
	for some $0<\xi<\frac{1}{4}\varepsilon \omega y_1^2$.
	Then by \eqref{D2psi}
	\begin{align*}
		\partial_{y_j} \psi^H\Bigl(y_1,g_H^+(y_1) + \frac{1}{4}\varepsilon \omega y_1^2 \Bigr)
		& = 
		\partial_{y_j} \psi^H(y_1,g_H^+(y_1) )
		+O(\varepsilon)
	\end{align*}
	and so
	\begin{align}
		\label{b1}
		\Bigl|\nabla  \psi^H\Bigl(y_1,g_H^+(y_1) + \frac{1}{4}\varepsilon \omega y_1^2 \Bigr)
		\Bigr|^2 - 1 = O(\varepsilon) ,
	\end{align}
	in the region \eqref{reg1b}.
	
	Suppose now that in addition to \eqref{reg1b} we have $|y_1| < (\varepsilon^{-1} |\log \varepsilon|)^{1/2} $.
	We are evaluating \eqref{grad2Psi} at $ y_2 = \tilde g_H^+(y_1)$ (c.f. \eqref{tgplus}).
	Note that \eqref{est1a} implies $ |\partial_{y_1} \psi^H(y)| \leq \frac{C}{|y|}$ and so
	\begin{align}
		\label{b2a}
		|\varepsilon \omega \partial_{y_1} \psi^H y_1|
		& \leq C \varepsilon .
	\end{align}
	Similarly, from \eqref{est2a}
	\begin{align}
		\label{b2b}
		| \varepsilon \omega \partial_{y_2} \psi^H y_2|
		& \leq C \varepsilon \log(y_1) \leq C \varepsilon |\log\varepsilon| .
	\end{align}
	Also, since  $ y_2 = \tilde g_H^+(y_1)$, we have
	\begin{align}
		\label{b2c}
		\varepsilon^2 \omega |y|^2 & \leq C \varepsilon^2 y_1^2.
	\end{align}
	Combining \eqref{b1}--\eqref{b2c}  we get
	\begin{align*}
		\mathop{\sup_{y \in \partial \Omega_0, \, y_2>0}}_{|y|^2\leq \varepsilon^{-1}|\log \varepsilon |}
		(1+|y|)^{\sigma} \,
		\left| |\nabla \psi_0|^2 -1  \right|
		\leq C \varepsilon^{1-\sigma/2} |\log\varepsilon|^{1+\sigma/2}.
	\end{align*}
	
	Consider the term $|\nabla \psi_0|^2-1$ in the region \eqref{reg1b} but now also assuming that  $|y_1| \geq  (\varepsilon^{-1} |\log \varepsilon|)^{1/2} $.
	Let us write \eqref{grad2Psi} as 
	\begin{align}
		\nonumber
		|\nabla \tilde \psi^H|^2 
		= |\nabla \psi^H|^2
		-  \varepsilon \omega \partial_{y_1} \psi^H y_1
		+ (A) 
	\end{align}
	where 
	\begin{align*}
		(A) = -  \varepsilon \omega \partial_{y_2} \psi^H y_2
		+ \frac{1}{4} \varepsilon^2 \omega^2 |y|^2 .
	\end{align*}
	Using that $y_2 = g_H^+(y_1) + \frac{1}{4} \varepsilon\omega y_1^2$ and \eqref{est2a} we get
	\begin{align*}
		(A) 
		&=  -  \varepsilon \omega \Bigl( 1 + O(\frac{1}{|y_1|}) \Bigr)\Bigl(g_H^+(y_1) + \frac{1}{4} \varepsilon\omega y_1^2\Bigr)
		+ \frac{1}{4} \varepsilon^2 \omega^2 y_1^2
		\\
		& \quad 
		+ \frac{1}{4} \varepsilon^2 \omega^2 
		\Bigl(g_H^+(y_1) + \frac{1}{4} \varepsilon\omega y_1^2\Bigr)^2 
		\\
		&=  -  \varepsilon \omega g_H^+(y_1)  
		-  \varepsilon \omega \Bigl(g_H^+(y_1) + \frac{1}{4} \varepsilon\omega y_1^2\Bigr)O\Bigl(\frac{1}{|y_1|}\Bigr)
		\\
		& \quad 
		+ \frac{1}{4} \varepsilon^2 \omega^2 
		\Bigl( g_H^+(y_1) + \frac{1}{4} \varepsilon\omega y_1^2 \Bigr)^2 .
	\end{align*}
	From \eqref{bdyHairpin}, for $y$ satisfying \eqref{reg1b} and  $|y_1| \geq  (\varepsilon^{-1} |\log \varepsilon|)^{1/2} $ we find that 
	\begin{align}
		\label{A}
		|(A)| \leq C ( \varepsilon  \log(|y_1|)
		+ \varepsilon^2 |y_1| 
		+ \varepsilon^4 y_1^4).
	\end{align}
	Using  \eqref{A} and \eqref{b1} we obtain
	\begin{align*}
		\left| |\nabla \psi_0|^2 -1  \right| \leq  C ( \varepsilon  \log(|y_1|)
		+ \varepsilon^2 |y_1| 
		+ \varepsilon^4 y_1^4).
	\end{align*}
	for $y$ satisfying \eqref{reg1b} and  $|y_1| \geq  (\varepsilon^{-1} |\log \varepsilon|)^{1/2} $.
	From here we deduce the validity of \eqref{estR1p}.
	
	\medskip
	\noindent
	{\bf Estimate in $R_2^+$.}
	We claim that 
	\begin{align}
		\label{estR2p}
		\sup_{y\in R_2^+}
		(1+|y|)^{\sigma} \,
		\left| |\nabla \psi_0|^2 -1  + 2 \varepsilon g_0(\varepsilon y_2 + d^S)  \right| 
		\leq C \varepsilon^{\ell_1} |\log\varepsilon|,
	\end{align}
	where $\ell_1$ is defined in \eqref{ell1}.
	Similarly as in $R_1^+$, we have
	\begin{align}
		\label{ab0}
		\sup_{y\in R_2^+}  (1+|y|)^\sigma |\varepsilon g_0(\varepsilon y_2 + d^S) |
		& \leq C  \varepsilon^{3-2b-\sigma b}.
	\end{align}
	
	Next we compute an estimate for $|\nabla \psi_0|^2-1$ in the region $\varepsilon^{-b}<|y|<2\varepsilon^{-b}$ $y_2>0$.
	In this region
	\begin{align}
		\nonumber
		\psi_0
		&= \tilde\psi^H \chi + \tilde\psi^D (1-\chi)
	\end{align}
	where $\tilde\psi^H$ and $\tilde\psi^D$ are defined in \eqref{tildePsiH} and \eqref{tildepsiD} respectively, and 
	\begin{align*}
		\chi(y) = \chi_0(\varepsilon^b y).
	\end{align*}
	Then
	\begin{align*}
		\nabla \psi_0 
		&= e_2 -\frac{1}{2}\varepsilon\omega y  - \frac{y}{|y|^2}
		+ \Bigl(\nabla \tilde \psi^H-e_2+\frac{1}{2}\varepsilon\omega y + \frac{y}{|y|^2}\Bigr) \chi 
		\\
		& \quad  
		+ \Bigl(\nabla\tilde\psi^D-e_2+\frac{1}{2}\varepsilon\omega y + \frac{y}{|y|^2} \Bigr) (1-\chi)
		+ (\tilde\psi^H-\tilde\psi^D) \nabla \chi ,
	\end{align*}
	and we use this to compute
	\begin{align*}
		|\nabla\psi_0|^2 
		&= \Bigl| e_2 -\frac{1}{2}\varepsilon\omega y - \frac{y}{|y|^2}\Bigr|^2 + R_1(y),
	\end{align*}
	where 
	\begin{align*}
		|R_1(y)| 
		&\leq 
		C \Bigl|  \nabla \tilde \psi^H-e_2+\frac{1}{2}\varepsilon\omega y + \frac{y}{|y|^2} \Bigr| 
		+C\Bigl| \nabla\tilde\psi^D-e_2+\frac{1}{2}\varepsilon\omega y + \frac{y}{|y|^2}  \Bigr|
		\\
		&\quad 
		+C\Bigl| (\tilde\psi^H-\tilde\psi^D) \nabla \chi\Bigr|
		\\
		& \quad 
		+C\Bigl| \nabla \tilde \psi^H-e_2+\frac{1}{2}\varepsilon\omega y + \frac{y}{|y|^2} \Bigr|^2 +C\Bigl| \nabla\tilde\psi^D-e_2+\frac{1}{2}\varepsilon\omega y + \frac{y}{|y|^2}   \Bigr|^2
		\\
		&\quad 
		+C\Bigl| (\tilde\psi^H-\tilde\psi^D) \nabla \chi\Bigr|^2.
	\end{align*}
	Writing $y =(y_1,y_2)$ with $y_2=g^+(y_1)$ (defined in \eqref{upperbdy}) we start with
	\begin{align}
		\label{a10}
		\Bigl| e_2 -\frac{1}{2}\varepsilon\omega y - \frac{y}{|y|^2}\Bigr|^2
		&= 1 -  \varepsilon\omega y_2+ \frac{1}{4}\varepsilon^2\omega^2 |y|^2
		-2\frac{y_2}{|y|^2} + \varepsilon\omega
		+ \frac{1}{|y|^2} .
	\end{align}
	From \eqref{upperbdy} we get
	\begin{align*}
		g^+(y_1) = \frac{1}{4}\varepsilon \omega y_1^2 + O(|\log\varepsilon|)
		+O(\varepsilon^3 y_1^4).
	\end{align*} 
	This combined with \eqref{a10} gives
	\begin{align*}
		\Bigl| e_2 -\frac{1}{2}\varepsilon\omega y - \frac{y}{|y|^2}\Bigr|^2
		&= 1 - \varepsilon\omega \Bigl( \frac{1}{4}\varepsilon\omega y_1^2 
		+ O(|\log\varepsilon|) +O(\varepsilon^3 y_1^4)\Bigr) 
		+ \frac{1}{4}\varepsilon^2\omega^2 y_1^2 
		\\
		& \quad 
		+ \frac{1}{4}\varepsilon^2\omega^2 \Bigl(\frac{1}{4}\varepsilon\omega y_1^2 
		+ O(|\log\varepsilon|) +O(\varepsilon^3 y_1^4)\Bigr)^2
		+ O( \varepsilon ) .
	\end{align*}
	Therefore
	\begin{align}
		\label{gr1}
		\left| \Bigl| e_2 -\frac{1}{2}\varepsilon\omega y - \frac{y}{|y|^2}\Bigr|^2 -1 \right|
		& \leq C \left( \varepsilon |\log\varepsilon|+\varepsilon^4 |y|^4
		\right).
	\end{align}
	
	We estimate the terms in $R_1$.
	From \eqref{est1a} we get
	\begin{align*}
		\left| \partial_{y_1} \tilde \psi^H + \frac{1}{2} \varepsilon \omega y_1 + \frac{y_1}{|y|^2}\right| \leq C \frac{\log |y|}{|y|^2}
	\end{align*}
	and from \eqref{est2a} 
	\begin{align*}
		\left| \partial_{y_2} \tilde \psi^H -1+ \frac{1}{2} \varepsilon \omega y_2 + \frac{y_2}{|y|^2}\right| 
		&\leq
		\left| \partial_{y_2} \tilde \psi^H -1+ \frac{1}{2} \varepsilon \omega y_2 \right| +  \frac{|y_2|}{|y|^2}
		\\
		&\leq  C \frac{y_2}{|y|^2}
		\\ 
		& \leq C \varepsilon ,
	\end{align*}
	because $y_2 \leq C \varepsilon |y|^2 + \log |y|$ and $|y|\geq \varepsilon^{-b}\geq \varepsilon^{-\frac{1}{2}}$.
	Therefore
	\begin{align}
		\label{gr2}
		\Bigl|  \nabla \tilde \psi^H-e_2+\frac{1}{2}\varepsilon\omega y + \frac{y}{|y|^2} \Bigr| 
		& \leq C \varepsilon + C \frac{\log |y|}{|y|^2} .
	\end{align}
	From the definition \eqref{tildepsiD} and \eqref{expNPsi1D}, together with $|d^D|\leq C\varepsilon|\log\varepsilon|$ (see \eqref{dDeps}) we obtain
	\begin{align}
		\label{gr3}
		\Bigl|\nabla  \tilde \psi^D-e_2+\frac{1}{2}\varepsilon\omega y + \frac{y}{|y|^2} \Bigr| 
		\leq C \varepsilon |\log\varepsilon|
	\end{align}
	and thanks to \eqref{diffpsiHpsiD} we get
	\begin{align}
		\label{gr4}
		\Bigl| (\tilde\psi^H-\tilde\psi^D) \nabla \chi\Bigr|
		\leq C \varepsilon^{b} \Bigl[ \varepsilon |\log \varepsilon| |y| + \frac{|\log\varepsilon|}{|y|} \Bigr].
	\end{align}
	Combining \eqref{gr1}--\eqref{gr4} we get
	\begin{align}
		\label{estR1}
		|R_1(y)|\leq C \varepsilon |\log\varepsilon| + C \frac{\log |y|}{|y|^2} +  C \varepsilon^{b} \Bigl[ \varepsilon |\log \varepsilon| |y| + \frac{|\log\varepsilon|}{|y|} \Bigr] ,
	\end{align}
	for $y \in R_2^+$.
	Therefore
	\begin{align}
		\label{abc1}
		\sup_{y \in R_2^+}
		(1+|y|)^{\sigma} \,
		\left| |\nabla \psi_0|^2 -1  \right|
		\leq C \varepsilon^{\ell_1} |\log\varepsilon|,
	\end{align}
	where $\ell_1$ is defined in \eqref{ell1}.
	
	From \eqref{ab0} and \eqref{abc1} we deduce \eqref{estR2p}.
	
	\medskip
	\noindent
	{\bf Estimate in $R_3^+$.}
	We claim that
	\begin{align}
		\label{estR3p}
		\sup_{y \in R_3^+}
		(1+|y|)^{\sigma} \,
		\left| |\nabla_y \psi_0|^2 -1  + 2 \varepsilon g_0(\varepsilon y_2 + d^S)  \right| 
		\leq C \varepsilon^{b(2-\sigma)}.
	\end{align}
	
	In this region $\psi_0$ is given by $\tilde\psi^D$ \eqref{tildepsiD} and the boundary of $\Omega_0$ is given by $\frac{1}{\varepsilon} \partial \Omega_\varepsilon^D $ \eqref{bdyOmegaeps}.
	So it is more natural to do the computations in the variable $x=\varepsilon y$. 
	Then we have
	\begin{align*}
		& \mathop{\sup_{y \in \partial \Omega_0, \, y_2>0}}_{ |y|\geq 2\varepsilon^{-b}}
		(1+|y|)^{\sigma} \,
		\left| |\nabla_y \psi_0|^2 -1  + 2  \varepsilon g_0(\varepsilon y_2 + d^S)  \right| 
		\\
		& \quad 
		\leq 
		C \varepsilon^{-\sigma} \sup_{x\in \tilde R_3^+} |x|^\sigma \left| |\nabla_x \Psi|^2 -1  + 2  \varepsilon g_0( x_2 + d^S)  \right| .
	\end{align*}
	where $\Psi$ is defined in \eqref{def:Psi} and where the region $\tilde R_3^+$ is given by  $x \in \partial\Omega^D_\varepsilon$ \eqref{pOmegaDeps}, $ |x|\geq 2\varepsilon^{1-b}$.
	We have
	\begin{align*}
		|\nabla_x \Psi|^2
		&=
		|\nabla_x \Psi^D|^2 + 2 \varepsilon \nabla_x \Psi^D \cdot  \nabla_x\Psi_1^D + \varepsilon^2 | \nabla_x \Psi_1^D|^2  .
	\end{align*}
	We recall that $ \partial\Omega^D_\varepsilon$ is described by  $x + \varepsilon \nu_{\partial\Omega^D} \Psi_1^D(x)$ with $x \in \partial \Omega^D$. Let us write $\nu = \nu_{\partial\Omega^D}$. Using Taylor's theorem, the equation satisfied by $\Psi^D$ and the smoothness of $\Psi^D$ and its derivatives, for $x\in \partial\Omega^D$ we have
	\begin{align*}
		|\nabla_x \Psi^D|^2 (x + \varepsilon \nu \Psi_1^D)
		&=
		|\nabla_x \Psi^D|^2 (x)
		+ 2 \varepsilon D^2 \Psi^D(x) \nabla \Psi^D(x) \cdot \nu  \Psi^D_1(x)
		+ O ( \varepsilon^2 \Psi_1^D(x)^2 )
		\\ 
		&=
		|\nabla_x \Psi^D|^2 (x)
		+ 2 \varepsilon D^2 \Psi^D(x) \nabla \Psi^D(x) \cdot \nu  \Psi^D_1(x)
		+ O ( \varepsilon^2 \Psi_1^D(x)^2 )
		\\
		&=
		1 + 2 \frac{\varepsilon}{R}\Psi_1^D(x)+ O ( \varepsilon^2 \Psi_1^D(x)^2 )
	\end{align*} 
	\begin{align*}
		\nabla_x \Psi^D \cdot  \nabla_x\Psi_1^D (x + \varepsilon \nu \Psi_1^D)
		&= 
		\nabla_x \Psi^D \cdot  \nabla_x\Psi_1^D (x)
		+ O(\varepsilon  |\nabla_x \Psi_1^D(x)|)
		+ O(\varepsilon  |D^2_x \Psi_1^D(x)|)
		\\
		&= -  \nabla_x\Psi_1^D (x) \cdot \nu 
		+ O(\varepsilon  |\nabla_x \Psi_1^D(x)|)
		+ O(\varepsilon  |D^2_x \Psi_1^D(x)|) .
	\end{align*}
	Therefore, using \eqref{eqPsi1D}, for $x\in \partial\Omega^D$
	\begin{align*}
		|\nabla_x \Psi|^2
		&= 1 - 2 \varepsilon  g_0(x_2+ d^S)
		+ O ( \varepsilon^2 \Psi_1^D(x)^2 )
		+ O(\varepsilon^2  |\nabla_x \Psi_1^D(x)|^2)
		\\
		& \quad 
		+ O(\varepsilon^2  |D^2_x \Psi_1^D(x)|) .
	\end{align*}
	We thus obtain
	\begin{align*}
		|\nabla_x \Psi|^2 - 1+ 2  \varepsilon g_0( x_2 + d^S) 
		&= 
		O ( \varepsilon^2 \Psi_1^D(x)^2 )
		+ O(\varepsilon^2  |\nabla_x \Psi_1^D(x)|^2)
		\\
		& \quad 
		+ O(\varepsilon^2  |D^2_x \Psi_1^D(x)|) .
	\end{align*}
	Then by Lemma~\ref{lem:sol-disc2}
	\begin{align}
		\label{abc2}
		\left| |\nabla_x \Psi|^2 - 1+ 2  \varepsilon g_0( x_2 + d^S) \right|
		& \leq  C \frac{\varepsilon^2}{|x|^2} ,
	\end{align}
	and therefore from \eqref{abc2} we get
	\begin{align*}
		\varepsilon^{-\sigma} \sup_{x\in \tilde R_3^+} |x|^\sigma \left| |\nabla_x \Psi|^2 -1  + 2  \varepsilon g_0( x_2 + d^S)  \right| 
		\leq  C \varepsilon^{b(2-\sigma)},
	\end{align*}
        which is \eqref{estR3p}.
	
	\medskip
	
	Putting together \eqref{estR1p}, \eqref{estR2p} and \eqref{estR3p} we find that 
	\begin{align}
		\label{error-upper}
		\sup_{y \in \partial \Omega_0, \ y_2>0} (1+|y|)^{\sigma} \,
		\left| |\nabla \psi_0|^2 -1  + 2  \varepsilon g_0(\varepsilon y_2 + d^S)  \right| 
		\leq C \varepsilon^{\ell_1} |\log\varepsilon|
	\end{align}
	because $b>\frac{1}{2}$.
	
	\medskip
	Next we estimate $(1+|y|)^{\sigma} \,
	\left| |\nabla \psi_0|^2 -1  + 2 \varepsilon g_0(\varepsilon y_2 + d^S)  \right| $ in the regions $R_j^-$, $j=1,2,3$.
	
	\medskip
	\noindent
	{\bf Estimate in $R_1^-$.}
	We have
	\begin{align}
		\label{estR1m}
		\sup_{y\in R_1^-}
		(1+|y|)^{\sigma} \,
		\left| |\nabla \psi_0|^2 -1  + 2 \varepsilon g_0(\varepsilon y_2 + d^S)  \right| 
		\leq C \varepsilon^{1-\sigma b} |\log\varepsilon|,
	\end{align}
	and the proof is very similar to that of \eqref{estR1p}.
	
	\medskip
	\noindent
	{\bf Estimate in $R_2^-$.}
	We have
	\begin{align}
		\label{estR2m}
		\sup_{y\in R_2^-}
		(1+|y|)^{\sigma} \,
		\left| |\nabla \psi_0|^2 -1  + 2 \varepsilon g_0(\varepsilon y_2 + d^S)  \right| 
		\leq C \varepsilon^{1-\sigma b}|\log\varepsilon|.
	\end{align}
	Indeed, in the region $R_2^-$, 
	\begin{align}
		\nonumber
		\psi_0
		&= \tilde\psi^H \chi + \tilde\psi^S (1-\chi)
	\end{align}
	where $\tilde\psi^H$ and $\tilde\psi^S$ are defined in \eqref{tildePsiH} and \eqref{tildePsiS} respectively, and 
	\begin{align*}
		\chi(y) = \chi_0(\varepsilon^b y) .
	\end{align*}
	
	Then
	\begin{align*}
		\nabla \psi_0 
		&= e_2 - \varepsilon\omega y_2 e_2  - \frac{y}{|y|^2}
		+ \Bigl(\nabla \tilde \psi^H-e_2+\varepsilon\omega y_2 e_2 + \frac{y}{|y|^2}\Bigr) \chi 
		\\
		& \quad  
		+ \Bigl(\nabla\tilde\psi^S-e_2+\varepsilon\omega y_2 e_2 + \frac{y}{|y|^2} \Bigr) (1-\chi)
		+ (\tilde\psi^H-\tilde\psi^S) \nabla \chi ,
	\end{align*}
	and we use this to compute
	\begin{align*}
		|\nabla\psi_0|^2 
		&= \Bigl| e_2 -\varepsilon\omega y_2 e_2 - \frac{y}{|y|^2}\Bigr|^2 + R_2(y),
	\end{align*}
	where
	\begin{align*}
		|R_2(y)| 
		&\leq 
		C \Bigl|  \nabla \tilde \psi^H-e_2+\varepsilon\omega y_2 e_2 + \frac{y}{|y|^2} \Bigr| 
		+C\Bigl| \nabla\tilde\psi^S-e_2+\varepsilon\omega y_2 e_2 + \frac{y}{|y|^2}  \Bigr|
		\\
		&\quad 
		+C\Bigl| (\tilde\psi^H-\tilde\psi^S) \nabla \chi\Bigr|
		\\
		& \quad 
		+C\Bigl| \nabla \tilde \psi^H-e_2+\varepsilon\omega y_2 e_2 + \frac{y}{|y|^2} \Bigr|^2 +C\Bigl| \nabla\tilde\psi^S-e_2+\varepsilon\omega y_2 e_2 + \frac{y}{|y|^2}  \Bigr|^2
		\\
		&\quad 
		+C\Bigl| (\tilde\psi^H-\tilde\psi^S) \nabla \chi\Bigr|^2.
	\end{align*}
	
	Writing $y =(y_1,y_2)$ with $y_2=g^-(y_1)$ (defined in \eqref{defgm}) we have
	\begin{align}
		\nonumber
		\Bigl| e_2 - \varepsilon\omega y_2 e_2 - \frac{y}{|y|^2}\Bigr|^2
		&= 1 -  2\varepsilon\omega y_2+ \varepsilon^2\omega^2 y_2^2
		-2\frac{y_2}{|y|^2} + 2\frac{\varepsilon\omega y_2^2}{|y|^2} 
		+ \frac{1}{|y|^2} .
	\end{align}
	But $|y_2|\leq C |\log y_1|$ on $R_2^-$, so 
	\begin{align}
		\label{abc3}
		\left| \Bigl| e_2 - \varepsilon\omega y_2 e_2 - \frac{y}{|y|^2}\Bigr|^2 -1\right|
		&\leq C \varepsilon |\log \varepsilon| + C\frac{|\log\varepsilon|}{|y|^2} .
	\end{align}
	Similar to \eqref{estR1} we have, for $y\in R_2^-$
	\begin{align*}
		|R_2(y)|\leq  C \frac{\log |y|}{|y|^2} +  C \varepsilon^{b} \Bigl[ \varepsilon |\log \varepsilon| |y| + \frac{|\log\varepsilon|}{|y|} \Bigr]  .
	\end{align*}
	This estimate and \eqref{abc3} imply \eqref{estR2m}.
	
	\medskip
	\noindent
	{\bf Estimate in $R_3^-$.}
	We claim that
	\begin{align}
		\label{estR3m1}
		\mathop{\sup_{ y \in \Omega_0}}_{y_2<0, \, 2\varepsilon^{-b}|y| \leq \frac{\delta_1}{\varepsilon}}
		(1+|y|)^{\sigma} \,
		\left| |\nabla_y \psi_0|^2 -1  + 2 \varepsilon g_0(\varepsilon y_2 + d^S)  \right| 
		\leq C \varepsilon^{b(2-\sigma)} .
	\end{align}
	
	A computation similar to \eqref{estR3p} gives
	\begin{align}
		\label{abc4}
		\left| |\nabla_x \Psi|^2 - 1 \right|
		& \leq  C \frac{\varepsilon^2}{|x|^2} ,
		\quad 2\varepsilon^{1-b}\leq |x|\leq \delta_1,
	\end{align}
	while
	\begin{align}
		\label{abc5}
		|\varepsilon g_0(x_2+d^S)| \leq C \varepsilon^2 |\log \varepsilon|,\quad 2\varepsilon^{1-b}\leq |x|\leq \delta_1.
	\end{align}
	This is because for the strip $\Omega^S$ \eqref{def:OmegaS0} we have $x_2+d^S\equiv 0$ on its upper boundary and the perturbed strip $\Omega^S_\varepsilon$ \eqref{pOmegaSeps}, \eqref{Seps} is obtained by adding a function of the form $O(\varepsilon \log|x|)$.
	From \eqref{abc4} and \eqref{abc5} we deduce \eqref{estR3m1}.
	
	We claim that
	\begin{align}
		\label{estR3m2}
		\mathop{\sup_{ y \in \Omega_0}}_{y_2<0, \, |y| \geq \frac{\delta_1}{\varepsilon}}
		e^{\mu \varepsilon|y|} \,
		\left| |\nabla_y \psi_0|^2 -1  + 2 \varepsilon g_0(\varepsilon y_2 + d^S)  \right| 
		\leq C \varepsilon^{2-\sigma}
	\end{align}
	The proof is similar to that of \eqref{estR3p}. 
	We change variables and use that $ |\nabla_x \Psi|^2 -1 $  is quadratic in $\varepsilon \Psi_1^D$ and the estimate
	\begin{align*}
		|g(x_2+d^S)| \leq C \varepsilon^2 e^{-\mu |x|},\quad |x|\geq \delta_1 .
	\end{align*}
	
	\medskip
	Combining \eqref{error-upper}, \eqref{estR1m}, \eqref{estR2m}, \eqref{estR3m1} and \eqref{estR3m2} we deduce the validity of the estimates \eqref{estE1k1} and \eqref{estE1k2} for $k=0$. For the derivatives the proof is similar, differentiating the expansions that we have already found in each region.
\end{proof}

\begin{proof}[Proof of Lemma~\ref{lem:error1-2}]
	The proof is similar to the one of Lemma~\ref{lem:error1}.
	Using the regions regions $R_j^\pm$ defined in \eqref{regions}, the estimates in the regions $R_1^\pm$ and $R_2^\pm$ are similar and yield
	\begin{align}
		\nonumber
		\sup_{y \in R_1^+\cup R_2^+}
		(1+|y|)^{\sigma} \,
		\left| (1-P) g_1(\varepsilon y_2 + d^S)  \right| 
		\leq C  \varepsilon^{3+m-2b-\sigma b}.
	\end{align}
	
	We claim that
	\begin{align}
		\nonumber
		\sup_{y \in R_3^+}
		(1+|y|)^{\sigma} \,
		\left| (1-P) g_1(\varepsilon y_2 + d^S)  \right| 
		\leq C\varepsilon^{1+m+b(1-\sigma)} |\log\varepsilon|.
	\end{align}
	Indeed, for $y \in R_3^+$, by the estimate on $P$ \eqref{estP} and the assumption on $g_1$ \eqref{hg},
	\begin{align*}
		(1+|y|)^{\sigma} \left| (1-P) g_1(\varepsilon y_2 + d^S)  \right|
		&\leq
		C (1+|y|)^{\sigma} \left(\varepsilon|\log\varepsilon|+\frac{1}{|y|}\right) |g_1|
		\\
		&\leq
		C \left( \varepsilon^{1-\sigma b}|\log\varepsilon|+\varepsilon^{b(1-\sigma)}\right) |g_1|
		\\
		&\leq
		C \varepsilon^{1+m+b(1-\sigma)}.
	\end{align*}
	The condition $\frac12<b<\frac23$ implies that $3+m-2b-\sigma b>b(1-\sigma)+1+m$.
	Hence \eqref{estE1k1-2} for $k=0$ follows for $y$ in the upper part of $\partial\Omega_0$. In the lower part of $\partial\Omega_0$ the estimate is similar. For $k=1,2$ the result follows by differentiating the terms and using similar estimates as above.
\end{proof}

For the proof of Lemma~\ref{lem:error-u1} it will be convenient to have the following intermediate computation.

\begin{lemma}
	We have that
	\begin{align}
		\label{decomp-err-la}
		\Delta\psi_0+\varepsilon\omega = f_1+f_2\quad \text{in } \overline{\Omega}_0
	\end{align}
	where 
	\begin{align}
		\label{f1x}
		|f_1(y)|\leq C\varepsilon 
		\text{ and }
		\supp(f_1)\subset B_L(0)
	\end{align}
	with $L$ independent of $\varepsilon$,  and
	\begin{align}
		\label{f2x}
		|f_2(y)|\leq \varepsilon^{1+b}|\log\varepsilon|,
		\text{ and }
		\supp(f_2)\subset \{\varepsilon^{-b}\leq|y|\leq2\varepsilon^{-b}\}.
	\end{align}
\end{lemma}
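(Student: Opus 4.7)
My plan is to split $\Omega_0$ into three regions according to the cutoff $\chi(y)=\chi_0(\varepsilon^b y)$ used in the definition \eqref{psi0} of $\psi_0$: the core $C_1=\{|y|\le\varepsilon^{-b}\}\cap\Omega_0$ where $\chi\equiv 1$; the transition annulus $C_2=\{\varepsilon^{-b}\le|y|\le 2\varepsilon^{-b}\}\cap\Omega_0$; and the outer region $C_3=\{|y|\ge 2\varepsilon^{-b}\}\cap\Omega_0$ where $\chi\equiv 0$. I define $f_1$ as the restriction of $\Delta\psi_0+\varepsilon\omega$ to $C_1$ and $f_2$ as its restriction to $C_2$, and then verify that the error vanishes identically on $C_3$ and establish the required bounds on $C_1$ and $C_2$.

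On $C_3$, $\psi_0$ equals $\tilde\psi^D$ in the upper half and $\tilde\psi^S$ in the lower half. By construction \eqref{tildepsiD} and \eqref{tildePsiS}, each is the sum of the explicit disk or strip stream function---which under the scaling $y=x/\varepsilon$ satisfies $-\Delta_y=\varepsilon\omega$ by direct computation---and a harmonic corrector from \eqref{eqPsi1D} or \eqref{eqPsi1S}. Hence $\Delta\psi_0+\varepsilon\omega\equiv 0$ on $C_3$. On $C_1$, $\psi_0=\tilde\psi^H$ from \eqref{tildePsiH}; using $\Delta\psi^H=0$ and expanding $\Delta(|y|^2\eta_1^+)$ and $\Delta(y_2^2\eta_1^-)$ gives
\begin{align*}
\Delta\tilde\psi^H+\varepsilon\omega
= \varepsilon\omega\bigl(1-\eta_1^+(y)-\eta_1^-(y)\bigr) + \varepsilon\omega\, G(y),
\end{align*}
where $G$ collects cross-terms involving $\nabla\eta_1^\pm$ and $\Delta\eta_1^\pm$ paired with polynomial prefactors in $y$. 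By \eqref{eta1pm}--\eqref{eta0} both contributions are supported in the horizontal strip $\{|y_2|\le 2\}$, and $\Omega_0\cap\{|y_2|\le 2\}$ is contained in a fixed ball $B_L(0)$ because $\Omega_0$ coincides there with a small perturbation of the hairpin \eqref{defHairpin}, whose neck is bounded uniformly in $\varepsilon$. On this ball the polynomial prefactors in $G$ are bounded, yielding $|f_1|\le C\varepsilon$.

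On $C_2$, I treat the upper half (the lower is symmetric, with $\tilde\psi^S$ and the shift $d^S$ replacing $\tilde\psi^D$ and $d^D$). Points in $C_2$ with $y_2>0$ satisfy $y_2\ge c|\log\varepsilon|$ for some fixed $c>0$: indeed $\Omega_0$ is a small perturbation of $\Omega^H$ in this annulus, and a point $y\in\Omega^H$ with $|y|\ge\varepsilon^{-b}$ and $y_2>0$ must have $y_2\ge g_H^+(|y_1|)$ or $|y_2|\ge c\varepsilon^{-b}$, forcing $y_2\ge c|\log\varepsilon|$ by \eqref{bdyHairpin}. Hence $\eta_1^+\equiv 1$ on the upper part of $C_2$, so $\psi_0=\tilde\psi^H\chi+\tilde\psi^D(1-\chi)$. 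Since $\Delta\tilde\psi^H=\Delta\tilde\psi^D=-\varepsilon\omega$ there, the product rule gives
\begin{align*}
\Delta\psi_0+\varepsilon\omega
= (\tilde\psi^H-\tilde\psi^D)\Delta\chi + 2(\nabla\tilde\psi^H-\nabla\tilde\psi^D)\cdot\nabla\chi.
\end{align*}
From $\chi(y)=\chi_0(\varepsilon^b y)$ one has $|\nabla\chi|\le C\varepsilon^b$ and $|\Delta\chi|\le C\varepsilon^{2b}$, while \eqref{diffpsiHpsiD} gives $|\tilde\psi^H-\tilde\psi^D|\le C\varepsilon^{1-b}|\log\varepsilon|$ on $C_2$. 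For the gradient difference I expand both $\nabla\tilde\psi^H$ (via Lemma~\ref{lemma:estGradPsiH} and \eqref{tildePsiH}) and $\nabla\tilde\psi^D$ (via the explicit formula \eqref{psiD} together with Lemma~\ref{lem:sol-disc2} and $d^D=O(\varepsilon|\log\varepsilon|)$): both equal $e_2-\tfrac12\varepsilon\omega y-y/|y|^2$ up to errors of order $\varepsilon|\log\varepsilon|$ plus $|\log|y||/|y|^2$, and since $b>1/2$ makes $|\log|y||/|y|^2$ on $C_2$ smaller than $\varepsilon|\log\varepsilon|$, I obtain $|\nabla\tilde\psi^H-\nabla\tilde\psi^D|\le C\varepsilon|\log\varepsilon|$ on $C_2$. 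Each of the two product terms in the displayed formula is then of order $\varepsilon^{1+b}|\log\varepsilon|$, which gives the required bound on $f_2$.

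The main technical step is the cancellation of leading asymptotic terms in $\nabla\tilde\psi^H-\nabla\tilde\psi^D$ on $C_2$: this cancellation is precisely what the vertical shifts \eqref{dDeps}, \eqref{dSeps} and the choice \eqref{def-g0} of $g_0$ were engineered to achieve. With it in hand the remainder is a mechanical product-rule computation using the already-established asymptotic expansions of $\psi^H$, $\Psi_1^D$, and $\Psi_1^S$.
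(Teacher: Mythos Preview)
Your proposal is correct and follows essentially the same approach as the paper: split into the three regions determined by the cutoff $\chi_0(\varepsilon^b y)$, observe that the error vanishes on $C_3$ and is supported in a fixed ball on $C_1$, and on $C_2$ use the product rule together with the expansions \eqref{diffpsiHpsiD}, Lemma~\ref{lemma:estGradPsiH}, and Lemma~\ref{lem:sol-disc2} to bound both $(\tilde\psi^H-\tilde\psi^D)\Delta\chi$ and $(\nabla\tilde\psi^H-\nabla\tilde\psi^D)\cdot\nabla\chi$ by $C\varepsilon^{1+b}|\log\varepsilon|$. You are in fact more explicit than the paper on one point it leaves implicit, namely that $\Omega_0\cap C_2$ consists of two tongues with $|y_2|\gtrsim|\log\varepsilon|$, so that $\eta_1^\pm\equiv 1$ there and the formula $\psi_0=\tilde\psi^H\chi+\tilde\psi^{D}(1-\chi)$ holds without extra cutoff terms.
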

\begin{proof}
	For $|y|\leq \varepsilon^{-b}$ we have
	\begin{align}
		\nonumber
		|\Delta \psi_0 + \varepsilon\omega |\leq C \varepsilon  \chi_{B(0,L)} 
	\end{align}
	for some fixed $L>0$, and for $|y|\geq 2 \varepsilon^{-b}$ we have $\Delta \psi_0 + \varepsilon\omega =0$.
	
	So let $y =(y_1,y_2)$ be such that 
	\begin{align}
		\label{reg1}
		y \in \Omega_0 , \quad 
		\varepsilon^{-b} \leq  |y|\leq 2 \varepsilon^{-b} .
	\end{align}
	Here we have
	\begin{align*}
		\Delta \psi_0 
		&=-\varepsilon \omega 
		+  2 \varepsilon^b \nabla ( \tilde \psi^H - \tilde \psi^D) \cdot \nabla \chi_0(\varepsilon^b y)
		+ \varepsilon^{2b}  ( \tilde \psi^H - \tilde \psi^D)  \Delta \chi_0(\varepsilon^b y),
	\end{align*}
	where $\tilde\psi^H$ is defined in \eqref{tildePsiH}, $\tilde\psi^D$ is defined in \eqref{tildepsiD}, and $\chi_0$ is the cut-off function defined in \eqref{chi0}.
	
	We claim that for $y$ in the region \eqref{reg1} we have
	\begin{align}
		\label{estErrLapl}
		|\Delta \psi_0 +\varepsilon \omega |
		&\leq C \varepsilon^{1+b} |\log\varepsilon| .
	\end{align}
	We start with an estimate for $\varepsilon^b \nabla ( \tilde \psi^H - \tilde \psi^D) \cdot \nabla \chi_0(\varepsilon^b y) $.
	Let $y=(y_1,y_2)$ be in the region \eqref{reg1} and assume that $y_2>0$. 
	By \eqref{tildePsiH} we have
	\begin{align*}
		\nabla_y \tilde \psi^H(y) = \nabla \psi^H(y) - \tfrac{1}{2} \varepsilon \omega y 
	\end{align*}
	and from \eqref{tildepsiD}, using $\omega R = 2$, 
	\begin{align*}
		\nabla_y \tilde \psi^D(y) = e_2 - \tfrac{1}{2} \varepsilon \omega y + \tfrac{1}{2} \omega d^D e_2 + \varepsilon \nabla_x \Psi_1^D(\varepsilon y)  .
	\end{align*}
	Then by \eqref{expNPsi1D} 
	\begin{align}
		\nonumber
		\nabla_y (\tilde\psi^H(y)-\tilde \psi^D(y))
		&= \nabla \psi^H(y) -e_2  - \varepsilon \nabla_x \Psi_1^D(\varepsilon y)  
		\\
		\label{m-1}
		&= \nabla \psi^H(y) -e_2  + \frac{y}{|y|^2} + O\Bigl( \frac{|\log\varepsilon|}{|y|^2}\Bigr)
		+ O(\varepsilon|\log\varepsilon|)
	\end{align}
        and so \eqref{est1a} and \eqref{est3} imply
	\begin{align*}
		\left|
		\Bigl(\nabla \psi^H(y) -e_2  - \frac{y}{|y|^2}\Bigl) \cdot \nabla_z \chi_0(\varepsilon^b y)\right|
		&\leq C \frac{\log |y|}{|y|^2}.
	\end{align*}
	It follows from this and \eqref{m-1} that
	\begin{align*}
		\Bigl| \nabla_y (\tilde\psi^H(y)-\tilde \psi^D(y)) \cdot \nabla \chi_0(\varepsilon^b y) \Bigr|
		& \leq C  \frac{\log|y|}{|y|^{2}} + C \varepsilon|\log\varepsilon| 
		\\
		& \leq C \varepsilon^{2b} |\log\varepsilon| + C \varepsilon|\log\varepsilon|  .
	\end{align*}
	Thus
	\begin{align}
		\label{lapl-n}
		\left| \varepsilon^b \nabla ( \tilde \psi^H - \tilde \psi^D) \cdot \nabla \chi_0(\varepsilon^b y) \right| \leq C \varepsilon^{1+b} |\log\varepsilon| 
	\end{align}
	for $y=(y_1,y_2)$ in the region \eqref{reg1} with $y_2>0$.
	A similar argument shows that \eqref{lapl-n} holds also for $y=(y_1,y_2)$  in the region \eqref{reg1} and $y_2<0$. 
	
	Consider $ \varepsilon^{2b}  ( \tilde \psi^H - \tilde \psi^D)  \Delta \chi_0(\varepsilon^b y)$.  Let $y$ in the region \eqref{reg1} and assume that $y_2>0$.  By \eqref{diffpsiHpsiD},
	\[
	|\tilde\psi^H(y)-\tilde\psi^D(y)| \leq C \varepsilon |\log\varepsilon| |y| + C \frac{|\log\varepsilon|}{|y|}.
	\]
	It follows that in this region
	\begin{align}
		\label{EstDiffPsi2}
		| \varepsilon^{2b}  ( \tilde \psi^H - \tilde \psi^D)  \Delta \chi_0(\varepsilon^b y) |
		& \leq C ( \varepsilon^{1+b} + \varepsilon^{3b}) |\log \varepsilon| .
	\end{align}
	Combining \eqref{lapl-n} and \eqref{EstDiffPsi2} we get \eqref{estErrLapl}.
\end{proof}

\begin{proof}[Proof of Lemma~\ref{lem:error-u1}]
	Let us write $\Delta\psi_0+\varepsilon\omega=f_1+f_2$ as in \eqref{decomp-err-la} with $f_1$ and $f_2$ satisfying \eqref{f1x}, \eqref{f2x}.
	Estimates \eqref{f1x} and \eqref{f2x} imply that
	\begin{align*}
		|\Delta\psi_0+\varepsilon \omega|\leq C \varepsilon^{1-\sigma b}|\log\varepsilon| 
		\frac{1}{(1+|y|)^{1+\sigma}} .
	\end{align*}
	Moreover, the support of $\Delta\psi_0+\varepsilon \omega$ is contained in $|y|\leq 2 \varepsilon^{-b}$.
Using Lemma~\ref{lem:lh2}, there is a function $\bar u_1$ satisfying
\begin{align}
\label{pHairpinD3}
\left\{
\begin{aligned}
-\Delta_y \bar u_1 &=  \varepsilon^{1-\sigma b}|\log\varepsilon| 
\frac{1}{(1+|y|)^{1+\sigma}}
&&
\text{in }  \Omega_0 \cap B_{2\varepsilon^{-b}} (0)
\\
\bar u_1 & =0 && \text{on } \partial \Omega_0 \cap B_{2\varepsilon^{-b}} (0)
\end{aligned}
\right.
\end{align}
and the inequalities 
\begin{alignat}{2}
\label{eq:u1-upper}
\bar u_1(y) &\leq C_2\varepsilon^{1-\sigma b}|\log\varepsilon| (1+|y|)^{1-\sigma} 
,
&\quad& y\in \Omega_0 \cap B_{2\varepsilon^{-b}} (0) \\
\label{eq:u1-lower}
\bar u_1(y) &\geq
C_1\varepsilon^{1-\sigma b}|\log\varepsilon| (1+|y|)^{1-\sigma} 
,
&\quad& y\in  \Omega_0 \cap B_{2\varepsilon^{-b}} (0).
\end{alignat}
See Remark~\ref{rem:u1}.
	We let $K>0$ be such that $(\frac{K}{C_1})^{\frac{1}{1-\sigma}}=2$ and define
	\begin{align*}
		\bar u_2 = \min(\bar u_1, K \varepsilon^{1-b}|\log\varepsilon|).
	\end{align*}
	Then $\bar u_2$ is well defined in $\Omega_0$ and it is a weak supersolution of 
	\begin{align}
		\nonumber
		\left\{
		\begin{aligned}
-\Delta_y u &=  \varepsilon^{1-\sigma b}|\log\varepsilon| 
			\frac{1}{(1+|y|)^{1+\sigma}}
			&&
			\text{in }\Omega_0
			\\
			u & =0 && \text{on } \partial\Omega_0.
		\end{aligned}
		\right.
	\end{align}
	By the maximum principle,
	\begin{align}
		\label{cota01}
		|u_1|\leq \bar u_2 \quad\text{in }\Omega_0.
	\end{align}
	This in particular implies 
	\begin{align*}
		|u_1(y)|\leq K \varepsilon^{1-b}|\log\varepsilon|, \quad y\in\Omega_0, \quad |y|=\frac{\delta_1}{\varepsilon},
	\end{align*}
	Using suitable barriers we obtain
	\begin{align}
		\label{cota02}
		|u_1(y)|\leq C \varepsilon^{1-b}|\log\varepsilon| e^{-\mu \varepsilon |y|},  \quad y\in\Omega_0, \quad |y|\geq\frac{\delta_1}{\varepsilon}.
	\end{align}
	Combining \eqref{cota01} and \eqref{cota02} we get
	\begin{align}
		\nonumber
		|u_1(y)| 
		\leq C 
		\begin{cases}
			\varepsilon^{1-\sigma b}|\log\varepsilon| (1+|y|)^{1-\sigma} , &  |y|\leq\frac{\delta_1}{\varepsilon},
			\\
			\varepsilon^{1-\sigma b}
			\varepsilon^{\sigma-1}
			|\log\varepsilon|  e^{-\mu \varepsilon |y|},&|y|\geq\frac{\delta_1}{\varepsilon}.
		\end{cases}
	\end{align}
	This and standard elliptic estimates imply
	\begin{align}
		\nonumber
		|\nabla_y u_1(y)|
		\leq C 
		\begin{cases}
			\varepsilon^{1-\sigma b} |\log\varepsilon| (1+|y|)^{-\sigma} , &|y|\leq\frac{\delta_1}{\varepsilon},
			\\
			\varepsilon^{1-\sigma b}
			\varepsilon^{\sigma}
			|\log\varepsilon|  e^{-\mu \varepsilon |y|},&|y|\geq\frac{\delta_1}{\varepsilon}
		\end{cases}
	\end{align}
	and hence
	\begin{align}
		\nonumber
		\sup_{y\in\mathcal{S}_0,\, |y| \leq \frac{\delta_1}{\varepsilon}}(1+|y|)^{\sigma}  |\nabla_y u_1(y)|&\leq C \varepsilon^{1-\sigma b}|\log\varepsilon|
            \\
		\nonumber
		\sup_{y\in\mathcal{S}_0,\,|y| \geq \frac{\delta_1}{\varepsilon}}\varepsilon^\sigma e^{\mu \varepsilon |y|}|\nabla_y u_1(y)|&\leq C \varepsilon^{1-\sigma b}|\log\varepsilon|,
	\end{align}
which are part of the norm $\| \  \|_{**,\partial\Omega} $ in \eqref{defNf2Omega0}.
By similar estimates for the higher derivatives of $u_1$, using \eqref{D2psiH-1}, we get
\begin{align*}
\Bigl\|  \frac{\partial u_1}{\partial \nu}
\Bigr\|_{**,\partial\Omega_0} \leq C \varepsilon^{1-\sigma b}|\log\varepsilon|.
\end{align*}

\end{proof}

\begin{proof}[Proof of Lemma~\ref{lem:error-u0}]\ 
	
	\medskip
	\noindent \textbf{Step 1.}
	Estimate $\psi_0$ on $\mathcal{S}_0$. 
	We claim that for $y\in\mathcal{S}_0$%
	\begin{align}
		\label{psi0R1}
		|\psi_0(y)| &\leq  C \varepsilon \log^2(2+|y|)+ C \varepsilon^2 (1+|y|)^2+C\varepsilon^3 (1+|y|)^4 , \quad |y| \leq \varepsilon^{-b}
		\\
		\label{psi0R2}
		|\psi_0(y)|&\leq C  \varepsilon^3 |y|^4
		+ C \varepsilon |\log\varepsilon| \,|y|  , \quad \varepsilon^{-b} \leq |y| \leq2\varepsilon^{-b}
		\\
		\label{psi0R3}
		|\psi_0(y)| &\leq C 
		\begin{cases}
			\frac{|\log\varepsilon|}{|y|} , &  2\varepsilon^{-b}\leq|y|\leq\frac{\delta_1}{\varepsilon},
			\\
			\varepsilon e^{-\mu \varepsilon |y|},&|y|\geq\frac{\delta_1}{\varepsilon},
		\end{cases}
	\end{align}
	for some $\mu>0$ (the same $\mu$ as in the statement of Lemma~\ref{lem:sol-strip2}).
	We also claim the estimates%
	\begin{align}
		\label{dpsi0R1}
		|\psi_0^{(j)}(y)| &\leq 
		C \varepsilon \frac{\log (2+|y|)}{(1+|y|)^j}+ C \varepsilon^2 (1+|y|)^{2-j}+C\varepsilon^3 (1+|y|)^{4-j},\quad|y|\leq\varepsilon^{-b}
		\\
		\label{dpsi0R2}
		|\psi_0^{(j)}(y)|&\leq C  \varepsilon^3 |y|^{4-j}
		+ C \varepsilon |\log\varepsilon| \,|y|^{1-j}  , \quad \varepsilon^{-b} \leq |y| \leq2\varepsilon^{-b}
		\\
		\label{dpsi0R3}
		|\psi_0^{(j)}(y)| &\leq C 
		\begin{cases}
			\frac{|\log\varepsilon|}{|y|^{1+j}} , &  2\varepsilon^{-b}\leq|y|\leq\frac{\delta_1}{\varepsilon},
			\\
			\varepsilon^{1+j} e^{-\mu \varepsilon |y|},&|y|\geq\frac{\delta_1}{\varepsilon},
		\end{cases}
	\end{align}
	for $j=1,2,3$, where $\psi_0^{(j)}$ is the $j$-th derivative with respect to arc-length on $\mathcal{S}_0$. 
	
	\medskip
	
	Let us prove \eqref{psi0R1}. For $y=(y_1,y_2)\in\partial\mathcal{S}_0$ with $|y|\leq\varepsilon^{-b}$, $\psi_0(y)$ is given by $\tilde\psi^H(y)$ \eqref{tildePsiH}.
	Consider the case $y_2>0$. Write $y=(y_1, \tilde g_H^+(y_1) )$.
	We have for some $0<\xi < \frac{1}{4}\varepsilon\omega y_1^2$, 
	\begin{align}
		\nonumber
		\tilde \psi^H (y_1, \tilde g_H^+(y_1) )
		&= 
		\psi^H \Bigl( y_1, g_H^+(y_1) + \frac{1}{4} \varepsilon\omega y_1^2 \Bigr)
		- \frac{1}{4} \varepsilon \omega y_1^2  \eta^+_{1}(y) 
		\\
		\nonumber
		& \quad 
		- \frac{1}{4} \varepsilon \omega \Bigl( g_H^+(y_1) 
		+ \frac{1}{4} \varepsilon\omega y_1^2 \Bigr)^2   \eta^+_{1}(y) 
		\\
		&= \partial_{y_2} \psi^H ( y_1, g_H^+(y_1) + \xi ) \frac{1}{4} \varepsilon\omega y_1^2 
		\nonumber
		- \frac{1}{4} \varepsilon \omega y_1^2  \eta^+_{1}(y) 
		\\
		\nonumber
		& \quad 
		- \frac{1}{4} \varepsilon \omega \Bigl( g_H^+(y_1) 
		+ \frac{1}{4} \varepsilon\omega y_1^2 \Bigr)^2   \eta^+_{1}(y) 
		\\
		\nonumber
		&= [ \partial_{y_2} \psi^H ( y_1, g_H^+(y_1) + \xi )-1] \frac{1}{4} \varepsilon\omega y_1^2 
		- \frac{1}{4} \varepsilon \omega y_1^2  ( \eta^+_{1}(y) -1) 
		\\
		& \quad 
		\label{cota3}
		- \frac{1}{4} \varepsilon \omega \Bigl( g_H^+(y_1) 
		+ \frac{1}{4} \varepsilon\omega y_1^2 \Bigr)^2   \eta^+_{1}(y) .
	\end{align}
	Using \eqref{est2a} we get
	\begin{align*}
		|\tilde \psi^H (y_1, \tilde g_H^+(y_1) )|
		&\leq C \varepsilon \log^2 (|y_1|)+ C \varepsilon^2 y_1^2+C\varepsilon^3 y_1^4.
	\end{align*}
	This proves \eqref{psi0R1} in the case $y_2>0$. If $y_2<0$ the computation is analogous.
	From \eqref{cota3} we also get estimates for the derivatives:
	\begin{align*}
		\Bigl|\frac{d^j}{d y_1^j}\tilde \psi^H (y_1, \tilde g_H^+(y_1) )\Bigr|
		&\leq C \varepsilon \frac{\log (|y_1|)}{|y_1|^j}+ C \varepsilon^2 y_1^{2-j}+C\varepsilon^3 y_1^{4-j},
	\end{align*}
	$j=1,2,3$, which gives \eqref{dpsi0R1}.
	\medskip
	
	We prove \eqref{psi0R3} next, because it will be used later in the proof of \eqref{psi0R2}. We consider $y=(y_1,y_2)\in\partial\mathcal{S}_0$ with $ 2\varepsilon^{-b}\leq|y|\leq\frac{\delta_1}{\varepsilon}$ and $y_2>0$. In this region $\psi_0(y)$ is given by $\tilde\psi^D(y)$ \eqref{tildepsiD}, and it is more convenient to do the computation in the variable $x$.
	We write $x\in \partial\Omega^D_\varepsilon$ as $x = z+\varepsilon\Psi_1^D(z)\nu_{\partial\Omega^D}(z)$, $z\in\partial\Omega^D$.
	Using \eqref{tildepsiD} and $x=\varepsilon y$,%
	\begin{align*}
		\psi_0(y)&=\tilde\psi^D(y)\\
		&=\frac{1}{\varepsilon} \left[ \Psi^D(x) + \varepsilon \Psi_1^D(x)\right]
		\\
		&=
		\frac{1}{\varepsilon} \left[ \Psi^D(z+\varepsilon\Psi_1^D(z)\nu_{\partial\Omega^D}(z)) + \varepsilon \Psi_1^D(z+\varepsilon\Psi_1^D(z)\nu_{\partial\Omega^D}(z))\right]
		\\
		&=
		\frac{1}{\varepsilon} \Bigl[ \Psi^D(z) + \varepsilon\Psi_1^D(z) \nabla\Psi^D(z)\cdot \nu_{\partial\Omega^D}(z)
		+ O(\varepsilon^2 |\Psi_1^D(z)|^2)
		\\
		&\quad
		+ \varepsilon \Psi_1^D(z+\varepsilon\Psi_1^D(z)\nu_{\partial\Omega^D}(z))\Bigr] .
	\end{align*}
	But $\Psi^D(z)=0$ and $\nabla\Psi^D(z)\cdot \nu_{\partial\Omega^D}(z)=-1$. Then, by Lemma~\ref{lem:sol-disc2},
	\begin{align*}
		|\psi_0(y)|
		&=
		\frac{1}{\varepsilon} \left[ O(\varepsilon^2 |\Psi_1^D(z)|^2)
		+ O(\varepsilon^2 |\Psi_1^D(z)| |\nabla \Psi_1^D(z)| ) \right] 
		\\
		&\leq C\varepsilon |\log\varepsilon|^2 + C \frac{\varepsilon |\log|z||}{|z|}
		\\
		&\leq C\varepsilon |\log\varepsilon|^2 + C \frac{|\log|\varepsilon y||}{|y|}
		\\ 
		& \leq  C \frac{|\log|\varepsilon y||}{|y|},
	\end{align*}
	in the region under consideration. This proves \eqref{psi0R3} when $|y|\geq2\varepsilon^{-b}$ and $y_2>0$. 
	Note that when $y_2>0$ we naturally have $|y|\leq \frac{C}{\varepsilon}$ so there is no need to prove exponential decay.
	In the lower half the computation is the same, but we need to use the exponential decay of $\Psi_1^S$ in Lemma~\ref{lem:sol-strip2}.
	
	\medskip
	The proof of \eqref{dpsi0R3} is similar. 
	\medskip
	
	We prove \eqref{psi0R2} for $y=(y_1,y_2)$ with $y_2>0$.
	In this region
	\begin{align}
		\label{psi0b}
		\psi_0
		= \tilde\psi^H \chi
		+ \tilde\psi^D (1-\chi),
	\end{align}
	(c.f. \eqref{psi0}) where
	\begin{align*}
		\chi(y) = \chi_0(\varepsilon^b y) ,
	\end{align*}
	$\chi_0$ is as in \eqref{chi0}, and $y_2= g^+(y_1)$ \eqref{upperbdy}.
	Then
	\begin{align*}
		|\psi_0(y)| & \leq |\tilde\psi^D(y)| + |(\tilde\psi^H(y)-\tilde\psi^D(y))|.
	\end{align*}
	By \eqref{diffpsiHpsiD},
	\begin{align}
		\label{bb1}
		| ( \tilde\psi^H(y)-\tilde\psi^D(y) )| 
		&\leq C \varepsilon |\log\varepsilon| |y_1| + C \frac{|\log\varepsilon|}{|y_1|},
		\quad y=(y_1,g^+(y_1))
	\end{align}
	for $\varepsilon^{-b}\leq |y|\leq2\varepsilon^{-b}$, using that $\frac{1}{2}<b<1$.
	
	We next estimate $\tilde\psi^D(y)$ for $y=(y_1,y_2)$ and $y_2=g^+(y_1)$.
	Recall that 
	\begin{align}
		\label{gplus}
		g^+(y_1) = g_{\varepsilon,D}^-(y_1) + (\tilde g_H^+(y_1)-g_{\varepsilon,D}^-(y_1))\eta(y_1),
	\end{align}
	where
	\[
	\eta(y_1)=\eta_0(\varepsilon^b y_1)
	\]
	and $\eta_0$ is defined in \eqref{eta0}.
	Then
	\begin{align*}
		\tilde \psi^D(y_1,g^+(y_1))
		&= \tilde \psi^D(y_1, g_{\varepsilon,D}^-(y_1) + (\tilde g_H^+(y_1)-g_{\varepsilon,D}^-(y_1))\eta(y_1))
		\\
		&= 
		\tilde \psi^D(y_1, g_{\varepsilon,D}^-(y_1) )
		+\nabla_y\tilde\psi^D(y_1,g_{\varepsilon,D}^-(y_1)+\xi) (\tilde g_H^+(y_1)-g_{\varepsilon,D}^-(y_1))\eta(y_1)
	\end{align*}
	for some $|\xi|\leq |(\tilde g_H^+(y_1)-g_{\varepsilon,D}^-(y_1))\eta(y_1)|$.
	The estimate \eqref{psi0R3} holds also for $\varepsilon^{-b}\leq|y|\leq2\varepsilon^{-b}$, and therefore
	\begin{align*}
		|\tilde \psi^D(y_1, g_{\varepsilon,D}^-(y_1) )|\leq C \frac{|\log\varepsilon|}{|y|}.
	\end{align*}
	On the other hand, from Lemma~\ref{lem:sol-disc2}
	\begin{align*}
		|\nabla \tilde \psi^D(y)|\leq C
	\end{align*}
	for $\varepsilon^{-b}\leq|y|\leq2\varepsilon^{-b}$.
	This implies
	\begin{align}
		\label{bb2}
		|\tilde \psi^D(y)|\leq C \frac{|\log\varepsilon|}{|y|} + C |\tilde g_H^+(y_1)-g_{\varepsilon,D}^-(y_1)|.
	\end{align}
	To estimate $\tilde g_H^+ - g_{\varepsilon,D}^-$, we recall \eqref{gepsDm} 
	\begin{align}
		\nonumber
		g_{\varepsilon,D}^-(y_1) = g_D^-(y_1) - \Psi_1^D(\varepsilon y_1,\varepsilon	g_D^-(y_1)) + O(\varepsilon^2 |\log\varepsilon| y_1^2 ).
	\end{align}
	From \eqref{gD1},
	\begin{align*}
		g_D^-(y_1) 
		&=  \frac{d^D}{\varepsilon} 
		+\frac{\varepsilon y_1^2}{2 R}+O\Bigl(\frac{\varepsilon^3 y_1^4}{R^3}\Bigr)
	\end{align*}
	as $|y_1|\to\infty$. We expand $\Psi_1^D(\varepsilon y)$ using Lemma~\ref{lem:sol-disc2}
	\begin{align*}
		\Psi_1^D(\varepsilon y)
		&= - \log(| \varepsilon y - P^D | ) + O(| \varepsilon y - P^D | \log(| \varepsilon y - P^D |) )
		\\
		&=\log\varepsilon+\log(|y|)+O(\frac{|\log\varepsilon|}{|y|})
		+ O( \varepsilon |\log\varepsilon| \,| y|)
	\end{align*}
	in the region under consideration.
	Therefore
	\begin{align*}
		g_{\varepsilon,D}^-(y_1) 
		&= \frac{d^D}{\varepsilon} 
		+\frac{\varepsilon y_1^2}{2 R}+O\Bigl(\frac{\varepsilon^3 y_1^4}{R^3}\Bigr) + \log\varepsilon+\log(y_1)+O\Bigl(\frac{|\log\varepsilon|}{y_1}\Bigr) 
		\\
		& \quad 
		+ O(\varepsilon^2 |\log\varepsilon| y_1^2 )
		+ O( \varepsilon |\log\varepsilon| \,| y|).
	\end{align*}
	On the other hand, from \eqref{bdyHairpin}
	\begin{align*}
		\tilde g_H^+(y_1) = \log( 2 y_1) +  \frac{1}{4}\varepsilon\omega y_1^2  + O\Bigl( \frac{1}{y_1}\Bigr) ,
	\end{align*}
	as $|y_1|\to\infty$.
	Since $\omega R=2$ we get
	\begin{align}
		\nonumber
		|g_{\varepsilon,D}^-(y_1) -\tilde g_H^+(y_1) |
		& \leq C\frac{\varepsilon^3 y_1^4}{R^3} +C\frac{|\log\varepsilon|}{|y_1|}
		+ C \varepsilon |\log\varepsilon| \,| y_1|
		\\
		\label{diff-g}
		& \leq C \varepsilon^3 y_1^4
		+ C \varepsilon |\log\varepsilon| \,| y_1| ,
	\end{align}
	for $\varepsilon^{-b}\leq|y_1|\leq2\varepsilon^{-b}$.
	This combined with \eqref{bb2} yields
	\begin{align}
		\label{bb3}
		|\tilde \psi^D(y)|\leq C  \varepsilon^3 |y|^4
		+ C \varepsilon |\log\varepsilon| \,|y| .
	\end{align}
	Then from \eqref{bb3} and \eqref{bb1} we obtain
	\begin{align*}
		|\psi_0(y)|\leq 
		C  \varepsilon^3 |y|^4
		+ C \varepsilon |\log\varepsilon| \,|y| ,
	\end{align*}
	which is the inequality \eqref{psi0R2}.
	
	\medskip
	
	For the proof of \eqref{dpsi0R2} we proceed similarly. We consider $y=(y_1,y_2)\in\mathcal{S}_0$, $y_2>0$ and $\varepsilon^{-b} \leq |y| \leq2\varepsilon^{-b}$.
	We compute from \eqref{psi0b}
	\begin{align*}
		\frac{d}{dy_1} \psi_0(y_1,g^+(y_1))
		&=\frac{d}{dy_1}  [\tilde\psi^H(y_1,g^+(y_1)) - \tilde\psi^D(y_1,g^+(y_1)]\chi(y_1,g^+(y_1)) 
		\\
		&\quad+  [\tilde\psi^H(y_1,g^+(y_1)) - \tilde\psi^D(y_1,g^+(y_1)]\frac{d}{dy_1} \chi(y_1,g^+(y_1))
		\\
		&\quad+ \frac{d}{dy_1} \tilde\psi^D (y_1,g^+(y_1)).
	\end{align*}
	Similarly to \eqref{diffpsiHpsiD} we have
	\begin{align}
		\nonumber
		|\nabla\tilde\psi^H(y)-\nabla\tilde\psi^D(y)| \leq C \varepsilon |\log\varepsilon|  + C \frac{|\log\varepsilon|}{|y|^2},
	\end{align}
	which implies that 
	\begin{align*}
		\left| \frac{d}{dy_1}  [\tilde\psi^H(y_1,g^+(y_1)) - \tilde\psi^D(y_1,g^+(y_1)]\chi(y_1,g^+(y_1)) \right|\leq C \varepsilon |\log\varepsilon|  + C \frac{|\log\varepsilon|}{|y|^2}.
	\end{align*} 
	From \eqref{diffpsiHpsiD} we also get
	\begin{align*} 
		\left|  [\tilde\psi^H(y_1,g^+(y_1)) - \tilde\psi^D(y_1,g^+(y_1)]\frac{d}{dy_1} \chi(y_1,g^+(y_1)) \right|\leq C \varepsilon |\log\varepsilon|  + C \frac{|\log\varepsilon|}{|y|^2}.
	\end{align*}
	To estimate $\frac{d}{dy_1} \tilde\psi^D (y_1,g^+(y_1))$ we use \eqref{gplus}, the estimate \eqref{dpsi0R3}, which reads
	\begin{align*}
		\left|\frac{d}{dy_1} \tilde\psi^D (y_1, g_{\varepsilon,D}^-(y_1))\right|\leq C\frac{|\log\varepsilon|}{|y|^2},
	\end{align*}
	and 
	\begin{align*}
		| (g_{\varepsilon,D}^-)'(y_1) -(\tilde g_H^+)'(y_1) |
		& \leq C \varepsilon^3 |y_1|^3
		+ C \varepsilon |\log\varepsilon|  
	\end{align*}
	for $\varepsilon^{-b}\leq|y_1|\leq2\varepsilon^{-b}$, which is proved similarly to \eqref{diff-g}. Combining the previous estimates we get
	\begin{align*}
		\left|\frac{d}{dy_1} \psi_0(y_1,g^+(y_1))\right|
		\leq C \varepsilon |\log\varepsilon| + C \varepsilon^3 |y_1|^3+C\frac{|\log\varepsilon|}{|y|^2}.
	\end{align*}
	This is the proof of \eqref{dpsi0R2} for $j=1$. For $j=2,3$ the computations are similar.
	
\medskip
\noindent \textbf{Step 2.}
Next we construct a function $\bar u$ which is a supersolution of \eqref{equ0}. We do the details of the construction assuming \eqref{restricb1}, which simplifies some steps.
Note that from 
\eqref{psi0R1}--\eqref{psi0R3} 
we get for $y\in\mathcal{S}_0$:
\begin{align}
\nonumber
|\psi_0(y)| \leq C 
\begin{cases}
C \varepsilon |\log\varepsilon| (1+|y|) , & |y| \leq 2\varepsilon^{-b}
\\
\frac{|\log\varepsilon|}{|y|} , &  2\varepsilon^{-b}\leq|y|\leq\frac{\delta_1}{\varepsilon}
\\
\varepsilon e^{-\mu \varepsilon |y|},&|y|\geq\frac{\delta_1}{\varepsilon}.
\end{cases}
\end{align}
This implies the following (less sharp but simpler) bound
\begin{align}
\nonumber
|\psi_0(y)| \leq C 
\begin{cases}
\varepsilon^{1-\sigma b} |\log\varepsilon| (1+|y|)^{1-\sigma} , & |y| \leq 2\varepsilon^{-b}
\\
\varepsilon^{1-b}|\log\varepsilon|, &  2\varepsilon^{-b}\leq|y|\leq\frac{\delta_1}{\varepsilon}
\\
\varepsilon^{1-b}|\log\varepsilon| e^{-\mu \varepsilon |y|},&|y|\geq\frac{\delta_1}{\varepsilon}.
\end{cases}
\end{align}
Using Lemma~\ref{lem:lh2}, there exists a function $\bar u_0$ satisfying
\begin{align}
\label{eq:bar-u0}
\left\{
\begin{aligned}
\Delta \bar u_0 & = 0 && \text{in } \Omega_0 \cap B_{2\varepsilon^{-b}} (0)
\\
\bar u_0 &= \varepsilon^{1-b\sigma} |\log\varepsilon| (1+|y|^2)^{\frac{1-\sigma}{2}} && \text{on }\partial\Omega_0 \cap B_{2\varepsilon^{-b}} (0)
\end{aligned}
\right.
\end{align}
and the inequalities
\begin{align}
\label{ineq-baru1}
C_1 \varepsilon^{1-b\sigma} |\log\varepsilon|  (1+|y|)^{1-\sigma}\leq 
|\bar u_0(y)|\leq C_2 \varepsilon^{1-b\sigma} |\log\varepsilon|  (1+|y|)^{1-\sigma}\quad\text{in }  \Omega_0 \cap B_{2\varepsilon^{-b}} (0) ,
\end{align}
for some constants $0<C_1<C_2$. 
See Remark~\ref{rem:u1}.
Note that 
\begin{align}
\label{psi0i}
|\psi_0(y)|\leq C \bar u_0(y), \quad y\in\partial\Omega_0, \quad |y|\leq 2 \varepsilon^{-b}.
\end{align}
We let $K>0$ be such that $(\frac{K}{C_1})^{\frac{1}{1-\sigma}}=2$ and define
\begin{align*}
\bar u_2 = \min(\bar u_0, K \varepsilon^{1-b}|\log\varepsilon|).
\end{align*}
This function is initially defined in $\Omega_0\cap B_{2\varepsilon^{-b}}(0)$, but because of the choice of $K$ and \eqref{ineq-baru1}, $\bar u_0(y)= K \varepsilon^{1-b}|\log\varepsilon|$ for $y\in\tilde\Omega^H$ and $|y|\geq2\varepsilon^{-b}$. Therefore $\bar u_2$ can be extended as $  K \varepsilon^{1-b}$ to $\Omega_0$. 
By \eqref{psi0R3} and \eqref{psi0i} we have 
\begin{align*}
|\psi_0(y)|\leq C \bar u_2(y), \quad y\in\partial\Omega_0.
\end{align*} 
Since $\bar u_2$ is weakly superharmonic, we obtain that
\begin{align}
\label{cota1}
|u_0(y)|\leq C \bar u_2(y), \quad y\in\Omega_0.
\end{align}
Using suitable barriers and the bound
\begin{align*}
|u_0(y)|\leq K \varepsilon^{1-b}|\log\varepsilon| \quad y\in\Omega_0, \quad |y|=\frac{\delta_1}{\varepsilon},
\end{align*}
we obtain
\begin{align}
\label{cota2}
|u_0(y)|\leq C \varepsilon^{1-b}|\log\varepsilon| e^{-\mu \varepsilon |y|},  \quad y\in\Omega_0, \quad |y|\geq\frac{\delta_1}{\varepsilon}.
\end{align}
Combining \eqref{cota1}, \eqref{cota2} we get
\begin{align}
\nonumber
|u_0(y)| 
\leq C 
\begin{cases}
\varepsilon^{1-\sigma b} |\log\varepsilon| (1+|y|)^{1-\sigma} , & |y| \leq 2\varepsilon^{-b}
\\
\varepsilon^{1-b}|\log\varepsilon|, &  2\varepsilon^{-b}\leq|y|\leq\frac{\delta_1}{\varepsilon}
\\
\varepsilon^{1-b}|\log\varepsilon| e^{-\mu \varepsilon |y|},&|y|\geq\frac{\delta_1}{\varepsilon}.
\end{cases}
\end{align}
This combined with standard elliptic estimates and \eqref{dpsi0R1}--\eqref{dpsi0R3} give
\begin{align*}
|\nabla_y u_0(y)|
\leq C 
\begin{cases}
\varepsilon^{1-\sigma b} |\log\varepsilon| (1+|y|)^{-\sigma} , & |y| \leq 2\varepsilon^{-b}
\\
\varepsilon^{1-b}|\log\varepsilon| |y|^{-1}, &  2\varepsilon^{-b}\leq|y|\leq\frac{\delta_1}{\varepsilon}
\\
\varepsilon^{2-b}|\log\varepsilon| e^{-\mu \varepsilon |y|},&|y|\geq\frac{\delta_1}{\varepsilon}.
\end{cases}
\end{align*}
This directly gives that 
\begin{align*}
\sup_{y\in\mathcal{S}_0,\, |y| \leq \frac{\delta_1}{\varepsilon}}(1+|y|)^{\sigma}  |\nabla_y u_0(y)|\leq C \varepsilon^{1-\sigma b}|\log\varepsilon|.
\end{align*}
and
\begin{align}
\nonumber
\sup_{y\in\mathcal{S}_0,\,|y| \geq \frac{\delta_1}{\varepsilon}}  \varepsilon^\sigma
e^{\mu \varepsilon |y|}|\nabla_y u_0(y)|\leq C \varepsilon^{1-\sigma b}|\log\varepsilon|.
\end{align}
which are part of the norm $\| \  \|_{**,\partial\Omega} $ \eqref{defNf2Omega0}.

By similar estimates for the higher derivatives of $u_0$ we get
\begin{equation*}
\Bigl\|  \frac{\partial u_0}{\partial \nu}
\Bigr\|_{**,\partial\Omega_0} \leq C \varepsilon^{1-b \sigma}|\log\varepsilon|.
\qedhere
\end{equation*}
\end{proof}

\begin{proof}[Proof of Proposition~\ref{prop:est-error-bdy}]
	The result follows by combining the estimates of Lemmas~\ref{lem:normal-psi0}, \ref{lem:error1}, \ref{lem:error1-2}, \ref{lem:error-u1}, and \ref{lem:error-u0}.
\end{proof}

\section{Modification of the hairpin and linearization}
\label{sec:LinH}

A byproduct of the result of Section~\ref{sect:operators} is that the linear problem
\begin{align}
	\label{linear-0}
	\left\{
	\begin{aligned}
		\Delta_y \phi  &= f_1  && \text{in } \Omega_0
		\\
		\frac{\partial \phi}{\partial\nu}
		+ (\kappa-\varepsilon\omega) \phi &= f_2
		&& \text{on } \mathcal{S}_0
		\\
		\phi &= 0 && \text{on } \mathcal{B}_0
	\end{aligned}
	\right.
\end{align}
plays an important role in the solution of \eqref{main-problem}. To solve \eqref{linear-0} we try to find $\phi$ in the form
\begin{align}
	\nonumber
	\phi(y) &= \phi^H(y) \chi^H(x)
	+ \frac{1}{\varepsilon}\phi^{S}(x)\chi^{S}(x)
	+ \frac{1}{\varepsilon}\phi^{D}(x)\chi^D(x),
\end{align}
where $y=\frac{x}{\varepsilon}$ and $\phi^H$, $\phi^{S}$ and $\phi^{D}$ are the new unknowns. The cut-off functions are defined in \eqref{cutoffs} and depend on the parameters $0<\delta_1<\delta_2\ll1$. The idea is that $\phi^H$, $\phi^{S}$, $\phi^{D}$ solve linear problems in domains $\tilde \Omega^H$, $\tilde\Omega^S$ and $\tilde\Omega^D$, which are close to the hairpin $\Omega^H$, the strip $\Omega^S$, and the disk $\Omega^D$. The purpose of Sections~\ref{sec:LinH}--\ref{sect:S} is to construct these domains, and study some linear problems on them.

\medskip
In this section we define a domain $\tilde\Omega^H$, which is a perturbation of the hairpin $\Omega^H$ \eqref{defHairpin}, and study the linear problem \eqref{robin3a} on $\tilde\Omega^H$. 
Let $\delta_2>0$ be fixed. We construct $\tilde\Omega^H$ so that coincides with $\Omega_0$ up to $|y|\leq 2 \delta_2\varepsilon^{-1}$ and otherwise coincides with the hairpin $\Omega^H$.
\index{OH@$\tilde \Omega^H$, coincides with $\Omega_0$ up to $\abs{y} \leq 2 \delta_2\varepsilon^{-1}$ and otherwise coincides with the hairpin $\Omega^H$}

We deal first with the upper part of $\tilde\Omega^H$. Let us recall that part of the boundary of $\Omega_0$ was defined as a graph
\[
y_2 = g^+(y_1) ,\quad \frac{\pi}{2}-1\leq |y_1|\leq \frac{R}{2\varepsilon},
\]
where
\[
g^+ =\tilde g_H^+  \eta_{\varepsilon^{-b}} + g_{\varepsilon,D}^- (1- \eta_{\varepsilon^{-b}}),
\]
(see \eqref{upperbdy}), where 
\[
\eta_\lambda(y_1,y_2) = \eta_0\Bigl(\frac{y_1}{\lambda}\Bigr)
\]
and $\eta_0$ is defined in \eqref{eta0}.
We define
\begin{align}
	\label{tildegp}
	\tilde g^+= g^+ \eta_{2\delta_2/\varepsilon}  +  g_H^+ \left(  1-  \eta_{2\delta_2/\varepsilon} \right).
\end{align}

For the lower part we do something similar. 
We have defined 
\[
g^- = g_H^-  \eta_{\varepsilon^{-b}}  +g_{\varepsilon,S}^+  (1- \eta_{\varepsilon^{-b}}),
\quad \frac{\pi}{2}-1\leq |y_1|\leq \frac{\delta}{\varepsilon}
\]
in \eqref{defgm}, and now let
\begin{align}
	\label{tildegm}
	\tilde g^- = g^-  \eta_{2\delta_2/\varepsilon}  +  g_H^- \left(  1-  \eta_{2\delta_2/\varepsilon} \right) .
\end{align}
We extend the functions $\tilde g^\pm$ as $g_H^\pm$ for $|y_1|\geq \frac{2\delta_2}{\varepsilon}$.
Then we define
\begin{align}
	\label{tOmegaH}
	\tilde\Omega^H
	= 
	\Bigl\{\, |y_1| < \frac{\pi}{2}-1, \  y_2 \in \R\Bigr\}
	\cup
	\Bigl\{\,  |y_1| >  \frac{\pi}{2}-1, \ 
	\tilde g^-(y_1)< y_2 < \tilde g^+(y_1)
	\Bigr\} .
\end{align}

Next we consider
\begin{align}
	\label{pHairpinv0}
	\left\{
	\begin{aligned}
		\Delta_y \phi &= f_1 && \text{in } \tilde \Omega^H
		\\
		\frac{\partial \phi}{\partial  \nu_y} 
		+ (\kappa - \varepsilon \omega \chi^H_2) \phi
		&=f_2  && \text{on } \partial \tilde \Omega^H.
	\end{aligned}
	\right.
\end{align}
Here $\chi_2^H(y)$ is defined as 
\begin{align*}
	\chi_2^H(y) = \chi_0\Bigl( \frac{\varepsilon y}{2\delta_2} \Bigl),
\end{align*}
where $\chi_0$ is the radial cut-off function in $\R^2$ defined in \eqref{chi0}.

We introduce some norms that are well adapted to the problem \eqref{pHairpinv0}, with the main feature that $\nabla \phi$ decays at the rate $|y|^{-\sigma}$, where $0<\sigma<1$.
We let $\Omega$ be either $\Omega^H$ or $\tilde\Omega^H$ and let $0<\sigma,\alpha <1$. 
For a function $\phi$ defined in $\Omega$, we let
\begin{align}
	\nonumber
	\| \phi \|_{\sharp,\Omega}
	&= \sup_{y \in \Omega}\,
	(1+|y|)^{\sigma-1}
	\Bigl[
	|\phi(y)| + (1+|y|) |\nabla \phi(y)|
	+ (1+|y|)^2 |D^2\phi(y)|
	\\
	\label{nH3}
	& \qquad \qquad \qquad \qquad 
	+ (1+|y|)^{2+\alpha} [D^2 \phi ]_{\alpha,B(y,\frac{|y|}{10})\cap \Omega}
	\Bigr] .
\end{align}
\index{h3@$\norm \ \norm_{\sharp,\Omega}$, \eqref{nH3} weighted H\"older norm for the solution  in the hairpin}
For $f_1$ defined in $\Omega$ we define
\begin{align}
	\label{nH1}
	\| f_1 \|_{\sharp\sharp,\Omega}
	&= \sup_{y \in \Omega}\,
	(1+|y|)^{1+\sigma}
	\left[
	|f_1(y)| + (1+|y|)^{\alpha} [f_1]_{\alpha,B(y,\frac{|y|}{10})\cap \Omega}
	\right] ,
\end{align}
\index{h2@$\norm \ \norm_{\sharp\sharp,\Omega}$, \eqref{nH1}, weighted H\"older norm for the RHS in the equation in the hairpin}
and for $f_2$ defined on $\partial \Omega$ we let
\begin{align}
\nonumber
\| f_2 \|_{\sharp\sharp,\partial\Omega}
&= 
\sup_{y \in \partial \Omega}
(1+|y|)^{\sigma}
\Bigl[ |f_2(y)|   + (1+|y|)|f_2'(y)|
\\
\label{nH2}
& \qquad \qquad \qquad \qquad 
+ (1+|y|)^{1+\alpha} [f_2']_{\alpha,B(y,\frac{|y|}{10})\cap \partial\Omega} 
\Bigr] ,
\end{align}
\index{h3@$\norm \ \norm_{\sharp\sharp,\partial\Omega}$, \eqref{nH2} weighted H\"older norm for the RHS on the boundary in the hairpin}
where ${}'$ is the derivative with respect to arc length.

\begin{proposition}
\label{prop:lh1}
There is $\delta_2^*>0$ and $C>0$ so that for  $0<\delta_2<\delta_2^*$ there is $\varepsilon^*>0$ such that for $0<\varepsilon<\varepsilon^*$ the following holds.
Let $f_1$ and $f_2$ satisfy $\|f_1\|_{\sharp\sharp,\tilde\Omega^H}<\infty$,
$\|f_2\|_{\sharp\sharp,\partial\tilde\Omega^H}<\infty$ and be even with respect to $y_1$.
Then there is a function $\phi$ even in $y_1$ and solving \eqref{pHairpinv0}, which defines a linear operator of $f_1$, $f_2$ such that
\begin{align}
\nonumber
\|\phi\|_{\sharp,\tilde\Omega^H} \leq C ( \|f_1\|_{\sharp\sharp,\tilde\Omega^H} +\|f_2\|_{\sharp\sharp,\partial\tilde\Omega^H}). 
\end{align}
\end{proposition}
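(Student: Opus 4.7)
The plan is to treat Proposition~\ref{prop:lh1} as a perturbative consequence of Lemma~\ref{lemmaH1}, which provides a bounded linear right inverse $T$ for the analogous linearized Robin problem on the unmodified hairpin $\Omega^H$. More precisely, Lemma~\ref{lemmaH1} associates to each even pair $(f_1,f_2)$ with finite $\|\cdot\|_{\sharp\sharp}$ norms an even function $\phi_0$ on $\Omega^H$ solving $\Delta\phi_0=f_1$ with Robin condition $\partial_\nu\phi_0+\kappa^H\phi_0=f_2$ (where $\kappa^H$ is the curvature of $\partial\Omega^H$), subject to $\|\phi_0\|_{\sharp,\Omega^H}\leq C(\|f_1\|_{\sharp\sharp,\Omega^H}+\|f_2\|_{\sharp\sharp,\partial\Omega^H})$. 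Evenness in $y_1$ takes care of the kernel element $Z_1^H$, and Lemma~\ref{lemmaH1} handles the remaining even kernel element $Z_2^H$ via a specific choice of solution.

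The next step is to flatten $\tilde\Omega^H$ onto $\Omega^H$ by a diffeomorphism $\Phi\colon\Omega^H\to\tilde\Omega^H$ that is the identity outside a tubular neighborhood of $\{|y_1|\leq 4\delta_2/\varepsilon\}$ and translates points near the upper (resp.\ lower) boundary vertically by the difference $\tilde g^+-g_H^+$ (resp.\ $\tilde g^--g_H^-$) times a smooth normal-direction cutoff. On the support of this cutoff, the geometric corrections satisfy $|\tilde g^\pm-g_H^\pm|\leq C(\varepsilon y_1^2+|\log\varepsilon|)$ with first and second derivatives of order $O(\varepsilon y_1)$ and $O(\varepsilon)$ respectively, so $\Phi$ is $C^2$ close to the identity with explicit decay in $y_1$. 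Pulling \eqref{pHairpinv0} back through $\Phi$ and retaining $\kappa^H$ on the left-hand side of the Robin boundary condition produces an equation of the form
\begin{align*}
\Delta\phi+\mathcal{A}_\varepsilon\phi = \tilde f_1 \quad \text{in }\Omega^H,\qquad
\partial_\nu\phi+\kappa^H\phi+\mathcal{B}_\varepsilon\phi = \tilde f_2 \quad \text{on }\partial\Omega^H,
\end{align*}
where $\mathcal{A}_\varepsilon$ is a differential operator of order $\leq 2$ collecting all deviations of the pulled-back Laplacian from $\Delta$, and $\mathcal{B}_\varepsilon$ is a zeroth-order operator collecting the curvature difference $\kappa-\kappa^H$, the change of normal derivative, and the explicit term $-\varepsilon\omega\chi_2^H$. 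Both operators are supported in $|y|\leq 4\delta_2/\varepsilon$ and their coefficients carry at least one factor of $\varepsilon$ by construction.

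The problem is then equivalent to the fixed-point equation $\phi = T(\tilde f_1,\tilde f_2) - T(\mathcal{A}_\varepsilon\phi,\mathcal{B}_\varepsilon\phi)$ on the space $\mathcal{X}_\sharp$ defined by \eqref{nH3}. The key estimate, which is the main obstacle, is
\begin{align*}
\|(\mathcal{A}_\varepsilon\phi,\mathcal{B}_\varepsilon\phi)\|_{\sharp\sharp}\leq C\delta_2\|\phi\|_\sharp.
\end{align*}
This cannot come from a uniform smallness of the geometric perturbation, since $|\tilde g^+-g_H^+|$ itself reaches $O(\delta_2^2/\varepsilon)$; rather, the factor $\delta_2$ is extracted by combining the $\varepsilon$ in each coefficient with the support restriction $|y|\lesssim\delta_2/\varepsilon$ and the weight powers in \eqref{nH1}--\eqref{nH2}. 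For instance, a zeroth-order coefficient of size $\varepsilon$ acting on $\phi$ with $|\phi(y)|\leq\|\phi\|_\sharp(1+|y|)^{1-\sigma}$ contributes at most $\varepsilon\|\phi\|_\sharp(1+|y|)^{1-\sigma}$ in absolute value, which after multiplying by the $\sharp\sharp$ weight $(1+|y|)^\sigma$ is bounded by $\varepsilon(1+|y|)\|\phi\|_\sharp\leq 4\delta_2\|\phi\|_\sharp$ on the support. The first- and second-order terms in $\mathcal{A}_\varepsilon$ are handled similarly, exploiting the gains $(1+|y|)^{-1}$ and $(1+|y|)^{-2}$ built into $\|\cdot\|_\sharp$. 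Taking $\delta_2$ small enough that $C\delta_2\|T\|<1/2$, the Neumann series defines the required bounded linear solution operator, with even symmetry preserved at every step by the reflection invariance of all ingredients.
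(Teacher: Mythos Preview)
Your approach is correct and essentially identical to the paper's: flatten $\tilde\Omega^H$ onto $\Omega^H$ by a vertical-shift diffeomorphism, then apply Lemma~\ref{lemmaH1} and close via a Neumann-series contraction, extracting the small factor $\delta_2$ exactly as you describe by pairing the derivative bounds $(\hat g^\pm)'=O(\varepsilon y_1)$, $(\hat g^\pm)''=O(\varepsilon)$ with the support constraint $|y|\lesssim\delta_2/\varepsilon$ and the weight structure of the $\sharp$, $\sharp\sharp$ norms. One wording quibble: your phrase ``$\Phi$ is $C^2$ close to the identity'' is misleading since $\|\Phi-\mathrm{id}\|_\infty$ reaches $O(\delta_2^2/\varepsilon)$, but you correctly note later that only the first and second derivative seminorms need to be small, which is what the argument actually uses.
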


A first step is to analyze the problem in the unperturbed hairpin $\Omega^H$:
\begin{align}
	\label{pHairpin3}
	\left\{
	\begin{aligned}
		\Delta_y \phi  &= f_1 \quad 
		\text{in }  \Omega^H 
		\\
		\frac{\partial \phi}{\partial  \nu_y} 
		+ \kappa \phi
		& =f_2 
		\quad \text{on } \partial \Omega^H  ,
	\end{aligned}
	\right.
\end{align}
where $\kappa $ is the curvature of $\partial \Omega^H$. 

We recall that $\Omega^H$ can be written as 
\[
\Omega^H=F(S), \quad S = \Bigl\{ \, w\in \C \  \Big| \  | \Re(w)|  < \frac{\pi}{2} \, \Bigr\} ,
\]
where
\begin{align}
	\label{Fhairpin}
	F(w) = w + \sin(w) 
\end{align}
and that 
\[
\psi^H(F(w))=\Re(\cos(w)), \quad w \in S,
\]
is harmonic in $\Omega^H$ and satisfies
\[
\psi^H = 0 , \quad |\nabla \psi^H|=1, \quad 
\text{on } \partial S. 
\]
A calculation shows that 
\[
Z_1^H = \partial_{y_1} \psi^H, \quad
Z_2^H = \partial_{y_2} \psi^H ,
\]
satisfy \eqref{pHairpin3} with $f_1=0$ and $f_2=0$.

We use the same sign conventions and orientations described in Section~\ref{sectFormal}.
Then the curvature of the boundary of $\Omega^H$ is given by
\[
\kappa(F(w)) = -\frac{1}{|\sin(w)|^2} = - \frac{1}{\cosh^2(t)}, 
\quad w = \pm \frac{\pi}{2} + i t, \quad t\in \R.
\]

For the unperturbed case we have a result without assuming even symmetry in $y_1$ of $f_1$, $f_2$, but instead assuming 
\begin{align}
	\label{orthog}
	\int_{\Omega^H} f_1 Z_1^H dy = \int_{\partial\Omega^H} f_2 Z_1^H d\ell(y),
\end{align}
where $d\ell$ is arclength.

We note that the integrals in \eqref{orthog} are well defined if  $\|f_1\|_{\sharp\sharp,\Omega^H}<\infty$, $\|f_2\|_{\sharp\sharp,\partial\Omega^H}<\infty$, by the decay of $Z_1^H$ from Lemma~\ref{lemma:estGradPsiH}.
We also remark that if $f_1$ and $f_2$ are even with respect to $y_1$, then \eqref{orthog} holds.

\begin{lemma}
	\label{lemmaH1}
	Let $f_1$ and $f_2$ satisfy $\|f_1\|_{\sharp\sharp,\Omega^H}<\infty$, $\|f_2\|_{\sharp\sharp,\partial\Omega^H}<\infty$, and assume that \eqref{orthog} holds.
	Then there is a function $\phi$ with $\|\phi\|_{\sharp,\Omega^H}<\infty$ solving \eqref{pHairpin3}, which defines a linear operator of $f_1$, $f_2$ and such that 
	\begin{align}
		\label{estPhiH3}
		\|\phi\|_{\sharp,\Omega^H} \leq C ( \|f_1\|_{\sharp\sharp,\Omega^H} +\|f_2\|_{\sharp\sharp,\partial\Omega^H}). 
	\end{align}
\end{lemma}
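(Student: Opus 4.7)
The plan is to pull the problem back to the strip $S$ via the conformal map $F$, where the kernel structure becomes transparent, and then use a Fredholm argument combined with a linear right-inverse to produce the solution. Setting $\tilde\phi = \phi\circ F$, the conformal invariance of the Laplacian together with $|F'(w)| = \cosh t$ on $\partial S$ and $\kappa\circ F = -1/\cosh^2 t$ transform \eqref{pHairpin3} into
\begin{align*}
    \Delta_w \tilde\phi &= |F'|^2\, f_1(F(w)) &&\text{in } S,\\
    \partial_{\nu_w}\tilde\phi - \frac{\tilde\phi}{\cosh t} &= \cosh t\cdot f_2(F(w)) &&\text{on } \partial S.
\end{align*}
The pulled back kernel elements $\tilde Z_j^H = Z_j^H\circ F$ satisfy the homogeneous version of this system. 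A direct calculation using $\psi^H\circ F = \cos s \cosh t$ gives $\tilde Z_1^H|_{s=\pm\pi/2} = \mp 1/\cosh t$ (odd in $s$, decaying in $t$) and $\tilde Z_2^H|_{s=\pm\pi/2} = \tanh t$ (even in $s$, bounded but tending to $\pm 1$), confirming that the two kernel elements have different asymptotic character.

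I would then realize the problem as a linear operator $T\colon X \to Y$ between weighted Hölder spaces whose norms are equivalent under $F$-conjugation to $\|\cdot\|_{\sharp,\Omega^H}$ and $\|\cdot\|_{\sharp\sharp,\Omega^H}\oplus\|\cdot\|_{\sharp\sharp,\partial\Omega^H}$, and argue $T$ is Fredholm of index $+1$. The kernel is two-dimensional, spanned by $\tilde Z_1^H$ and $\tilde Z_2^H$, but only $\tilde Z_1^H$ produces a continuous linear functional on $Y$: with the decay built into $\|\cdot\|_{\sharp\sharp}$, the pairing $\int f_1 Z_1^H - \int f_2 Z_1^H$ is absolutely convergent because $|Z_1^H| = O(|y|^{-1})$ by \eqref{est1a}, whereas the corresponding pairing against $Z_2^H$ fails to converge since $Z_2^H$ is bounded away from zero. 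Thus the cokernel is one-dimensional and detected precisely by \eqref{orthog}. The Fredholm property would be established by exhausting $S$ by truncated strips $S_R = S\cap\{|t|<R\}$ with artificial boundary conditions at $t=\pm R$ (where the Fredholm alternative is classical), together with a priori bounds from a supersolution barrier of the form $M(1+|y|)^{1-\sigma}$ pulled back to $S$. Given \eqref{orthog}, a linear right-inverse is fixed by orthogonalizing $\tilde \phi$ against two continuous functionals $\ell_1, \ell_2$ on $X$ chosen dual to $\tilde Z_1^H, \tilde Z_2^H$ (for instance, integrals against smooth functions supported near the waist of $\Omega^H$), giving a solution depending linearly on $(f_1, f_2)$. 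The estimate in the strip is transported back to $\Omega^H$ using $|F(w)|\asymp e^{|t|}/2$ and $|F'(w)|\asymp 1+|y|$, together with interior and boundary Schauder regularity on balls $B(y,|y|/10)$.

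The main obstacle is constructing the a priori supersolution barrier in the presence of the bounded non-decaying kernel element $\tilde Z_2^H$: a plain sup-norm comparison cannot distinguish a true solution from one with an arbitrary multiple of $\tilde Z_2^H$ added, so the uniform bounds on $S_R$ as $R\to\infty$ must be taken modulo this direction. Concretely, one first subtracts off the best multiple of $\tilde Z_2^H$ (estimated via $\ell_2$) and then applies the barrier to the remainder. A secondary technical point is verifying that $M(1+|y|)^{1-\sigma}$ is compatible with the Robin boundary condition $\partial_\nu + \kappa$; one uses the decay $\kappa = O(|y|^{-2})$ at infinity to absorb the small boundary error into the supersolution inequality, which is what forces the restriction $\sigma \in (0,1)$ in the definition of the norm.
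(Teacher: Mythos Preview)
Your opening move---pulling back via $F$ to the strip and identifying the transformed problem---matches the paper exactly. After that the two arguments diverge completely. The paper does \emph{not} use Fredholm theory or barriers for the Robin problem. Instead it first splits off a Dirichlet piece $\tilde\phi_1$ (solved by a genuine barrier $\cosh((1-\sigma)w_2)\cos(aw_1)$), and then for the remaining homogeneous-interior Robin problem it writes $\tilde\phi_2 = \Re f$ and observes that the Robin condition becomes a first-order complex ODE $f'(w) - \tfrac{1}{\sin w} f = \sin(w)\,H_2$ on $\partial S$, hence on all of $S$ by the maximum principle. This ODE is solved explicitly with the integrating factor $\cos(w/2)/\sin(w/2)$. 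The orthogonality \eqref{orthog} enters as the condition that the two asymptotic limits $L,U$ of $\Im H_2$ agree, so that after subtracting a constant the analytic data $\tilde H_2$ decays and the integral formula for $f$ has the right growth. This is fully constructive and avoids any compactness or index bookkeeping.

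Your Fredholm outline is plausible in principle, but there is a real gap in the a priori estimate step. You propose a supersolution barrier $M(1+|y|)^{1-\sigma}$ for the Robin problem, noting that $\kappa = O(|y|^{-2})$ at infinity. The difficulty is not the size of $\kappa$ but its \emph{sign}: $\kappa<0$ everywhere on $\partial\Omega^H$, so the Robin operator $\partial_\nu + \kappa$ has the wrong sign for a comparison principle, and no positive supersolution can control solutions (indeed $Z_2^H$ itself is a bounded sign-changing element of the kernel). Subtracting the $Z_2^H$-component first, as you suggest, does not restore a maximum principle---the remainder still solves a Robin problem with $\kappa<0$. What would actually work for your route is a blow-up/contradiction argument: normalize, extract a limit on the full strip, and use a Liouville-type classification to land in $\mathrm{span}\{Z_1^H,Z_2^H\}$, contradicting the imposed normalizations $\ell_1=\ell_2=0$. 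That argument is standard but is not what you wrote; the barrier mechanism as stated does not go through.
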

\begin{proof}
	We use the conformal map  $F$ defined in \eqref{Fhairpin}.
	Let
	\[
	\tilde \phi = \phi \circ F . 
	\]
	Then \eqref{pHairpin3} is equivalent to
	\begin{align}
		\label{pHairpin4}
		\left\{
		\begin{aligned}
			\Delta_w \tilde \phi  &= \tilde f_1 \quad 
			\text{in }  S
			\\
			\frac{\partial \tilde\phi}{\partial  \nu} 
			+ \tilde \kappa \tilde\phi
			& = \tilde f_2 
			\quad \text{on } \partial S  ,
		\end{aligned}
		\right.
	\end{align}
	where
	\begin{align*}
		\tilde f_1(w) &= | F'(w) |^2 \, f_1(F(w)) ,
		\\
		\tilde f_2(w) &= | F'(w) | \, f_2(F(w)) ,
		\\
		\tilde \kappa(w) &= | F'(w) | \, \kappa(F(w)) .
	\end{align*}
	In \eqref{pHairpin4}, $\nu$ denotes the outward unit normal vector to $\partial S$, that is, for $t$ real, $\nu(\pm\frac{\pi}{2}+it)=(\pm 1,0)$ in the $w$-plane.
	A computation gives
	\[
	\tilde \kappa (w) = -\frac{1}{|\sin(w)|} = - \frac{1}{\cosh(w_2)} , \quad w = \pm \frac{\pi}{2} + i w_2, \quad w_2\in \R.
	\]
	The condition \eqref{orthog} takes the form
	\begin{align*}
		\int_S \tilde f_1(w) Z_1^H(F(w)) dw = \int_{\partial S} \tilde f_2(w) Z_1^H(F(w)) d\ell(w), 
	\end{align*}
	where again $d\ell(w)$ is arc-length.
	
	We solve \eqref{pHairpin4} in two stages. First we solve the Dirichlet problem
	\begin{align}
		\label{pHairpinD}
		\left\{
		\begin{aligned}
			\Delta_w \tilde \phi_1  &= \tilde f_1 \quad 
			\text{in }  S
			\\
			\tilde\phi_1 & = 0
			\quad \text{on } \partial S  ,
		\end{aligned}
		\right.
	\end{align}
	and then we solve the Robin problem
	\begin{align}
		\label{pHairpin6}
		\left\{
		\begin{aligned}
			\Delta_w \tilde \phi_2  &= 0\quad 
			\text{in }  S
			\\
			\frac{\partial \tilde\phi_2}{\partial  \nu} 
			+ \tilde \kappa \tilde\phi_2
			& = h
			\quad \text{on } \partial S,
		\end{aligned}
		\right.
	\end{align}
	where
	\begin{align*}
		h = \tilde f_2  - \frac{\partial\tilde\phi_1}{\partial\nu}. 
	\end{align*}
	The solution to \eqref{pHairpin4} is then given by $\tilde \phi = \tilde \phi_1 + \tilde \phi_2$.
	
	\medskip
	
	Let us solve \eqref{pHairpinD}. Using that $ |F'(w)|^2 = |1+\cos(w)|^2$, the relation \eqref{expansionw2b}, and the definition of $\|f_1\|_{\sharp\sharp,\Omega^H}$ \eqref{nH1}, we get
	\[
	|\tilde f_1(w) | \leq C \| f_1\|_{\sharp\sharp,\Omega^H}
	e^{(1-\sigma)|w_2|}, \quad w = w_1+ i w_2.
	\]
	Using the barrier 
	\[
	\bar\phi_1(w) =  \cosh( (1-\sigma) w_2 ) \cos( a w_1)
	\]
	with $(1-\sigma)<a<1$ we find existence of a unique solution to \eqref{pHairpinD} satisfying
	\begin{align}
		\nonumber
		|\tilde \phi_1(w) |\leq C  \| f_1\|_{\sharp\sharp,\Omega^H}e^{(1-\sigma)|w_2|}.
	\end{align}
	By standard elliptic estimates we also get
	\begin{align}
		\label{x2}
		\Bigl| \frac{\partial \tilde \phi_1}{\partial \nu}(w) \Bigr|\leq C  \| f_1\|_{\sharp\sharp,\Omega^H}e^{(1-\sigma)|w_2|}.
	\end{align}
	By \eqref{x2} and the definition of $\|f_2\|_{\sharp\sharp,\partial\Omega^H}$ in \eqref{nH2}, we have
	\begin{align*}
		|h(w)|\leq C ( \|f_1\|_{\sharp\sharp,\Omega^H} + \|f_2\|_{\sharp\sharp,\partial\Omega^H} ) e^{(1-\sigma)|w_2|}, \quad w \in \partial S.
	\end{align*}
	We note that
	\begin{align}
		\nonumber
		&\int_{\partial S} h(w) Z_1^H(F(w)) d\ell(w)
		\\
		\nonumber
		&=\int_{\partial S}\tilde f_2(w)Z_1^H(F(w)) d\ell(w)
		-\int_{\partial S}\frac{\partial\tilde\phi_1}{\partial\nu}	Z_1^H(F(w)) d\ell(w)
		\\
		\nonumber
		&=\int_{\partial S}\tilde f_2(w)Z_1^H(F(w)) d\ell(w)
		- \int_S\tilde f_1(w)Z_1^H(F(w))dw
		\\
		\label{int0}
		&=0.
	\end{align}
	
	\medskip
	
	Let us consider problem \eqref{pHairpin6}. Changing the notation, \eqref{pHairpin6} becomes
	\begin{align}
		\label{pHairpin7}
		\left\{
		\begin{aligned}
			\Delta  u  &= 0 \quad 
			\text{in }  S
			\\
			\frac{\partial u}{\partial  \nu} 
			-\frac{1}{|\sin(w)|} u
			& = h
			\quad \text{on } \partial S.
		\end{aligned}
		\right.
	\end{align}
	We further rewrite the boundary condition in \eqref{pHairpin7} as 
	\begin{align}
		\nonumber
		\partial_{w_1} u - \frac{1}{\sin(w)} u = \tilde h
		\quad \text{on } \partial S,
	\end{align}
	where 
	\[
	\tilde h\Big(\pm\frac{\pi}{2}+i w_2\Bigr) = \pm h\Bigl(\pm\frac{\pi}{2}+i w_2\Bigr) , \quad w_2\in \R.
	\]
	We note that $\sin(\pm \frac{\pi}{2}+iw_2) = \pm \cosh(w_2) $ for $w_2\in \R$, and so we may write \eqref{pHairpin7} as 
	\begin{align}
		\label{u2}
		\left\{
		\begin{aligned}
			\Delta u &=0 \quad \text{in }S
			\\
			\partial_{w_1} u - \frac{1}{\sin(w)} u &= \sin(w) h_2
			\quad \text{on } \partial S,
		\end{aligned}
		\right.
	\end{align}
	where 
	\begin{align*}
		h_2(w)=\frac{\tilde h(w)}{\sin(w)}, \quad w=\pm\frac{\pi}{2}+iw_2, \ w_2\in\R.
	\end{align*}
	Note that $h_2$ is real valued and satisfies
	\begin{align}
		\label{g2f}
		|h_2(w) | \leq C ( \|f_1\|_{\sharp\sharp,\Omega^H} + \|f_2\|_{\sharp\sharp,\partial\Omega^H} ) e^{-\sigma|w_2|}, \quad w \in \partial S.
	\end{align}
	Actually $h_2$ is H\"older continuous on $\partial S$, because $\tilde f_1 $ in \eqref{pHairpinD} is H\"older continuous, and by standard elliptic estimates $ \frac{\partial\tilde \phi_1}{\partial \nu}$ is H\"older continuous on $\partial S$. Moreover, the H\"older continuity of $h_2$ has the following form:
	\begin{align}
		\label{g2b}
		\frac{|h_2(w_1,w_2')-h_2(w_1,w_2'')|}{|w_2'-w_2''|^\alpha} 
		\leq C  ( \|f_1\|_{\sharp\sharp,\Omega^H} + \|f_2\|_{\sharp\sharp,\partial\Omega^H} ) e^{-\sigma w_2}, \quad w_1=\pm\frac{\pi}{2},
	\end{align}
	where $w_2 = \min(|w_2'|,|w_2''|)$.
	
	We remark that for $w=\pm\frac{\pi}{2}+iw_2$, $w_2\in\R$,
	\begin{align*}
		Z_1^H(F(w)) = \Re\Bigl( \frac{\sin(w)}{1+\cos(w)}\Bigr) 
		= \pm\frac{1}{\cosh(w_2)}.
	\end{align*}
	Hence, for $w=\pm\frac{\pi}{2}+iw_2$, $w_2\in\R$,
	\begin{align*}
		h(w) Z_1^H(F(w))
		&=\pm h_2(w) \sin(w) \Re\Bigl( \frac{\sin(w)}{1+\cos(w)}\Bigr)=\pm h_2(w) ,
	\end{align*}
	and we get from \eqref{int0} that
	\begin{align}
		\label{int0b}
		\int_{-\infty}^\infty \Bigl(h_2\Bigl(\frac{\pi}{2}+iw_2\Bigr)-h_2\Bigl(-\frac{\pi}{2}+iw_2\Bigr) \Bigr) dw_2=0.
	\end{align}
	
	To motivate the formula that we use later on, let us proceed  assuming that we have a solution to \eqref{u2}.
	Since $u$ is harmonic in $S$, we can write it as the real part of an analytic function $f$ on $S$, $u = \Re(f)$.
	Let $H_2$ be the bounded analytic function on $S$ such that $\Re(H_2)=h_2$ on $\partial S$.
	Then $\Im(H_2)$ is unique up to the addition of  a constant. 
	Then by \eqref{u2}
	\[
	\Re\Bigl[ f'(w) -  \frac{1}{\sin(w)} f\Bigr] = \Re\left[\sin(w) H_2 \right] , \quad \text{on }\partial S ,
	\]
	where $f'(w)$ is the complex derivative of $f$.
	By the maximum principle it is reasonable to expect that this is also true on $S$. Then we can solve for $f$ using the integrating factor $\frac{\cos(\frac{w}{2})}{\sin(\frac{w}{2})}$
	\[
	f(w) = \frac{\sin(\frac{w}{2})}{\cos(\frac{w}{2})} \Bigl( \int_0^w \frac{\cos(\frac{v}{2})}{\sin(\frac{v}{2})} \sin(v) H_2(v) \,dv + c\Bigr), 
	\quad w \in S,
	\]
	for some constant $c$.
	
	We use now this formula to construct a solution to \eqref{u2}. Given $h_2$ satisfying \eqref{g2f}, \eqref{g2b} and \eqref{int0b} we let $H_2$ be the bounded analytic function in $S$ such that $\Re(H_2)=h_2$ on $\partial S$. The imaginary part of $H_2$ is defined up to a constant that we fix by setting $\Im(H_2)(0)=0$.
	Let $j_2=\Im(H_2)$.
	Then $j_2$ is harmonic in $S$ and $H_2 = h_2 + i j_2$. Using that $h_2$ satisfies \eqref{g2f} and \eqref{g2b}, we get that
	\begin{align}
		\label{h2}
		U = \lim_{w_2\to\infty} j_2(w_1,w_2),\quad
		L = \lim_{w_2\to-\infty} j_2(w_1,w_2)
	\end{align}
	exist uniformly for $w_1\in[-\frac{\pi}{2},\frac{\pi}{2}]$. Moreover,
	\begin{align*}
		|j_2(w_1,w_2)-U|&\leq C(\|f_1\|_{\sharp\sharp,\Omega^H} + \|f_2\|_{\sharp\sharp,\partial\Omega^H} ) e^{-\sigma w_2},
		\\
		|j_2(w_1,w_2)-L|&\leq C(\|f_1\|_{\sharp\sharp,\Omega^H} + \|f_2\|_{\sharp\sharp,\partial\Omega^H} ) e^{-\sigma w_2}.
	\end{align*}
	The proof of this is presented in Section~\ref{sec:holder-strip}.
	
	Integrating $H_2$ on the closed curve $\partial\mathcal R$ where $\mathcal R = [-\frac{\pi}{2},\frac{\pi}{2}]\times [-R,R]$ and letting $R\to \infty$ we find that
	\begin{align*}
		\int_{-\infty}^\infty \Bigl(h_2\Bigl(\frac{\pi}{2}+iw_2\Bigr)-h_2\Bigl(-\frac{\pi}{2}+iw_2\Bigr)\Bigr)dw_2 = \pi ( L - U).
	\end{align*}
	From \eqref{int0b} we deduce that $L=U$.
	We then define
	\begin{align*}
		\tilde H_2 = h_2 + i j_2 -i L.
	\end{align*}
	It follows from \eqref{g2f} and \eqref{h2} that 
	\begin{align}
		\label{tG2}
		|\tilde H_2(w) | \leq C ( \|f_1\|_{\sharp\sharp,\Omega^H} + \|f_2\|_{\sharp\sharp,\partial\Omega^H} ) e^{-\sigma |w_2|}, \quad w \in S.
	\end{align}
	
	Then we define 
	\[
	f(w) = \frac{\sin(\frac{w}{2})}{\cos(\frac{w}{2})} \int_0^w \frac{\cos(\frac{v}{2})}{\sin(\frac{v}{2})} \sin(v) \tilde H_2(v) \,dv, 
	\quad w \in S.
	\]
	There is no singularity in the integrand, so $f(w)$ is single valued.
	Then $u = \Re(f)$ satisfies \eqref{u2} and from \eqref{tG2} we have that
	\[
	|f(w) | \leq C ( \|f_1\|_{\sharp\sharp,\Omega^H} + \|f_2\|_{\sharp\sharp,\partial\Omega^H} ) e^{(1-\sigma)|w_2|}, \quad w \in S.
	\]
	
	\medskip
	
	Going back to \eqref{pHairpin6}, we have constructed a solution $\tilde \phi_2$, which satisfies
	\begin{align}
		\nonumber
		|\tilde \phi_2(w) |\leq C  ( \|f_1\|_{\sharp\sharp,\Omega^H} + \|f_2\|_{\sharp\sharp,\partial\Omega^H} )
		e^{(1-\sigma)|w_2|}.
	\end{align}
	This gives the desired solution $\phi$ of \eqref{pHairpin3}.
	By standard elliptic estimates, it satisfies \eqref{estPhiH3}.
\end{proof}

\begin{proof}[Proof of Proposition~\ref{prop:lh1}]
	Let us rewrite the problem \eqref{pHairpinv0} to be solved as
	\begin{align}
		\label{pHairpin2}
		\left\{
		\begin{aligned}
			\Delta_y \tilde\phi &= \tilde f_1 &&
			\text{in } \tilde \Omega^H 
			\\
			\frac{\partial \tilde\phi}{\partial  \nu_{\partial\tilde\Omega^H}} 
			+ (\kappa_{\partial\tilde\Omega^H} - \varepsilon \omega \chi^H_2) \tilde\phi
			&= \tilde f_2 && \text{on } \partial \tilde \Omega^ H  ,
		\end{aligned}
		\right.
	\end{align}
	where $\tilde\Omega^H$ is given in \eqref{tOmegaH} and we have changed the unknown to $\tilde\phi$.

	Note that the functions $\tilde g^+$ and $\tilde g^-$ used to define $\tilde \Omega^H$ (in \eqref{tildegp} and \eqref{tildegm}) can be decomposed as 
	\begin{align*}
		\tilde g^+ = g_H^+ + \hat g^+, \qquad
		\tilde g^- = g_H^- + \hat g^-,
	\end{align*}
	where
	\begin{align}
		\nonumber
		\hat g^+
		& = g_H^+ ( \eta_{\varepsilon^{-b}} -  \eta_{2\delta_2/\varepsilon}  ) 
		+ \tfrac{1}{4}\varepsilon\omega y_1^2
		(1-\eta_0(|y_1|-10)) \eta_{\varepsilon^{-b} }
		\\
		\label{defHatgplus}
		& \quad 
		+ g_D^- (1-\eta_{\varepsilon^{-b}} ) \eta_{2\delta_2/\varepsilon} ,
	\end{align}
	and
	\begin{align*}
		\hat g^- = g_H^- ( \eta_{\varepsilon^{-b}} -  \eta_{2\delta_2/\varepsilon}  ) - \varepsilon^{-1}  d^S(1-\eta_{\varepsilon^{-b}}) \eta_{2\delta_2/\varepsilon}.
	\end{align*}
	
	We make the following change of variables
	\[
	y \in \tilde\Omega^H \mapsto z = K(y)\in\Omega^H
	\]
	where
	\begin{align*}
		K(y) &= y + (0,\varphi(y)), 
		\\    
		\varphi(y_1,y_2) &= \begin{cases}
			-\hat g^+(y_1)\eta_0(\frac{\varepsilon y_2}{\delta_2}), & y_2>0
			\\
			-\hat g^-(y_1)\eta_0(\frac{\varepsilon y_2}{\delta_2}), & y_2<0,
		\end{cases}
	\end{align*}
	and $\eta_0$ is the cut-off function defined in \eqref{eta0}.
	Let 
	\[
	\phi(K(y)) = \tilde\phi(y),\quad y\in\tilde\Omega^H.
	\]
	Then the problem \eqref{pHairpin2} is transformed into
	\begin{align}
		\label{pHairpin2b}
		\left\{
		\begin{aligned}
			\Delta_z \phi + B_1[\phi]&= f_1&&
			\text{in } \Omega^H 
			\\
			\frac{\partial \phi}{\partial  \nu_{\partial\Omega^H}} 
			+ \kappa_{\partial\Omega^H}\phi 
			+ B_2[\phi]
			&=f_2 && \text{on } \partial \Omega^ H  ,
		\end{aligned}
		\right.
	\end{align}
	where
	\begin{align*}
		B_1[\phi]&=
		2 \partial_{z_1,z_2}^2 \phi \partial_{y_1} \varphi
		+2 \partial_{z_2,z_2}^2 \phi \partial_{y_2} \varphi
		+\partial_{z_2,z_2}^2 \phi |\nabla_y\varphi|^2
		\\
		&\quad+\partial_{z_2}\phi\Delta_{y}\varphi
        \\
		B_2[\phi] &= - \varepsilon \omega \chi^H_2\phi 
		+ (\hat\kappa-\kappa_{\partial\Omega^H})\phi
		\\
		& \quad
		+\partial_{z_1}\phi
		\left(\frac{\frac{d\tilde g^+}{dy_1}}{(1+(\frac{d\tilde g^+}{dy_1})^2)^{\frac{1}{2}}}-\frac{\frac{dg^+_H}{dy_1}}{(1+(\frac{dg^+_H}{dy_1})^2)^{\frac{1}{2}}}\right)
		\\
		& \quad
		+\partial_{z_2}\phi
		\left(-\frac{1}{(1+(\frac{d\tilde g^+}{dy_1})^2)^{\frac{1}{2}}}+\frac{1}{(1+(\frac{dg^+_H}{dy_1})^2)^{\frac{1}{2}}}-\frac{\frac{d\tilde g^+}{dy_1}\frac{d\hat g^+}{dy_1}}{(1+(\frac{d\tilde g^+}{dy_1})^2)^{\frac{1}{2}}}
		\right)
	\end{align*}
	and
	\begin{align*}
		\hat\kappa(K(y)) = \kappa_{\partial\tilde \Omega^H}(y),
		\qquad f_j(K(y)) = \tilde f_j(y).
	\end{align*}
	
	For given right hand sides $f_1$, $f_2$ we construct a solution $\phi$ of \eqref{pHairpin2b}, which is a linear operator of $f_1$ and $f_2$.
	To do this we treat \eqref{pHairpin2b} as a perturbation of \eqref{pHairpin3}. 
	Let $\phi=T[f_1,f_2]$ be the operator constructed in Lemma~\ref{lemmaH1}, that given $f_1$ and $f_2$ produces a solution $\phi$ of \eqref{pHairpin3} with the estimate
	\begin{align}
		\nonumber
		\|T[f_1,f_2]\|_{\sharp,\Omega^H} \leq C ( \|f_1\|_{\sharp\sharp,\Omega^H} +\|f_2\|_{\sharp\sharp,\partial\Omega^H}). 
	\end{align}
	We construct a solution to \eqref{pHairpin2b} as a fixed point of the operator
	\[
	\phi \mapsto F[\phi]=T[f_1-B_1[\phi],f_2-B_2[\phi]].
	\]
	The existence of a unique fixed point follows from the following claim: there exist $\delta_2^*>0$ and $C>0$ so that for  $0<\delta_2<\delta_2^*$ there is $\varepsilon^*>0$ such that for $0<\varepsilon<\varepsilon^*$
	\begin{align}
		\label{abc}
		\|T[f_1-B_1[\phi],f_2-B_2[\phi]]\|_{\sharp,\Omega^H} \leq 
		\tfrac{1}{2}\|\phi\|_{\sharp,\Omega^H}
		+ C \|f_1\|_{\sharp\sharp,\Omega^H}+ C \|f_2\|_{\sharp\sharp,\partial\Omega^H}.
	\end{align}
	
	Indeed, let us estimate $\| B_1[\phi] \|_{\sharp\sharp,\Omega^H}$.
	For this we need to estimate first
	\[
	\sup_{z\in \Omega^H} (1+|z|)^{1+\sigma} |B_1[\phi]|.
	\]
	Let $z = (z_1,z_2) \in \Omega^H$ and assume first that $z_2>0$. 
	From \eqref{defHatgplus} we get 
	\[
	|(\hat g^+)'(z_1)| \leq C m_1
	\]
	where
	\[
	m_1 = |\log\varepsilon| \varepsilon^b + \varepsilon^{1-b} + \frac{\varepsilon}{\delta_2}\log\Bigl( \frac{\delta_2}{\varepsilon}\Bigr) + \delta_2 .
	\]
	Note that $m_1$ is small  if $\delta_2$ is small and then $\varepsilon$ is small depending on $\delta_2$.
	Then
	\begin{align}
		\label{B1}
		| \partial_{z_1,z_2}^2 \phi \partial_{y_1} \varphi |
		& \leq C   \frac{m_1}{(1+|z|)^{1+\sigma}} \| \phi \|_{\sharp,\Omega^H}.
	\end{align}
	The computation is similar for the other terms in $B_1$ involving second order derivatives.
	To estimate the term with first order derivative in $B_1$ we note that 
	\begin{align*}
		|\partial_{y_1,y_1}^2\varphi|\leq|(\hat g^+)''(z_1)|\leq C \frac{\varepsilon m_2}{\delta_2}
	\end{align*}
	where
	\begin{align*}
		m_2 = \delta_2  + \frac{\varepsilon}{\delta_2} \log\Bigl( \frac{\delta_2}{\varepsilon}\Bigr) .
	\end{align*}
	We have used here that $b>\frac{1}{2}$ so that $ \varepsilon^{2b} |\log\varepsilon| \leq \varepsilon$ for $\varepsilon$ small.
	Similarly,
	\begin{align*}
		|\partial_{y_2,y_2}^2\varphi|\leq C \frac{\varepsilon^2}{\delta_2^2}
		\leq C \frac{\varepsilon m_2}{\delta_2}.
	\end{align*}
	We note that $\Delta_y \varphi$ is supported on $|z|\leq \frac{4\delta_2}{\varepsilon}$.
	Then we estimate, for $z_2>0$ and $|z_1|\leq \frac{4\delta_2}{\varepsilon}$:
	\begin{align}
		\nonumber
		|\partial_{z_2} \phi \Delta_y \varphi|
		& \leq C \frac{\varepsilon m_2}{\delta_2(1+|z|)^\sigma} \|\phi\|_{\sharp,\Omega^H}
		\\
		\label{B2}
		& \leq C \frac{m_2}{ (1+|z|)^{1+\sigma}} \|\phi\|_{\sharp,\Omega^H} .
	\end{align}
	Then note that  $m_2$ is small  if $\delta_2$ is small and then $\varepsilon$ is small depending on $\delta_2$.
	
	Combining \eqref{B1} and \eqref{B2} and a similar calculation for $z_2<0$, we obtain
	\begin{align*}
		\sup_{z\in \Omega^H } (1+|z|)^{1+\sigma} |B_1[\phi]| \leq C (m_1+m_2) \|\phi\|_{\sharp,\Omega^H}.
	\end{align*}
	The H\"older part of the norm $\| \ \|_{\Omega^H}$ is handled similarly and we obtain
	\begin{align}
		\label{Bx1}
		\| B_1[\phi] \|_{\sharp\sharp,\Omega^H} \leq C m_3 \|\phi\|_{\sharp,\Omega^H}
	\end{align}
	where $m_3$ is small if $\delta_2$ and $\varepsilon$ are chosen appropriately small and $C$ is fixed.
	
	The estimate of the term $B_2[\phi]$ is similar. We analyze some of the terms in this expression. 
	We have
	\begin{align*}
		|\varepsilon \omega \chi^H_2(z)\phi(z) | &\leq C \varepsilon (1+|z|)^{1-\sigma}  \chi^H_2(z) \|\phi\|_{\sharp,\Omega^H}
		\leq C \frac{\delta_2 }{(1+|z|)^\sigma}\|\phi\|_{\sharp,\Omega^H}.
	\end{align*}
	An analogous computation gives
	\begin{align*}
		\| \varepsilon \omega \chi^H_2 \phi\|_{\sharp\sharp,\partial\Omega}  \leq C \delta_2 \|\phi\|_{\sharp,\Omega^H}.
	\end{align*}
	All the other terms in $B_2$ are estimated similarly and we get
	\begin{align}
		\label{Bx2}
		\| B_2[\phi] \|_{\sharp\sharp,\partial\Omega^H} \leq C m_4 \|\phi\|_{\sharp,\Omega^H},
	\end{align}
	where $m_4$ is small if $\delta_2$ and $\varepsilon$ are chosen appropriately small and $C$ is fixed.
	
	From \eqref{Bx1} and \eqref{Bx2} we obtain \eqref{abc}.
\end{proof}

We also consider the Dirichlet problem
\begin{align}
	\label{pHairpinD2}
	\left\{
	\begin{aligned}
		\Delta_y \phi  &= f_1 && \text{in }  \tilde\Omega^H 
		\\
		\phi & =f_2 &&\text{on } \partial \tilde\Omega^H.
	\end{aligned}
	\right.
\end{align}
We use the following norm for $f_2$ defined on $\partial\Omega$, where $\Omega$ is either $\Omega^H$ or $\tilde\Omega^H$. 
For $0<\sigma,\alpha <1$ we define 
\begin{align}
	\nonumber
	\| f_2 \|_{\sharp,\partial\Omega}
	&= \sup_{y \in \partial\Omega}
	(1+|y|)^{\sigma-1}
	\Bigl[
	|f_2(y)| + (1+|y|) |f_2'(y)|
	+ (1+|y|)^2 |f_2''(y)|
	\\
	\nonumber
	& \qquad \qquad \qquad \qquad 
	+ (1+|y|)^{2+\alpha} [ f_2'' ]_{\alpha,B(y,\frac{|y|}{10})\cap \partial\Omega} 
	\Bigr] ,
\end{align}
where $f_2'$ denotes the derivative of $f_2$ with respect to arc length.
This is similar to the norm $\| \ \|_{\sharp,\Omega}$ \eqref{nH3} but for functions restricted to the boundary of $\Omega$. 

\begin{lemma}
\label{lem:lh2}
There is $\delta_2^*>0$ and $C>0$ so that for  $0<\delta_2<\delta_2^*$ there is $\varepsilon^*>0$ such that for $0<\varepsilon<\varepsilon^*$ the following holds.
Let $f_1$ and $f_2$ satisfy $\|f_1\|_{\sharp\sharp,\tilde\Omega^H}<\infty$,
$\|f_2\|_{\sharp,\partial\tilde\Omega^H}<\infty$. 
Then there is a function $\phi$ with $\|\phi\|_{h,\tilde\Omega^H}<\infty$ solving \eqref{pHairpinD2}, which defines a linear operator of $f_1$, $f_2$ and such that 
\begin{align}
\nonumber
\|\phi\|_{\sharp,\tilde\Omega^H} \leq C ( \|f_1\|_{\sharp\sharp,\tilde\Omega^H} +\|f_2\|_{\sharp,\partial\tilde\Omega^H}). 
\end{align}
\end{lemma}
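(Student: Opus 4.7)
The strategy parallels that of Proposition~\ref{prop:lh1}: first solve the problem on the unperturbed hairpin $\Omega^H$ using the conformal map $F(w)=w+\sin(w)$ that sends the strip $S$ to $\Omega^H$, and then reduce \eqref{pHairpinD2} to a perturbation of that model problem via the same change of variables $K\colon\tilde\Omega^H\to\Omega^H$ introduced in \eqref{pHairpin2b}.

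For the model Dirichlet problem on $\Omega^H$, pulling back through $F$ produces
\begin{align*}
\left\{
\begin{aligned}
\Delta_w\tilde\phi&=|F'(w)|^2f_1(F(w))&&\text{in }S,\\
\tilde\phi&=f_2(F(w))&&\text{on }\partial S.
\end{aligned}
\right.
\end{align*}
Using $|F'(w)|^2=|1+\cos w|^2$ and the fact that $|F(w)|\sim\tfrac12 e^{|w_2|}$ as $|w_2|\to\infty$ (cf.\ \eqref{expansionw2b}), the norms of $f_1$ and $f_2$ translate into bounds
\begin{align*}
|\tilde f_1(w)|&\leq C\|f_1\|_{\sharp\sharp,\Omega^H}\,e^{(1-\sigma)|w_2|},
&
|\tilde f_2(w)|&\leq C\|f_2\|_{\sharp,\partial\Omega^H}\,e^{(1-\sigma)|w_2|}.
\end{align*}
The Dirichlet problem in $S$ with such right-hand sides is then solved using the barrier $\bar\phi(w)=M\cosh((1-\sigma)w_2)\cos(a w_1)$ with $1-\sigma<a<1$, which is superharmonic and satisfies $\bar\phi(\pm\pi/2,w_2)=0$; the problem for $\tilde\phi$ is reduced to one with zero Dirichlet data by subtracting a harmonic extension of $\tilde f_2$ (obtained via a Poisson-kernel type formula in the strip, whose decay is controlled by the decay of $\tilde f_2$). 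Standard Schauder estimates on balls of size comparable to $|z|$ then upgrade the pointwise bound to the full $\|\cdot\|_{\sharp,\Omega^H}$ control.

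Having the solution operator $T_D[f_1,f_2]$ for the Dirichlet problem on $\Omega^H$, we apply the diffeomorphism $K$ from the proof of Proposition~\ref{prop:lh1}. Setting $\phi(K(y))=\tilde\phi(y)$ transforms \eqref{pHairpinD2} into
\begin{align*}
\left\{
\begin{aligned}
\Delta_z\phi+B_1[\phi]&=f_1&&\text{in }\Omega^H,\\
\phi&=f_2\circ K^{-1}&&\text{on }\partial\Omega^H,
\end{aligned}
\right.
\end{align*}
where $B_1$ is the same first/second order perturbation operator that appears in \eqref{pHairpin2b}. Note that on $\partial\Omega^H$ the map $K$ is just the graph identification between $(y_1,g_H^\pm(y_1))$ and $(y_1,\tilde g^\pm(y_1))$, so $\|f_2\circ K^{-1}\|_{\sharp,\partial\Omega^H}\leq C\|f_2\|_{\sharp,\partial\tilde\Omega^H}$. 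One then solves by Banach fixed point in the norm $\|\cdot\|_{\sharp,\Omega^H}$, looking for a fixed point of
\[
\phi\mapsto T_D[f_1-B_1[\phi],\,f_2\circ K^{-1}].
\]
The contraction property is precisely the estimate $\|B_1[\phi]\|_{\sharp\sharp,\Omega^H}\leq Cm_3\|\phi\|_{\sharp,\Omega^H}$ already established in \eqref{Bx1}, where $m_3$ can be made arbitrarily small by choosing first $\delta_2$ then $\varepsilon$ small. This yields $\phi$ with $\|\phi\|_{\sharp,\Omega^H}\leq C(\|f_1\|_{\sharp\sharp,\tilde\Omega^H}+\|f_2\|_{\sharp,\partial\tilde\Omega^H})$, and reverting through $K$ gives the required solution on $\tilde\Omega^H$.

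The main obstacle is the model Dirichlet problem on the strip with boundary data that is allowed to grow like $e^{(1-\sigma)|w_2|}$: one must produce a solution with the matching growth in the interior and with two full derivatives controlled in the weighted Hölder sense defining $\|\cdot\|_{\sharp,\Omega^H}$. This requires checking that the harmonic extension of $\tilde f_2$ obtained from the strip Poisson kernel indeed inherits the exponential growth rate (no loss as $|w_2|\to\infty$), and that the resulting expressions for the derivatives translate back through $F$ — where $|F'(w)|\sim e^{|w_2|}/2$ absorbs one factor of $|z|$ — to the declared $|z|^{-\sigma}$ gradient decay and $|z|^{-1-\sigma}$ Hessian decay in the $z=F(w)$ variable. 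Everything else (the perturbation argument on $\tilde\Omega^H$, the handling of $f_1$ via the same barrier, and the Schauder upgrade) is essentially identical to the Robin case.
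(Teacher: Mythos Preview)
Your approach is correct and essentially matches the paper's brief sketch: solve the Dirichlet problem on $\Omega^H$ by mapping to the strip via $F$, control the solution with the barrier $\cosh((1-\sigma)w_2)\cos(aw_1)$ and Schauder estimates, then pass to $\tilde\Omega^H$ by the same change of variables $K$ and contraction argument as in Proposition~\ref{prop:lh1}. One small slip: the barrier does \emph{not} vanish at $w_1=\pm\pi/2$ for $a<1$, since $\cos(a\pi/2)>0$; this is harmless because after reducing to zero Dirichlet data you only need the barrier to be nonnegative on $\partial S$, which it is.
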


\begin{proof}
    First solve the problem in the hairpin $\Omega^H$ and then use a perturbation argument as in Proposition~\ref{prop:lh1}. To solve the problem in $\Omega^H$ we use the conformal map $F$ to map the problem onto a strip, as in Lemma~\ref{lemmaH1}, and use a barrier and then elliptic estimates.
\end{proof}

\begin{remark}
\label{rem:u1}
To construct a function $\bar u_1$ as used in Section~\ref{sect:error}, \eqref{pHairpinD3} one can solve, using Lemma~\ref{lem:lh2},
\begin{align}
\nonumber
\left\{
\begin{aligned}
-\Delta_y \bar u_1 &=  \varepsilon^{1-\sigma b}|\log\varepsilon| 
\frac{1}{(1+|y|)^{1+\sigma}}
&&
\text{in } \tilde\Omega^H
\\
\bar u_1 & =0 && \text{on } \partial\tilde\Omega^H.
\end{aligned}
\right.
\end{align}
The restriction of $\bar u_1$ to $\Omega_0$ satisfies \eqref{pHairpinD3} and the inequalities \eqref{eq:u1-upper}, \eqref{eq:u1-lower}. The lower bound \eqref{eq:u1-lower} can be obtained by passing to the strip and using an appropriate subsolution.

Likewise, a function $\bar u_0$ as needed in Section~\ref{sect:error}, satisfying \eqref{eq:bar-u0} and \eqref{ineq-baru1} can be obtained by solving
\begin{align}
\nonumber
\left\{
\begin{aligned}
\Delta \bar u_0 & = 0 && \text{in } \tilde\Omega^H
\\
\bar u_0 &= \varepsilon^{1-b\sigma} |\log\varepsilon| (1+|y|^2)^{\frac{1-\sigma}{2}} && \text{on }\partial\tilde\Omega^H
\end{aligned}
\right.
\end{align}
using Lemma~\ref{lem:lh2}.
The upper bound comes from Lemma~\ref{lem:lh2} while the lower bound is again proved by changing variables to the strip and using a suitable subsolution.
\end{remark}

\section{Modification of the disk and linearization}
\label{sec:TildeOD}

Similarly to what is done in Section~\ref{sec:LinH}, we define here a domain $\tilde\Omega^D$ close to $\Omega^D_\varepsilon$ in \eqref{pOmegaDeps} and study the linear equation \eqref{robin3a} on $\tilde\Omega^D$. 

The main reason to modify $\Omega^D_\varepsilon$ is that $\partial\Omega^D_\varepsilon$ has a singularity. 
But even if we worked with the disk $\Omega^D$ \eqref{OmegaD}, at a certain point (for example Lemma~\ref{lemma:RZHbdyD}) we need to estimate derivatives with respect to $x$ of $Z^H(\frac{x}{\varepsilon})$ on the boundary of the disk, where $Z^H = \partial_{y_2} \psi^H$.
We have by \eqref{D2psi}
\begin{align*}
	\Bigl|\nabla_x \Bigl[ Z^H\Bigl(\frac{x}{\varepsilon}\Bigr)\Bigr]\Bigr| &= \frac{1}{\varepsilon} \Bigl|\nabla_y Z^H\Bigl(\frac{x}{\varepsilon}\Bigr)\Bigr| \leq  C \frac{\varepsilon}{|x|^2+\varepsilon^2}.
\end{align*}
Control of this term depends on a lower bound for $|x|$ for $x\in \partial\Omega^D$.
If we work with the disk $\Omega^D$ \eqref{OmegaD}, since $d^D = O(\varepsilon|\log\varepsilon|)$ we would get a control of the form 
\begin{align*}
	|\nabla_x Z^H| \leq  C \frac{\varepsilon}{|x|^2+\varepsilon^2} \leq C \frac{1}{\varepsilon |\log\varepsilon |^2},
\end{align*}
which is not good enough for our purposes.

In \eqref{pOmegaDeps} we defined $\partial\Omega^D_\varepsilon$ and wrote part of the lower half of the scaled curve $\frac{1}{\varepsilon} \partial\Omega^D_\varepsilon$ as the graph of the function $g_{\varepsilon,D}^-(y_1)$, $1\leq|y_1|\leq\frac{R}{2\varepsilon}$, \eqref{gepsDm}.
Thus, part of the lower boundary $\partial\Omega^D_\varepsilon$ is the graph of the function
\begin{align}
	\label{scaled}
	\varepsilon g_{\varepsilon,D}^-\Bigl(\frac{x_1}{\varepsilon}\Bigr),
	\quad  \varepsilon\leq |x|\leq \frac{R}{2}.
\end{align}
We will define
\begin{align*}
	\tilde g_{\varepsilon,D}^-\colon \Bigl[-\frac{R}{2},\frac{R}{2}\Bigr]\to \R
\end{align*}
close to the scaled function \eqref{scaled} and use it to define $\tilde\Omega^D$.

The construction depends on a small parameter $\delta_1>0$, which is independent of $\varepsilon$. More precisely, we will find $\delta_1^*>0$ so that for all $0<\delta_1<\delta_1^*$ there is $\varepsilon^*(\delta_1^*)>0$ such that for $0<\varepsilon<\varepsilon^*(\delta_1^*)$ the following construction is made, and the results in this section hold.

In the following definitions it is worth to keep in mind the expansions
\begin{align}
	\nonumber
	g_D^-(y_1) &=  \frac{d^D}{\varepsilon} 
	+\frac{\varepsilon \omega y_1^2}{4}+O(\varepsilon^3 y_1^4)
	\\
	\nonumber
	g_{\varepsilon,D}^-(y_1) 
	&= \frac{d^D}{\varepsilon} 
	+\frac{\varepsilon \omega y_1^2}{4}+O(\varepsilon^3 y_1^4) + \log\varepsilon+\log(|y_1|)+O\Bigl(\frac{|\log\varepsilon|}{|y_1|}\Bigr)
	+ O( \varepsilon |\log\varepsilon| \, |y_1| ) ,
\end{align}
valid for $1\ll |y_1|< \varepsilon^{-1} \delta_1$, which follow from \eqref{gD1} and \eqref{gepsDm2}.
Then the effect of the scaling \eqref{scaled} is
\begin{align}
	\label{ex-g-disc}
	\varepsilon g_{D}^-\Bigl(\frac{x_1}{\varepsilon}\Bigr)
	&=  d^D+\frac{1}{4}\omega x_1^2+O(x_1^4)
	\\
	\label{ex-g-disc-eps-mod}
	\varepsilon g_{\varepsilon,D}^-\Bigl(\frac{x_1}{\varepsilon}\Bigr)
	&= d^D+\frac{1}{4}\omega x_1^2+O(x_1^4)
	+ \varepsilon\log(|x_1|)+O\Bigl(\frac{\varepsilon^2|\log\varepsilon|}{|x_1|}\Bigr)
	+ O( \varepsilon |\log\varepsilon| \, |x_1| ) ,
\end{align}
valid for $\varepsilon \ll |x_1|< \delta_1$.
We scale these functions by the parameter $\delta_1$ in such a way as to preserve the quadratic part in $x_1$.
Thus, for $\xi_1\in[-\frac{R}{2}\delta_1,\frac{R}{2}\delta_1]\setminus ( -\frac{\varepsilon}{\delta_1},\frac{\varepsilon}{\delta_1})$ define
\[
g_1(\xi_1) = \frac{\varepsilon}{\delta_1^2} g_{\varepsilon,D}^- \Bigl(\frac{\delta_1 \xi_1}{\varepsilon}\Bigr).
\]
The idea is to modify the function $g_1$ so that for $|\xi_1|\leq 1$ the modification coincides with 
\begin{align*}
	\frac{\varepsilon}{\delta_1^2} g_D^- \Bigl(\frac{\delta_1 \xi_1}{\varepsilon}\Bigr) + \delta_1^2.
\end{align*}
This corresponds to the graph of the lower part of the boundary of $\Omega^D$ shifted vertically, to avoid a bad estimate on $\nabla_x Z^H(\frac{x}{\varepsilon})$. 
We note that we are not shifting the disk $\Omega^D$ (or its perturbed version $\Omega_\varepsilon^D$), but only a piece of its lower boundary.
To do this, we let $\eta_0 \in C^\infty(\R)$, $\eta_0\geq0$ be even and such that $\eta_0(s)=1$ for $|s|\leq 1$, $\eta_0(s)=0$ for $|s|\geq 2$ (as in \eqref{eta0}).
Next we define
\begin{align*}
	\tilde g_1(\xi_1) = g_1(\xi_1) ( 1 - \eta_0(\xi_1)) + 
	\Bigl(  \frac{\varepsilon}{\delta_1^2} g_D^- \Bigl(\frac{\delta_1 \xi_1}{\varepsilon}\Bigr) + \delta_1^2 \Bigr) \eta_0(\xi_1).
\end{align*}
Then $\tilde g_1\colon [-\frac{R}{2}\delta_1,\frac{R}{2}\delta_1]\to \R$ is a smooth even function such that
\begin{align}
	\label{tildeg1}
	\left\{
	\begin{aligned}
		& \tilde g_1(\xi_1) = g_1(\xi_1) ,\quad|\xi_1|\geq[-2,2],
		\\
		& \tilde g_1(\xi_1) \geq \delta_1^2  ,\quad \xi_1 \in [-1,1].
	\end{aligned}
	\right.
\end{align}

\begin{lemma}
	We have 
	\begin{align}
		\label{tildeg2}
		& \| \tilde g_1 - g_1 \|_{C^4([-2,2]) } \leq C \delta_1^2 + C \frac{\varepsilon|\log\varepsilon|}{\delta_1^2} .
	\end{align}
\end{lemma}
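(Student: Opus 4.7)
The plan is to expand $\tilde g_1 - g_1$ using the explicit formula \eqref{tildeg1} and then bound each piece by invoking the asymptotics of $g_{\varepsilon,D}^-$ from \eqref{gepsDm2} together with the estimates for $\Psi_1^D$ from Lemma~\ref{lem:sol-disc2}. Since $\eta_0(\xi_1)=0$ for $|\xi_1|\geq 2$, we have $\tilde g_1 = g_1$ there, so the difference is supported in $[-2,2]$. Direct substitution in the definition of $\tilde g_1$ gives
\begin{equation*}
\tilde g_1(\xi_1) - g_1(\xi_1)
= \eta_0(\xi_1)\Bigl[\frac{\varepsilon}{\delta_1^2}\bigl(g_D^- - g_{\varepsilon,D}^-\bigr)\Bigl(\frac{\delta_1\xi_1}{\varepsilon}\Bigr) + \delta_1^2\Bigr]
\end{equation*}
wherever $g_1$ is defined. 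The constant shift $\delta_1^2\eta_0(\xi_1)$ contributes $C\delta_1^2$ to the $C^4$ norm, accounting for the first term on the right of \eqref{tildeg2}, so I would focus on the bracketed piece involving $g_D^- - g_{\varepsilon,D}^-$.

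To control this main term, I would substitute the identity \eqref{gepsDm2}, which gives
\begin{equation*}
(g_D^- - g_{\varepsilon,D}^-)(y_1) = \Psi_1^D\bigl(\varepsilon y_1, \varepsilon g_D^-(y_1)\bigr) + O\bigl(\varepsilon^2|\log\varepsilon|y_1^2\bigr),
\end{equation*}
and evaluate at $y_1 = \delta_1\xi_1/\varepsilon$. By \eqref{ex-g-disc}, the argument of $\Psi_1^D$ becomes $\bigl(\delta_1\xi_1, d^D + \tfrac14\omega\delta_1^2\xi_1^2 + O(\delta_1^4\xi_1^4)\bigr)$, which lies at distance comparable to $\delta_1|\xi_1|$ from $P_D=(0,d^D)$. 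Invoking Lemma~\ref{lem:sol-disc2} gives $|\Psi_1^D(x)|\leq C|\log(\delta_1|\xi_1|)| \leq C|\log\varepsilon|$, and I would use $|D^k_x\Psi_1^D(x)|\leq C|x-P_D|^{-k}$ for $k\geq 1$ (the case $k=2$ is stated in Lemma~\ref{lem:sol-disc2}; $k=3,4$ follow by interior elliptic regularity applied to the harmonic extension of $\Psi_1^D$ constructed there).

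Applying the chain rule to $\xi_1\mapsto \Psi_1^D(\delta_1\xi_1,\ldots)$, each differentiation picks up a factor $\delta_1$ from the parametrization but incurs a factor $|\delta_1\xi_1|^{-1}$ from the derivative bounds, yielding a uniform $O(|\xi_1|^{-k})$ bound that stays $O(1)$ on the transition band $|\xi_1|\in[1,2]$ where $\eta_0$ is active. Multiplying by the prefactor $\varepsilon/\delta_1^2$ then produces $O(\varepsilon|\log\varepsilon|/\delta_1^2)$ at order zero and $O(\varepsilon/\delta_1^2)$ at orders $1$--$4$, all absorbed into the claimed bound. The remainder $O(\varepsilon^2|\log\varepsilon|y_1^2)$, once rescaled and multiplied by $\varepsilon/\delta_1^2$, becomes $O(\varepsilon|\log\varepsilon|\xi_1^2)$, whose four derivatives are trivially dominated by both terms on the right of \eqref{tildeg2}. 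Finally, derivatives of $\eta_0$ are bounded on $[-2,2]$ and contribute lower-order terms.

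The main obstacle is the near-singular behavior of $\Psi_1^D$: the bare function is only logarithmically bounded while its $k$-th derivative grows like $|x-P_D|^{-k}$. The proof hinges on the precise balance provided by the scaling $y_1=\delta_1\xi_1/\varepsilon$, which places the evaluation point at distance $\approx \delta_1|\xi_1|$ from $P_D$, so that the $\delta_1^k$ gain from the chain rule exactly compensates the $\delta_1^{-k}$ loss from the derivative estimates, leaving $O(1)$ after differentiation. The global prefactor $\varepsilon/\delta_1^2$ then produces the stated bound, with the $|\log\varepsilon|$ arising solely from the $k=0$ evaluation of $\Psi_1^D$ at distance $O(\delta_1)$ (or smaller, down to $O(\varepsilon)$ if one needs to include $|\xi_1|$ as small as $\varepsilon/\delta_1$) from $P_D$.
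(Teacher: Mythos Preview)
Your proposal is correct and follows essentially the same route as the paper: write $\tilde g_1 - g_1 = \eta_0(\xi_1)\bigl[\tfrac{\varepsilon}{\delta_1^2}(g_D^- - g_{\varepsilon,D}^-)(\tfrac{\delta_1\xi_1}{\varepsilon}) + \delta_1^2\bigr]$, then estimate the bracket on the transition band $|\xi_1|\in[1,2]$. The paper invokes the pre-packaged expansions \eqref{ex-g-disc} and \eqref{ex-g-disc-eps-mod} directly, whereas you trace them back to $\Psi_1^D$ via \eqref{gepsDm2} and Lemma~\ref{lem:sol-disc2}; these are equivalent since \eqref{ex-g-disc-eps-mod} is itself derived from the behaviour of $\Psi_1^D$ near $P_D$. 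Your chain-rule balance argument ($\delta_1^k$ from the parametrization against $(\delta_1|\xi_1|)^{-k}$ from $D^k\Psi_1^D$) makes explicit what the paper summarizes as ``similar ones for the derivatives''.
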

\begin{proof}
	The proof is by direct computation, using the expansions \eqref{ex-g-disc} and \eqref{ex-g-disc-eps-mod} and similar ones for the derivatives.  The biggest term comes from the difference
	\begin{align*}
		g_1(\xi_1)-\Bigl[\frac{\varepsilon}{\delta_1^2} g_D^- \Bigl(\frac{\delta_1 \xi_1}{\varepsilon}\Bigr)+\delta_1^2\Bigr]
		&=
		\frac{\varepsilon}{\delta_1^2} g_{\varepsilon,D}^- \Bigl(\frac{\delta_1 \xi_1}{\varepsilon}\Bigr)
		-\Bigl[\frac{\varepsilon}{\delta_1^2} g_D^- \Bigl(\frac{\delta_1 \xi_1}{\varepsilon}\Bigr)+\delta_1^2\Bigr],
	\end{align*}
	which, by \eqref{ex-g-disc} and \eqref{ex-g-disc-eps-mod}, for $|\xi_1|\in[1,2]$ can be estimated by 
	\begin{align*}
		\left|g_1(\xi_1)-\Bigl[\frac{\varepsilon}{\delta_1^2} g_D^- \Bigl(\frac{\delta_1 \xi_1}{\varepsilon}\Bigr)+\delta_1^2\Bigr]\right|
		&\leq C \delta_1^2 + C\frac{\varepsilon |\log(\delta_1\xi_1)|}{\delta_1^2}
		+C\frac{\varepsilon^2|\log\varepsilon|}{\delta_1|\xi_1|}
		\\
		&\quad
		+C\frac{\varepsilon}{\delta_1} |\log\varepsilon| \, |\xi_1| 
		\\
		&\leq 
		C \delta_1^2 + C\frac{\varepsilon |\log \varepsilon|}{\delta_1^2}.
        \qedhere
	\end{align*}
\end{proof}
Define
\begin{align}
	\label{tgDm}
	\tilde g_D^-(x_1) = \delta_1^2 \tilde g_1\Bigl(\frac{x_1}{\delta_1}\Bigr) 
\end{align}
and the domain $\tilde\Omega^D$ by
\begin{align}
	\label{tildeOD}
	\left\{
	\begin{aligned}
		&\text{in $|x_1|>\frac{R}{2}$, $\tilde\Omega^D$ agrees with $\Omega_\varepsilon^D$}
		\\
		&\text{in $|x_1|\leq\frac{R}{2}$, $\tilde\Omega^D$ is given by}
		\\
		&\quad  \Bigl\{ \, (x_1,x_2) \, \Big|  \, x_1 \in \Bigl[-\frac{R}{2},\frac{R}{2}\Bigr], \ \tilde g_D^-(x_1)<x_2<g_D^+(x_1) \, \Bigr\} 
        .
	\end{aligned}
	\right.
\end{align}
\index{OD@$\tilde \Omega^D$, a domain close to a disk}
We also define the point $P_D$ as the lowest point in $\tilde\Omega^D$
\begin{align*}
	P_D = (0,\tilde g_D^-(0)).
\end{align*}

\begin{figure}
	\centering
	\includegraphics[scale=1,page=1]{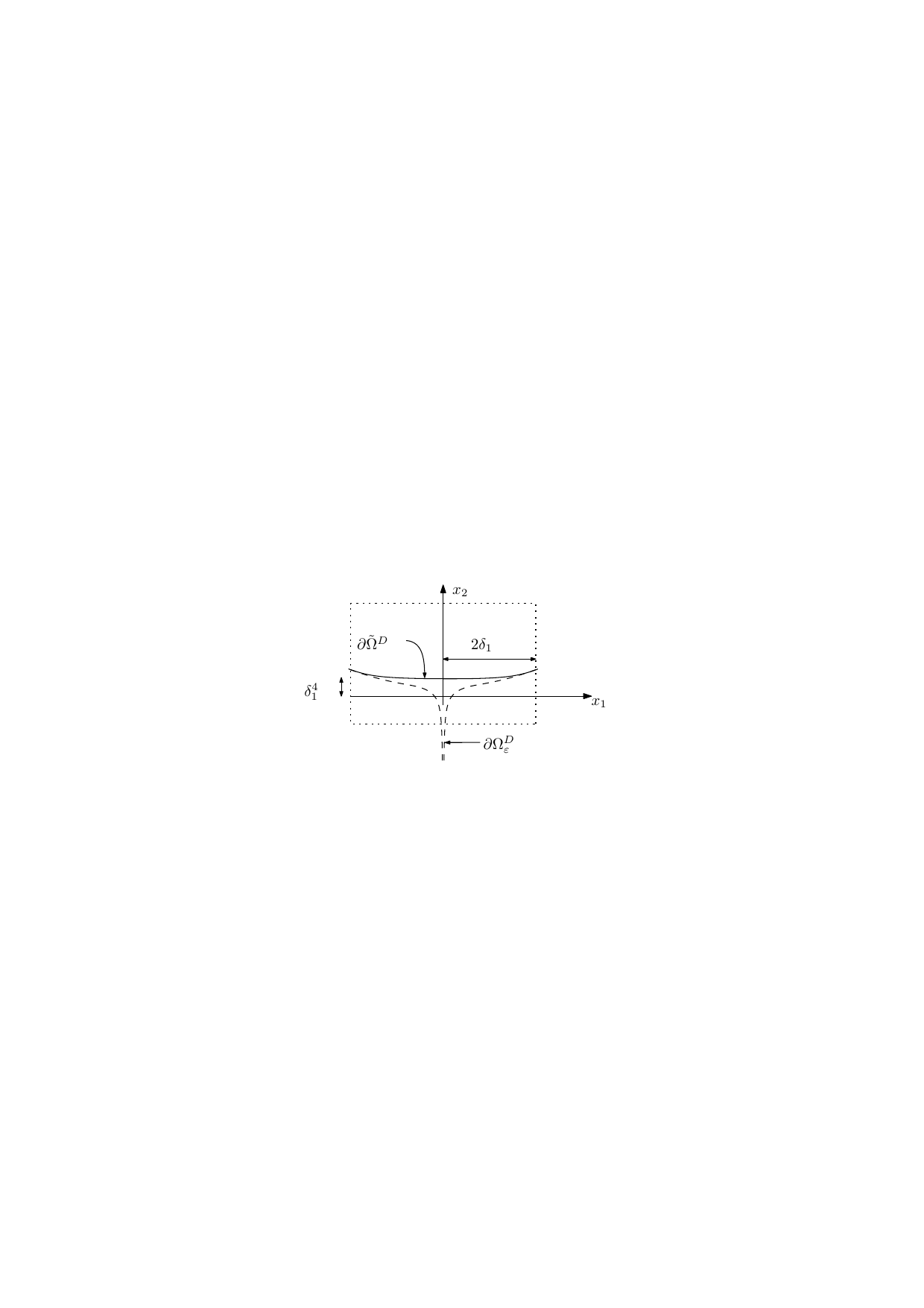}
	\caption{Sketch of $\partial\Omega^D$. }
	\label{fig:OmegaD}
\end{figure}

Figure~\ref{fig:OmegaD} shows a sketch of $\partial\tilde\Omega^D$ near the origin. Outside the dotted box with sidelength $4\delta_1$, $\partial\tilde\Omega^D$ coincides with $\partial\Omega^D_\varepsilon$. In particular, outside the dotted box, $\partial\tilde\Omega^D$ is an $\varepsilon$-perturbation of the circle $\partial\Omega^D$ of radius $R$ centered at $(0,R+d^D)$. Inside the box, $\partial\tilde\Omega^D$ is a smooth graph, close to the circle $\partial\Omega^D$, and a distance $\delta_1^4$ above the $x_1$ axis. This comes from the inequality $\tilde g_1(\xi_1) \geq \delta_1^2$ in \eqref{tildeg1} and the scaling \eqref{tgDm}.

Here we present some properties needed in other sections regarding the domain $\tilde\Omega^D$ defined in \eqref{tildeOD}.
Let $\kappa_{\partial\Omega^D}$ and $\kappa_{\partial\tilde \Omega^D}$  denote the curvatures of $\partial\Omega^D$ and $\partial\tilde\Omega^D$ respectively. 
Then we have the following properties.

\begin{lemma}
	Let us write $\tilde \kappa$ for the function  $\kappa_{\partial\tilde \Omega^D}$ but measured on $\partial\Omega^D$ by the transformation
	\begin{align*}
		\tilde\kappa(x_1,x_2) = \kappa_{\partial\tilde \Omega^D}(x_1,x_2- g_D^-(x_1) + \tilde g_D^-(x_1) ), \quad (x_1,x_2) \in \partial\Omega^D.
	\end{align*}
	Then
	\begin{align*}
		\| \kappa_{\partial\Omega^D} - \tilde \kappa\|_{C^{1,\alpha},\partial\Omega^D} \leq C \delta_1^{1-\alpha}+C\frac{\varepsilon|\log\varepsilon|}{\delta_1^{3+\alpha}}.
	\end{align*}
	Moreover, there is $c>0$ such that 
	\begin{align}
		\label{tOD0}
		|x|\geq c \delta_1^4 , \quad \text{for all } x\in \partial\tilde\Omega^D.
	\end{align}
\end{lemma}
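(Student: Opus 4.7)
The plan is to split $\partial\Omega^D$ into the transition region near $P_D$ and its complement, and separately handle the lower bound \eqref{tOD0}. Outside the transition region (that is, for $|x_1|>2\delta_1$ on the lower arc and on the upper arc), the boundary $\partial\tilde\Omega^D$ coincides with $\partial\Omega_\varepsilon^D$, which by \eqref{pOmegaDeps} is a normal perturbation of $\partial\Omega^D$ of size $\varepsilon\Psi_1^D$. By Lemma~\ref{lem:sol-disc2}, $\Psi_1^D$ and its derivatives up to order three are well-controlled away from $P_D$, with the worst behavior near $|x|\sim\delta_1$ giving $O(\varepsilon|\log\varepsilon|/\delta_1^{k})$ for the $k$-th derivative. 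Viewing the curvature as a smooth function of the first two derivatives of a graph parametrization (or equivalently of the position and normal of the boundary), this bounds the contribution of this region to $\|\kappa_{\partial\Omega^D}-\tilde\kappa\|_{C^{1,\alpha}}$ by $C\varepsilon|\log\varepsilon|/\delta_1^{3+\alpha}$.

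In the transition region $|x_1|\leq 2\delta_1$, both arcs are graphs over the $x_1$-axis: $x_2=g_D^-(x_1)$ for $\partial\Omega^D$ and $x_2=\tilde g_D^-(x_1)=\delta_1^2\tilde g_1(x_1/\delta_1)$ for $\partial\tilde\Omega^D$. Writing also $\varepsilon g_{\varepsilon,D}^-(x_1/\varepsilon)=\delta_1^2 g_1(x_1/\delta_1)$, the chain rule yields the scalings
\[
\bigl(\tilde g_D^-- \varepsilon g_{\varepsilon,D}^-(\cdot/\varepsilon)\bigr)^{(k)}(x_1)=\delta_1^{2-k}(\tilde g_1-g_1)^{(k)}(x_1/\delta_1),
\]
together with an extra factor $\delta_1^{-\alpha}$ for the corresponding H\"older seminorms. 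Since the curvature $\kappa(f)=-f''(1+(f')^2)^{-3/2}$ is a smooth function of $(f',f'')$, its $C^{1,\alpha}$ norm is bounded linearly in $\|f\|_{C^{3,\alpha}}$, and the worst scaling is $\delta_1^{-1-\alpha}$, coming from $[(\tilde g_1-g_1)''']_\alpha$. Applying \eqref{tildeg2} then produces $C\delta_1^{1-\alpha}+C\varepsilon|\log\varepsilon|/\delta_1^{3+\alpha}$, which dominates the exterior estimate. A short additional step converts the difference of graph curvatures into the claimed pullback $\kappa_{\partial\Omega^D}-\tilde\kappa$ measured along $\partial\Omega^D$, using that the prescribed vertical shift $\tilde g_D^--g_D^-$ is smooth with derivatives obeying the same bounds.

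For \eqref{tOD0}, I consider three cases on $x\in\partial\tilde\Omega^D$. On the upper arc, $x_2=g_D^+(x_1)\geq R+d^D$ in the range $|x_1|\leq R/2$, and $|x|\geq|x_1|\geq R/2$ in the range $|x_1|\geq R/2$, so in either case $|x|\geq R/2\gg\delta_1^4$. On the outer part of the lower arc, $|x_1|\geq\delta_1$, whence $|x|\geq|x_1|\geq\delta_1\geq\delta_1^4$. On the inner part $|x_1|\leq\delta_1$, the inequality $\tilde g_1(\xi_1)\geq\delta_1^2$ for $|\xi_1|\leq 1$ from \eqref{tildeg1} rescales to $\tilde g_D^-(x_1)=\delta_1^2\tilde g_1(x_1/\delta_1)\geq\delta_1^4$, hence $|x|\geq|x_2|=\tilde g_D^-(x_1)\geq\delta_1^4$.

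The main obstacle is the careful bookkeeping of the derivative and H\"older seminorm scalings through $\xi_1=x_1/\delta_1$, and verifying that the perturbation error of $\partial\Omega_\varepsilon^D$ with respect to $\partial\Omega^D$ on $2\delta_1\leq|x_1|\leq R/2$ is absorbed into the stated bound. Both are handled by \eqref{tildeg2} together with the $\log$-type asymptotics of $\Psi_1^D$ from Lemma~\ref{lem:sol-disc2}; beyond this bookkeeping the proof reduces to differentiating the explicit formula $\kappa=-f''(1+(f')^2)^{-3/2}$ for the curvature of a graph.
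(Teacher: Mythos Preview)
Your proposal is correct and follows the same approach as the paper, which simply says the result ``follows directly from \eqref{tildeg1} and \eqref{tildeg2}''; you have supplied the scaling bookkeeping and the case analysis for \eqref{tOD0} that underlie that one-line proof. One minor point: in the transition region you compare $\tilde g_1$ to $g_1$ (i.e., $\partial\tilde\Omega^D$ to $\partial\Omega_\varepsilon^D$), whereas the target is $\partial\Omega^D$, so you also need the $\Omega_\varepsilon^D$ versus $\Omega^D$ comparison for $|x_1|\in[\delta_1,2\delta_1]$; but this is the same $\varepsilon\Psi_1^D$ estimate you already use outside, and it is absorbed into the $C\varepsilon|\log\varepsilon|/\delta_1^{3+\alpha}$ term.
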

\begin{proof}
	This follows directly from \eqref{tildeg1} and \eqref{tildeg2}.
\end{proof}

Now we consider the linearized problem in $\tilde\Omega^D$:
\begin{align}
	\label{pDisc0}
	\left\{
	\begin{aligned}
		\Delta \phi&= f_1 
		&&\text{in }  \tilde \Omega^D
		\\
		\frac{\partial \phi}{\partial \nu}
		+ (\kappa-\omega) \phi
		&= f_2
		&&\text{on }  \partial \tilde \Omega^D .
	\end{aligned}
	\right.
\end{align}
For this problem we work with standard H\"older norms. 
Let $0<\alpha<1$ and define
\begin{align}
    \label{nD1}
	\|f_1\|_{C^\alpha,\tilde\Omega^D}
	&= 
	\|f_1\|_{L^\infty(\tilde \Omega^D)}
	+
	[f_1]_{\alpha,\tilde\Omega^D}
\index{$\norm \ \norm_{C^\alpha,\tilde\Omega^D}$}
\intertext{and}
    \label{nD2}
	\| f_2 \|_{C^{1,\alpha},\partial\tilde\Omega^D}
	&= 
	\|f_2\|_{L^\infty(\partial\tilde \Omega^D)}
	+\|f_2'\|_{L^\infty(\partial\tilde \Omega^D)}
	+
	[f_2']_{\alpha,\partial\tilde\Omega^D}
    \\
    \index{$\norm \ \norm_{C^{1,\alpha},\partial\tilde\Omega^D}$}
    \label{nD3}
	\| \phi \|_{C^{2,\alpha},\tilde\Omega^D}
	&= 
	\|\phi\|_{L^\infty(\tilde \Omega^D)}
	+\|\nabla\phi\|_{L^\infty(\tilde \Omega^D)}
	+\|D^2\phi\|_{L^\infty(\tilde \Omega^D)}
	+  [D^2 \phi]_{\alpha,\tilde\Omega^D}.
\end{align}
\index{$\norm \ \norm_{C^{2,\alpha},\tilde\Omega^D}$}

Since $\tilde\Omega^D$ is close to $\Omega^D$ and problem \eqref{pDisc0} in $\Omega^D$ has a non-trivial kernel, some orthogonality conditions are needed to get existence and good estimates.
We work with functions that are even in $x_1$ so that the orthogonality condition with respect to horizontal translations is automatically satisfied. Then to get existence and a good estimate for the solution of \eqref{pDisc0} we project the right hand side.
This leads us to consider the modified equation
\begin{align}
	\label{pDiscProj}
	\left\{
	\begin{aligned}
		\Delta \phi & = f_1(x)
		&& \text{in }  \tilde \Omega^D
		\\
		\frac{\partial \phi}{\partial \nu}
		+ (\kappa-\omega)   \phi
		&= f_2(x) + g (x_2+d^S)
		&&\text{on }  \partial \tilde \Omega^D .
	\end{aligned}
	\right.
\end{align}
We include the shift $d^S$ in the boundary condition, because this appears in the formulation \eqref{main-problem0}, see \eqref{def:E1} and \eqref{E}. 

\begin{proposition}
	\label{propDisc1}
	There are $\delta_1^*>0$, $C>0$ so that for $0<\delta_1<\delta_1^*$ there is $\varepsilon^*>0$ so that for $0<\varepsilon<\varepsilon^*$ the following holds.
	Let $f_1\in C^\alpha(\overline{\tilde\Omega^D})$ and $f_2\in C^{1,\alpha}(\partial\tilde\Omega^D)$ be even in $x_1$.
	Then there is  $\phi \in C^{2,\alpha}(\overline{\tilde \Omega^D})$, even in $x_1$, and $g \in \R$ that solve \eqref{pDiscProj}. Moreover they are linear functions of $f_1$, $f_2$ and satisfy
	\begin{align*}
		\| \phi \|_{C^{2,\alpha},\tilde\Omega^D} + |g| \leq C ( \|f_1\|_{C^\alpha,\tilde\Omega^D}+ \|f_2\|_{C^{1,\alpha},\partial\tilde\Omega^D} ).
	\end{align*}
\end{proposition}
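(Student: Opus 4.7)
The plan is to first establish the analogous solvability result on the exact disk $\Omega^D$, and then transfer it to $\tilde\Omega^D$ by a perturbation/fixed-point argument of the same flavour as the passage from Lemma~\ref{lemmaH1} to Proposition~\ref{prop:lh1}.

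On $\Omega^D$ with $\kappa=1/R$ and $\omega=2/R$, the Robin operator $\frac{\partial}{\partial\nu}+(\kappa-\omega)$ coincides with $\frac{\partial}{\partial\nu}-\tfrac1R$ from \eqref{robin3}, whose kernel, restricted to functions even in $x_1$, is spanned by $Z_2^D(x)=x_2-(R+d^D)$. A direct computation gives $\int_{\partial\Omega^D}(x_2+d^S)Z_2^D\,d\ell\neq0$, so the unique choice
\[
g=\frac{\int_{\Omega^D} f_1 Z_2^D\,dx-\int_{\partial\Omega^D} f_2 Z_2^D\,d\ell}{\int_{\partial\Omega^D}(x_2+d^S)Z_2^D\,d\ell}
\]
renders the right-hand side Fredholm-compatible, and standard Schauder theory on the disk furnishes a unique even solution $\phi$ orthogonal to $Z_2^D$ with $\|\phi\|_{C^{2,\alpha}(\overline{\Omega^D})}+|g|\leq C(\|f_1\|_{C^\alpha}+\|f_2\|_{C^{1,\alpha}})$. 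This defines a bounded linear solution operator $T$ on $\Omega^D$.

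Next I transfer the problem from $\tilde\Omega^D$ to $\Omega^D$. By \eqref{tildeg1}, \eqref{tildeg2} and the scaling \eqref{tgDm}, $\partial\tilde\Omega^D$ differs from $\partial\Omega^D$ by an even graph perturbation supported in $|x_1|\lesssim\delta_1$, whose $C^4$ size is controlled by $m(\delta_1,\varepsilon):=\delta_1^2+\varepsilon|\log\varepsilon|/\delta_1^2$. I straighten this perturbation by an even diffeomorphism $K(x)=x+(0,\varphi(x))$ with $\|\varphi\|_{C^{3,\alpha}}\leq Cm(\delta_1,\varepsilon)$, converting \eqref{pDiscProj} into a problem on $\Omega^D$ of the form
\[
\begin{cases}
\Delta\phi+B_1[\phi]=\tilde f_1 & \text{in }\Omega^D,\\
\dfrac{\partial\phi}{\partial\nu}+(\kappa_{\partial\Omega^D}-\omega)\phi+B_2[\phi]=\tilde f_2+g(x_2+d^S+b(x)) & \text{on }\partial\Omega^D,
\end{cases}
\]
where $\tilde f_j$ have norms comparable to $f_j$, while $B_1$, $B_2$ and $b$ have coefficient norms of size $O(m(\delta_1,\varepsilon))$. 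One then solves this transformed problem on $\Omega^D$ as the fixed point of $\phi\mapsto T[\tilde f_1-B_1[\phi],\ \tilde f_2-B_2[\phi]-gb(x)]$, with $g$ recomputed at each iteration as in the first step. Boundedness of $T$ together with the $O(m(\delta_1,\varepsilon))$ smallness of $B_1,B_2,b$ makes this a contraction once $\delta_1$ is small and then $\varepsilon$ is small depending on $\delta_1$; pulling the fixed point back by $K^{-1}$ produces the claimed $(\phi,g)$, with the desired estimate coming from the corresponding estimate for $T$.

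The main obstacle is the bookkeeping of the perturbation: although the $L^\infty$ deviation between $\tilde\Omega^D$ and $\Omega^D$ near the origin is as small as $\delta_1^4$, the coefficients entering $B_1,B_2,b$ depend on first and second derivatives of $\varphi$ and on the boundary curvature, which are only controlled by $m(\delta_1,\varepsilon)$ via \eqref{tildeg2}. A secondary subtlety is that the Fredholm projection defining $g$ must stay uniformly nondegenerate after straightening, which follows from the fact that $b$ is small and $\int_{\partial\Omega^D}(x_2+d^S)Z_2^D\,d\ell$ is a fixed nonzero constant. Both issues are precisely why the statement needs first $\delta_1$ and then $\varepsilon$ to be taken small.
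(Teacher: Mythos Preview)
Your approach is essentially the paper's: solve on the exact disk via the Fredholm alternative for \eqref{robin3}, pull $\tilde\Omega^D$ back to $\Omega^D$ by a diffeomorphism, and close by a contraction on the resulting small perturbation operators. One correction worth noting: the deviation of $\partial\tilde\Omega^D$ from $\partial\Omega^D$ is \emph{not} supported in $|x_1|\lesssim\delta_1$. By \eqref{tildeOD}, outside that region $\tilde\Omega^D$ coincides with $\Omega_\varepsilon^D$, which is an $O(\varepsilon|\log\varepsilon|)$ \emph{normal} perturbation of the full circle (see \eqref{pOmegaDeps}), so the boundary differs from $\partial\Omega^D$ everywhere, and near the left and right extremes of the disk it is not a graph over $x_1$. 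For this reason the paper, after centering the disk at the origin, uses a radial diffeomorphism $F_{\delta_1,\varepsilon}(x)=x-h(x/|x|)\,\eta_0(4(1-|x|))\,x/|x|$ rather than a purely vertical shift $x+(0,\varphi(x))$. With that adjustment your fixed-point argument and the $m(\delta_1,\varepsilon)$ bookkeeping are exactly what is needed; the paper simply records that $h_{\delta_1,\varepsilon}\to0$ in $C^3$ and invokes perturbation of the Fourier-series solution on $B_R(0)$ without writing out the contraction explicitly.
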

\begin{proof}
	We shift vertically by the amount $R+d^D$ and so problem \eqref{pDiscProj} gets transformed into
	\begin{align}
		\label{pDiscProj2}
		\left\{
		\begin{aligned}
			\Delta \phi & = f_1(x)
			&& \text{in }  (\tilde \Omega^D-(R+d^S)e_2)
			\\
			\frac{\partial \phi}{\partial \nu}
			+ (\kappa-\omega)   \phi
			&= f_2(x) + g (x_2+R+d^D+d^S)
			&&\text{on }  \partial (\tilde \Omega^D-(R+d^S)e_2) ,
		\end{aligned}
		\right.
	\end{align}
	where $e_2=(0,1)$.
	
	For $\delta_1>0$, $\varepsilon>0$ small, we can write 
	\[
	\partial\tilde\Omega^D = \{x + h_{\delta_1,\varepsilon}(x) \nu_{\Omega^D}(x) \ | \ x\in\partial\Omega^D\},
	\]
	for some smooth function $h_{\delta_1,\varepsilon}\colon \partial\Omega^D\to\R$, such that $h_{\delta_1,\varepsilon}\to0$ as $\delta_1\to0$, $\varepsilon\to 0$ in $C^3$. Here and in the rest of this proof, when we say ``as $\delta_1\to0$, $\varepsilon\to 0$'', this means that the statement is valid for all $\delta_1>0$ small and for all $0<\varepsilon<\varepsilon^*(\delta_1)$ where $\varepsilon^*(\delta_1)>0$.
	Then the function
	\begin{align*}
		F_{\delta_1,\varepsilon}(x)=x-h\Bigl(\frac{x}{|x|}\Bigr) \eta_0(4(1-|x|)) \frac{x}{|x|}
	\end{align*}
	maps $B_R(0)$ bijectively into $(\tilde \Omega^D-(R+d^S)e_2)$. 
	The cut-off $\eta_0$ is the same as in \eqref{eta0}.
	Moreover $F_{\delta_1,\varepsilon}$ is smooth and tends to the identity in $C^3$ as $\delta_1\to0$, $\varepsilon\to 0$.
	
	Let $\tilde\phi(x)=\phi(F(x))$, $x\in B_R(0)$. Then \eqref{pDiscProj2} becomes
	\begin{align}
		\label{pBallProj2}
		\left\{
		\begin{aligned}
			L_{\delta_1,\varepsilon} \tilde\phi & = \tilde f_1(x)
			&& \text{in }  B_R(0)
			\\
			B_{\delta_1,\varepsilon}\tilde\phi
			+ (\tilde\kappa_{\delta_1,\varepsilon}-\omega)\tilde\phi
			&= \tilde f_2(x) + g \tilde Z_{\delta_1,\varepsilon}(x)
			&&\text{on }  \partial B_R(0),
		\end{aligned}
		\right.
	\end{align}
	where $L_{\delta_1,\varepsilon}$ is a second order operator and $B$ is a first order operator such that $L_{\delta_1,\varepsilon}\to\Delta$ and $B_{\delta_1,\varepsilon}\to\nu\cdot\nabla$ as $\delta_1\to0$, $\varepsilon\to0$. Here $\tilde Z_{\delta_1,\varepsilon}(x)$ tends to $x_2+R$ as $\delta_1\to0$, $\varepsilon\to0$.
	
	Consider
	\begin{align}
		\label{pBall0}
		\left\{
		\begin{aligned}
			\Delta \phi & = f_1(x)
			&& \text{in } B_R(0)\\
			\frac{\partial \phi}{\partial \nu}
			+ (\kappa-\omega)   \phi
			&= f_2(x) + g (x_2+R)
			&&\text{on }  \partial B_R(0),
		\end{aligned}
		\right.
	\end{align}
	Using Fourier series and standard elliptic estimates one can prove:
	Let $f_1\in C^\alpha(\overline{B_R(0)})$ and $f_2\in C^{1,\alpha}(\partial B_R(0))$ be even in $x_1$.
	Then there is  $\phi \in C^{2,\alpha}(\overline{B_R(0)})$, even in $x_1$, and $g \in \R$ that solve \eqref{pBall0}. Moreover they are linear functions of $f_1$, $f_2$ and satisfy
\begin{align*}
\| \phi \|_{C^{2,\alpha},B_R(0)} + |g| \leq C ( \|f_1\|_{C^\alpha,B_R(0)}+ \|f_2\|_{C^{1,\alpha},\partial B_R(0)} ).
\end{align*}
	By treating \eqref{pBallProj2} as a perturbation of \eqref{pBall0} we see that given $\tilde f_1\in C^\alpha(\overline{B_R(0)})$ and $\tilde f_2\in C^{1,\alpha}(\partial B_R(0))$ even in $x_1$, there is  $\tilde\phi \in C^{2,\alpha}(\overline{B_R(0)})$, even in $x_1$, and $g \in \R$ that solve \eqref{pBallProj2}. From here we obtain the result in the statement of the proposition.
\end{proof}

We also need the following estimate.

\begin{lemma}
	\label{lemmaNormZD}
	Let $Z_2^D$ be the function defined in \eqref{Z2D}. Then
	\begin{align}
		\label{errorZD}
		\Bigl\|  \frac{\partial Z_2^D}{\partial  \nu_{\partial\tilde\Omega^D}} +  (\kappa_{\partial\tilde\Omega^D}-\omega) Z_2^D \Bigr\|_{C^{1,\alpha},\partial\tilde\Omega^D} & \leq C \delta_1^{1-\alpha}+C\frac{\varepsilon|\log\varepsilon|}{\delta_1^{3+\alpha}}
		\\
		\label{X2}
		\| x_2 \|_{C^{1,\alpha}(\partial\tilde\Omega^D) } & \leq C +C\delta_1^{3-\alpha}+C\frac{\varepsilon|\log\varepsilon|}{\delta_1^{1+\alpha}}.
	\end{align}
\end{lemma}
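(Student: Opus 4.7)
The strategy is to exploit the fact that $Z_2^D$ exactly kills the boundary operator on the unperturbed circle $\partial\Omega^D$, and then to track the errors produced by the modifications encoded in $\partial\Omega^D_\varepsilon$ (for $|x_1|>R/2$) and in $\tilde g_D^-$ (for $|x_1|\leq R/2$).

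\textbf{Step 1 (exact cancellation on $\partial\Omega^D$).} On $\partial\Omega^D=\partial B_R(0,R+d^D)$ we have $\nu_{\partial\Omega^D}(x)=(x-(0,R+d^D))/R$ and $\kappa_{\partial\Omega^D}=1/R$, while $\nabla Z_2^D=e_2$. A direct computation gives
\[
\frac{\partial Z_2^D}{\partial\nu_{\partial\Omega^D}}+(\kappa_{\partial\Omega^D}-\omega)Z_2^D
=\frac{Z_2^D}{R}+\Bigl(\frac{1}{R}-\omega\Bigr)Z_2^D=\Bigl(\frac{2}{R}-\omega\Bigr)Z_2^D=0,
\]
by the assumption $\omega R=2$. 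This algebraic identity is the heart of the estimate.

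\textbf{Step 2 (transfer to $\partial\tilde\Omega^D$ and error bookkeeping).} Use the same vertical transfer that appears in the preceding lemma, namely $(x_1,x_2)\mapsto (x_1,x_2-g_D^-(x_1)+\tilde g_D^-(x_1))$ on the lower part and the analogous normal perturbation from $\Omega^D_\varepsilon$ on the remaining portion, to pull the Robin expression on $\partial\tilde\Omega^D$ back to $\partial\Omega^D$. Writing $\tilde\nu$ and $\tilde\kappa$ for the pulled-back normal and curvature, and using $\nabla Z_2^D=e_2$, the cancellation of Step 1 reduces the left-hand side of \eqref{errorZD} to
\[
\bigl(\tilde\nu_2-\nu_{2}\bigr)\;+\;\bigl(\tilde\kappa-\kappa_{\partial\Omega^D}\bigr)Z_2^D\;+\;(\tilde\kappa-\omega)\bigl(\tilde g_D^--g_D^-\bigr),
\]
where the last term absorbs the displacement of $Z_2^D$ under the transfer. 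The middle term is controlled directly by the preceding lemma, giving $C\delta_1^{1-\alpha}+C\varepsilon|\log\varepsilon|/\delta_1^{3+\alpha}$. The first and third terms are estimated from the explicit formula $\tilde g_D^-(x_1)=\delta_1^2\tilde g_1(x_1/\delta_1)$ combined with \eqref{tildeg2}: the chain rule produces factors $\delta_1^{-1}$ for each differentiation and $\delta_1^{-\alpha}$ for the H\"older seminorm, so $\|\tilde g_D^--g_D^-\|_{C^{3,\alpha}([-2\delta_1,2\delta_1])}$ scales to exactly the announced bound. The region $|x_1|>R/2$, where $\tilde\Omega^D=\Omega^D_\varepsilon$, contributes only $O(\varepsilon|\log\varepsilon|)$ since there $\partial\Omega^D_\varepsilon$ is a smooth $O(\varepsilon|\log\varepsilon|)$ perturbation of $\partial\Omega^D$ by Lemma~\ref{lem:sol-disc2}.

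\textbf{Step 3 (estimate of $x_2$).} The $L^\infty$ part is trivial because $\partial\tilde\Omega^D$ is contained in a fixed compact set, yielding the base $C$. For the $C^{1,\alpha}$ part, on the middle piece the boundary is the graph $x_2=\tilde g_D^-(x_1)$, so tangential differentiation gives $x_2'=(\tilde g_D^-)'/\sqrt{1+((\tilde g_D^-)')^2}$; the scaling $\tilde g_D^-(x_1)=\delta_1^2\tilde g_1(x_1/\delta_1)$ produces $(\tilde g_D^-)'=O(\delta_1)$ and $(\tilde g_D^-)''=O(1)$, and \eqref{tildeg2} combined with the chain-rule factor $\delta_1^{-\alpha}$ upgrades this to the $\delta_1^{3-\alpha}+\varepsilon|\log\varepsilon|/\delta_1^{1+\alpha}$ remainder. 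On the upper piece and on $|x_1|>R/2$ we use that $\partial\tilde\Omega^D$ is an $O(\varepsilon|\log\varepsilon|)$ $C^{2,\alpha}$-perturbation of the smooth circle $\partial\Omega^D$, where $x_2$ has bounded $C^{1,\alpha}$ norm.

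\textbf{Main obstacle.} The routine part is Step 1. The bookkeeping in Step 2 requires careful tracking of which inverse powers of $\delta_1$ and $\varepsilon$ arise from the rescaling $x_1\mapsto x_1/\delta_1$ inside $\tilde g_1$ versus from the chain-rule in the H\"older seminorm, and making sure that the transfer map used to compare $\partial\tilde\Omega^D$ with $\partial\Omega^D$ is smooth enough across the gluing regions $|x_1|\sim\delta_1$ and $|x_1|\sim R/2$ that no spurious lower-order loss appears; this is the only genuinely delicate point.
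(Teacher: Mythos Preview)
Your proposal is correct and follows essentially the same approach as the paper: both start from the exact cancellation of the Robin operator applied to $Z_2^D$ on the circle $\partial\Omega^D$, then transfer to $\partial\tilde\Omega^D$ via the graph parametrization and control the difference term-by-term using \eqref{tildeg2} and the scaling $\tilde g_D^-(x_1)=\delta_1^2\tilde g_1(x_1/\delta_1)$. Your decomposition in Step~2 is slightly more explicit than the paper's (which computes the normal-derivative and curvature contributions separately as \eqref{c2} and \eqref{c3}), and you address the outer region $|x_1|>R/2$ explicitly where the paper is silent, but the substance is the same.
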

\begin{proof}
	The function $Z_2^D$ \eqref{Z2D} is defined and harmonic in $\R^2$.
	The idea is that $Z_2^D$ satisfies
	\begin{align}
		\label{c1}
		\frac{\partial Z_2^D}{\partial \nu_{\partial\Omega^D}} +  (\kappa_{\partial\Omega^D}-\omega) Z_2^D=0
		\quad \text{on } \partial\Omega^D, 
	\end{align}
	and since $\partial\Omega^D$ is close to $\partial\tilde\Omega^D$ then $ \frac{\partial Z_2^D}{\partial \nu_{\partial\tilde\Omega^D}} +  (\kappa_{\partial\tilde\Omega^D}-\omega) Z_2^D$ should be small on $\partial\tilde\Omega^D$. To make this precise we parametrize part of the lower half of $\partial\tilde\Omega^D$ with
	\begin{align*}
		x_1\mapsto (x_1,\tilde g_D^-(x_1))
	\end{align*}
	and compute
	\begin{align*}
		\frac{\partial Z_2^D}{\partial \nu_{\partial\tilde\Omega^D}} (x_1,\tilde g_D^-(x_1))&=
		-\frac{1}{\sqrt{1+((\tilde g_D^-)'(x_1))^2}}
        \\
		\frac{\partial Z_2^D}{\partial \nu_{\partial\Omega^D}} (x_1,g_D^-(x_1))&=
		-\frac{1}{\sqrt{1+((g_D^-)'(x_1))^2}} .
	\end{align*}
	From this and \eqref{tildeg2} we get
    \begin{align}
        \label{c2}
        \left\|\frac{\partial Z_2^D}{\partial \nu_{\partial\tilde\Omega^D}} (\cdot,\tilde g_D^-(\cdot))
        -\frac{\partial Z_2^D}{\partial \nu_{\partial\Omega^D}} (\cdot,g_D^-(\cdot))\right\|_{C^{1,\alpha}([-1/2,1/2])} \leq C \delta_1^{2-\alpha}+C \frac{\varepsilon|\log\varepsilon|}{\delta_1^{2+\alpha}}.
    \end{align}
    
    Similarly, 
    \begin{align*}
        [(\kappa_{\partial\Omega^D}-\omega)Z_2^D ](x_1,g_D^{-}(x_1))=\Bigl( \frac{{g_D^{-}}''(x_1)}{(1+({g_D^{-}}')^2(x_1) )^{3/2}} - \omega\Bigr)(g_D^{-}(x_1)-R-d^D).
    \end{align*}
    Using this formula, a similar one for the corresponding expression involving $\partial\tilde\Omega^D$, and \eqref{tildeg2} we find that
    \begin{align}
        \nonumber
        \bigl\| &(\kappa_{\partial\tilde\Omega^D}-\omega)Z_2^D ](x_1,\tilde g_D^{-}(x_1)) -(\kappa_{\partial\Omega^D}-\omega)Z_2^D ](x_1,g_D^{-}(x_1)) \bigr\|_{C^{1,\alpha}([-1/2,1/2])} 
        \\
        \label{c3}
        & \quad \leq C \delta_1^{1-\alpha}+C\frac{\varepsilon|\log\varepsilon|}{\delta_1^{3+\alpha}}.
    \end{align}
    Combining \eqref{c1}, \eqref{c2} and \eqref{c3} we obtain the stated estimate \eqref{errorZD}.
    The proof of \eqref{X2} is similar.
\end{proof}
			
We will also need another estimate of the same type.
Let
\begin{align}
    \label{chiH1}
    \chi^H(x) = \chi_0\Bigl( \frac{x}{\delta_2} \Bigl),\quad x\in\R^2,
\end{align}
where $\delta>0$ and $\chi_0$ is a radial cut-off function in $\R^2$ such that 
\[
\chi_0(x) = 1 \quad |x| \leq 1,
\quad
\chi_0(x) = 0 \quad |x| \geq 2.
\]
We assume that $0<\delta_1<\delta_2$.
			
			\begin{lemma}
				\label{lemma:RZHbdyD}
				Given $0<\delta_1<\delta_2$, for $\varepsilon>0$ sufficiently small we have
				\begin{align}
					\label{BopZchiH}
					\Bigl\| \frac{\partial (Z^H (\frac{\cdot}{\varepsilon})\chi^H)}{\partial \nu} +  (\kappa-\omega) Z^H (\frac{\cdot}{\varepsilon})\chi^H\Bigr\|_{C^{1, \alpha},\tilde\Omega^D}  \leq C \delta_2^{-(2+\alpha)}
				\end{align}
				and
				\begin{align}
					\label{LZchiH}
					\Bigl\| \Delta\Bigl (Z^H\Bigl(\frac{\cdot}{\varepsilon}\Bigr) \chi^H \Bigr) \Bigr\|_{C^\alpha,\tilde\Omega^D} 
					\leq C \delta_2^{-(2+\alpha)}.
				\end{align}
			\end{lemma}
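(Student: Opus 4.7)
The plan is to exploit two key facts: first, $Z^H=\partial_{y_2}\psi^H$ is harmonic on the extended domain of $\psi^H$, so $\Delta_x[Z^H(x/\varepsilon)]=0$; and second, the asymptotic estimates of Lemma~\ref{lemma:estGradPsiH} give, for $|y|\geq 1$, the bounds $|Z^H(y)|\leq C$ and $|D^k Z^H(y)|=|D^{k+1}\psi^H(y)|\leq C|y|^{-k-1}$ for $k\geq 1$. Rescaling yields
\[
|D^k_x Z^H(x/\varepsilon)|\leq C\,\varepsilon^{k-1}|x|^{-k-1},\qquad k\geq 1,
\]
which is available throughout $\tilde\Omega^D$ once $\varepsilon$ is small enough compared to $\delta_1$, because \eqref{tOD0} gives $|x|\geq c\delta_1^4$ on $\partial\tilde\Omega^D$ (and an analogous positive lower bound inside, since $\tilde\Omega^D$ lies above the graph of $\tilde g_D^-\geq c\delta_1^4$).

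For \eqref{LZchiH}, harmonicity reduces the Laplacian to
\[
\Delta_x\bigl(Z^H(x/\varepsilon)\chi^H(x)\bigr)
=\tfrac{2}{\varepsilon}\,\nabla_y Z^H(x/\varepsilon)\cdot\nabla_x\chi^H(x)+Z^H(x/\varepsilon)\Delta_x\chi^H(x).
\]
Both terms are supported in the annulus $\delta_2\leq|x|\leq 2\delta_2$. On this annulus $|D^j\chi^H|\leq C\delta_2^{-j}$ and $|D^k_x Z^H(x/\varepsilon)|\leq C\varepsilon^{k-1}\delta_2^{-k-1}$, so the dominant term is $Z^H\Delta\chi^H$, of size $C\delta_2^{-2}$ with $C^\alpha$-seminorm bounded (using the diameter of the annulus) by $C\delta_2^{-(2+\alpha)}$; the gradient-gradient term is smaller by a factor $\varepsilon/\delta_2^2$. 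This yields \eqref{LZchiH}.

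For \eqref{BopZchiH}, decompose on $\partial\tilde\Omega^D$
\[
\frac{\partial(Z^H\chi^H)}{\partial\nu}+(\kappa-\omega)Z^H\chi^H
=\chi^H\bigl(\partial_\nu Z^H+(\kappa-\omega)Z^H\bigr)+Z^H\,\partial_\nu\chi^H.
\]
The second piece is supported in $\delta_2\leq|x|\leq 2\delta_2$ and, exactly as above, is bounded in $C^{1,\alpha}$ by $C\delta_2^{-(2+\alpha)}$, the main contribution being $Z^H\partial_\nu\chi^H$ of size $C\delta_2^{-1}$ with derivatives of size $C\delta_2^{-2}$ along the boundary. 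For the first piece, the factor $\chi^H$ confines attention to $|x|\leq 2\delta_2$; here $|\kappa|\leq C$, $|Z^H|\leq C$, and $|\partial_\nu Z^H(x/\varepsilon)|\leq C\varepsilon|x|^{-2}\leq C\varepsilon\delta_1^{-8}$, while the corresponding derivatives along $\partial\tilde\Omega^D$ (using the rescaled bounds on $D^2_x Z^H$ and $D^3_x Z^H$, plus the $C^{1,\alpha}$-smoothness of $\kappa_{\partial\tilde\Omega^D}$ from Lemma~\ref{lemmaNormZD}) give $C^{1,\alpha}$-norm at most $C(1+\varepsilon\delta_1^{-M})$ for some fixed $M$. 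Choosing $\varepsilon$ small enough (depending on $\delta_1$ and $\delta_2$) so that $\varepsilon\delta_1^{-M}\leq\delta_2^{-(2+\alpha)}$, this is absorbed into the claimed bound, completing \eqref{BopZchiH}.

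The main obstacle is the apparent singularity of $\nabla_x Z^H(x/\varepsilon)=\varepsilon^{-1}\nabla_y Z^H(x/\varepsilon)$ near $x=0$: without the modification of $\Omega_\varepsilon^D$ to $\tilde\Omega^D$ the boundary would reach $|x|=O(\varepsilon|\log\varepsilon|)$, producing a factor $\varepsilon^{-1}|\log\varepsilon|^{-2}$ that could not be absorbed. The construction \eqref{tildeOD} lifts the lower tip to $|x|\geq c\delta_1^4$, and the remaining factor $\varepsilon\delta_1^{-M}$ is then made small by taking $\varepsilon$ small depending on $\delta_1$ and $\delta_2$, exactly as in the hypothesis of the lemma.
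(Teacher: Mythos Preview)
Your approach is essentially the same as the paper's: exploit harmonicity of $Z^H$, use the decay estimates from Lemma~\ref{lemma:estGradPsiH} rescaled to $x$-variables, observe the support of $\nabla\chi^H$ lies in the annulus $\delta_2\leq|x|\leq 2\delta_2$, and use the lower bound $|x|\geq c\delta_1^4$ on $\partial\tilde\Omega^D$ to control the terms involving derivatives of $Z^H$, absorbing the $\varepsilon\delta_1^{-M}$ contributions by taking $\varepsilon$ small.

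Two small slips worth correcting. First, the rescaled derivative bound should read $|D^k_x Z^H(x/\varepsilon)|\leq C\,\varepsilon\,|x|^{-k-1}$ for all $k\geq 1$ (not $\varepsilon^{k-1}$): one has $\varepsilon^{-k}|D^k_yZ^H(y)|\leq C\varepsilon^{-k}|y|^{-(k+1)}=C\varepsilon|x|^{-(k+1)}$. This does not affect your conclusion since you only use that these terms carry a positive power of $\varepsilon$. Second, in bounding $\|\chi^H(\partial_\nu Z^H+(\kappa-\omega)Z^H)\|_{C^{1,\alpha}}$ you also pick up cross terms from differentiating $\chi^H$, contributing an extra $C\delta_2^{-(1+\alpha)}$; this is still dominated by $C\delta_2^{-(2+\alpha)}$, so the final bound is unaffected.
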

			\begin{proof}
				Write 
				\begin{align*}
					\frac{\partial (Z^H \chi^H)}{\partial \nu} +  (\kappa-\omega) Z^H \chi^H
					&=  Z^H  \frac{\partial  \chi^H}{\partial \nu}
					+    \frac{\partial Z^H }{\partial \nu} \chi^H
					+    (\kappa-\omega) Z^H  \chi^H .
				\end{align*}
				
				Using that $Z^H = \partial_{y_2} \psi^H$ and  \eqref{D2psi}
				we find
				\begin{align*}
					|\nabla_x Z^H| &= \frac{1}{\varepsilon} |\nabla_y Z^H|  \leq  \frac{C}{\varepsilon |y|^2} 
					\leq  C \frac{\varepsilon}{|x|^2} 
				\end{align*}
				since $y = \frac{x}{\varepsilon}$.
				In a similar way we get for $j=2,3$:
				\begin{align*}
					|D^j_x Z^H(x)| \leq C \frac{\varepsilon}{|x|^{1+j}}, \quad x \in \partial\tilde\Omega^D.
				\end{align*}
				But for $x\in \partial\tilde\Omega^D$ we have $|x|\geq c \delta_1^4$ for some $c>0$, by \eqref{tOD0}, so
				\begin{align*}
					|D_x^j Z^H(x)| \leq C \frac{\varepsilon}{\delta_1^{4(1+j)}}, \quad x \in \partial\tilde\Omega^D.
				\end{align*}
				From this and estimates for the derivative of $\chi^H$ we obtain
				\begin{align}
					\label{bx1}
					\Bigl\| \frac{ \partial \chi^H}{\partial \nu} Z^H\Bigr\|_{C^{1,\alpha},\tilde\Omega^D} 
					&\leq  C \frac{\varepsilon}{\delta_1^{4(2+\alpha)} \delta_2} 
					+\frac{C}{\delta_2^{2+\alpha}} .
				\end{align}
				A similar argument gives 
				\begin{align}
					\label{bx2}
					\Bigl\| \frac{\partial Z^H }{\partial \nu}  \chi^H \Bigr\|_{C^{1,\alpha},\tilde\Omega^D} 
					\leq C \frac{\varepsilon}{\delta_1^{4(3+\alpha)}} + C\frac{\varepsilon}{\delta_1^8\delta_2^{1+\alpha}} .
				\end{align}
				
				To estimate $(\kappa-\omega) Z^H \chi^H$ let us recall that $\kappa=\kappa_{\partial\tilde\Omega^D}$ is the curvature of $\partial\tilde\Omega^D$, which is parametrized by $\tilde g_D^-$ \eqref{tgDm}. Using \eqref{tildeg1}, by a similar computation to \eqref{c3}, we see that
				\begin{align*}
					\|\kappa\|_{C^{1,\alpha},\tilde\Omega^D} \leq C+C \delta_1^{1-\alpha}+C\frac{\varepsilon|\log\varepsilon|}{\delta_1^{3+\alpha}}.
				\end{align*}
				Using this and estimates for the derivatives of $ Z^H \chi^H$ as before we obtain
				\begin{align}
					\label{bx3}
					\| (\kappa-\omega) Z^H \chi^H \|_{C^{1,\alpha},\tilde\Omega^D} 
					& \leq 
					C+C \delta_1^{1-\alpha}+C\frac{\varepsilon|\log\varepsilon|}{\delta_1^{3+\alpha}}
					+C\frac{\varepsilon}{\delta_1^{4(1+\alpha)}}
					+C\frac{1}{\delta_2^{1+\alpha}}.
				\end{align}
				Combining \eqref{bx1}, \eqref{bx2} and \eqref{bx3}, we see that given $\delta_2>0$ and $\delta_1>0$ then for $\varepsilon>0$ sufficiently small we get \eqref{BopZchiH}.
				
				\medskip
				Let us consider $ \Delta(Z^H(\frac{\cdot}{\varepsilon}) \chi^H)$. Since $Z^H$ is harmonic we have $\Delta(Z^H  \chi^H ) = 2 \nabla Z^H \cdot \nabla \chi^H + Z^H \Delta \chi^H$.
				But the following estimates hold
				\begin{align*}
					\Bigl| Z^H \Bigl(\frac{x}{\varepsilon}\Bigr)\Delta \chi^H(x) \Bigr| \leq 
					C \delta_2^{-2}
				\end{align*}
				\begin{align*}
					\left[ Z^H \Bigl(\frac{\cdot}{\varepsilon}\Bigr)\Delta \chi^H\right]_{\alpha,\tilde\Omega^D} \leq C  \delta_2^{-(2+\alpha) }.
				\end{align*}
				This and similar estimates for the term $ \nabla Z^H \cdot \nabla \chi^H$ give \eqref{LZchiH}.
			\end{proof}

   	\section{Modification and linear theory on the strip}
			\label{sect:S}
			
			Similarly to the modifications of the hairpin and disk, we need to modify the perturbed strip $\Omega^S_\varepsilon$ defined in \eqref{pOmegaSeps} to avoid the singularity.
			By \eqref{Seps} and \eqref{h1S}, $\mathcal{S}^S_\varepsilon$ is the graph of the function
			\[
			g_S(x_1)= -d^S+\varepsilon \Psi_1^S(x_1,-d^S) 
			\]
			where $\Psi_1^S$ is the function constructed in Lemma~\ref{lem:sol-strip2}.
			Let $\eta_0$ be the cut-off in \eqref{eta0}.
			We define 
			\begin{align}
                \notag
    			\tilde g_S(x_1)&= \Bigl(1-\eta_0\Bigl(\frac{x_1}{\delta_1}\Bigr)\Bigr) g_S(x_1) + \eta_0\Bigl(\frac{x_1}{\delta_1}\Bigr) g_S(\delta_1)
                \\
				\label{tOmegaS}
				\tilde\Omega^S &= \{ \ (x_1,x_2) \ | \ -\A-d^S<x_2<\tilde g_S(x_1) \ \}.
			\end{align}
			The boundary of $\tilde\Omega^S$ consists of 
			\begin{align*}
				\tilde{\mathcal{S}}=\{\ x_2=\tilde g_S(x_1) \ \},
				\quad
				\mathcal{B} =\{\ x_2=-\A-d^S\ \}.
			\end{align*}
We define $P_S$ as 
\begin{align}
\label{def:PS}
P_S=(0,\tilde g_S(0)).
\end{align}

			Next we consider problem 
			\begin{align}
				\label{pStrip0}
				\left\{
				\begin{aligned}
					\Delta \phi &= f_1 
					&&\text{in } \tilde\Omega^S
					\\
					\frac{\partial\phi}{\partial \nu} -\omega\phi
					&= f_2 
					&&\text{on }\tilde{\mathcal{S}}
					\\
					\phi&= 0 &&\text{on }  \mathcal{B}
				\end{aligned}
				\right.
			\end{align}
			
Let $\mu>0$, $0<\alpha<1$.
We define
\begin{align}
\label{nS1}
\|f_1\|_{C^\alpha,\tilde\Omega^S}
&= 
\sup_{x\in\tilde\Omega^S}  \cosh(\mu x_1) 
\left( |f_1(x)| + [f_1]_{\alpha,B(x,1)\cap\tilde\Omega^S} \right)
\\
\label{nS2}
\|f_2\|_{C^{1,\alpha},\partial\tilde\Omega^S}
&= 
\sup_{x\in \partial\tilde\Omega^S}  \cosh(\mu x_1) 
\left( |f_2(x)| +|\nabla f_2(x)| + [\nabla f_2]_{\alpha,B(x,1) \cap \partial\tilde\Omega^S} \right)
\\
\label{nS3}
\| \phi \|_{C^{2,\alpha},\tilde\Omega^S}
&= \sup_{x\in\tilde\Omega^S}  \cosh(\mu x_1) 
\left( |\phi(x)| +|\nabla \phi(x)|+|D^2\phi(x)|  +  [D^2 \phi]_{\alpha,B(x,1)\cap\tilde\Omega^S} \right).
\end{align}
			
\begin{proposition}
\label{propS}
Assume that $\omega \Ae<1$.
Then for $\mu>0$ small as in \eqref{eq:mu}  (depending on $1-\omega \Ae$),
there are $\delta_1^*>0$, $C>0$ so that for $0<\delta_1<\delta_1^*$ there is $\varepsilon^*>0$ so that for $0<\varepsilon<\varepsilon^*$ the following holds. For $\|f_1\|_{C^\alpha,\tilde\Omega^S}<\infty$ and  $\|f_2\|_{C^{1,\alpha},\tilde\Omega^S}<\infty$ there is a unique solution $\phi$ of \eqref{pStrip0} with $\|\phi \|_{C^{2,\alpha},\tilde\Omega^S}<\infty$,
and it satisfies
\begin{align*}
\| \phi \|_{C^{2,\alpha},\tilde\Omega^S} \leq C ( \|f_1\|_{C^\alpha,\tilde\Omega^S}+ \|f_2\|_{C^{1,\alpha},\partial\tilde\Omega^S} ).
\end{align*}
\end{proposition}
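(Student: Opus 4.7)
\medskip

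\noindent\textbf{Proof proposal for Proposition~\ref{propS}.}

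The plan is to reduce the problem on $\tilde\Omega^S$ to the analogous problem on the flat strip $\Omega^S$ by a diffeomorphism, and then solve the perturbed equation on $\Omega^S$ by a fixed point argument based on the linear theory already sketched in Section~\ref{sectFormal}. First, I would establish the mapping property on the unperturbed strip: given $\|f_1\|_{C^\alpha,\Omega^S}<\infty$ and $\|f_2\|_{C^{1,\alpha},\partial\Omega^S}<\infty$ (with weights $\cosh(\mu x_1)$), there exists a unique $\phi$ solving \eqref{strip0} with $\|\phi\|_{C^{2,\alpha},\Omega^S}\leq C(\|f_1\|_{C^\alpha,\Omega^S}+\|f_2\|_{C^{1,\alpha},\partial\Omega^S})$. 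Existence and the $L^\infty$ bound follow by building a global barrier from the explicit super-solution $\bar u(x_1,x_2)=\sin(\lambda(x_2+\Ae))\exp(-\mu|x_1|)$ of \eqref{superstrip}, valid for $\mu>0$ satisfying \eqref{eq:mu} because $\omega\Ae<1$. Uniqueness comes from the same barrier. The $C^{2,\alpha}$ control then follows from standard interior and boundary Schauder estimates applied in unit-size balls, together with the exponential weight which is essentially constant at that scale.

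Next I would write $\tilde\Omega^S$ as the image of $\Omega^S$ under a diffeomorphism
\[
F(x_1,x_2)=\bigl(x_1,\,x_2+(\tilde g_S(x_1)+d^S)\,\zeta(x_2)\bigr),
\]
where $\zeta\in C^\infty(\R)$ equals $1$ near $x_2=-d^S$ and vanishes near $x_2=-\Ae-d^S$. From \eqref{tOmegaS} and Lemma~\ref{lem:sol-strip2} one has $\|\tilde g_S+d^S\|_{C^{3,\alpha}(\R)}\leq C\varepsilon|\log\varepsilon|$ (the modification only affects $|x_1|\leq2\delta_1$, where the log singularity of $\Psi_1^S$ is smoothed out), so $F$ is a $C^{3,\alpha}$-diffeomorphism that is close to the identity. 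Setting $\tilde\phi=\phi\circ F$, problem \eqref{pStrip0} becomes
\begin{align*}
\left\{
\begin{aligned}
\Delta\tilde\phi+M_1[\tilde\phi] &= \tilde f_1 && \text{in }\Omega^S\\
\frac{\partial\tilde\phi}{\partial\nu}-\omega\tilde\phi+M_2[\tilde\phi]&=\tilde f_2 && \text{on } x_2=-d^S\\
\tilde\phi&=0 && \text{on }x_2=-\Ae-d^S,
\end{aligned}
\right.
\end{align*}
where $M_1,M_2$ are linear differential operators of orders $2$ and $1$ whose coefficients tend to $0$ in $C^\alpha$ as $\varepsilon\to0$, and similarly $\tilde f_j=f_j\circ F\cdot J_j$ with Jacobian factors $J_j$ that preserve the exponentially weighted norms up to a constant.

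Finally I would solve the transformed problem by contraction: let $T$ denote the bounded solution operator for \eqref{strip0} produced in the first step, and seek $\tilde\phi$ as a fixed point of
\[
\tilde\phi\;\longmapsto\;T\bigl[\tilde f_1-M_1[\tilde\phi],\,\tilde f_2-M_2[\tilde\phi]\bigr].
\]
The key estimate is $\|M_1[\tilde\phi]\|_{C^\alpha,\Omega^S}+\|M_2[\tilde\phi]\|_{C^{1,\alpha},\partial\Omega^S}\leq\eta(\delta_1,\varepsilon)\,\|\tilde\phi\|_{C^{2,\alpha},\Omega^S}$ with $\eta(\delta_1,\varepsilon)\to0$ as $\delta_1,\varepsilon\to0$, which makes the map a contraction. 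This yields existence, uniqueness, and the claimed bound after composing back with $F^{-1}$.

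\medskip

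\noindent\emph{Main obstacle.} The delicate point is verifying that $M_1,M_2$ are genuinely small in the weighted $C^{\alpha}$ norms. The cut-off construction \eqref{tOmegaS} is concentrated in the region $|x_1|\leq 2\delta_1$ of bounded weight, so one must check that differentiating the perturbation $\tilde g_S+d^S$ up to order $3$ does not produce factors of $\delta_1^{-1}$ that cancel the $\varepsilon|\log\varepsilon|$ gain; since $\tilde g_S$ is constructed by interpolating between $g_S(\pm\delta_1)=O(\varepsilon|\log\varepsilon|)$ and a smooth profile of the same size, the derivatives obey $\|\tilde g_S+d^S\|_{C^{k}}\leq C\varepsilon|\log\varepsilon|\,\delta_1^{-k}$, so the smallness follows provided $\varepsilon\ll\delta_1^{3}$, which is guaranteed by choosing $\varepsilon^{*}=\varepsilon^{*}(\delta_1)$ small enough as in the statement.
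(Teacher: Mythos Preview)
Your proposal is correct and follows exactly the strategy the paper indicates: solve the problem on the flat strip $\Omega^S$ using the supersolution \eqref{superstrip} together with standard Schauder estimates, then transfer to $\tilde\Omega^S$ by a diffeomorphism close to the identity and close up with a contraction argument. The only minor slip is that the size of the boundary perturbation is governed by $\varepsilon|\log\delta_1|$ rather than $\varepsilon|\log\varepsilon|$ (and the higher derivatives by $\varepsilon\delta_1^{-k}$), so the smallness of $M_1,M_2$ comes from taking $\varepsilon$ small relative to a fixed $\delta_1$, not from sending $\delta_1\to0$; this is consistent with the statement and with your own ``main obstacle'' paragraph.
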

The proof is by perturbation of the same result in the strip $\Omega^S$ \eqref{strip1}.
For the strip $\Omega^S$ the result follows by using standard H\"older estimates and the supersolution \eqref{superstrip}.

\section{The linearized equation in \texorpdfstring{$\Omega_0$}{Omega0}}
\label{sec:linear}
			
			We consider here the linear problem
			\begin{align}
				\label{main-linear}
				\left\{
				\begin{aligned}
					\Delta_y \phi  &= f_1  && \text{in } \Omega_0
					\\
					\frac{\partial \phi}{\partial\nu}
					+ (\kappa-\varepsilon\omega) \phi &= f_{2,0} + g ( \varepsilon y_2+d^S)
					&& \text{on } \mathcal{S}_0
					\\
					\phi &= 0 && \text{on } \mathcal{B}_0,
				\end{aligned}
				\right.
			\end{align}
			for $\varepsilon>0$ sufficiently small, where $\Omega_0$ is the domain constructed in Section~\ref{sec:approx} and $\mathcal{S}_0$, $\mathcal{B}_0$ are the upper and lower components of $\partial\Omega_0$.
			We recall that $\mathcal{S}_0$ is asymptotic to $y_2=-\varepsilon^{-1}d^S$ as $|y|\to \infty$, where $d^S$ is a parameter of size $O(\varepsilon|\log\varepsilon|)$, see \eqref{dSeps}.
			In \eqref{main-linear}, $\kappa$ is the curvature of $\partial\Omega_0$ with the sign convention of Section~\ref{sectFormal} and $\omega>0$ is a constant.
			At some points later on we will use the notation
			\begin{align}
				\label{decompf2}
				f_2(y) = f_{2,0}(y)+g ( \varepsilon y_2+d^S).
			\end{align}
			
			The domain $\Omega_0$ defined in Section~\ref{sec:approx} is dilated, in the sense that the part that corresponds to the disk has radius $O(\frac{1}{\varepsilon})$ and the neck of the hairpin has size $O(1)$. Problem \eqref{main-linear} can be transformed, by the change 
			\begin{align}
				\label{tf}
				\tilde \phi(x) &= \varepsilon \phi\Bigl(\frac{x}{\varepsilon}\Bigr), \quad 
				\tilde f_1(x) =\frac{1}{\varepsilon} f_1\Bigl(\frac{x}{\varepsilon}\Bigr), \quad 
				\tilde f_2(x) = f_2\Bigl(\frac{x}{\varepsilon}\Bigr), 
				\\
				\label{tkappa}
				\tilde \kappa(x) &= \frac{1}{\varepsilon}\kappa\Bigl(\frac{x}{\varepsilon}\Bigr)\end{align}
			into the problem in the original scale
			\begin{align}
				\nonumber
				\left\{
				\begin{aligned}
					\Delta_x \tilde\phi  &= \tilde f_1  && \text{in } \varepsilon\Omega_0
					\\
					\frac{\partial \tilde\phi}{\partial\nu}
					+ (\tilde \kappa-\omega) \tilde\phi &= \tilde f_{2,0} + g (x_2+	d^S)
					&& \text{on } \varepsilon\mathcal{S}_0
					\\
					\tilde\phi &= 0 && \text{on } \varepsilon\mathcal{B}_0.
				\end{aligned}
				\right.
			\end{align}
			
For $f_1$ and $f_2$ in \eqref{main-linear} we consider the norms defined in \eqref{defNf1Omega0} and \eqref{defNf2Omega0}.
Let $0<\sigma,\alpha<1$, $\delta_1>0$ be small, and $\mu>0$ satisfy \eqref{eq:mu}.
For the solution $\phi$ of \eqref{main-linear} we define the norm
			\begin{align}
				\label{norm-phi}
				\| \phi \|_{*,\Omega_0} & = \text{the least $M$ such that for $y \in \Omega_0$}
				\\
				\nonumber
				& |\phi(y)|
				+(1+|y|) |\nabla \phi(y)|
				+(1+|y|)^2 |D^2 \phi(y)|
				+(1+|y|)^{2+\alpha} [D^2\phi]_{\alpha,B(y,\frac{|y|}{10})}
				\\
				\nonumber
				& \qquad \qquad  \qquad 
				\leq (1+|y|)^{1-\sigma} M 
				\qquad \qquad\text{for } |y| \leq \frac{\delta_1}{\varepsilon}
				\\
				\nonumber
				& 
				|\phi(y)| 
				+\Bigl(\frac{\delta_1}{\varepsilon}\Bigr) |\nabla\phi(y)|
				+\Bigl(\frac{\delta_1}{\varepsilon}\Bigr)^2 |D^2\phi(y)|
				+\Bigl(\frac{\delta_1}{\varepsilon}\Bigr)^{2+\alpha} [D^2\phi]_{\alpha,B(y,\frac{\delta_1}{10 \varepsilon})}
				\\
				\nonumber
				& \quad \qquad \qquad 
				\leq \Bigl(\frac{\delta_1}{\varepsilon}\Bigr)^{1-\sigma} \exp(-\mu \varepsilon |y|) M \qquad\qquad   \text{for } |y| \geq \frac{\delta_1}{\varepsilon} .
			\end{align}
			\index{$\norm \  \norm_{*,\Omega_0}$, norm for the solution of the linearized problem in $\Omega_0$}
			
			The main result of this section is the following.
			
			\begin{proposition}
				\label{prop:main-linear}
				There is $C>0$ and $\varepsilon_0>0$ so that for $0<\varepsilon<\varepsilon_0$  for any $f_1$ and $f_{2,0}$ with $\|f_1\|_{**, \Omega_0}<\infty$ and $\|f_{2,0}\|_{**, \partial \Omega_0} <\infty$ there exist $g$ and $\phi$ that satisfy \eqref{main-linear}, are linear functions of $f_1$, $f_{2,0}$ and satisfy the estimate
				\begin{align}
					\nonumber
					\|\phi\|_{*,\Omega_0} &\leq C \varepsilon^{\sigma-1}( \|f_1\|_{**, \Omega_0}+\|f_{2,0}\|_{**, \partial \Omega_0} )
					\\
					\nonumber
					|g|&\leq C \varepsilon^{\sigma}( \|f_1\|_{**, \Omega_0}+\|f_{2,0}\|_{**, \partial \Omega_0} ).
				\end{align}
			The norms $\| \cdot \|_{**, \Omega_0}$ and $\| \cdot \|_{**, \partial \Omega_0}$ are defined in \eqref{defNf1Omega0} and \eqref{defNf2Omega0} respectively.
			\end{proposition}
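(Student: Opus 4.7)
\medskip

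\noindent\textbf{Proof proposal.} The plan is to decompose the problem into three subproblems posed on the building-block domains $\tilde\Omega^H$, $\tilde\Omega^D$, $\tilde\Omega^S$ introduced in Sections~\ref{sec:LinH}, \ref{sec:TildeOD}, \ref{sect:S}, and solve the resulting coupled system by a fixed-point argument. Concretely, I would look for $\phi$ in the form
\begin{align*}
\phi(y) = \phi^H(y) \chi^H(x) + \tfrac{1}{\varepsilon}\phi^D(x)\chi^D(x) + \tfrac{1}{\varepsilon}\phi^S(x)\chi^S(x), \qquad x=\varepsilon y,
\end{align*}
where $\chi^H,\chi^D,\chi^S$ are smooth cut-offs. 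The cut-off $\chi^H$ (defined essentially as in \eqref{chiH1}) localizes to $|x|\lesssim \delta_2$, while $\chi^D$ localizes to $\{x_2\geq 0\}\cap\{|x|\geq \delta_1\}$ and $\chi^S$ to $\{x_2\leq 0\}\cap\{|x|\geq\delta_1\}$, with overlap regions of width comparable to $\delta_1$ and $\delta_2$. The unknowns $\phi^H$, $\phi^D$, $\phi^S$ are respectively defined on $\tilde\Omega^H$, $\tilde\Omega^D$, $\tilde\Omega^S$; the scalar $g$ is a fourth unknown.

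Substituting this ansatz into \eqref{main-linear} and grouping terms by support, I would impose that each component solves a Robin problem on its model domain with right-hand side equal to the appropriate localization of $(f_1,f_{2,0})$, plus commutator terms produced when $\Delta$ and the boundary operator hit the cut-offs on the \emph{other} components. That is, $\phi^H$ satisfies the Robin problem of Proposition~\ref{prop:lh1} with RHS given by $f_1\chi^H$, $f_{2,0}\chi^H$, plus terms of the form $\nabla\chi^H\cdot\nabla(\varepsilon^{-1}\phi^D\chi^D)$, etc.; $\phi^D$ satisfies \eqref{pDiscProj} on $\tilde\Omega^D$ (the version with the free parameter $g$) with RHS given by the corresponding pieces; and $\phi^S$ solves \eqref{pStrip0} on $\tilde\Omega^S$. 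The free parameter $g$ is then fixed precisely to meet the solvability condition of Proposition~\ref{propDisc1} on $\tilde\Omega^D$ (orthogonality to $Z_2^D$); the evenness in $y_1$ eliminates the $Z_1^D$ direction.

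The fixed-point argument proceeds in the product space defined by the norms $\|\phi^H\|_{\sharp,\tilde\Omega^H}$, $\|\phi^D\|_{C^{2,\alpha},\tilde\Omega^D}$, $\|\phi^S\|_{C^{2,\alpha},\tilde\Omega^S}$, and $|g|$. The crucial point is to show that (i) the norms defined in \eqref{defNf1Omega0}, \eqref{defNf2Omega0} control the localized RHS in each building-block norm with an explicit $\varepsilon$-loss matching the $\varepsilon^{\sigma-1}$ in the statement (which comes from converting $y$-norms with weight $(1+|y|)^{1-\sigma}$ to the $C^{2,\alpha}$ norms on the disk/strip at scale $x=\varepsilon y$), and (ii) the commutator terms arising from the cut-offs are small perturbations. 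For (ii), derivatives of $\chi^H$ are supported in $|x|\sim\delta_2$, and derivatives of $\chi^D,\chi^S$ in $|x|\sim\delta_1$; when evaluated on a solution controlled by the product norm, these produce contractive mappings once $\delta_1,\delta_2$ are chosen sufficiently small (independently of $\varepsilon$) and then $\varepsilon$ is taken small. The key auxiliary estimates here are Lemmas~\ref{lemmaNormZD} and~\ref{lemma:RZHbdyD}, which bound precisely the terms arising when the Robin operator on the disk hits $Z^H\chi^H$ and similar transplanted quantities.

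The main obstacle I expect is the careful matching of norms across the three regions and the explicit tracking of the $\varepsilon^{\sigma-1}$ loss in the final estimate. Concretely, the hairpin theory (Proposition~\ref{prop:lh1}) gives control in the $y$-variable with polynomial weights, while the disk and strip theories give control in the $x$-variable in standard H\"older spaces; reconciling these at the overlap $|x|\sim\delta_1$ (equivalently $|y|\sim\delta_1/\varepsilon$) produces the weight switch visible in the definition \eqref{norm-phi}, and one must check that the commutator contributions, although supported in these overlap regions, do not accumulate losses larger than $\varepsilon^{\sigma-1}$. A second subtle point is that the solvability condition on the disk forces $g$ to depend linearly on the data; one must verify that the resulting $g$ is of size $O(\varepsilon^\sigma(\|f_1\|_{**,\Omega_0}+\|f_{2,0}\|_{**,\partial\Omega_0}))$, which follows from testing the disk Robin problem against $Z_2^D$ and using that the orthogonality defect produced by the commutators carries an extra $\varepsilon$-factor from the cut-off derivatives. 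Once the fixed point is obtained, standard Schauder estimates upgrade the solution to the regularity encoded in $\|\cdot\|_{*,\Omega_0}$.
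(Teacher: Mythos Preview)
Your overall architecture is right, but the ansatz is missing a key ingredient. The paper actually writes
\[
\phi(y) = \bigl(\phi^H(y) + \lambda Z^H(y)\bigr)\chi^H(x) + \tfrac{1}{\varepsilon}\phi^S(x)\chi^S(x) + \tfrac{1}{\varepsilon}\bigl(\phi^D(x)+\beta Z^D(x)\bigr)\chi^D(x),
\]
with two extra scalar unknowns $\lambda,\beta$ multiplying the translation kernels $Z^H=\partial_{y_2}\psi^H$ and $Z^D=\partial_{x_2}\psi^D$. These are not cosmetic: $\lambda$ is fixed so that $\phi^S(P_S)=0$, and $\beta$ so that $(\phi^D+\beta Z^D)(P_D)=0$. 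Without these vanishing conditions the commutator estimate you rely on for contractivity fails. For instance, the hairpin norm of $\varepsilon\phi^S(\varepsilon y)\Delta\chi^S$ is governed by $(1+|y|)^{1+\sigma}$ times this quantity on $|y|\sim\delta_1/\varepsilon$; with $\phi^S(P_S)=0$ one gains a factor $|\varepsilon y|$ and obtains a bound $C\delta_1^{\sigma}\varepsilon^{-\sigma}\|\phi^S\|$, which is small in $\delta_1$. Without the vanishing one only gets $C\delta_1^{\sigma-1}\varepsilon^{-\sigma}\|\phi^S\|$, which blows up as $\delta_1\to 0$, and your fixed point does not close.

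Relatedly, your explanation of where the $\varepsilon^{\sigma-1}$ loss originates is wrong. In the paper's proof the piece $\phi_0=\phi-\lambda Z^H\chi^H$ satisfies $\|\phi_0\|_{*,\Omega_0}\leq C(\|f_1\|_{**}+\|f_{2,0}\|_{**})$ with \emph{no} loss; the entire $\varepsilon^{\sigma-1}$ factor comes from the single term $\lambda Z^H\chi^H$, because $\varepsilon|\lambda|$ is only $O(\varepsilon^\sigma)$ times the data while $Z^H$ is bounded and nondecaying. So the loss is not a norm-conversion artifact but the genuine cost of inserting the bounded kernel element needed to make the coupling contractive. The paper also solves the system sequentially (strip in terms of hairpin, then hairpin in terms of disk, then disk for $(\phi^D,\beta,g)$) rather than in one product-space fixed point; this is a convenience rather than an essential difference, but it makes the $\delta_1,\delta_2$ bookkeeping cleaner.
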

			
			The strategy of the proof, postponed to Section~\ref{sectProofSys}, is to decompose the solution $\phi$ into three functions that roughly will solve a linear coupled system in domains close to the hairpin, the disk and the strip.
			Let $0<\delta_1<\delta_2$ be small numbers to be chosen later on. They will be fixed independently of $\varepsilon$ and we will need $\varepsilon\ll\delta_1\ll\delta_2$.
			Let $\chi_0$ be a radial cut-off function in $\R^2$ such that 
			\[
			\chi_0(x) = 1 \quad |x| \leq 1,
			\quad
			\chi_0(x) = 0 \quad |x| \geq 2.
			\]
			Define
			\begin{align}
				\label{cutoffs}
				\left\{
				\begin{aligned}
					\chi^H(x) &= \chi_0\Bigl( \frac{x}{\delta_2} \Bigl)    
					\\
					\chi^{S}(x) &= \Bigl[ 1-\chi_0\Bigl( \frac{x}{\delta_1} \Bigl)\Bigr]
					\eta_1^-\Bigl( \frac{x}{\varepsilon} \Bigl)
					\\
					\chi^{D}(x) &=\Bigl[ 1-\chi_0\Bigl( \frac{x}{\delta_1} \Bigl)\Bigr]
					\eta_1^+\Bigl( \frac{x}{\varepsilon} \Bigl),
				\end{aligned}
				\right.
			\end{align}
			where
			\[
			\eta_1^+(y) = 1-\eta_0(y_2) , \quad \eta_1^-(y) = 1-\eta_0(-y_2) ,
			\]
			\[
			\eta_0 \in C^\infty(\R) ,\quad
			0 \leq \eta_0\leq 1, \quad 
			\eta_0(s) = 1 \quad s \leq 1, \quad \eta_0(s) = 0 \quad s \geq 2 .
			\]
			$\eta_1^+$, $\eta_1^-$, $\eta_0$  are the same functions defined in \eqref{eta1pm}, \eqref{eta0}. The cut-off $\chi^H$ defined here is the same as in \eqref{chiH1}.
			
			\medskip
			We also define the cut off functions with larger supports
				\begin{align}
                \nonumber
				\left\{
				\begin{aligned}
					\chi^H_2 (x) &= \chi_0\Bigl( \frac{x}{2\delta_2} \Bigl)    
					\\
					\chi^{S}_2 (x) &= \Bigl[ 1-\chi_0\Bigl( \frac{2x}{\delta_1} \Bigl)\Bigr]
					\eta_1^-\Bigl( \frac{2x}{\varepsilon} \Bigl) 
					\\
					\chi^{D}_2(x) &=\Bigl[ 1-\chi_0\Bigl( \frac{2x}{\delta_1} \Bigl)\Bigr]
					\eta_1^+\Bigl( \frac{2x}{\varepsilon} \Bigl), 
				\end{aligned}
				\right.
			\end{align}
		Observe that
		$$
		\chi^H (x) \, \chi^H_2 (x) = \chi^H (x) , \quad 	\chi^S (x) \, \chi^S_2 (x) = \chi^S (x) , 	\quad \chi^D (x) \, \chi^D_2 (x) = \chi^D (x) 
		$$
		for all $x \in \R^2$.
			\medskip
			We look for a solution of \eqref{main-linear} of the form
			\begin{align}
				\label{phi}
				\phi(y) &=  ( \phi^H(y) + \lambda Z^H(y)) \chi^H(x)
				+ \frac{1}{\varepsilon}\phi^{S}(x)\chi^{S}(x)
				+ \frac{1}{\varepsilon}(\phi^{D}(x)+ \beta Z^D(x))\chi^D(x),
			\end{align}
			where
			\begin{align*}
				y &= \frac{x}{\varepsilon}.
			\end{align*}
			
			The functions $Z^H(y)$, $Z^D(x)$ are kernel elements due to vertical translations in the hairpin $\Omega^H$ and disk $\Omega^D$:
			\begin{align*}
				Z^H(y) = \partial_{y_2} \psi^H(y),\quad
				Z^D(x) = \partial_{x_2} \psi^D(x),
			\end{align*}
where $\phi^H $ is the hairpin stream function \eqref{psiH} and  $\phi^D $ is the disk stream function \eqref{psiD}. In other sections we have written $Z^D$ as $Z_2^D$ to distinguish from the other element in the kernel $Z_1^D$ due to horizontal translation. The symmetry in $x_1$ yields that the  only relevant translation is vertical. Hence in this section we keep the notation $Z^D$.
			
			In \eqref{phi}, $\lambda$, $\beta$ are part of the solution.
			
			The function $\phi^H$ is defined on the domain $\tilde\Omega^H$ constructed in \eqref{tOmegaH}, $\phi^D$ is defined on $\tilde\Omega^D$ constructed in \eqref{tildeOD}, and $\phi^S$ is defined on the strip $\tilde\Omega^S$ defined in \eqref{tOmegaS}.
			The domains $\tilde\Omega^H$, $\tilde\Omega^D$, $\tilde\Omega^S$ and the cut-off functions have been constructed so that the terms in the equations \eqref{gHairpin}--\eqref{gDisc} below are correctly defined. 
			Indeed, consider for example $ \varepsilon (\phi^D + \beta Z^D) \Delta \chi^D$, which appears in \eqref{gHairpin}. We claim that this function is well-defined in $\tilde \Omega^H$.
			To see this, let $y \in \tilde\Omega^H$ be such that $x = \varepsilon y \in \supp( \Delta \chi^D)$. Then $y=(y_1,y_2)$ satisfies $y_2\geq 1$ and $\frac{\delta_1}{\varepsilon} \leq |y| \leq 2 \frac{\delta_1}{\varepsilon}$. 
			But the transition from the hairpin to the disk occurs in the region $\varepsilon^{-b}\leq |y|\leq 2 \varepsilon^{-b}$.
			Then $\varepsilon y \in \tilde \Omega^D$. 
			
			\subsection{The system}
			Consider the following system of equations for the functions $\phi^H$, $\phi^D$, $\phi^S$ and the parameters $\lambda$, $\beta$:
			\begin{align}
				\label{gHairpin}
				\left\{
				\begin{aligned}
					& \Delta_y \phi^H  + \varepsilon \phi^S \Delta \chi^S +2 \varepsilon \nabla \phi^S \cdot \nabla \chi^S
					+ \varepsilon (\phi^D + \beta Z^D) \Delta \chi^D
					\\
					& \quad 
					+ 2\varepsilon \nabla (\phi^D + \beta Z^D)\cdot \nabla \chi^D =  f_1 \chi_2^H \qquad \qquad 
					\text{in } \tilde \Omega^H \cap B_{2\delta_2/\varepsilon}(0)
					\\
					& \frac{\partial \phi^H}{\partial  \nu_y} 
					+ (\kappa - \varepsilon \omega \chi_2^H) \phi^H
					+\phi^S \frac{\partial \chi^S}{\partial  \nu_x}
					+  ( \phi^D +\beta Z^D) \frac{\partial \chi^D}{\partial  \nu_x}
					\\
					& \qquad \qquad 
					+ \varepsilon\omega \lambda Z^D ( 1-\chi^D-\chi^S)
					= f_2 \chi_2^H 
					\quad \text{on } \partial \tilde \Omega^ H  \cap B_{2\delta_2/\varepsilon}(0)
				\end{aligned}
				\right.
			\end{align}
\begin{align}
\label{gStrip}
\left\{
\begin{aligned}
& \Delta ( \phi^S + \varepsilon\lambda Z^H(y) \chi^H )
+2 \nabla_y \phi^H \cdot \nabla \chi^H
+ \varepsilon \phi^H \Delta \chi^H
= \tilde f_1 \chi_2^S
\quad \text{in }  \tilde\Omega^S
\\
& \frac{\partial}{\partial \nu}(  \phi^S + \varepsilon\lambda Z^H(y) \chi^H )
+ (\tilde\kappa-\omega)  ( \phi^S + \varepsilon\lambda Z^H(y) \chi^H ) 
+ \varepsilon \phi^H \frac{\partial \chi^H}{\partial \nu}
\\ & \quad 
= \tilde f_2 \chi_2^S
\quad \text{on }  \partial \tilde\Omega^S
\end{aligned}
\right.
\end{align}
			\begin{align}
				\label{gDisc}
				\left\{
				\begin{aligned}
					& \Delta ( \phi^D + \varepsilon\lambda Z^H(y) \chi^H + \beta Z^D)
					+2 \nabla_y \phi^H \cdot \nabla \chi^H
					+ \varepsilon \phi^H \Delta \chi^H
					= \tilde f_1 \chi_2^D
					\quad \text{in }  \tilde \Omega^D 
					\\
					& \frac{\partial}{\partial \nu}(  \phi^D + \varepsilon\lambda Z^H(y) \chi^H + \beta Z^D)
					+ (\tilde\kappa-\omega) (  \phi^D + \varepsilon\lambda Z^H(y) \chi^H + \beta Z^D) 
					\\ & \quad 
					+ \varepsilon \phi^H \frac{\partial \chi^H}{\partial \nu}
					= \tilde f_2 \chi_2^D 
					\quad \text{on }  \partial \tilde \Omega^D
				\end{aligned}
				\right.
			\end{align}
where   $f_2(y)$ is given in \eqref{decompf2}, and  $\tilde f_j$ and $\tilde\kappa$ are defined in \eqref{tf}, \eqref{tkappa}.
			
			In \eqref{gHairpin}--\eqref{gDisc} 
            the differential operators without subscript act on the variable $x$. 
			So, for example, in \eqref{gStrip},
			\begin{align*}
				\Delta \chi^H = ( \partial_{x_1x_1} + \partial_{x_2x_2}) \chi^H.
			\end{align*}
			
			We claim that if $\phi^H$, $\phi^D$, $\phi^S$, $\lambda$, $\beta$ satisfy \eqref{gHairpin}--\eqref{gDisc}, then $\phi$ given by \eqref{phi} satisfies \eqref{main-linear}.
			To see this, we multiply \eqref{gHairpin} by $\varepsilon^{-1}\chi^H$,  \eqref{gStrip} by $\chi^S$ and  \eqref{gDisc} by $\chi^D$ and add. 
			Let us keep track of the term containing $Z^H$ in the resulting expression:
			\begin{align*}
				\varepsilon \lambda \Delta( Z^H \chi^ H) ( \chi^S+\chi^D)
				&= 
				\varepsilon \lambda \Delta Z^H \chi^ H ( \chi^S+\chi^D)
				+ 
				2\varepsilon \lambda \nabla Z^H \cdot \nabla \chi^ H
				( \chi^S+\chi^D)
				\\
				&\quad 
				+ 
				\varepsilon \lambda  Z^H \Delta \chi^ H
				( \chi^S+\chi^D)
				\\ 
				&=
				2\varepsilon \lambda \nabla Z^H \cdot \nabla \chi^ H
				+ 
				\varepsilon \lambda  Z^H \Delta \chi^ H
				\\
				&=
				\varepsilon\lambda \Delta (Z^H \chi^H)  ,
			\end{align*}
			because $\Delta Z^H =0$ and the form of the cut-offs.
			
			Problem \eqref{gHairpin} leads us to consider linear problems of the form
			\begin{align}
				\label{pHairpin}
				\left\{
				\begin{aligned}
					\Delta_y \phi^H  &= j_1 &&\text{in } \tilde \Omega^H 
					\\
					\frac{\partial \phi^H}{\partial  \nu_y} 
					+ (\kappa - \varepsilon \omega \chi^H_2) \phi^H
					&=j_2 && \text{on } \partial \tilde \Omega^ H.
				\end{aligned}
				\right.
			\end{align}
			In \eqref{gHairpin} the equation and boundary condition are needed on part of $\tilde\Omega^H$ or its boundary.
			In \eqref{pHairpin} we consider the problem in the whole $\tilde \Omega^H$ but then restrict its solution to $\tilde \Omega^H \cap B_{2\delta_2/\varepsilon}(0)$. Existence and estimates of solutions for \eqref{pHairpin} have been obtained in Proposition~\ref{prop:lh1}.
			
			By taking the unknown in \eqref{gDisc} to be $\tilde \phi^D=\phi^D + \varepsilon\lambda Z^H(y) \chi^H + \beta Z^D$, we arrive at the problem
			\begin{align}
				\label{pDisc}
				\left\{
				\begin{aligned}
					\Delta \tilde \phi^D&= j_1 
					&&\text{in }  \tilde \Omega^D
					\\
					\frac{\partial \tilde \phi^D}{\partial \nu}
					+ (\kappa-\omega) \tilde \phi^D 
					&= j_2
					&&\text{on }  \partial \tilde \Omega^D .
				\end{aligned}
				\right.
			\end{align}
			Existence and estimates of solutions for \eqref{pDisc} have been obtained in Proposition~\ref{propDisc1}.
			
			Similarly, by taking the unknown in \eqref{gStrip} to be $ \tilde \phi^S(x) = \phi^S(x) + \lambda Z^H(y) \chi^H(x)$, we are led to
			\begin{align}
				\label{pStrip}
				\left\{
				\begin{aligned}
					\Delta \tilde \phi^S &= j_1 
					&&\text{in } \tilde\Omega^S
					\\
					\frac{\partial \tilde \phi^S}{\partial \nu} -\omega \tilde \phi^S 
					&= j_2 
					&&\text{on }\tilde{\mathcal{S}}
					\\
					\tilde \phi^S &= 0 &&\text{on }  \mathcal{B}
				\end{aligned}
				\right.
			\end{align}
			where $\tilde\Omega^S$ is defined in \eqref{tOmegaS}.
			Existence and estimates of solutions for \eqref{pStrip} are given in Proposition~\ref{propS}.
			
			\subsection{Solving the system}
			To solve  the system \eqref{gHairpin}--\eqref{gDisc} we proceed in three steps. First we solve \eqref{gStrip} as an operator of $\phi^H$. That is, given $\phi^H$ we find
			\begin{align}
				\nonumber
				\phi^S = \mathcal{T}_S[\phi^H,f_1,f_2], \quad 
				\lambda = \Lambda_S[\phi^H,f_1,f_2],
			\end{align}
			that solve \eqref{gStrip}, and such that 
			\[
			\phi^S(P_S) = 0
			\]
where $P_S$ is defined in \eqref{def:PS}.

Next, given $\phi^D$ and $\beta$ we consider \eqref{gHairpin} with $\phi^S = \mathcal{T}_S[\phi^H,f_1,f_2] $ and $\lambda = \Lambda_S[\phi^H,f_1,f_2]$. Assuming that $(\phi^D + \beta Z^D)(P_D)=0$ we find 
\[
\phi^H = \mathcal{T}_H[\phi^D,\beta,f_1,f_2]
\]
that solves \eqref{gHairpin}.	

In the last step we solve for $\phi^D$, $\beta$ and $g$ in equation \eqref{gDisc} with $\phi^H = \mathcal{T}_H[\phi^D,\beta,f_1,f_2]$ and $\lambda = \Lambda_S[ \mathcal{T}_H[\phi^D,\beta,f_1,f_2],f_1,f_2]$.
			
			\medskip
			
			The next proposition allows us to implement the first step, namely  given $\phi^H$ we find
			$\phi^S$ and $\lambda$ solving \eqref{gStrip} as operators on $\phi^H$.
			
			\begin{proposition}
				\label{propS2}
				Let $0<\delta_1<\delta_2$. Then there is $C$ and $\varepsilon_0>0$ such that for $0<\varepsilon<\varepsilon_0$ there are linear operators 
				$\mathcal{T}_S$, $\Lambda_S$
				so that if $ \| \phi^H\|_{\sharp, \tilde\Omega^H}<\infty$, 
				then 
				\begin{align}
					\nonumber
					\phi^S = \mathcal{T}_S[\phi^H,f_1,f_2], \quad 
					\lambda = \Lambda_S[\phi^H,f_1,f_2],
				\end{align}
				satisfy \eqref{gStrip} and 
				\[
				\phi^S(P_S) = 0.
				\]
				Moreover
\begin{align}
\nonumber
\|  \mathcal{T}_S[\phi^H,f_1,f_2] \|_{C^{2,\alpha},\tilde\Omega^S} 
&\leq 
C \varepsilon^{\sigma}  \delta_2^{-1-\alpha-\sigma} \|\phi^H\|_{\sharp , \tilde\Omega^H} + C \| \tilde f_1 \chi_2^S \|_{C^\alpha, \tilde \Omega^S}
\\
\label{TS}
& \quad + C \| \tilde f_2 \chi_2^S \|_{C^{1,\alpha},\partial \tilde \Omega^S}
\\
\nonumber
\varepsilon | \Lambda_S[\phi^H,f_1,f_2] | & 
\leq 
C \varepsilon^{\sigma}  \delta_2^{-1-\alpha-\sigma} \| \phi^H\|_{\sharp, \tilde\Omega^H} + C \| \tilde f_1 \chi_2^S \|_{C^\alpha,\tilde \Omega^S}
\\
\label{Lambda}
& \quad 
+ C \| \tilde f_2 \chi_2^S \|_{C^{1,\alpha},\partial \tilde \Omega^S}.
\end{align}
			\end{proposition}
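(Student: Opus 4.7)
The plan is to rewrite \eqref{gStrip} as a Robin problem on $\tilde\Omega^S$ that fits the framework of Proposition~\ref{propS}, and then use the scalar parameter $\lambda$ to impose the normalization $\phi^S(P_S)=0$. Introducing the auxiliary unknown $\Phi := \phi^S + \varepsilon\lambda Z^H(\cdot/\varepsilon)\chi^H$ and using that $Z^H$ is harmonic and that $\chi^H=0$ on $\mathcal{B}$, the system \eqref{gStrip} rewrites as
\begin{align*}
\Delta\Phi &= J_1 := \tilde f_1\chi_2^S - 2\nabla_y\phi^H\cdot\nabla\chi^H - \varepsilon\phi^H\Delta\chi^H &&\text{in }\tilde\Omega^S,\\
\partial_\nu\Phi + (\tilde\kappa-\omega)\Phi &= J_2 := \tilde f_2\chi_2^S - \varepsilon\phi^H\partial_\nu\chi^H &&\text{on }\tilde{\mathcal{S}},\\
\Phi &= 0 &&\text{on }\mathcal{B},
\end{align*}
where neither $J_1$ nor $J_2$ depends on $\lambda$. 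Since the curvature $\tilde\kappa$ of the modified strip boundary is uniformly small by \eqref{tildeg2}, treating $\tilde\kappa\Phi$ as a contraction perturbation of the unperturbed Robin problem \eqref{pStrip0} produces, via Proposition~\ref{propS}, a unique $\Phi\in C^{2,\alpha}(\overline{\tilde\Omega^S})$ depending linearly on $(\phi^H,f_1,f_2)$.

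Next, I would estimate $J_1,J_2$. The derivatives of $\chi^H$ are supported on $\{\delta_2\le|x|\le 2\delta_2\}$, where $\cosh(\mu x_1)$ is uniformly bounded and $|y|=|x|/\varepsilon\in[\delta_2/\varepsilon,2\delta_2/\varepsilon]$. From the definition of $\|\cdot\|_{\sharp,\tilde\Omega^H}$ one gets $|\phi^H(y)|\le\|\phi^H\|_\sharp(\delta_2/\varepsilon)^{1-\sigma}$ and $|\nabla_y\phi^H(y)|\le\|\phi^H\|_\sharp(\delta_2/\varepsilon)^{-\sigma}$, and combining these with $|D^k\chi^H|\lesssim\delta_2^{-k}$ and the corresponding H\"older seminorms of order $\delta_2^{-k-\alpha}$ yields by direct computation
\begin{align*}
\|J_1\|_{C^\alpha,\tilde\Omega^S}+\|J_2\|_{C^{1,\alpha},\partial\tilde\Omega^S}\le C\varepsilon^\sigma\delta_2^{-1-\alpha-\sigma}\|\phi^H\|_\sharp+C\|\tilde f_1\chi_2^S\|_{C^\alpha,\tilde\Omega^S}+C\|\tilde f_2\chi_2^S\|_{C^{1,\alpha},\partial\tilde\Omega^S},
\end{align*}
and Proposition~\ref{propS} then bounds $\|\Phi\|_{C^{2,\alpha},\tilde\Omega^S}$ by the same quantity.

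The condition $\phi^S(P_S)=0$ is equivalent to $\Phi(P_S)=\varepsilon\lambda Z^H(P_S/\varepsilon)\chi^H(P_S)$, and since $|P_S|=O(\varepsilon|\log\varepsilon|)\ll\delta_2$ we have $\chi^H(P_S)=1$, while Lemma~\ref{lemma:estGradPsiH} gives $Z^H(P_S/\varepsilon)\to -1$ as $\varepsilon\to0$. Hence the denominator is bounded below by $c\varepsilon$, the equation uniquely determines $\lambda$, and $\varepsilon|\Lambda_S|\le C|\Phi(P_S)|\le C\|\Phi\|_{C^{2,\alpha},\tilde\Omega^S}$, proving \eqref{Lambda}. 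Setting $\phi^S:=\Phi-\varepsilon\lambda Z^H(\cdot/\varepsilon)\chi^H$ then yields a linear operator $\mathcal{T}_S$ of $(\phi^H,f_1,f_2)$ satisfying $\phi^S(P_S)=0$, and evenness in $x_1$ is automatic because $J_1,J_2$ inherit even symmetry, Proposition~\ref{propS} preserves it, and $Z^H,\chi^H$ are even in $y_1$.

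The main obstacle will be Step~4, i.e., promoting the bound on $\Phi$ to the bound \eqref{TS} on $\phi^S$. The chain rule yields $|D^k_xZ^H(x/\varepsilon)|\le C\varepsilon/|x|^{k+1}$ from $|D^k_yZ^H(y)|\le C/|y|^{k+1}$; away from the origin, say $|x|\gtrsim\delta_1$, this immediately absorbs into the right-hand side of \eqref{TS}, but near $P_S$ the pointwise estimate would force unwelcome negative powers of $\varepsilon|\log\varepsilon|$. The way to close the argument is to use that near $P_S$ both $\Phi$ and $\varepsilon\lambda Z^H(\cdot/\varepsilon)\chi^H$ are harmonic (the former because $\chi_2^S$ and derivatives of $\chi^H$ vanish there, the latter because $Z^H$ is harmonic and $\chi^H\equiv 1$) and satisfy compatible homogeneous boundary conditions, so that the difference $\phi^S$, already pinned to $0$ at $P_S$, inherits its $C^{2,\alpha}$ control from boundary data on a circle of radius $\sim\delta_1$ around $P_S$ by interior Schauder estimates for the Robin problem, where the pointwise bounds on $Z^H(\cdot/\varepsilon)$ are already harmless.
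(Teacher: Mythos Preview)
Your approach is essentially identical to the paper's: set $\Phi := \phi^S + \varepsilon\lambda Z^H(\cdot/\varepsilon)\chi^H$, solve the resulting $\lambda$-independent Robin problem on $\tilde\Omega^S$ via Proposition~\ref{propS} after estimating the data exactly as you do, and then determine $\lambda$ from $\Phi(P_S)=\varepsilon\lambda Z^H(P_S/\varepsilon)$. The paper, too, simply asserts at the end that \eqref{TS} and \eqref{Lambda} ``follow from'' the bound on $\Phi=\tilde\phi^S$.

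You are right that promoting the bound on $\Phi$ to a bound on $\phi^S$ is the only delicate point, and you are right that $|D^k_x Z^H(x/\varepsilon)|\sim\varepsilon^{1-k}/|\log\varepsilon|^{k+1}$ near $P_S$, so the naive subtraction does not literally yield \eqref{TS} uniformly on $\tilde\Omega^S$. However, your proposed fix has an error: $\varepsilon\lambda Z^H(\cdot/\varepsilon)$ does \emph{not} satisfy the homogeneous Robin condition on $\partial\tilde\Omega^S$. The identity $\partial_\nu Z^H+\kappa Z^H=0$ holds on $\partial\Omega^H$, with the hairpin's normal and curvature; on $\partial\tilde\Omega^S$ the normal direction, the curvature, and the location of the curve are all different (in $y$-variables the strip boundary near $P_S$ sits at $y_2\approx-|\log\varepsilon|$, nowhere near $\partial\Omega^H$), and the boundary operator carries the coefficient $\tilde\kappa-\omega$ rather than the hairpin curvature. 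Hence the Robin data for $\phi^S$ near $P_S$ is of size $\varepsilon^{-1}|\lambda|/|y|^2\sim\varepsilon^{\sigma-1}/|\log\varepsilon|^2$, and your Schauder argument does not close.

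The cleanest resolution is simply to observe that in every downstream use --- in the ansatz \eqref{phi}, in the coupling terms of \eqref{gHairpin}, and in the assembly of $\phi$ in Section~\ref{sectProofSys} --- the function $\phi^S$ appears only through $\phi^S\chi^S$, $\phi^S\,\partial_\nu\chi^S$, $\nabla\phi^S\cdot\nabla\chi^S$, or $\phi^S\Delta\chi^S$, all supported on $\{|x|\ge\delta_1\}$. There $|D^k_x Z^H(x/\varepsilon)|\le C\varepsilon/\delta_1^{k+1}$, and the subtraction $\phi^S=\Phi-\varepsilon\lambda Z^H(\cdot/\varepsilon)\chi^H$ inherits the bound \eqref{TS} on that region, which is all that is ever needed. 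Equivalently, one can carry $\Phi=\tilde\phi^S$, which has the good global estimate, through the later arguments and split off the $\varepsilon\lambda Z^H$ piece only after multiplication by $\chi^S$.
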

We refer to \eqref{nH1}--\eqref{nH3} for the norms in $\tilde \Omega^H$ and to \eqref{nS1}--\eqref{nS3} for the norms in $\tilde \Omega^S$.
			\begin{proof}
				Letting
				\[
				\tilde \phi^S = \phi^S + \varepsilon\lambda Z^H(y) \chi^H ,
				\]
				we need to solve \eqref{pStrip} with
				\begin{align*}
					j_1 &= - 2 \nabla_y \phi^H \cdot \nabla \chi^H
					-\varepsilon \phi^H \Delta \chi^H
					+ \tilde f_1 \chi_2^S
					\\
					j_2 &= - \varepsilon \phi^H \frac{\partial \chi^H}{\partial \nu}
					+ \tilde f_2 \chi_2^S .
				\end{align*}
Using Proposition~\ref{propS}, we need to estimate
\begin{align*}
\| \nabla_y \phi^H \cdot \nabla \chi^H \|_{C^\alpha,\tilde\Omega^S}
+ \| \varepsilon \phi^H \Delta \chi^H \|_{C^\alpha,\tilde\Omega^S} \quad {\mbox {and}} \quad \| \varepsilon \phi^H \frac{\partial \chi^H}{\partial \nu}\|_{C^{1,\alpha},\partial\tilde \Omega^S}.
\end{align*}
				We have
				\begin{align}
					\nonumber
					|  \varepsilon \phi^H \Delta \chi^H (x)|
					&\leq \varepsilon \Bigl(\frac{|x|}{\varepsilon}\Bigr)^{1-\sigma}
					\delta_2^{-2} \Bigl|\Delta\chi_0\Bigl(\frac{x}{\delta_2}\Bigr)\Bigr| \|\phi^H\|_{\sharp, \tilde\Omega^H}
					\\
					\label{calcPhiH}
					& \leq C \varepsilon^{\sigma} \delta_2^{-1-\sigma} \|\phi^H\|_{\sharp , \tilde\Omega^H}.
				\end{align}
				
For the H\"older part of $ \| \varepsilon \phi^H \Delta \chi^H \|_{C^\alpha,\tilde \Omega^S}$, we need only to estimate
				\[
				[  \varepsilon \phi^H \Delta \chi^H  ]_{\alpha,B_1(0)\cap \tilde \Omega^S},
				\]
				because the support of $ \Delta \chi^H $ is contained in the ball $B_1(0)$.
				Let $x_1,x_2\in B_1(0)\cap \tilde \Omega^S$ and let $y_j= \frac{x_j}{\varepsilon}$.
				Let us first assume that $|x_1-x_2|\leq \frac 1{10} \max(|x_1|,|x_2|)$.
				We may assume that $\max(|x_1|,|x_2|)=|x_1|$.
				Then write
				\begin{align*}
					\phi^H\Bigl( \frac{x_1}{\varepsilon}\Bigr) \Delta \chi^H(x_1)
					-\phi^H\Bigl( \frac{x_2}{\varepsilon}\Bigr) \Delta \chi^H(x_2)
					&=
					\Bigl[\phi^H\Bigl( \frac{x_1}{\varepsilon}\Bigr) 
					-\phi^H\Bigl( \frac{x_2}{\varepsilon}\Bigr) \Bigr]
					\Delta \chi^H(x_1)
					\\
					& \quad 
					+\phi^H\Bigl( \frac{x_2}{\varepsilon}\Bigr) 
					[\Delta \chi^H(x_1)
					- \Delta \chi^H(x_2)] .
				\end{align*}
				We have
				\begin{align}
					\nonumber
					\Bigl|  \varepsilon \Bigl[ \phi^H\Bigl(\frac{x_1}{\varepsilon}\Bigl) -\phi^H\Bigl(\frac{x_2}{\varepsilon}\Bigl) \Bigr]\Delta \chi^H (x)\Bigr|
					&\leq \varepsilon 
					\frac{|x_1-x_2|}{\varepsilon}
					\Bigl(\frac{|x_1|}{\varepsilon}\Bigr)^{-\sigma}
					\delta_2^{-2} \Bigl|\Delta\chi_0\Bigl(\frac{x}{\delta_2}\Bigr)\Bigr| \|\phi^H\|_{\sharp, \tilde\Omega^H}
					\\
					\nonumber
					&\leq \varepsilon^{\sigma} |x_1-x_2|^\alpha |x_1|^{1-\alpha}|x_1|^{-\sigma} \delta_2^{-2}\Bigl|\Delta\chi_0\Bigl(\frac{x}{\delta_2}\Bigr)\Bigr| \|\phi^H\|_{\sharp, \tilde\Omega^H}
					\\
					\nonumber
					&\leq C\varepsilon^{\sigma} |x_1-x_2|^\alpha \delta_2^{-1-\alpha-\sigma} \|\phi^H\|_{\sharp, \tilde\Omega^H},
				\end{align}
				and similarly,
				\begin{align}
					\label{a5}
					\Bigl|\phi^H\Bigl( \frac{x_2}{\varepsilon}\Bigr) 
					[\Delta \chi^H(x_1)
					- \Delta \chi^H(x_2)]\Bigr|
					\leq 
					C\varepsilon^{\sigma} |x_1-x_2|^\alpha \delta_2^{-1-\alpha-\sigma} \|\phi^H\|_{\sharp, \tilde\Omega^H},
				\end{align}
				if we assume that $|x_1-x_2|\leq \frac 1{10} \max(|x_1|,|x_2|)$.

				Assume now that $|x_1-x_2| > \frac{\max(|x_1|,|x_2|)}{10}$.
				We estimate
				\begin{align*}
					\Bigl|
					\phi^H\Bigl( \frac{x_1}{\varepsilon}\Bigr) \Delta \chi^H(x_1)
					-\phi^H\Bigl( \frac{x_2}{\varepsilon}\Bigr) \Delta \chi^H(x_2)
					\Bigr| 
					&\leq 
					\Bigl|
					\phi^H\Bigl( \frac{x_1}{\varepsilon}\Bigr) \Delta \chi^H(x_1)
					\Bigr|
					\\
					& \quad 
					+
					\Bigl|
					\phi^H\Bigl( \frac{x_2}{\varepsilon}\Bigr) \Delta \chi^H(x_2)
					\Bigr| .
				\end{align*}
				If $ \Delta \chi^H(x_1)\not=0$ then $1 \leq C \delta_2^{-\alpha} |x_1-x_2|^\alpha$ and so
				\begin{align*}
					\Bigl|
					\phi^H\Bigl( \frac{x_1}{\varepsilon}\Bigr) \Delta \chi^H(x_1)
					\Bigr|
					&\leq C \varepsilon^\sigma \delta_2^{-1-\sigma}
					\|\phi^H\|_{\sharp, \tilde\Omega^H}
					\\
					& \leq C \varepsilon^\sigma \delta_2^{-1-\sigma-\alpha} |x_1-x_2|^\alpha
					\|\phi^H\|_{\sharp, \tilde\Omega^H} .
				\end{align*}
				The other term is handled similarly and we get that \eqref{a5} is valid also in the case  $|x_1-x_2| > \frac 1{10} \max(|x_1|,|x_2|)$.
				We thus obtain
\begin{align*}
\| \varepsilon \phi^H \Delta \chi^H \|_{C^\alpha,\tilde\Omega^S} 
\leq C \varepsilon^{\sigma}  \delta_2^{-1-\alpha-\sigma} \|\phi^H\|_{\sharp,\tilde\Omega^H}.
\end{align*}
We also have
\begin{align*}
\| \nabla_y \phi^H \cdot \nabla \chi^H \|_{C^\alpha,\tilde \Omega^S} 
\leq C \varepsilon^{\sigma}  \delta_2^{-1-\alpha-\sigma} \|\phi^H\|_{\sharp,\tilde\Omega^H}.
\end{align*}
				
				Finally, let us estimate
				\begin{align*}
					\Bigl\| \varepsilon \phi^H \frac{\partial \chi^H}{\partial \nu} \Bigr\|_{C^{1,\alpha},\partial \tilde \Omega^S} .
				\end{align*}
				We compute for $x\in\partial\tilde\Omega^S$, 
				\begin{align*}
					\Bigl|  \varepsilon \phi^H\Bigl(\frac{x}{\varepsilon}\Bigr) \frac{\partial \chi^H(x)}{\partial \nu}  \Bigr|
					&\leq \varepsilon \Bigl(\frac{|x|}{\varepsilon}\Bigr)^{1-\sigma}
					\delta_2^{-1} \Bigl|\nabla\chi_0\Bigl(\frac{x}{\delta_2}\Bigr)\Bigr| \|\phi^H\|_{\sharp, \tilde\Omega^H}
					\\
					& \leq C \varepsilon^{\sigma }\delta_2^{-\sigma}
					\|\phi^H\|_{\sharp, \tilde\Omega^H}.
				\end{align*}
				Similarly we obtain for $x\in\partial\tilde\Omega^S$
				\begin{align*}
					\Bigl|  \nabla_x \Bigl[
					\varepsilon \phi^H\Bigl(\frac{x}{\varepsilon}\Bigr) \frac{\partial \chi^H(x)}{\partial \nu} \Bigr] \Bigr|
					&\leq C \varepsilon^{\sigma} \delta_2^{-1-\sigma}
					\|\phi^H\|_{\sharp, \tilde\Omega^H}
				\end{align*}
				and
				\begin{align*}
					\Bigl[ \nabla_x \Bigl( \varepsilon \phi^H\Bigl(\frac{x}{\varepsilon}\Bigr)  \frac{\partial \chi^H}{\partial \nu} \Bigr) \Bigr]_{\alpha,B_1(0)\cap \partial\tilde\Omega^S} 
					\leq C \varepsilon^{\sigma} \delta_2^{-1-\sigma-\alpha}
					\|\phi^H\|_{\sharp, \tilde\Omega^H}.
				\end{align*}
				Therefore
\begin{align*}
\Bigl\| \varepsilon \phi^H \frac{\partial \chi^H}{\partial \nu} \Bigr\|_{C^{1,\alpha},\partial\tilde\Omega^S} 
\leq C \varepsilon^{\sigma}  \delta_2^{-1-\alpha-\sigma} \|\phi^H\|_{\sharp,\tilde\Omega^H}.
\end{align*}

				By Proposition~\ref{propS} we find that \eqref{pStrip} has a unique solution $\tilde\phi^S$, which satisfies
\begin{align}
\label{tPhiS}
\|  \tilde\phi^S \|_{C^{2,\alpha},\tilde\Omega^S}  
& \leq 
C \varepsilon^{\sigma}  \delta_2^{-1-\alpha-\sigma} \|\phi^H\|_{\sharp, \tilde\Omega^H} + C \| \tilde f_1 \chi_2^S \|_{C^\alpha, \tilde \Omega^S}
+ C \| \tilde f_2 \chi_2^S \|_{C^{1,\alpha},\partial \tilde \Omega^S}.
\end{align}

Finally we set 
\begin{align*}
\lambda=\Lambda_S[\phi^H,f_1,f_2] = \frac{1}{\varepsilon} 
					\frac{\tilde\phi^S(P_S)}{ Z^H(\frac{P_S}{\varepsilon} ) }
				\end{align*}
				and
				\begin{align*}
					\mathcal{T}_S[\phi^H,f_1,f_2](x) = \tilde \phi^S(x) - \varepsilon\lambda Z^H\Bigl(\frac{x}{\varepsilon} \Bigr) .
				\end{align*}
				The estimates \eqref{TS} and \eqref{Lambda} follow from \eqref{tPhiS}.
			\end{proof}

			\medskip
			We now turn to problem \eqref{gHairpin}.

			\begin{proposition}
				\label{propH2}
				Consider \eqref{gHairpin} with $\phi^S = \mathcal{T}_S[\phi^H,f_1,f_2] $ and $\lambda = \Lambda_S[\phi^H,f_1,f_2]$, where $\mathcal{T}_S$, $\Lambda_S$ are the operators defined in Proposition~\ref{propS2}.
				
				There is $\delta_0>0 $, $\varepsilon_0>0$ and $C>0$ such that if 
				\begin{align}
					\label{delta1delta2}
					\delta_1^\sigma \delta_2^{-1-\alpha-\sigma}<\delta_0
				\end{align}
				and $0<\varepsilon<\varepsilon_0$, then there is a linear operator
				$\mathcal{T}_H$ such that if $\|\phi^D\|_{C^{2,\alpha}, \tilde \Omega^D}<\infty$ and $\beta\in \R$ satisfy $\phi^D(P_D) + \beta Z^D(P_D)=0$, then
				\begin{align}
					\nonumber
					\phi^H = \mathcal{T}_H[\phi^D,\beta,f_1,f_2] 
				\end{align}
				satisfies \eqref{gHairpin}.
				Moreover
				\begin{align}
					\nonumber
					\|  \mathcal{T}_H[\phi^D,\beta,f_1,f_2] \|_{\sharp, \tilde\Omega^H} 
					& \leq  
					C \delta_1^\sigma \varepsilon^{-\sigma} ( \| \phi^D \|_{C^{2,\alpha}, \tilde \Omega^D} +|\beta| )
					\\
					\nonumber
					& \quad 
					+ C \delta_1^\sigma \varepsilon^{-\sigma} ( \| \tilde f_1 \chi_2^S \|_{C^\alpha,\tilde \Omega^S} +  \| \tilde f_2 \chi_2^S \|_{C^{1,\alpha},\partial \tilde \Omega^S} )
					\\
					\label{estH}
					& \quad 
					+ C \| f_1 \chi_2^H \|_{\sharp\sharp, \tilde\Omega^H}
					+C \| f_2 \chi_2^H \|_{\sharp\sharp,\partial\tilde\Omega^H} .
				\end{align}
			\end{proposition}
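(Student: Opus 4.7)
The plan is to cast \eqref{gHairpin} as the linear problem \eqref{pHairpin} on $\tilde\Omega^H$ with right-hand sides
\begin{align*}
j_1 &= f_1 \chi_2^H - \varepsilon\phi^S \Delta\chi^S - 2\varepsilon\nabla\phi^S \cdot \nabla\chi^S - \varepsilon(\phi^D + \beta Z^D)\Delta\chi^D - 2\varepsilon\nabla(\phi^D + \beta Z^D)\cdot\nabla\chi^D \\
j_2 &= f_2 \chi_2^H - \phi^S \tfrac{\partial\chi^S}{\partial\nu_x} - (\phi^D + \beta Z^D)\tfrac{\partial\chi^D}{\partial\nu_x} - \varepsilon\omega\lambda Z^D(1-\chi^D-\chi^S),
\end{align*}
to which Proposition~\ref{prop:lh1} applies, giving a solution operator $T_H$. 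Since here $\phi^S=\mathcal{T}_S[\phi^H,f_1,f_2]$ and $\lambda=\Lambda_S[\phi^H,f_1,f_2]$ depend linearly on $\phi^H$ via Proposition~\ref{propS2}, the identity $\phi^H = T_H[j_1,j_2]$ becomes a fixed-point equation in $\phi^H$ which I would solve by the contraction mapping principle in the norm $\|\cdot\|_{\sharp,\tilde\Omega^H}$.

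The key computation is to estimate $\|j_1\|_{\sharp\sharp,\tilde\Omega^H}$ and $\|j_2\|_{\sharp\sharp,\partial\tilde\Omega^H}$. The supports of $\nabla\chi^D$, $\nabla\chi^S$, $\Delta\chi^D$, $\Delta\chi^S$ and $1-\chi^D-\chi^S$ lie in the annulus $\delta_1 \leq |x| \leq 2\delta_1$, which corresponds to $\delta_1/\varepsilon \leq |y| \leq 2\delta_1/\varepsilon$. In this region the weight $(1+|y|)^{1+\sigma}$ evaluates to $\sim(\delta_1/\varepsilon)^{1+\sigma}$. Here the hypothesis $(\phi^D+\beta Z^D)(P_D)=0$ becomes crucial: since $|P_D|\leq C\varepsilon|\log\varepsilon|\ll\delta_1$, Taylor's theorem yields
\[
|\phi^D(x) + \beta Z^D(x)| \leq C\delta_1\bigl(\|\phi^D\|_{C^{2,\alpha},\tilde\Omega^D}+|\beta|\bigr),\qquad x\in\mathrm{supp}(\nabla\chi^D),
\]
and gradient control is $O(\|\phi^D\|_{C^{2,\alpha}}+|\beta|)$. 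Then a typical bulk term contributes
\[
(1+|y|)^{1+\sigma}\,\bigl|\varepsilon(\phi^D+\beta Z^D)\Delta\chi^D\bigr| \leq C\,(\delta_1/\varepsilon)^{1+\sigma}\cdot\varepsilon\cdot\delta_1\cdot\delta_1^{-2}\bigl(\|\phi^D\|_{C^{2,\alpha}}+|\beta|\bigr) = C\delta_1^\sigma\varepsilon^{-\sigma}\bigl(\|\phi^D\|_{C^{2,\alpha}}+|\beta|\bigr),
\]
matching the first term on the right of \eqref{estH}. The analogous boundary terms and H\"older seminorm contributions are handled identically. For $\varepsilon\omega\lambda Z^D(1-\chi^D-\chi^S)$, I would use $|Z^D(x)|\leq C|x|$ combined with \eqref{Lambda} to produce the correct bound. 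The terms involving $\phi^S$ are controlled analogously by invoking \eqref{TS} and the vanishing $\phi^S(P_S)=0$ built into $\mathcal{T}_S$; this produces a self-coupling term of size $C\delta_1^\sigma\delta_2^{-1-\alpha-\sigma}\varepsilon^{-\sigma}\cdot\varepsilon^\sigma\|\phi^H\|_{\sharp,\tilde\Omega^H} = C\delta_1^\sigma\delta_2^{-1-\alpha-\sigma}\|\phi^H\|_{\sharp,\tilde\Omega^H}$.

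Assembling these bounds, the map $\phi^H\mapsto T_H[j_1(\phi^H),j_2(\phi^H)]$ is bounded on $\|\cdot\|_{\sharp,\tilde\Omega^H}$ with operator norm $\leq C\delta_1^\sigma\delta_2^{-1-\alpha-\sigma}$ (from the self-coupling through $\mathcal{T}_S,\Lambda_S$), plus the prescribed source terms in \eqref{estH}. The smallness condition \eqref{delta1delta2} then makes the map a contraction, giving existence, uniqueness, linearity, and the estimate \eqref{estH} by the standard Banach fixed-point argument. The main technical obstacle I anticipate is bookkeeping the H\"older seminorm contributions across the various cut-off supports, in particular verifying the two-case analysis $|x_1-x_2|\lessgtr\tfrac{1}{10}\max(|x_1|,|x_2|)$ for each product involving a cut-off, and checking that the Taylor remainder argument for the vanishing conditions at $P_D$ and $P_S$ actually produces matching H\"older control (not merely sup-norm control) — this was done implicitly in Proposition~\ref{propS2} and should carry over, but it must be done carefully to avoid spurious powers of $\delta_1^{-1}$ that would break the smallness.
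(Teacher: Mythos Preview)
Your proposal is correct and follows essentially the same approach as the paper: set up the fixed-point problem $\phi^H = L_H[G_1,G_2]$ via Proposition~\ref{prop:lh1}, estimate the coupling terms using the vanishing conditions at $P_D$ and $P_S$ together with the support properties of the cut-offs at scale $\delta_1/\varepsilon$, and close via \eqref{delta1delta2}. One small correction: you claim $|Z^D(x)|\leq C|x|$, but in fact $Z^D(x)=-\tfrac{\omega}{2}(x_2-(R+d^D))$ is bounded away from zero near the origin; the paper simply uses $|Z^D|\leq C$ together with the compact support of $1-\chi^D-\chi^S$ (in $|y|\leq 2\delta_1/\varepsilon$), which already gives the needed bound $C\delta_1^\sigma\varepsilon^{-\sigma}\cdot\varepsilon|\lambda|$ for that term.
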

We refer to \eqref{nH1}--\eqref{nH3} for the norms in $\tilde \Omega^H$, to \eqref{nS1}--\eqref{nS3} for the norms in $\tilde \Omega^S$ and to \eqref{nD1}--\eqref{nD3} for the norms in $\tilde \Omega^D$.
			\begin{proof}
				Proposition~\ref{prop:lh1} defines a linear operator $L_H$ such that if $\|j_1\|_{\sharp\sharp,\tilde\Omega^H}<\infty$,
				$\|j_2\|_{\sharp\sharp,\partial\tilde\Omega^H}<\infty$, then $\phi^H=L_H[j_1,j_2]$ is a solution of \eqref{pHairpin} and we have the estimate
				\begin{align}
					\label{estLH}
					\| L_H[j_1,j_2] \|_{\sharp,\tilde\Omega^H} \leq C ( \|j_1\|_{\sharp\sharp,\tilde\Omega^H} +\|j_2\|_{\sharp\sharp,\partial\tilde\Omega^H}). 
				\end{align}
				Let
				\begin{align}
                    \notag
					G_1[\phi^H,\phi^D,\beta,f_1,f_2] &=
					-\varepsilon \phi^S \Delta \chi^S 
					-2 \varepsilon \nabla \phi^S \cdot \nabla \chi^S
					+ \varepsilon (\phi^D + \beta Z^D) \Delta \chi^D
					\\
                    \label{G1def}
					& \quad 
					- 2\varepsilon \nabla (\phi^D + \beta Z^D)\cdot \nabla \chi^D 
					+ f_1 \chi_2^H
					\\
                    \notag
					G_2[\phi^H,\phi^D,\beta,f_1,f_2] &=
					-  \phi^S \frac{\partial \chi^S}{\partial  \nu_x}
					-  ( \phi^D +\beta Z^D) \frac{\partial \chi^S}{\partial  \nu_x}
					- \varepsilon\omega \lambda Z^D ( 1-\chi^S-\chi^D)
					\\
     \notag
					& \quad 
					+ f_2 \chi_2^H ,
				\end{align}
				where in the above expressions we have written
				\begin{align}
					\label{phiS2}
					\phi^S = \mathcal{T}_S[\phi^H,f_1,f_2] 
					\\
					\nonumber
					\lambda = \Lambda_S[\phi^H,f_1,f_2].
				\end{align}

				The proof of Proposition~\ref{propH2} consists in showing that the fixed point problem
				\begin{align}
					\label{fixed1}
					\phi^H = L_H[G_1[\phi^H,\phi^D,\beta,f_1],G_2[\phi^H,\phi^D,\beta,f_2]]
				\end{align}
				has a unique solution with the properties described in the statement.

				We claim that
				\begin{align}
					\nonumber
					\| G_1[\phi^H,\phi^D,\beta,f_1,f_2] \|_{\sharp\sharp,\tilde\Omega^H}
					& \leq
					C \delta_1^{\sigma}  \delta_2^{-1-\alpha-\sigma} \|\phi^H\|_{\sharp, \tilde\Omega^H} 
					+ C \delta_1^\sigma \varepsilon^{-\sigma} (  \| \phi^D \|_{C^{2, \alpha}, \tilde \Omega^D}  +|\beta| )
					\\
					\nonumber
					& \quad
					+ C \delta_1^\sigma \varepsilon^{-\sigma} ( \| \tilde f_1 \chi_2^S \|_{C^\alpha,\tilde \Omega^S} +  \| \tilde f_2 \chi_2^S \|_{C^{1,\alpha},\partial \tilde\Omega^S} )
					\\
					\label{estG1}
					& \quad 
					+ C \| f_1 \chi_2^H \|_{\sharp\sharp,\tilde\Omega^H},
				\end{align}
				and
				\begin{align}
					\nonumber
					\|  G_2[\phi^H,\phi^D,\beta,f_1,f_2] \|_{\sharp\sharp,\partial\tilde\Omega^H} 
					&\leq
					C \delta_1^{\sigma}  \delta_2^{-1-\alpha-\sigma} \|\phi^H\|_{\sharp, \tilde\Omega^H} 
					+ C \delta_1^\sigma \varepsilon^{-\sigma} (  \| \phi^D \|_{C^{2, \alpha},\tilde \Omega^D}  +|\beta| )
					\\
					\nonumber
					& \quad
					+ C \delta_1^\sigma \varepsilon^{-\sigma} ( \| \tilde f_1 \chi_2^S \|_{C^\alpha,\tilde\Omega^S} +  \| \tilde f_2 \chi_2^S \|_{C^{1,\alpha},\partial\tilde\Omega^S} )
					\\
					\label{estG2}
					& \quad 
					+ C \| f_2 \chi_2^H \|_{\sharp\sharp,\partial\tilde\Omega^H}.
				\end{align}
				
				Let us prove \eqref{estG1}, starting with the term $\varepsilon \phi^S \Delta \chi^S $ in \eqref{G1def}. We note that $\phi^S$ given by \eqref{phiS2} satisfies, by Proposition~\ref{propS2}, $\phi^S(P_S)=0$. Hence 
\begin{align}
\nonumber
| \varepsilon \phi^S(\varepsilon y) \Delta \chi^S (y)| 
& \leq \varepsilon (\varepsilon |y|) \delta_1^{-2} \Bigl| \Delta \chi_0\Bigl( \frac{y}{\delta_1} \Bigr)\Bigr| \| \phi^S \|_{C^{2,\alpha},\tilde\Omega^S}  
\\
\nonumber
& = \varepsilon^2 \delta_1^{-2} \frac{1+|y|^{2+\sigma}}{1+|y|^{1+\sigma}}\Bigl| \Delta \chi_0\Bigl( \frac{y}{\delta_1} \Bigr)\Bigr| \| \phi^S \|_{C^{2,\alpha},\tilde\Omega^S}  
\\
\nonumber
& \leq C \varepsilon^2 \delta_1^{-2} \frac{(\delta_1/\varepsilon)^{2+\sigma}}{1+|y|^{1+\sigma}}\Bigl| \Delta \chi_0\Bigl( \frac{y}{\delta_1} \Bigr)\Bigr| \| \phi^S \|_{C^{2,\alpha},\tilde\Omega^S} 
\\ 
\nonumber
& \leq C  \delta_1^\sigma \varepsilon^{-\sigma} \frac{1}{1+|y|^{1+\sigma}} \| \phi^S \|_{C^{2,\alpha},\tilde\Omega^S} .
\end{align}
Now let us compute the H\"older part of the norm $\| \varepsilon \phi^S(\varepsilon y) \Delta \chi^S \|_{\sharp\sharp,\tilde\Omega^H} $. 
				Let $y \in \tilde \Omega^H$
				and $y_1,y_2 \in B(y,\frac{|y|}{10})\cap \tilde\Omega^H$, $y_1\not=y_2$. 
				Writing
				\begin{align*}
					\phi^S(\varepsilon y_1)\Delta \chi^S(\varepsilon y_1) - 
					\phi^S(\varepsilon y_2)\Delta \chi^S(\varepsilon y_2) 
					&= 
					[\phi^S(\varepsilon y_1) - \phi^S(\varepsilon y_2)]
					\Delta \chi^S(\varepsilon y_1) 
					\\
					& \quad 
					+ \phi^S(\varepsilon y_2) [\Delta \chi^S(\varepsilon y_1)-\Delta \chi^S(\varepsilon y_2)] ,
				\end{align*}
				we estimate
				\begin{align*}
					&
					\varepsilon |\phi^S(\varepsilon y_1) - \phi^S(\varepsilon y_2) | 
					|\Delta \chi^S(\varepsilon y_1) |
					\\
					& \quad \leq \varepsilon^2 |y_1-y_2| \delta_1^{-2}
					\Bigl| \Delta \chi_0\Bigl( \frac{y_1}{\delta_1} \Bigr)\Bigr| \| \phi^S \|_{C^{2, \alpha}, \tilde \Omega^S} 
					\\
					& \quad  \leq 
					C \varepsilon^2 \delta_1^{-2} |y_1-y_2|^\alpha |y|^{1-\alpha}\frac{|y|^{1+\sigma+\alpha}}{1+|y|^{1+\sigma+\alpha}}
					\Bigl| \Delta \chi_0\Bigl( \frac{y_1}{\delta_1} \Bigr)\Bigr| \| \phi^S \|_{C^{2, \alpha}, \tilde \Omega^S} 
					\\
					& \quad \leq 
					C \varepsilon^2 \delta_1^{-2} |y_1-y_2|^\alpha \frac{(\delta_1/\varepsilon)^{2+\sigma}}{1+|y|^{1+\sigma+\alpha}}
					\Bigl| \Delta \chi_0\Bigl( \frac{y_1}{\delta_1} \Bigr)\Bigr| \| \phi^S \|_{C^{2, \alpha}, \tilde \Omega^S}  
					\\
					\nonumber
					& \quad \leq C  \delta_1^\sigma \varepsilon^{-\sigma} |y_1-y_2|^\alpha\frac{1}{1+|y|^{1+\sigma+\alpha}}\| \phi^S \|_{C^{2, \alpha}, \tilde \Omega^S}  .
				\end{align*}
				We then get
				\begin{align*}
					\|  \varepsilon \phi^S(\varepsilon y) \Delta \chi^S (y)\|_{\sharp\sharp,\tilde \Omega^H} \leq 
					C  \delta_1^\sigma \varepsilon^{-\sigma} \| \phi^S \|_{C^{2, \alpha}, \tilde \Omega^S}  .
				\end{align*}
				Combining this inequality with \eqref{TS} yields
				\begin{align*}
					\|  \varepsilon \phi^S(\varepsilon y) \Delta \chi^S (y)\|_{\sharp\sharp,\tilde \Omega^H} 
					&\leq 
					C \delta_1^{\sigma}  \delta_2^{-1-\alpha-\sigma} \|\phi^H\|_{\sharp,\tilde\Omega^H} +  C  \delta_1^\sigma \varepsilon^{-\sigma}\| \tilde f_1 \chi_2^S \|_{C^\alpha,\tilde \Omega^S}
					\\
					& \quad + C   \delta_1^\sigma \varepsilon^{-\sigma}\| \tilde f_2 \chi_2^S \|_{C^{1,\alpha},\partial \tilde\Omega^S}.
				\end{align*}
				
				Let us analyze the term $  \varepsilon \nabla \phi^S \cdot \nabla \chi^S$ that appears in \eqref{G1def}. 
                We have
				\begin{align*}
					\left| \varepsilon \nabla \phi^S \cdot \nabla \chi^S \right|
					& \leq \varepsilon  \delta_1^{-1} \Bigl| \nabla \chi_0\Bigl( \frac{y}{\delta_1} \Bigr)\Bigr| \| \phi^S \|_{C^{2, \alpha}, \tilde \Omega^S}  
					\\
					& \leq C \varepsilon \delta_1^{-1} \frac{(\delta_1/\varepsilon)^{1+\sigma}}{1+|y|^{1+\sigma}}\Bigl| \nabla \chi_0\Bigl( \frac{y}{\delta_1} \Bigr)\Bigr| \| \phi^S \|_{C^{2, \alpha}, \tilde \Omega^S} 
					\\
					& \leq C  \delta_1^\sigma \varepsilon^{-\sigma} \frac{1}{1+|y|^{1+\sigma}} \| \phi^S \|_{C^{2, \alpha}, \tilde \Omega^S}  .
				\end{align*}
				The H\"older part is estimated as before, and we get, using \eqref{TS},
				\begin{align*}
					\|  \varepsilon \nabla \phi^S \cdot \nabla \chi^S \|_{\sharp\sharp,\tilde \Omega^H} &\leq 
					C  \delta_1^\sigma \varepsilon^{-\sigma} \| \phi^S \|_{C^{2, \alpha}, \tilde \Omega^S} 
					\\
					&\leq 
					C \delta_1^{\sigma}  \delta_2^{-1-\alpha-\sigma} \|\phi^H\|_{\sharp,\tilde\Omega^H} +  C  \delta_1^\sigma \varepsilon^{-\sigma}\| \tilde f_1 \chi_2^S \|_{C^\alpha,\tilde \Omega^S}
					\\
					& \quad + C   \delta_1^\sigma \varepsilon^{-\sigma}\| \tilde f_2 \chi_2^S \|_{C^{1,\alpha},\partial \tilde \Omega^S}.
				\end{align*}
				The remaining terms in $G_1$ are estimated similarly and we get \eqref{estG1}.

				\bigskip

				Next, let us prove \eqref{estG2}.
				Note that $ ( 1-\chi^S-\chi^D)(\varepsilon y) =0$ if $|y|\geq 2 \frac{\delta_1}{\varepsilon}$. Since 
				$Z^D$ is bounded this implies that 
				\begin{align*}
					\sup_{y \in \tilde \Omega^H}
					(1+|y|)^\sigma
					| \varepsilon  \lambda Z^D ( 1-\chi^S-\chi^D)(\varepsilon y) |
					\leq C \varepsilon |\lambda| \delta_1^\sigma \varepsilon^{-\sigma}.
				\end{align*}
				The derivative and its H\"older part are estimated analogously and we get
				\begin{align*}
					\| \varepsilon \lambda Z^D ( 1-\chi^S-\chi^D) \|_{\sharp\sharp,\partial\tilde\Omega^H} \leq C \varepsilon |\lambda| \delta_1^\sigma \varepsilon^{-\sigma}.
				\end{align*}
				This and \eqref{TS} yield
				\begin{align*}
					\| \varepsilon \lambda Z^D ( 1-\chi^S-\chi^D) \|_{\sharp\sharp,\partial\tilde\Omega^H} 
					&\leq 
					C \delta_1^\sigma  \delta_2^{-1-\alpha-\sigma} \|\phi^H\|_{\sharp, \tilde\Omega^H} + C \delta_1^\sigma \varepsilon^{-\sigma} \| \tilde f_1 \chi_2^S \|_{C^\alpha,\tilde \Omega^S}
					\\
					& \quad + C\delta_1^\sigma \varepsilon^{-\sigma} \| \tilde f_2 \chi_2^S \|_{C^{1,\alpha},\partial \tilde \Omega^S}
				\end{align*}
				Similar calculations as before yield the validity of \eqref{estG2}.
				
				\medskip
				
				Using \eqref{estG1} and \eqref{estG2} combined with the  estimates for $\mathcal{T}_S$ and $\Lambda_S$ \eqref{TS} and \eqref{Lambda}, and the estimate for $L_H$ in \eqref{estLH}, we obtain
				\begin{align}
					\nonumber
					\| L_H[G_1[\phi^H,\phi^D,\beta,f_1],&G_2[\phi^H,\phi^D,\beta,f_2]] \|_{\sharp ,\tilde\Omega^H} 
					\\
					\nonumber
					& \leq  
					C \delta_1^\sigma \delta_2^{-1-\alpha-\sigma}    \| \phi^H\|_{\sharp,\tilde\Omega^H}
					+C\delta_1^\sigma\varepsilon^{-\sigma}  ( \| \phi^D \|_{C^{2, \alpha}, \tilde \Omega^D}  +|\beta| )
					\\
					\nonumber
					& \qquad 
					+ C \delta_1^\sigma\varepsilon^{-\sigma}  ( \| \tilde f_1 \chi_2^S \|_{C^\alpha,\tilde\Omega^S}  +  \| \tilde f_2 \chi_2^S \|_{C^{1,\alpha}, \partial\tilde \Omega^S} )
					\\
					\label{cont}
					& \qquad 
					+ C \| f_1 \chi_2^H \|_{\sharp\sharp ,\tilde\Omega^H}
					+C \| f_2 \chi_2^H \|_{\sharp\sharp ,\partial\tilde\Omega^H} .
				\end{align}
				Inequality \eqref{cont} and the assumption \eqref{delta1delta2} allow us to apply the contraction mapping principle and we find a unique $\phi^H$ solving the fixed point problem \eqref{fixed1}.
				Moreover, also from \eqref{cont} we find that this fixed point satisfies \eqref{estH}.
			\end{proof}

			Finally, to fully solve the system \eqref{gHairpin}--\eqref{gDisc}, we consider  problem \eqref{gDisc} where we regard $\phi^H = \mathcal{T}_H[\phi^D,f_1,f_2]$ as defined in Proposition~\ref{propH2}, and $\lambda$ is given by
			\begin{align}
				\label{alpha}
				\lambda = \Lambda_S[\mathcal{T}_H[\phi^D,\beta,f_1,f_2],f_1,f_2].
			\end{align}
			We note that $\Lambda_S$ is a linear function of  $\phi^D,\beta,f_1,f_2$ and has the estimate 
			\begin{align}
				\nonumber
				\varepsilon | \Lambda_S[\mathcal{T}_H[\phi^D,&\beta,f_1,f_2],f_1,f_2]| 
				\\
				\nonumber
				&\leq 
				C \delta_1^\sigma \delta_2^{-1-\alpha-\sigma}
				\left[
				\| \phi^D \|_{C^{2, \alpha}, \tilde \Omega^D}  +|\beta| \right]
				\\
				\nonumber
				& \qquad 
				+  C   \| \tilde f_1 \chi_2^S \|_{C^\alpha, \tilde\Omega^S}
				+ C \| \tilde f_2 \chi_2^S \|_{C^{1,\alpha},\partial\tilde\Omega^S} 
				\\
				\label{estLambda}
				& \qquad 
				+ C  \varepsilon^{\sigma} \delta_2^{-1-\alpha-\sigma} \left[ \| f_1 \chi_2^H \|_{\sharp\sharp,\tilde\Omega^H}
				+  \| f_2 \chi_2^H \|_{\sharp\sharp ,\partial\tilde\Omega^H}
				\right],
			\end{align}
			thanks to \eqref{Lambda} and \eqref{estH}.
			
			\begin{proposition}
				\label{propPhiD2}
				Consider \eqref{gDisc} with $\phi^H = \mathcal{T}_H[\phi^D,\beta,f_1,f_2]$ and $\lambda$ given by \eqref{alpha}.
				For any $\delta_2>0$ small there is $\bar\delta_1>0$ so that if $0<\delta_1<\bar\delta_1$ there is $\bar\varepsilon>0$ and $C>0$ so that if
				$0<\varepsilon<\bar\varepsilon$ then there are linear operators
				$\mathcal{T}_D[f_1,f_2]$, $\mathcal{B}_D[f_1,f_2] $, $\mathcal{G}_D[f_1,f_2] $  such that 
				\begin{align}
					\nonumber
					\phi^D &= \mathcal{T}_D[f_1,f_2] 
					\\
					\nonumber
					\beta &= \mathcal{B}_D[f_1,f_2] 
					\\
					\nonumber
					g &= \mathcal{G}_D[f_1,f_2] 
				\end{align}
				satisfy \eqref{gDisc}.
				Moreover,
				\begin{align}
					\nonumber
					\|\phi^D\|_{C^{2,\alpha},\tilde\Omega^D}
					+ |\beta|
					&  \leq C  \| \tilde f_1 \chi_2^D\|_{C^\alpha,\tilde\Omega^D}
					+   \| \tilde f_{2,0} \chi_2^D\|_{C^{1,\alpha},\partial\tilde\Omega^D}.
					\\
					\nonumber
					& \quad + C \varepsilon^{\sigma}  \delta_2^{-3-2\alpha-\sigma} ( \| f_1 \chi_2^H \|_{\sharp\sharp ,\tilde\Omega^H}
					+ \| f_{2,0} \chi_2^H \|_{\sharp\sharp ,\partial\tilde\Omega^H} 
					)
					\\
					\label{estphiD}
					& \quad + C \delta_2^{-(2+\alpha)} ( \| \tilde f_1 \chi_2^S \|_{C^\alpha,\tilde \Omega^S}+  \| \tilde f_{2,0} \chi_2^S \|_{C^{1,\alpha},\partial \tilde \Omega^S})
				\end{align}
				and
				\begin{align}
					\nonumber
					|g| & \leq C   \left[ 
					\| \tilde f_1 \chi_2^D\|_{C^\alpha,\tilde\Omega^D} 
					+   \| \tilde f_{2,0} \chi_2^D\|_{C^{1,\alpha},\partial\tilde\Omega^D} \right]
					\\
					\nonumber
					& \quad 
					+ C \varepsilon^{\sigma}  \delta_1^{3-\alpha} \delta_2^{-3-2\alpha-\sigma} 
					\left[ \| f_1 \chi_2^H \|_{\sharp\sharp ,\tilde\Omega^H}
					+ \| f_{2,0} \chi_2^H \|_{\sharp\sharp ,\partial\tilde\Omega^H} \right]
					\\
					\label{estg3}
					& \quad + 
					C \delta_1^\sigma  \delta_2^{-1-\sigma-\alpha}\left[ 
					\| \tilde f_1 \chi_2^S \|_{C^\alpha,\tilde \Omega^S}+  \| \tilde f_{2,0} \chi_2^S \|_{C^{1,\alpha},\partial \tilde\Omega^S}
					\right] .
				\end{align}
			\end{proposition}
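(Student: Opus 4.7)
The plan is to reformulate \eqref{gDisc} as a Robin problem on $\tilde\Omega^D$ for the combination
\[
\tilde\phi^D \;:=\; \phi^D + \varepsilon\lambda\, Z^H(\cdot/\varepsilon)\chi^H + \beta Z^D,
\]
and solve it by combining Proposition~\ref{propDisc1} with a linear fixed-point argument. Substituting $\phi^H = \mathcal{T}_H[\phi^D,\beta,f_1,f_2]$ and $\lambda$ given by \eqref{alpha}, equation \eqref{gDisc} takes the form of \eqref{pDiscProj} for $\tilde\phi^D$ with right-hand sides
\[
j_1 = \tilde f_1\chi_2^D - 2\nabla_y\phi^H\cdot\nabla\chi^H - \varepsilon\phi^H\Delta\chi^H, \qquad j_2 = \tilde f_{2,0}\chi_2^D - \varepsilon\phi^H\,\partial_\nu\chi^H,
\]
the parameter $g$ being absorbed in the boundary term $g(x_2+d^S)$ as in \eqref{pDiscProj}. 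Proposition~\ref{propDisc1} produces a unique pair $(\tilde\phi^D,g)$ (for its canonical normalization). I then decompose $\tilde\phi^D - \varepsilon\lambda Z^H\chi^H = \phi^D + \beta Z^D$ by pinning $\phi^D(P_D)=0$, which forces $\beta=(\tilde\phi^D - \varepsilon\lambda Z^H\chi^H)(P_D)/Z^D(P_D)$ and automatically enforces the compatibility condition $\phi^D(P_D) + \beta Z^D(P_D) = 0$ required by $\mathcal{T}_H$.

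This construction defines a linear self-map $\mathcal{F}\colon(\phi^D,\beta)\mapsto(\phi^D_{\mathrm{new}},\beta_{\mathrm{new}})$ on a closed subspace of $C^{2,\alpha}(\overline{\tilde\Omega^D})\times\R$. By Propositions~\ref{propH2} and~\ref{propS2}, the dependence of $\phi^H$ and $\lambda$ on $(\phi^D,\beta)$ propagates into $j_1, j_2$ with a factor $\delta_1^\sigma\delta_2^{-1-\alpha-\sigma}$ coming from the $\mathcal{T}_H$-estimate, together with further small $\varepsilon^\sigma$-type factors from the cutoff matchings. After fixing $\delta_2$, choosing $\delta_1$ small and then $\varepsilon$ small, $\mathcal{F}$ becomes a strict contraction; the Banach fixed point theorem yields a unique linear solution $(\phi^D,\beta)$, and then $g$ via Proposition~\ref{propDisc1}.

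The estimate \eqref{estphiD} follows by tracking the three types of data through the fixed-point bound: the disk data $\tilde f_1\chi_2^D$, $\tilde f_{2,0}\chi_2^D$ is estimated directly by Proposition~\ref{propDisc1}; the hairpin-sourced terms enter via the composition $\mathcal{T}_H\circ(\text{cutoff transfer})$ and contribute the factor $\varepsilon^\sigma\delta_2^{-3-2\alpha-\sigma}$; and the strip-sourced data passes through $\mathcal{T}_S$ into the Robin data on $\tilde\Omega^D$, giving $\delta_2^{-(2+\alpha)}$ through Lemma~\ref{lemma:RZHbdyD}. The sharper estimate \eqref{estg3} for $g$ is obtained by using that $g$ is determined by the solvability (orthogonality) condition implicit in Proposition~\ref{propDisc1}: pairing $j_1, j_2$ against $Z^D$, integrating by parts, and using $\Delta Z^D = 0$ together with $\partial_\nu Z^D + (\kappa_{\partial\Omega^D}-\omega)Z^D = 0$ on $\partial\Omega^D$, the hairpin-coupled pieces are rewritten as pairings of $Z^D$ against $\Delta(Z^H\chi^H)$ in $\tilde\Omega^D$ and $\partial_\nu(Z^H\chi^H) + (\tilde\kappa-\omega)Z^H\chi^H$ on $\partial\tilde\Omega^D$, whose norms are controlled by \eqref{BopZchiH}--\eqref{LZchiH}; the correction from $\tilde\kappa - \kappa_{\partial\Omega^D}$ is absorbed using Lemma~\ref{lemmaNormZD}. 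These cancellations produce the gain $\delta_1^{3-\alpha}$ in the hairpin coefficient and $\delta_1^\sigma$ in the strip coefficient.

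The main obstacle is precisely the sharp bound \eqref{estg3}: achieving the gain factors requires the integration-by-parts re-expression, exploiting the approximate Robin-orthogonality of $Z^D$ against both $Z^H\chi^H$ and the hairpin-coupled cutoff data, rather than applying $L^\infty$ bounds to $\phi^H$. Without these gains, the estimate on $g$ would only match that of $(\phi^D,\beta)$, which is insufficient for the coupling argument in Proposition~\ref{prop:main-linear}.
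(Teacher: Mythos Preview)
Your overall strategy — a linear fixed point for $\phi^D$ via Proposition~\ref{propDisc1}, followed by an orthogonality argument (pairing with $Z^D$) to isolate $g$ — matches the paper's. But there is a genuine error in your normalization scheme that breaks the iteration.

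You decompose $\tilde\phi^D - \varepsilon\lambda Z^H\chi^H = \phi^D + \beta Z^D$ by pinning $\phi^D(P_D)=0$, and then assert this ``automatically enforces the compatibility condition $\phi^D(P_D)+\beta Z^D(P_D)=0$ required by $\mathcal{T}_H$''. This is false: with $\phi^D(P_D)=0$ your formula for $\beta$ gives
\[
\phi^D(P_D)+\beta Z^D(P_D)=(\tilde\phi^D-\varepsilon\lambda Z^H\chi^H)(P_D),
\]
which has no reason to vanish. Without the vanishing of $(\phi^D+\beta Z^D)(P_D)$, the estimate inside Proposition~\ref{propH2} fails: the terms $\varepsilon(\phi^D+\beta Z^D)\Delta\chi^D$ and $(\phi^D+\beta Z^D)\partial_\nu\chi^D$ in the hairpin right-hand sides were bounded in the $\|\cdot\|_{\sharp\sharp}$ norms precisely by using $(\phi^D+\beta Z^D)(P_D)=0$ to gain a factor $|x-P_D|$ near $P_D$. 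With your normalization these terms are merely $O(|\beta|)$ near $P_D$, the $\delta_1^\sigma$ gain in \eqref{estH} is lost, and the contraction does not close.

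The paper sidesteps this by \emph{not} passing through $\tilde\phi^D$. It iterates directly on $\phi^D$: given $\phi^D$, one sets $\beta(\phi^D)=-\phi^D(P_D)/Z^D(P_D)$ (so the compatibility condition holds tautologically), computes $\phi^H=\mathcal{T}_H[\phi^D,\beta(\phi^D),f_1,f_2]$ and $\lambda$, and then defines the new $\phi^D$ as $L_D[G_1,G_2]$, where $G_1,G_2$ carry the pieces $-\varepsilon\lambda\Delta(Z^H\chi^H)$, $-\varepsilon\lambda\bigl(\partial_\nu(Z^H\chi^H)+(\tilde\kappa-\omega)Z^H\chi^H\bigr)$ and $-\beta\bigl(\partial_\nu Z^D+(\tilde\kappa-\omega)Z^D\bigr)$ on the right-hand side rather than inside the unknown. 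Lemmas~\ref{lemmaNormZD} and~\ref{lemma:RZHbdyD} then control these source terms, yielding the contraction factor $\delta_1^\sigma\delta_2^{-3-2\alpha-\sigma}+\delta_1^{1-\alpha}+\varepsilon|\log\varepsilon|\delta_1^{-3-\alpha}$. Your scheme can be repaired by adopting exactly this splitting; the rest of your outline, in particular the treatment of $g$ via integration against $Z^D$ and the use of the small support of the cutoff terms to gain powers of $\delta_2$, is correct and is what the paper does.
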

			\begin{proof}
				We set up a fixed point problem for $\phi^D$.
				Given  $\phi^D \in C^{2,\alpha}(\overline{\tilde\Omega^D})$, we let 
				\begin{align}
					\label{beta}
					\beta(\phi^D) = -\frac{\phi^D(P_D)}{Z^D(P_D)}
				\end{align}
				so that $(\phi^D + \beta(\phi^D) Z^D)(P_D)=0$. 
				Let $\mathcal{T}_H$ be the operator defined in Proposition~\ref{propH2} and let us write
				\begin{align}
					\label{phiH1}
					\phi^H = \mathcal{T}_H[ \phi^D , \beta(\phi^D), f_1,f_2].
				\end{align}

				Let $L_D$ be the linear operator constructed in Proposition~\ref{propDisc1} that to $f_1 \in C^\alpha(\overline{\tilde\Omega^D})$ and $f_2 \in C^{1,\alpha}(\partial\tilde\Omega^D)$, assumed to be even in $x_1$, assigns $\phi = L_D[f_1,f_2]\in C^{2,\alpha}(\overline{\tilde \Omega^D})$, even in $x_1$, and $g \in \R$ that solve \eqref{pDiscProj}. Moreover we have
				\begin{align}
					\label{estLD}
					\| L_D[f_1,f_2] \|_{C^{2, \alpha}, \tilde \Omega^D}  + |g| \leq C \left( \|f_1\|_{C^\alpha, \tilde \Omega^D}+ \|f_2\|_{C^{1, \alpha},\partial \tilde \Omega^D} \right).
				\end{align}
				Let us write
				\begin{align}
					\label{lambda2}
					\lambda = \Lambda_S[\mathcal{T}_H[\phi^D,\beta(\phi^D),f_1,f_2],f_1,f_2], 
				\end{align}
				which is the same as \eqref{alpha} but with $\beta$ given by \eqref{beta}.

				With the notation for $\beta(\phi^D)$, $\phi^H$, $\lambda$ defined in \eqref{beta}, \eqref{phiH1}, and \eqref{lambda2}, we let
				\begin{align}
					\nonumber
					\mathcal{A}_D[\phi^D,f_1,f_2] = L_D[G_1[\phi^D,f_1,f_2],G_2[\phi^D,f_1,f_2]] 
				\end{align}
				where
				\begin{align*}
					G_1[\phi^D,f_1,f_2] 
					&= 
					- \varepsilon \lambda \Delta(Z^H(y) \chi^H)
					-2 \nabla_y \phi^H \cdot \nabla \chi^H
					- \varepsilon \phi^H \Delta \chi^H
					+ \tilde f_1 \chi_2^D
					\\
					G_2[\phi^D,f_1,f_2] 
					&= 
					- \varepsilon \lambda \Bigl( \frac{\partial (Z^H \chi^H)}{\partial \nu} +  (\tilde\kappa-\omega) Z^H \chi^H\Bigr) 
					-\beta(\phi^D) \Bigl( \frac{\partial Z^D}{\partial \nu} +  (\tilde\kappa-\omega) Z^D\Bigr) 
					\\
					& \qquad 
					- \varepsilon \phi^H \frac{\partial \chi^H}{\partial \nu}
					+ \tilde f_{2,0} \chi_2^D + g x_2  ,
                    \\
				\tilde f_{2,0}(x) &= \frac{1}{\varepsilon} f_{2,0} \Bigl(\frac{x}{\varepsilon}\Bigr),
				\end{align*}
				and $f_{2,0}$ is the function in the decomposition \eqref{decompf2}.
				To find a solution to \eqref{gDisc} it is sufficient to find $\phi^D$ such that 
				\begin{align}
					\label{fixedAD}
					\phi^D &= \mathcal{A}_D[\phi^D,f_1,f_2] .
				\end{align}

				\medskip

				Let us prove that $\mathcal{A}_D$ has a fixed point. 
				By \eqref{estLD}
				\begin{align}
					\nonumber
					\| \mathcal{A}_D[\phi^D,f_1,f_2]  \|_{C^{2, \alpha}, \tilde \Omega^D}
					&=
					\|  L_D[G_1[\phi^D,f_1,f_2],G_2[\phi^D,f_1,f_2]] \|_{C^{2, \alpha}, \tilde \Omega^D}
					\\
					\label{estAD}
					& \leq C \| G_1[\phi^D,f_1,f_2] \|_{C^\alpha,\tilde\Omega^D}
					+  C \| G_2[\phi^D,f_1,f_2] \|_{C^{1,\alpha},\partial\tilde\Omega^D} .
				\end{align}
				We claim that
				\begin{align}
					\nonumber
					\| G_1[\phi^D,f_1,f_2] \|_{C^\alpha,\tilde\Omega^D}
					& 
					\leq C   \delta_2^{-3-2\alpha-\sigma}  \delta_1^\sigma
					\| \phi^D \|_{C^{2, \alpha}, \tilde \Omega^D} + C \| \tilde f_1 \chi_2^D\|_{C^\alpha,\tilde\Omega^D}
					\\
					\nonumber
					& \quad 
					+ C \varepsilon^{\sigma}  \delta_2^{-3-2\alpha-\sigma} ( \| f_1 \chi_2^H \|_{\sharp\sharp ,\tilde\Omega^H}
					+ \| f_2 \chi_2^H \|_{\sharp\sharp ,\partial\tilde\Omega^H} 
					)
					\\
					\label{estG1x}
					& \quad + C \delta_2^{-(2+\alpha)} (\| \tilde f_1 \chi_2^S \|_{C^\alpha,\tilde \Omega^S}+  \| \tilde f_2 \chi_2^S \|_{C^{1,\alpha},\partial \tilde \Omega^S}).
				\end{align}

				The estimate of the terms $-2 \nabla_y \phi^H \cdot \nabla \chi^H
				- \varepsilon \phi^H \Delta \chi^H$ in $G_1$ is similar to the one in the proof of Proposition~\ref{propS2} and is given by 
				\begin{align*}
					\| 2 \nabla_y \phi^H \cdot \nabla \chi^H
					+\varepsilon \phi^H \Delta \chi^H\|_{C^\alpha,\tilde\Omega^D} 
					\leq 
					C \varepsilon^{\sigma}  \delta_2^{-1-\alpha-\sigma} \| \phi^H\|_{\sharp, \tilde\Omega^H}  .
				\end{align*}
But $\phi^H$ is given by \eqref{phiH1} so, using \eqref{estH} and the definition of $\beta$ \eqref{beta}, we get
\begin{align}
\nonumber
& \| 2 \nabla_y \phi^H \cdot \nabla \chi^H
+\varepsilon \phi^H \Delta \chi^H\|_{C^\alpha,\tilde\Omega^D} 
\\
\nonumber
& \quad \leq 
C\delta_2^{-1-\alpha-\sigma}\delta_1^{\sigma} ( \| \phi^D \|_{C^{2,\alpha},\tilde \Omega^D} 
+ \| \tilde f_1 \chi_2^S \|_{C^\alpha,\tilde \Omega^S} +  \| \tilde f_2 \chi_2^S \|_{C^{1,\alpha},\partial \tilde \Omega^S} )
\\
\label{G1a}
& \qquad 
+ C  \varepsilon^\sigma \delta_2^{-1-\alpha-\sigma} ( \| f_1 \chi_2^H \|_{\sharp\sharp ,\Omega^H}
+ \| f_2 \chi_2^H \|_{\sharp\sharp ,\tilde\Omega^H} ).
\end{align}

				From \eqref{LZchiH} and \eqref{estLambda} we get
				\begin{align}
					\nonumber
					\Bigl\|  \varepsilon\lambda\Delta\Bigl (Z^H\Bigl(\frac{\cdot}{\varepsilon}\Bigr) \chi^H \Bigr) \Bigr\|_{C^\alpha,\tilde\Omega^D} 
					&\leq 
					C   \delta_2^{-3-2\alpha-\sigma}  \delta_1^\sigma
					( \| \phi^D \|_{C^{2, \alpha}, \tilde \Omega^D} +|\beta| )
					\\
					\nonumber
					& \quad 
					+ C \varepsilon^{\sigma}  \delta_2^{-3-2\alpha-\sigma} ( \| f_1 \chi_2^H \|_{\sharp\sharp ,\tilde\Omega^H}
					+ \| f_2 \chi_2^H \|_{\sharp\sharp ,\partial\tilde\Omega^H} 
					)
					\\
					\label{G1b}
					& \quad + C \delta_2^{-(2+\alpha)} ( \| \tilde f_1 \chi_2^S \|_{C^\alpha,\tilde \Omega^S}+  \| \tilde f_2 \chi_2^S \|_{C^{1,\alpha},\partial \tilde \Omega^S}).
				\end{align}
				Combining \eqref{G1a} and \eqref{G1b} we obtain \eqref{estG1x}.
				
				\medskip
				
				Next we estimate $\| G_2[\phi^D,f_1,f_2]  \|_{C^{1,\alpha},\partial\tilde\Omega^D}$.
				We claim that
				\begin{align}
					\nonumber
					\| G_2[\phi^D,f_1,f_2] \|_{C^{1,\alpha}, \tilde \Omega^D}
					& 
					\leq C   (\delta_2^{-3-2\alpha-\sigma}  \delta_1^\sigma +  \delta_1^{1-\alpha}+\varepsilon|\log\varepsilon| \delta_1^{-3-\alpha})
					\| \phi^D \|_{C^{2, \alpha}, \tilde \Omega^D} + C \| \tilde f_2\|_{C^{1,\alpha},\partial\tilde\Omega^D}
					\\
					\nonumber
					& \quad 
					+ C \varepsilon^{\sigma}  \delta_2^{-3-2\alpha-\sigma} ( \| f_1 \chi_2^H \|_{\sharp\sharp ,\tilde\Omega^H}
					+ \| f_2 \chi_2^H \|_{\sharp\sharp ,\partial\tilde\Omega^H} 
					)
					\\
					\label{estG2x}
					& \quad + C \delta_2^{-(2+\alpha)} (\| \tilde f_1 \chi_2^S \|_{C^\alpha,\tilde \Omega^S}+  \| \tilde f_2 \chi_2^S \|_{C^{1,\alpha},\partial \tilde \Omega^S}) .
				\end{align}
				Indeed, from estimate \eqref{BopZchiH} in combination with \eqref{estLambda} we get
				\begin{align}
					\nonumber
					\varepsilon |\lambda | &
					\Bigl\| 
					\frac{\partial (Z^H \chi^H)}{\partial \nu} +  (\tilde\kappa-\omega) Z^H \chi^H
					\Bigr\|_{C^\alpha,\tilde \Omega^S}
					\\
					\nonumber
					&\leq 
					C \delta_1^\sigma \delta_2^{-3-2\alpha-\sigma}
					\left[
					\| \phi^D \|_{C^{2, \alpha}, \tilde \Omega^D} +|\beta| \right]
					\\
					\nonumber
					& \qquad 
					+  C  \delta_2^{-2-\alpha} \left[ \| \tilde f_1 \chi_2^S \|_{C^\alpha,\tilde \Omega^S}
					+  \| \tilde f_2 \chi_2^S \|_{C^{1,\alpha},\partial \tilde \Omega^S}  \right]
					\\
					\label{G2x1}
					& \qquad 
					+ C  \varepsilon^{\sigma} \delta_2^{-3-2\alpha-\sigma} \left[ \| f_1 \chi_2^H \|_{\sharp\sharp ,\tilde\Omega^H}
					+  \| f_2 \chi_2^H \|_{\sharp\sharp ,\partial\tilde\Omega^H}
					\right] .
				\end{align}
				A direct calculation also gives
				\begin{align*}
					\Bigl\| \varepsilon \phi^H \frac{\partial \chi^H}{\partial \nu}\Bigr\|_{C^{1,\alpha},\tilde\partial\Omega^D}
					\leq 
					C \varepsilon^{\sigma}  \delta_2^{-1-\alpha-\sigma} \| \phi^H\|_{\sharp, \tilde\Omega^H}  .
				\end{align*}
				Therefore, using  \eqref{phiH1}, \eqref{estH}  we get
				\begin{align}
					\nonumber
					\Bigl\| \varepsilon \phi^H \frac{\partial \chi^H}{\partial \nu}\Bigr\|_{C^{1,\alpha},\tilde\partial\Omega^D}
					& \leq C \delta_2^{-1-\alpha-\sigma}\delta_1^{\sigma}  \| \phi^D \|_{C^{2, \alpha}, \tilde \Omega^D} 
					\\
					\nonumber
					& \quad +
					C\delta_2^{-1-\alpha-\sigma}\delta_1^{\sigma} 
					\left[ \| \tilde f_1 \chi_2^S \|_{C^\alpha,\tilde \Omega^S} +  \| \tilde f_2 \chi_2^S \|_{C^{1,\alpha},\partial \tilde \Omega^S} \right]
					\\
					\label{G2x2}
					& \quad 
					+ C  \varepsilon^\sigma \delta_2^{-1-\alpha-\sigma} \left[ \| f_1 \chi_2^H \|_{\sharp\sharp ,\Omega^H}
					+ \| f_2 \chi_2^H \|_{\sharp\sharp ,\tilde\Omega^H} \right].
				\end{align}

				Regarding the term $\beta(\phi^D) ( \frac{\partial Z^D}{\partial \nu} +  (\tilde\kappa-\omega) Z^D)  $ we use that $|\beta(\phi^D)|\leq C \|\phi^D\|_{C^{2,\alpha},\tilde\Omega^D}$ and the following inequality
				\begin{align}
					\label{G2x3}
					\Bigl\|  \frac{\partial Z^D}{\partial \nu} +  (\tilde\kappa-\omega) Z^D \Bigr\|_{C^{1,\alpha},\tilde\Omega^D}
					\leq C \delta_1^{1-\alpha}+C\frac{\varepsilon|\log\varepsilon|}{\delta_1^{3+\alpha}},\end{align}
				from Lemma~\ref{lemmaNormZD}.
				
				Combining \eqref{G2x1}, \eqref{G2x2} and \eqref{G2x3} we obtain \eqref{estG2x}.
				
				\medskip
				
				From  \eqref{estAD}, \eqref{estG1x} and \eqref{estG2x}, 
				we see that if $\delta_2>0$, then $\delta_1>0$ is fixed sufficiently small, and then $\varepsilon>0$ is small, there is a unique $\phi^D$ satisfying the fixed point problem \eqref{fixedAD} and $\phi^D$ has the estimate
				\begin{align}
					\label{estPhiD2}
					\|\phi^D\|_{C^{2,\alpha},\tilde\Omega^D}
					&  \leq C F_D ,
				\end{align}
				where 
				\begin{align*}
					F_D&=  \| \tilde f_1 \chi_2^D\|_{C^\alpha,\tilde\Omega^D}
					+   \| \tilde f_2 \chi_2^D\|_{C^{1,\alpha},\partial\tilde\Omega^D}.
					\\
					& \quad +  \varepsilon^{\sigma}  \delta_2^{-3-2\alpha-\sigma} ( \| f_1 \chi_2^H \|_{\sharp\sharp ,\tilde\Omega^H}
					+ \| f_2 \chi_2^H \|_{\sharp\sharp ,\partial\tilde\Omega^H} 
					)
					\\
					& \quad +  \delta_2^{-(2+\alpha)}\| \tilde f_1 \chi_2^S \|_{C^\alpha,\tilde \Omega^S}+  \| \tilde f_2 \chi_2^S \|_{C^{1,\alpha},\partial\tilde \Omega^S}).
				\end{align*}
				
				We will now obtain estimates for $\phi^H$ and $\phi^S$.
				Using  \eqref{estH} and \eqref{estPhiD2} we obtain
				\begin{align}
					\label{estPhiH4}
					\|  \phi^H \|_{\sharp, \tilde\Omega^H}  \leq C F_H ,
				\end{align}
				where
				\begin{align*}
					F_H &=
					\varepsilon^{-\sigma} \delta_1^\sigma\left[ 
					\| \tilde f_1 \chi_2^D\|_{C^\alpha,\tilde\Omega^D} 
					+   \| \tilde f_2 \chi_2^D\|_{C^{1,\alpha},\partial\tilde\Omega^D} \right]
					\\
					& \quad +  \| f_1 \chi_2^H \|_{\sharp\sharp ,\tilde\Omega^H}
					+ \| f_2 \chi_2^H \|_{\sharp\sharp ,\partial\tilde\Omega^H} 
					\\
					& \quad
					+\varepsilon^{-\sigma} \delta_1^\sigma \delta_2^{-(2+\alpha)} \left[  \| \tilde f_1 \chi_2^S \|_{C^\alpha,\tilde \Omega^S}+  \| \tilde f_2 \chi_2^S \|_{C^{1,\alpha},\partial \tilde \Omega^S} \right]
				\end{align*}
				assuming that 
				\begin{align}
					\label{del12}
					\delta_1^\sigma  \delta_2^{-3-2\alpha-\sigma}  \leq 1 ,
				\end{align}
				which is compatible with the statement of the proposition.
				
				We also get, using \eqref{TS} and \eqref{estPhiH4}
				\begin{align}
					\label{estPhiS}
					\|  \phi^S \|_{C^{2, \alpha}, \tilde \Omega^S}
					& \leq C F_S
				\end{align}
				where
				\begin{align*}
					F_S &= 
					\| \tilde f_1 \chi_2^S \|_{C^\alpha, \tilde \Omega^S}
					+ \| \tilde f_2 \chi_2^S \|_{C^{1,\alpha},\partial \tilde \Omega^S}
					\\
					&\quad + 
					\varepsilon^{\sigma}  \delta_2^{-1-\alpha-\sigma}
					\left[ \| f_1 \chi_2^H \|_{\sharp\sharp ,\tilde\Omega^H}
					+ \| f_2 \chi_2^H \|_{\sharp\sharp ,\partial\tilde\Omega^H} \right]
					\\
					&\quad + 
					\delta_2^{-1-\alpha-\sigma}\delta_1^\sigma
					\left[ 
					\| \tilde f_1 \chi_2^D\|_{C^\alpha,\tilde\Omega^D} 
					+   \| \tilde f_2 \chi_2^D\|_{C^{1,\alpha},\partial\tilde\Omega^D} \right]
				\end{align*}
				assuming  \eqref{del12}.

				Let us prove the estimate for $g$. For this we multiply the equation \eqref{gDisc} by $Z^D$ and integrate. We get
				\begin{align*}
					\int_{\tilde\Omega^D}
					& 
					[ \tilde f_1 \chi_2^D - 2 \nabla_y \phi^H \cdot \nabla \chi^H
					-\varepsilon \phi^H \Delta \chi^H ] Z^D \, dx
					\\
					&=
					- \int_{\partial\tilde\Omega^D}
					\Bigl( \frac{\partial Z^D}{\partial\nu} + (\tilde\kappa-\omega) Z^D \Bigr) \phi^D
					- \varepsilon \lambda 
					\int_{\partial\tilde\Omega^D}
					\Bigl( \frac{\partial Z^D}{\partial\nu} + (\tilde\kappa-\omega) Z^D \Bigr) Z^H \chi^H
					\\
					& \quad 
					-\beta   
					\int_{\partial\tilde\Omega^D}
					\Bigl( \frac{\partial Z^D}{\partial\nu} + (\tilde\kappa-\omega) Z^D \Bigr) Z^D
					- \varepsilon \int_{\partial\tilde\Omega^D}\phi^H \frac{\partial \chi^H}{\partial \nu} Z^D
					\\
					& \quad 
					+ \int_{\partial\tilde\Omega^D}
					\tilde f_{2,0}  Z^D 
					+ g \int_{\partial\tilde\Omega^D}
					(x_2+d^S)  Z^D .
				\end{align*}
				Since 
				\begin{align}
					\label{g0}
					\lim_{\delta_1\to 0} \int_{\partial\tilde\Omega^D} (x_2+d^S)  Z^D \not=0,
				\end{align}
				to get an estimate for $g$ it is sufficient to show that the dependence on $g$ of the remaining terms is of the form $o(1)|g|$, choosing appropriately $\delta_1$, $\delta_2$ and then letting $\varepsilon\to 0$.
				
				Using \eqref{estLambda} and \eqref{estPhiD2} we get, recalling the notation for  $\lambda$ in \eqref{lambda2}
				\begin{align}
					\nonumber
					\varepsilon | \lambda |
					& \leq 
					C \delta_1^\sigma  \delta_2^{-1-\alpha-\sigma}  \left[ 
					\| \tilde f_1 \chi_2^D\|_{C^\alpha,\tilde\Omega^D} 
					+   \| \tilde f_2 \chi_2^D\|_{C^{1,\alpha},\partial\tilde\Omega^D} \right]
					\\
					\nonumber
					& \quad 
					+ C \varepsilon^{\sigma}  \delta_2^{-1-\alpha-\sigma} 
					\left[ \| f_1 \chi_2^H \|_{\sharp\sharp ,\tilde\Omega^H}
					+ \| f_2 \chi_2^H \|_{\sharp\sharp ,\partial\tilde\Omega^H} \right]
					\\
					\label{lambdaeps1}
					& \quad 
					+ C \| \tilde f_1 \chi_2^S \|_{C^\alpha,\tilde \Omega^S} + C \| \tilde f_2 \chi_2^S \|_{C^{1,\alpha},\partial \tilde \Omega^S} ,
				\end{align}
				assuming again \eqref{del12}.
				
				In this bound for $\varepsilon\lambda$ the terms that depend on $g$ are the ones that depend on $f_2$ because of the decomposition \eqref{decompf2}.
				We use the following estimates:
				\begin{align}
					\label{gInD}
					\| (x_2+d^S)\chi_2^D \|_{C^{1,\alpha}, \partial\tilde\Omega^D } & \leq C ,
				\end{align}
				which follows from Lemma~\ref{lemmaNormZD}, and 
				\begin{align}
					\label{gInH}
					\left\| (\varepsilon y_2 + d^S)\chi_2^H\right\|_{\sharp\sharp ,\partial\tilde\Omega^H}
					 &\leq C \varepsilon^{-\sigma}\delta_2^{2+\sigma}
                    \\
					\label{gInS}
					\| (x_2+d^S)\chi_2^S \|_{C^{1,\alpha},\partial\tilde\Omega^D }  &\leq C \frac{\varepsilon}{\delta_1^{1+\alpha}}.
				\end{align}
				The proof of \eqref{gInH} and \eqref{gInS} are direct computations.
				
				Using the decomposition \eqref{decompf2} and  \eqref{gInD}--\eqref{gInS}, we get from \eqref{lambdaeps1} that
\begin{align}
\nonumber
\varepsilon |\lambda|
& \leq 
C \delta_1^\sigma  \delta_2^{-1-\alpha-\sigma}  \left[ 
\| \tilde f_1 \chi_2^D\|_{C^\alpha,\tilde\Omega^D} 
+   \| \tilde f_{2,0} \chi_2^D\|_{C^{1,\alpha},\partial\tilde\Omega^D} \right]
\\
\nonumber
& \quad 
+ C \varepsilon^{\sigma}  \delta_2^{-1-\alpha-\sigma} 
\left[ \| f_1 \chi_2^H \|_{\sharp\sharp ,\tilde\Omega^H}
+ \| f_{2,0} \chi_2^H \|_{\sharp\sharp ,\partial\tilde\Omega^H} \right]
\\
\nonumber
& \quad 
+ C \| \tilde f_1 \chi_2^S \|_{C^\alpha,\tilde\Omega^S} + C \| \tilde f_{2,0} \chi_2^S \|_{C^{1,\alpha},\partial\tilde\Omega^S}
\\
\nonumber
& \quad +C |g| ( \delta_1^\sigma  \delta_2^{-1-\alpha-\sigma} + \varepsilon\delta_1^{-1-\alpha}).
\end{align}	
Since the support of $\chi^H$ is contained in a ball of radius $\delta_2$,
\begin{align}
\nonumber
& \left| \varepsilon \lambda
\int_{\partial\tilde\Omega^D} \Bigl( \frac{\partial Z^D}{\partial\nu} + (\tilde\kappa-\omega) Z^D \Bigr) Z^H \chi^H
\right|
\\
\nonumber
&\qquad  \leq  
C \delta_1^\sigma  \delta_2^{1-\alpha-\sigma}  \left[ 
\| \tilde f_1 \chi_2^D\|_{C^\alpha,\tilde\Omega^D} 
+   \| \tilde f_{2,0} \chi_2^D\|_{C^{1,\alpha},\partial\tilde\Omega^D} \right]
\\
\nonumber
& \qquad \quad 
+ C \varepsilon^{\sigma}  \delta_2^{1-\alpha-\sigma} 
\left[ \| f_1 \chi_2^H \|_{\sharp\sharp ,\tilde\Omega^H}
+ \| f_{2,0} \chi_2^H \|_{\sharp\sharp ,\partial\tilde\Omega^H} \right]
\\
\nonumber
& \qquad \quad 
+ C \delta_2^2 \left[ \| \tilde f_1 \chi_2^S \|_{C^\alpha,\tilde\Omega^S} +  \| \tilde f_{2,0} \chi_2^S \|_{C^{1,\alpha},\partial\tilde\Omega^S} \right]
\\
\label{g1}
& \qquad \quad +C |g|  ( \delta_1^\sigma  \delta_2^{-1-\alpha-\sigma} + \varepsilon\delta_1^{-1-\alpha}).
\end{align}
				
				Let us estimate $ \int_{\tilde\Omega^D} \varepsilon \phi^H \Delta \chi^H  Z^D  dx$. From \eqref{calcPhiH}
				\begin{align}
					\nonumber
					|  \varepsilon \phi^H \Delta \chi^H (x)|
					& \leq C \varepsilon^{\sigma} \delta_2^{-1-\sigma} \|\phi^H\|_{\sharp , \tilde\Omega^H}
				\end{align}
				and this function is supported in a region of area $O(\delta_2^2)$, so, in combination with \eqref{estPhiH4} we get
				\begin{align*}
					\left|\int_{\tilde\Omega^D}
					\varepsilon \phi^H \Delta \chi^H  Z^D  dx\right|
					& \leq C \varepsilon^{\sigma} \delta_2^{1-\sigma} \|\phi^H\|_{\sharp, \tilde\Omega^H}
					\\ 
					& \leq C  \delta_1^\sigma \delta_2^{1-\sigma} \left[ 
					\| \tilde f_1 \chi_2^D\|_{C^\alpha,\tilde\Omega^D} 
					+   \| \tilde f_2 \chi_2^D\|_{C^{1,\alpha},\partial\tilde\Omega^D} \right]
					\\
					& \quad
					+ C \varepsilon^{\sigma} \delta_2^{1-\sigma} \left[
					\| f_1 \chi_2^H \|_{\sharp\sharp ,\tilde\Omega^H}
					+ \| f_2 \chi_2^H \|_{\sharp\sharp ,\partial\tilde\Omega^H} 
					\right]
					\\
& \quad + 
C \delta_1^\sigma  \delta_2^{-1-\sigma-\alpha}\left[ 
\| \tilde f_1 \chi_2^S \|_{C^\alpha,\tilde\Omega^S}+  \| \tilde f_2 \chi_2^S \|_{C^{1,\alpha},\partial\tilde\Omega^S}
\right].
				\end{align*}
				
				Using the decomposition \eqref{decompf2} we find
				\begin{align}
					\label{g2}
					\left|\int_{\tilde\Omega^D}
					\varepsilon \phi^H \Delta \chi^H  Z^D  dx\right|
					& \leq C M + C \left(\delta_1^\sigma \delta_2^{1-\sigma} + \delta_2^3 + \varepsilon \delta_1^{\sigma-1-\alpha}\delta_2^{-1-\sigma-\alpha}\right)  |g|,
				\end{align}
				where
				\begin{align*}
					M&= \delta_1^\sigma \delta_2^{1-\sigma} \left[ 
					\| \tilde f_1 \chi_2^D\|_{C^\alpha,\tilde\Omega^D} 
					+   \| \tilde f_{2,0} \chi_2^D\|_{C^{1,\alpha},\partial\tilde\Omega^D} \right]
					\\
					& \quad
					+ C \varepsilon^{\sigma} \delta_2^{1-\sigma} \left[
					\| f_1 \chi_2^H \|_{\sharp\sharp,\tilde\Omega^H}
					+ \| f_{2,0} \chi_2^H \|_{\sharp\sharp, \partial\tilde\Omega^H} 
					\right]
					\\
					& \quad + 
					C \delta_1^\sigma  \delta_2^{-1-\sigma-\alpha}\left[ 
					\| \tilde f_1 \chi_2^S \|_{C^\alpha,\tilde\Omega^S}+  \| \tilde f_{2,0} \chi_2^S \|_{C^{1,\alpha},\partial\tilde\Omega^S})
					\right] .
				\end{align*}
				
				Similar computations also give
				\begin{align}
					\nonumber
					&\left| \int_{\tilde\Omega^D}
					[2 \nabla_y \phi^H \cdot \nabla \chi^H
					-\varepsilon \phi^H \Delta \chi^H ] Z^D \, dx
					\right|
					\\
					\label{g3}
					& \qquad \leq C M +C \left(\delta_1^\sigma \delta_2^{1-\sigma} + \delta_2^3 + \varepsilon \delta_1^{\sigma-1-\alpha}\delta_2^{-1-\sigma-\alpha}\right)  |g|,
				\end{align}
				and
				\begin{align}
					\label{g4}
					\varepsilon\left|
					\int_{\partial\tilde\Omega^D}
					\phi^H \frac{\partial \chi^H}{\partial \nu} Z^D
					\right|
					& \leq C M + C \left(\delta_1^\sigma \delta_2^{1-\sigma} + \delta_2^3 + \varepsilon \delta_1^{\sigma-1-\alpha}\delta_2^{-1-\sigma-\alpha}\right)  |g|.
				\end{align}
				
				We note that 
				\begin{align}
					\label{g5}
					\left|
					\int_{\partial\tilde\Omega^D}
					\Bigl( \frac{\partial Z^D}{\partial\nu} + (\tilde\kappa-\omega) Z^D \Bigr) Z^D
					\right| \leq C \delta_1^{2-\alpha}+C\frac{\varepsilon|\log\varepsilon|}{\delta_1^{2+\alpha}},
				\end{align}
				by \eqref{errorZD}.
				
				Combining the information in \eqref{g0}, \eqref{g1}, \eqref{g2}, \eqref{g3}, \eqref{g4}, \eqref{g5} we find that 
				\begin{align*}
					|g| & \leq C   \left[ 
					\| \tilde f_1 \chi_2^D\|_{C^\alpha,\tilde\Omega^D} 
					+   \| \tilde f_{2,0} \chi_2^D\|_{C^{1,\alpha},\partial\tilde\Omega^D} \right]
					\\
					\nonumber
					& \quad 
					+ C \varepsilon^{\sigma}  \delta_1^{3-\alpha} \delta_2^{-3-2\alpha-\sigma} 
					\left[ \| f_1 \chi_2^H \|_{\sharp\sharp, \tilde\Omega^H}
					+ \| f_{2,0} \chi_2^H \|_{\sharp\sharp ,\partial\tilde\Omega^H} \right]
					\\
					& \quad + 
					C \delta_1^\sigma  \delta_2^{-1-\sigma-\alpha}\left[ 
					\| \tilde f_1 \chi_2^S \|_{C^\alpha, \tilde\Omega^S}+  \| \tilde f_{2,0} \chi_2^S \|_{C^{1,\alpha},\partial \tilde\Omega^S}
					\right] .
				\end{align*}
				This proves \eqref{estg3}.
				Combining \eqref{estg3} with \eqref{estPhiD2} we obtain \eqref{estphiD}  and this concludes the proof.
			\end{proof}
			
			\subsection{Proof of Proposition~\ref{prop:main-linear}}
			\label{sectProofSys}
			
			To simplify the notation, in what follows we fix $0<\delta_1<\delta_2$ so that the previous results in this section hold for all $\varepsilon>0$ small.
			We will not keep track of the dependence of the constants on $\delta_1$, $\delta_2$.
			
			Proposition~\ref{propPhiD2} gives a solution 
			$\phi^D$, $\phi^S$, $\phi^H$, $\lambda$, $\beta$, $g$ of the system \eqref{gHairpin}, \eqref{gStrip}, \eqref{gDisc}. The formula 
			\begin{align}
				\label{phi2}
				\phi(y) &=  ( \phi^H(y) + \lambda Z^H(y)) \chi^H(x)
				+ \frac{1}{\varepsilon}\phi^{S}(x)\chi^{S}(x)
				+ \frac{1}{\varepsilon}(\phi^{D}(x)+ \beta Z^D(x))\chi^D(x),
			\end{align}
			with $y=\frac{x}{\varepsilon}$
			(this is \eqref{phi}) gives a solution to \eqref{main-linear}, which is a linear operator of the right hand sides $f_1$ and $f_{2,0}$. We collect the estimates for the different terms of $\phi$. From \eqref{estPhiD2}, \eqref{estPhiH4}, and \eqref{estPhiS} we find that 
			\begin{align}
				\label{est-parts-phi}
				\|\phi^H\|_{\sharp,\tilde\Omega^H}
				+ \varepsilon^{-\sigma}\|\phi^D\|_{C^{2,\alpha},\tilde\Omega^D}
				+ \varepsilon^{-\sigma} \|\phi^S\|_{C^{2,\alpha},\tilde\Omega^S}
				&\leq C M_1 + C M_2 
                ,
			\end{align}
			where
			\begin{align*}
				M_1&=
				\| f_1 \chi_2^H \|_{\sharp\sharp ,\tilde\Omega^H}
				+  
				\varepsilon^{-\sigma} 
				\| \tilde f_1 \chi_2^D\|_{C^\alpha,\tilde\Omega^D} 
				+\varepsilon^{-\sigma}  \| \tilde f_1 \chi_2^S \|_{C^\alpha,\tilde \Omega^S},
				\\
				M_2&=
				\| f_{2,0} \chi_2^H \|_{\sharp\sharp ,\partial\tilde\Omega^H}  
				+   \varepsilon^{-\sigma}  \| \tilde f_{2,0}\chi_2^D\|_{C^{1,\alpha},\partial\tilde\Omega^D}
				+\varepsilon^{-\sigma}    \| \tilde f_{2,0} \chi_2^S \|_{C^{1,\alpha},\partial\tilde \Omega^S}  .
			\end{align*}
			We note that for $\varepsilon>0$ small, 
			$M_1$ is equivalent to the norm $ \| f_1 \|_{**, \Omega_0 }$ defined in \eqref{defNf1Omega0} and $M_2$ is equivalent to the norm $ \| f_{2,0} \|_{**, \partial \Omega_0 }$ defined in \eqref{defNf2Omega0}. In particular,
			\begin{align*}
				M_1\leq C  \| f_1 \|_{**, \Omega_0 },\quad
				M_2\leq C  \| f_{2,0} \|_{**, \partial \Omega_0}.
			\end{align*}
			
			Let us rewrite \eqref{phi2} as
			\begin{align*}
				\phi(y) = \phi_0(y) + \lambda Z^H(y) \chi^H(\varepsilon y)
			\end{align*}
			where
			\begin{align*}
				\phi_0(x) = \phi^H(y) \chi^H(x)
				+ \frac{1}{\varepsilon}\phi^{S}(x)\chi^{S}(x)
				+ \frac{1}{\varepsilon}(\phi^{D}(x)+ \beta Z^D(x))\chi^D(x),\quad x = \varepsilon y.
			\end{align*}
			Then from  \eqref{est-parts-phi} we get that, for the norm defined in \eqref{norm-phi},
			\begin{align*}
				\|\phi_0\|_{*, \Omega_0 } &\leq C ( \|f_1\|_{**, \Omega_0 } +  \|f_{2,0}\|_{**, \partial \Omega_0}).
			\end{align*}
			On the other hand, estimate \eqref{estLambda} gives
			\begin{align*}
				\| \lambda Z^H \chi^H(\varepsilon \, \cdot ) \|_{*, \Omega_0}  \leq C \varepsilon^{-1+\sigma}  ( \|f_1\|_{**, \Omega_0} +  \|f_{2,0}\|_{**, \partial \Omega_0 }).
			\end{align*}
			Therefore
			\begin{align*}
				\|\phi\|_{*, \Omega_0}\leq C \varepsilon^{-1+\sigma}  ( \|f_1\|_{**, \Omega_0 } +  \|f_{2,0}\|_{**, \partial\Omega_0 }).
			\end{align*}
			For $g$ we have, by \eqref{estg3}
			\begin{align*}
				|g|\leq C \varepsilon^{\sigma}  ( \|f_1\|_{**, \Omega_0 } +  \|f_{2,0}\|_{**, \partial \Omega_0 }).
			\end{align*}

\section{Proof of Theorem~\ref{thm}}
\label{sec:proof}

The proof consists in solving the nonlinear problem \eqref{main-problem}
and then checking that the map $F[h]$ defined by \eqref{def:Fh}, where $h$ is given by \eqref{def:a}, is a diffeomorphism.

\subsection{Solving problem (\ref{main-problem})}
\label{sect:solve}

We rewrite \eqref{main-problem} as
			\begin{align}
				\label{main-problem3}
				\left\{
				\begin{aligned}
					\Delta_y u + Q_1[u] &=0 && \text{in }\Omega_0
					\\
					\frac{\partial u}{\partial\nu}
					+ (\kappa - \varepsilon\omega)u + E_0(g_1) + Q_2[u,\varepsilon g_0+g_1] &= -g_1 ( \varepsilon y_2+d^S)
					&& \text{on } \mathcal S_0 
					\\
					u&=0 
					&& \text{on } \mathcal B_0 
				\end{aligned}
				\right.
			\end{align}
			where $E_0(g_1)$ is defined in \eqref{def:E0}.

   \subsubsection{Formulation as a fixed point problem}
			Let $L$ denote the linear operator constructed in Proposition~\ref{prop:main-linear}, which to functions $f_1$ and $f_2$ satisfying  $\|f_1\|_{**, \Omega_0}<\infty$ and $\|f_2\|_{**, \partial \Omega_0}<\infty$ associates $(\phi,g_1)=L[f_1,f_2]$ that satisfy 
			\begin{align}
				\nonumber
				\left\{
				\begin{aligned}
					\Delta_y \phi  &= f_1  && \text{in } \Omega_0
					\\
					\frac{\partial \phi}{\partial\nu}
					+ (\kappa-\varepsilon\omega) \phi &= f_2-g_1 ( \varepsilon y_2+d^S) 
					&& \text{on } \mathcal{S}_0
					\\
					\phi &= 0 && \text{on } \mathcal{B}_0,
				\end{aligned}
				\right.
			\end{align}
			and 
			\begin{align}
				\label{L1b}
				\|\phi\|_{*,\Omega_0} &\leq C \varepsilon^{\sigma-1}( \|f_1\|_{**, \Omega_0}+\|f_2\|_{**, \partial \Omega_0} )
				\\
				\label{L2b}
				|g_1| &\leq C \varepsilon^{\sigma}( \|f_1\|_{**, \Omega_0 }+\|f_2\|_{**, \partial \Omega_0} ) .
			\end{align}
			We refer to \eqref{defNf1Omega0} and \eqref{defNf2Omega0} for the definition of the norms $\| \cdot \|_{**, \Omega_0}$ and $\| \cdot \|_{**, \partial \Omega_0}$, and to \eqref{norm-phi} for the definition of $\| \cdot \|_{*, \Omega_0}$.

			Then to find a solution to \eqref{main-problem3} it is sufficient that $u$, $g_1$ satisfy the fixed point problem
			\begin{align}
				\label{fixed}
				(u,g_1)=-L\big[ Q_1[u],E_0(g_1)+Q_2[u,\varepsilon g_0+g_1]\big].
			\end{align}
			Define the space
			\begin{align*}
				X = \{ \ u \in C^{2,\alpha}(\overline{\Omega}_0) \ | \ \|u\|_{*,\Omega_0} < \infty\}.
			\end{align*}
			For simplicity we assume in what remains of this section that
			\begin{align}
				\nonumber
				\frac12<b<\frac23
			\end{align}
			so that the hypotheses in Lemmas~\ref{lem:error1-2} and \ref{lem:error-u0} are satisfied, and 
			\begin{align}
				\label{sigmab}
				\min(1-\sigma b  ,4(1-b)\sigma b) = 1-\sigma b,
			\end{align}
			which appears in the exponent in Proposition~\ref{prop:est-error-bdy}.
			
			\medskip
			We solve \eqref{fixed} for $(u,g_1) \in B \times I$ where
			\begin{align*}
				B&=\{ \ u \in X \ | \ \|u\|_{*,\Omega_0} \leq M \varepsilon^{\sigma(1-b)}|\log\varepsilon| \ \} ,
				\\
				I&=\big[- M \varepsilon^{1+m} |\log \varepsilon| ,M \varepsilon^{1+m} |\log \varepsilon|\big],
			\end{align*}
			and $M>0$ is a constant that will be defined later on, and $0<m<\sigma(1-b)$.
			We note that the hypothesis on $g_1$ \eqref{hg} is then satisfied.
			We consider the map $\mathcal F(u,g_1)$, $(u,g_1)\in B\times I$ defined by 
			\begin{align*}
				\mathcal{F}(u,g_1) = -L[Q_1[u],E_0(g_1)+Q_2[u,g_1]].
			\end{align*}
			
			We claim that $\mathcal{F}(u,g_1)$ is well defined for $(u,g_1)\in B\times I$. Recall that $Q_1$ and $Q_2$ \eqref{newQ1}, \eqref{newQ2} are defined in terms of the function $\mathbf{h}(u,y)$ \eqref{def:a}, which gives the solution of \eqref{bcD} in terms of $u$. But in this equation we can solve for $h(y)$ as a function of $u(y)$ and $y$, if $u(y)$ is sufficiently small. 
			The hypothesis $u\in B$ is sufficient for this purpose (see the end of Section~\ref{sect:operators}).
			
			We introduce the norm
			\begin{align*}
				\|  h \|_{*,\partial\Omega_0} & = \text{the least $M$ such that for $y \in\partial\Omega_0$}
				\\
				\nonumber
				& | h(y)|
				+(1+|y|) | h'(y)|
				+(1+|y|)^2 | h'' \phi(y)|
				+(1+|y|)^{2+\alpha} [ h'']_{\alpha,B(y,\frac{|y|}{10})\cap\partial\Omega_0}
				\\
				\nonumber
				& \qquad \qquad  \qquad 
				\leq (1+|y|)^{1-\sigma} M 
				\qquad \qquad\text{for } |y| \leq \frac{\delta_1}{\varepsilon}
				\\
				\nonumber
				& 
				| h(y)| 
				+\Bigl(\frac{\delta_1}{\varepsilon}\Bigr) | h'(y)|
				+\Bigl(\frac{\delta_1}{\varepsilon}\Bigr)^2 | h''(y)|
				+\Bigl(\frac{\delta_1}{\varepsilon}\Bigr)^{2+\alpha} [ h'']_{\alpha,B(y,\frac{\delta_1}{10 \varepsilon})\cap\partial\Omega_0}
				\\
				\nonumber
				& \quad \qquad \qquad 
				\leq \Bigl(\frac{\varepsilon}{\delta_1}\Bigr)^{1-\sigma} \exp(-\mu \varepsilon |y|) M \qquad\qquad   \text{for } |y| \geq \frac{\delta_1}{\varepsilon} .
			\end{align*}
			This norm is similar to the norm $\| \ \|_{*,\Omega_0}$ in \eqref{norm-phi} but restricted to $\partial\Omega_0$.
			Using estimates for the derivatives of  $\psi_0$ and $\Psi[0]$, which can be obtained similarly as in Section~\ref{sect:error}, we find that
			\begin{align}
				\label{h-from-u}
				\| \mathbf{h}(u,\cdot)\|_{*,\partial\Omega_0} \leq C \| u\|_{*,\Omega_0},\quad u \in B.
			\end{align}
			There is a similar Lipschitz estimate
			\begin{align*}
				\| \mathbf{h}(u,\cdot) - \mathbf{h}(v,\cdot) \|_{*,\partial\Omega_0} \leq C \| u-v\|_{*,\Omega_0},\quad u,v\in B.
			\end{align*}
			
			In order for $\mathcal{F}$ to be well defined we also need to verify that the operators $Q_1$ and $Q_2$ are well defined, since they involve coefficients of the form  $(1-\kappa(t+h\eta))^{-a} (1+h\eta_t)^{-b}$ for some integers $a,b\geq 0$. This holds for $u\in B$ as discussed at the end of Section~\ref{sect:operators}.

   \subsubsection{Estimates}
			
                Next we will estimate the quantities $\|Q_1[u]\|_{**, \Omega_0}$, $\|E_0(g_1)\|_{**, \partial \Omega_0}$, and $\|Q_2[u,\varepsilon g_0+g_1]\|_{**, \partial \Omega_0}$ and also their Lipschitz constants.
                
                \medskip\paragraph{\textbf{Estimates for $\|Q_1[u]\|_{**, \Omega_0}$.}}
			We claim that 
			\begin{align}
				\label{estQ1}
				\|Q_1[u]\|_{**, \Omega_0} &\leq C \|u\|_{*,\Omega_0}^2, \quad u\in B,
			\end{align}
			and
			\begin{align}
				\label{estQ1-lip}
				\|Q_1[u]-Q_1[v]\|_{**, \Omega_0} &\leq C \varepsilon^{\sigma(1-b)} \, |\log\varepsilon| \,  \|u-v\|_{*,\Omega_0}, \quad u,v\in B .
			\end{align}
			
			In \eqref{rename-u} we have renamed $u=\tilde \Psi_1[h]$, therefore the expression $\tilde Q_1$ defined in \eqref{Q1} becomes
			\begin{align}
				\nonumber
				\tilde Q_1(u ,h) &=  L_h^{(1)}[u] - L_h^{(1)}[ \nabla_y \Psi[0] \cdot \nu \eta h ]
				\\
				\nonumber
				& \quad 
				+ L_h^{(2)}[u] +  L_h^{(2)}[\tilde \Psi[0] ] - L_h^{(2)}[ \nabla_y \Psi[0] \cdot \nu \eta h ]
				\\
				\nonumber
				& \quad 
				+ L^{(0)}[ \tilde \psi_0[h]  - \tilde \psi_0[0] + \nabla_y   \tilde \psi_0[0] \cdot \nu \eta h]
				\\
				& \quad 
				\nonumber
				+ L_h^{(1)} [\tilde \psi_0[h] - \tilde \psi_0[0]] 
				+ L_h^{(2)} [\tilde \psi_0[h]]  .
			\end{align}
			
			We estimate the terms in $L_h^{(1)}[u]$, for $|y|\leq \frac{\delta_1}{\varepsilon}$,
			\begin{align*}
				\Bigl|\frac{2 \kappa h \eta}{(1-\kappa t)^3} \partial_{ss} u\Bigr|&\leq C \frac{1}{1+|y|} (1+|y|)^{1-\sigma}  (1+|y|)^{-1-\sigma}\|u\|_{*,\Omega_0}\|h\|_{*,\partial\Omega_0}
				\\
				&\leq \frac{C}{(1+|y|)^{1+2\sigma}}  \|u\|_{*,\Omega_0}^2
				\\
				\Bigl|\frac{(h\eta)_s}{(1-\kappa t)^2}  \partial_{st}u\Bigr|
				&\leq \frac{C}{(1+|y|)^{1+2\sigma}}  \|u\|_{*,\Omega_0}\|h\|_{*,\partial\Omega_0}
				\\
				|2 h \eta_t \partial_{tt}u|
				&\leq \frac{C}{(1+|y|)^{1+2\sigma}}   \|u\|_{*,\Omega_0}^2.
			\end{align*}
			In the region $|y|\geq \frac{\delta_1}{\varepsilon}$ we have
			\begin{align*}
				\Bigl|\frac{2 \kappa h \eta}{(1-\kappa t)^3} \partial_{ss} u\Bigr|&\leq C \varepsilon \Bigl(\frac{\delta_1}{\varepsilon}\Bigr)^{1-\sigma} \exp(-\mu \varepsilon |y|) \|h\|_{*,\partial\Omega_0} \Bigl(\frac{\delta_1}{\varepsilon}\Bigr)^{-1-\sigma} \exp(-\mu \varepsilon |y|) \|u\|_{*,\Omega_0}
				\\
				&\leq 
				C \delta_1^{1-\sigma}\varepsilon^\sigma\Bigl(\frac{\delta_1}{\varepsilon}\Bigr)^{-1-\sigma} \exp(-\mu \varepsilon |y|) \|u\|_{*,\Omega_0}^2.
			\end{align*}
			The other terms are estimated similarly and we get
			\begin{align*}
				\| \hat L_{1,h}^{(1)}[u] \|_{**, \Omega_0 } \leq C \|u\|_{*,\Omega_0}^2.
			\end{align*}
			Similar estimates hold for the other terms of $Q_1$ and we obtain \eqref{estQ1}.
			
			Concerning the proof of \eqref{estQ1-lip}, the computations are similar as before. Let us consider them term $L_{h_u}^{(1)}[u]-L_{h_v}^{(1)}[v]$ where $h_u=\mathbf{h}(u,y)$, $h_v=\mathbf{h}(v,y)$ with $\mathbf{h}$ defined in \eqref{def:a}. We have, for $|y|\leq\frac{\delta_1}{\varepsilon}$:
			\begin{align*}
				&\Bigl|\frac{2 \kappa h_u \eta}{(1-\kappa t)^3} \partial_{ss} u-\frac{2 \kappa h_v \eta}{(1-\kappa t)^3} \partial_{ss} v\Bigr|
				\\
				&\quad=
				\Bigl|\frac{2 \kappa  \eta}{(1-\kappa t)^3} \Bigl[ h_u\partial_{ss} u-h_u\partial_{ss} u\Bigr] \Bigr|
				\\
				&\quad\leq C \frac{1}{1+|y|} 
				(1+|y|)^{1-\sigma}  
				\|h_u-h_v\|_{*,\partial\Omega_0}
				(1+|y|)^{-1-\sigma} \|u\|_{*,\Omega_0}
				\\
				&\qquad
				+ C \frac{1}{1+|y|} 
				(1+|y|)^{1-\sigma}  
				\|h_v\|_{*,\partial\Omega_0}
				(1+|y|)^{-1-\sigma} \|u-v\|_{*,\Omega_0}
				\\
				&\quad \leq C 
				M \varepsilon^{\sigma(1-b)}\, |\log\varepsilon| \,  \|u-v\|_{*,\Omega_0}.
			\end{align*}
			Similar estimates hold for the other terms of $Q_1$ and we obtain \eqref{estQ1-lip}.
			
                \medskip\paragraph{\textbf{Estimates for $\|E_0(g_1)\|_{**, \partial \Omega_0}$.}}
			We have estimated $\|E_0(g_1)\|_{**,\partial\Omega_0}$ in Proposition~\ref{prop:est-error-bdy} and we have
			\begin{align}
				\label{E0g1}
				\|E_0(g_1)\|_{**, \partial \Omega_0 }\leq C \varepsilon^{\ell_0} |\log\varepsilon|,
			\end{align}
			where $\ell_0 = \min(1-\sigma b, 4(1-b) - \sigma b, 1+m+b(1-\sigma) )$.
			We also have the estimate
			\begin{align}
				\label{lipE0}
				\|E_0(g_1)-E_0(\bar g_1)\|_{**, \partial \Omega_0 }\leq C \varepsilon^{b(1-\sigma)} \, |\log\varepsilon| \, |g_1-\bar g_1|.
			\end{align}
			The proof is the same as the one of Lemma~\ref{lem:error1-2}.
			Note that by \eqref{sigmab}
			\[
			\ell_0=1-\sigma b.
			\]
			
                \medskip\paragraph{\textbf{Estimates for $\|Q_2[u,\varepsilon g_0+g_1]\|_{**, \partial \Omega_0}$.}}
			Next we consider $Q_2[u,\varepsilon g_0+g_1]$ defined in \eqref{newQ2}.
			We claim that there exist $\Theta>1-\sigma$ and $C$, such that for $\varepsilon>0$ small, for $g=\varepsilon g_0+g_1$ and $g_1\in I$, 
			\begin{align}
				\label{Q2-est1}
				\|Q_2[u,g]\|_{**, \partial \Omega_0} &\leq C \varepsilon^{\Theta}\|u\|_{*,\Omega_0}+ C \|u\|_{*,\Omega_0}^2,\quad u\in B,
			\end{align}
			\begin{align}
				\label{Q2-est2}
				\|Q_2[u,g]-Q_2[v,g]\|_{**, \partial \Omega_0 } &\leq C \varepsilon^{\Theta}\|u-v\|_{*,\Omega_0}+ C \varepsilon^{\sigma(1-b)} |\log\varepsilon| \|u-v\|_{*,\Omega_0},
			\end{align}
			for $u,v\in B$.
			We also claim that for $u\in B$ and $g_1,\tilde g_1\in I$,
			\begin{align}
				\label{Q2-est3}
				\|Q_2[u,\varepsilon g_0+g_1]-Q_2[u,\varepsilon g_0+\tilde g_1]\|_{**, \partial \Omega_0 } &\leq C \varepsilon^{\sigma(1-b)} \, |\log\varepsilon| \, |g_1-\tilde g_1|.
			\end{align}
			To prove these claims, we will check that the operators $\hat Q_{2,a},\dots,\hat Q_{2,f}$ satisfy the corresponding versions of \eqref{Q2-est1} \eqref{Q2-est2}, and \eqref{Q2-est3}.
			
			\medskip
			Let us consider $\hat Q_{2,a}(\hat u,h)$ \eqref{Q2a}, and in particular $\hat B_h(\hat u,\hat u)$. The main term in $\hat B_h(\hat u,\hat u)$ is $|\nabla u|^2$. We have
			\begin{alignat*}{2}
				(1+|y|)^\sigma |\nabla u(y)|^2
				& \leq (1+|y|)^{-\sigma} \|u\|_{*,\Omega_0}^2,
                &\quad&\text{for } |y|\leq\frac{\delta_1}{\varepsilon}
                \\
				(\frac{\delta_1}{\varepsilon})^\sigma e^{\mu\varepsilon|y|} |\nabla u(y)|^2
				& \leq (\frac{\varepsilon}{\delta_1})^\sigma e^{-\mu\varepsilon|y|} \|u\|_{*,\Omega_0}^2,
                &\quad&\text{for } |y|\geq\frac{\delta_1}{\varepsilon}.
			\end{alignat*}
			A similar estimate is valid for the H\"older part and we get
			\begin{align*}
				\| |\nabla u|^2 \|_{**, \partial \Omega_0} \leq C \|u\|_{*,\Omega_0}^2.
			\end{align*}
			Similarly
			\begin{align*}
				\| \hat B_h(\hat u,\hat u) \|_{**, \partial \Omega_0}  \leq C \|u\|_{*,\Omega_0}^2,\quad u\in B.
			\end{align*}
			The other terms in $\hat Q_{2,a}(\hat u,h)$ are estimated similarly.
			
			\medskip
			Let us consider $\hat Q_{2,b}(\hat u,h)$ \eqref{Q2b}.
			The main contribution is $B_h^{(1)}( \hat\psi_0[h],\hat\psi_0[h])$.
			We claim that for $u\in B$ and $h=\mathbf{h}(u,s)$, we have
			\begin{align*}
				\|\hat B_h^{(1)}(\hat\psi_0[h],\hat\psi_0[h])\|_{**, \partial \Omega_0}\leq C \varepsilon |\log\varepsilon| \|u\|_{*,\Omega_0}.
			\end{align*}
			Expanding in $h$, the main term is $B_h^{(1)}( \hat\psi_0[0],\hat\psi_0[0])$.
			We recall that 
			\begin{align*}
				\hat B_h^{(1)}( \hat\psi_0[0],  \hat\psi_0[0])
				&=
				2 \kappa h  \partial_s \hat\psi_0[0] \partial_s \hat\psi_0[0]
				- 2 h'  \partial_s \hat\psi_0[0] \partial_t \hat  \psi_0[0].
			\end{align*}
			
			For the estimate, we use \eqref{dpsi0R1}, \eqref{dpsi0R2}, \eqref{dpsi0R3} and summarize the result for $j=1$:
			\begin{align}
				\label{dPsi0}
				|\psi_0^{(1)}(y)| \leq 
				\begin{cases}
					C \varepsilon,&|y|\leq\varepsilon^{-b},
					\\
					C \varepsilon |\log\varepsilon|   , & \varepsilon^{-b} \leq |y| \leq2\varepsilon^{-b},
					\\
					\frac{|\log\varepsilon|}{|y|^{2}} , &  2\varepsilon^{-b}\leq|y|\leq\frac{\delta_1}{\varepsilon},
					\\
					\varepsilon^{2} e^{-\mu \varepsilon |y|},&|y|\geq\frac{\delta_1}{\varepsilon}.
				\end{cases}
			\end{align}
			Using \eqref{est-kappa} and \eqref{dPsi0} we get
            \begin{align*}
                &
    			\begin{aligned}
    				(1+|y|)^\sigma
    				|\kappa h\partial_s \hat\psi_0[0] \partial_s \hat\psi_0[0]|
    				&\leq (1+|y|)^\sigma (\frac{1}{(1+|y|)^2}+\varepsilon)(1+|y|)^{1-\sigma}
    				\|h\|_{*,\partial\Omega_0} \varepsilon^2
    				\\
    				&\leq\varepsilon^{2}\|h\|_{*,\partial\Omega_0}
    				\quad\text{for } |y|\leq\varepsilon^{-\frac{1}{2}}
                    ,
    			\end{aligned}
                \displaybreak[1]\\[1ex]
                &
    			\begin{aligned}
    				(1+|y|)^\sigma
    				|\kappa h\partial_s \hat\psi_0[0] \partial_s \hat\psi_0[0]|
    				&\leq (1+|y|)^\sigma (\frac{1}{(1+|y|)^2}+\varepsilon)(1+|y|)^{1-\sigma}
    				\|h\|_{*,\partial\Omega_0} \varepsilon^2
    				\\
    				&\leq \varepsilon^3 (1+|y|)\|h\|_{*,\partial\Omega_0}
    				\\
    				&\leq \varepsilon^{3-b}\|h\|_{*,\partial\Omega_0}
    				\quad\text{for } \varepsilon^{-\frac{1}{2}}\leq|y|\leq\varepsilon^{-b}
                    ,
    			\end{aligned}
                \displaybreak[1]\\[1ex]
                &
    			\begin{aligned}
    				(1+|y|)^\sigma |\kappa h\partial_s \hat\psi_0 \partial_s \hat\psi_0|
    				&\leq (1+|y|)^\sigma (\frac{1}{(1+|y|)^2}+\varepsilon)(1+|y|)^{1-\sigma}
    				\|h\|_{*,\partial\Omega_0} (\frac{|\log\varepsilon|}{|y|^2})^2
    				\\
    				&\leq\varepsilon\frac{|\log\varepsilon|^2}{|y|^3}\|h\|_{*,\partial\Omega_0}
    				\\
    				&\leq \varepsilon^{1+3b}|\log\varepsilon|^2\|h\|_{*,\partial\Omega_0}
    				\quad\text{for }2\varepsilon^{-b}\leq|y|\leq\frac{\delta_1}{\varepsilon}
                    ,
    			\end{aligned}
                \displaybreak[1]\\[1ex]
                &
    			\begin{aligned}
    				(1+|y|)^\sigma |\kappa h\partial_s \hat\psi_0[0] \partial_s \hat\psi_0[0]|
    				&\leq (1+|y|)^\sigma (\frac{1}{(1+|y|)^2}+\varepsilon)(1+|y|)^{1-\sigma}
    				\|h\|_{*,\partial\Omega_0} (\varepsilon|\log\varepsilon|)^2
    				\\
    				&\leq \varepsilon^{3}|\log\varepsilon|^2 (1+|y|) \|h\|_{*,\partial\Omega_0} 
    				\\
    				&\leq \varepsilon^{3-b}|\log\varepsilon|^2 \|h\|_{*,\partial\Omega_0}
    				\quad\text{for }\varepsilon^{-b}\leq|y|\leq2\varepsilon^{-b}
                    ,
    			\end{aligned}
    			\intertext{and}
                &
    			\begin{aligned}
    				(\frac{\delta_1}{\varepsilon})^\sigma e^{\mu\varepsilon|y|}
    				|\kappa h\partial_s \hat\psi_0[0] \partial_s \hat\psi_0[0]|
    				&\leq (\frac{\delta_1}{\varepsilon})^\sigma  e^{\mu\varepsilon|y|}(\frac{1}{(1+|y|)^2}+\varepsilon)(\frac{\delta_1}{\varepsilon})^{1-\sigma}
    				\|h\|_{*,\partial\Omega_0} (\varepsilon^2 e^{-\mu\varepsilon|y|})^2
    				\\
    				&\leq\varepsilon^4 e^{-\mu\varepsilon|y|}
    				\|h\|_{*,\partial\Omega_0}
    				\\
    				&\leq\varepsilon^4 
    				\|h\|_{*,\partial\Omega_0}
    				\quad\text{for } |y|\geq\frac{\delta_1}{\varepsilon}.
    			\end{aligned}
            \end{align*}
			Proceeding similarly for the H\"older term we get
			\begin{align}
				\label{Q2-L1-1-00}
				\|\kappa h \partial_s \hat\psi_0[0] \partial_s \hat\psi_0[0]\|_{**, \partial \Omega_0} \leq C\varepsilon^2\|h\|_{*,\partial\Omega_0}.
			\end{align}
			
			Analogously,
            \begin{align*}
                &
    			\begin{aligned}
    				(1+|y|)^\sigma
    				|h'\partial_s \hat\psi_0[0] \partial_t \hat\psi_0[0]|
    				&\leq \varepsilon (1+|y|)^\sigma (1+|y|)^{-\sigma}
    				\|h\|_{*,\partial\Omega_0} 
    				\\
    				&\leq\varepsilon\|h\|_{*,\partial\Omega_0} \quad\text{for } |y|\leq\varepsilon^{-b},
    			\end{aligned}
                \displaybreak[1]\\[1ex]
                &
    			\begin{aligned}
    				(1+|y|)^\sigma |h'\partial_s \hat\psi_0 \partial_t \hat\psi_0|
    				&(1+|y|)^\sigma (1+|y|)^{-\sigma}
    				\|h\|_{*,\partial\Omega_0} \varepsilon|\log\varepsilon|
    				\\
    				&\leq \varepsilon|\log\varepsilon| \|h\|_{*,\partial\Omega_0}
    				\quad\text{for }\varepsilon^{-b}\leq|y|\leq2\varepsilon^{-b},
    			\end{aligned}
                \displaybreak[1]\\[1ex]
                &
    			\begin{aligned}
    				(1+|y|)^\sigma |h'\partial_s \hat\psi_0 \partial_t \hat\psi_0|
    				&\leq(1+|y|)^\sigma (1+|y|)^{-\sigma}
    				\|h\|_{*,\partial\Omega_0} \frac{|\log\varepsilon|}{|y|^2}
    				\\
    				&\leq\varepsilon^{2b}|\log\varepsilon|\|h\|_{*,\partial\Omega_0}\quad\text{for } 2\varepsilon^{-b}\leq|y|\leq\frac{\delta_1}{\varepsilon},
                \end{aligned}
                \intertext{and}
                &
    			\begin{aligned}
    				(\frac{\delta_1}{\varepsilon})^\sigma e^{\mu\varepsilon|y|} |h'\partial_s \hat\psi_0 \partial_t \hat\psi_0|
    				&\leq (\frac{\delta_1}{\varepsilon})^\sigma e^{\mu\varepsilon|y|} 
    				(\frac{\varepsilon}{\delta_1})^{1+\sigma}e^{-\mu\varepsilon|y|} \varepsilon^2 e^{-\mu\varepsilon|y|} \|h\|_{*,\partial\Omega_0}
    				\\
    				&\leq \varepsilon^3 \|h\|_{*,\partial\Omega_0}\quad\text{for } |y|\geq\frac{\delta_1}{\varepsilon}.
    			\end{aligned}
            \end{align*} 
			From this and similar estimates for the H\"older part of the norm, we obtain
			\begin{align}
				\label{Q2-L1-2-00}
				\|h'\partial_s \hat\psi_0 \partial_t \hat\psi_0\|_{**, \partial \Omega_0}\leq \varepsilon|\log\varepsilon|  \|h\|_{*,\partial\Omega_0}.
			\end{align}
			Combining \eqref{Q2-L1-1-00} and \eqref{Q2-L1-2-00} we get
			\begin{align*}
				\|\hat B_h^{(1)}( \hat\psi_0[0],  \hat\psi_0[0])\|_{**, \partial \Omega_0}\leq C \varepsilon |\log\varepsilon| \|h\|_{*,\partial\Omega_0}.
			\end{align*}
			Similar computations are valid for the other terms in $\hat Q_{2,b}(\hat u,h)$.
			
			\medskip
			Next we turn to $\hat Q_{2,c}(\hat u,h)$ \eqref{Q2c} for $u\in B$ and $h=\mathbf{h}(u,s)$.
			Consider $\hat B_h^{(1)}(\hat u,\hat\psi_0[h])$, whose main term is $\hat B_h^{(1)}(\hat u,\hat\psi_0[0])$.
			We have
			\begin{align}
				\nonumber
				\hat B_h^{(1)}(\hat u,\hat\psi_0[0])
				=2 \kappa h  \partial_s \hat u \partial_s \hat\psi_0[0]
				- h'  \partial_s \hat u \partial_t\hat\psi_0[0]
				- h' \partial_s\hat\psi_0[0] \partial_t \hat u.
			\end{align}
			Using \eqref{est-kappa} and \eqref{dPsi0}, we get
            \begin{align*}
                &
    			\begin{aligned}
    				&
    				(1+|y|)^\sigma |\kappa h  \partial_s \hat u \partial_s \hat\psi_0[0]|
    				\\
    				&\quad\leq C(1+|y|)^\sigma (\frac{1}{(1+|y|)^2}+\varepsilon)(1+|y|)^{1-2\sigma}
    				\|u\|_{*,\Omega_0}\|h\|_{*,\partial\Omega_0} \varepsilon
    				\\
    				&\quad\leq C\varepsilon
    				\|u\|_{*,\Omega_0}\|h\|_{*,\partial\Omega_0},\quad|y|\leq\varepsilon^{-\frac{1}{2}}
                    ,
    			\end{aligned}
                \displaybreak[1]\\[1ex]  
                &
    			\begin{aligned}
    				&
    				(1+|y|)^\sigma |\kappa h  \partial_s \hat u \partial_s \hat\psi_0[0]|
    				\\
    				&\quad\leq C(1+|y|)^\sigma (\frac{1}{(1+|y|)^2}+\varepsilon)(1+|y|)^{1-2\sigma}
    				\|u\|_{*,\Omega_0}\|h\|_{*,\partial\Omega_0} \varepsilon
    				\\
    				&\quad\leq C\varepsilon^{2-b(1-\sigma)}
    				\|u\|_{*,\Omega_0}\|h\|_{*,\partial\Omega_0},\quad\varepsilon^{-\frac{1}{2}}\leq|y|\leq\varepsilon^{-b}
                    ,
    			\end{aligned}
                \displaybreak[1]\\[1ex]  
                &
    			\begin{aligned}
    				&
    				(1+|y|)^\sigma |\kappa h  \partial_s \hat u \partial_s \hat\psi_0[0]|
    				\\
    				&\quad\leq C(1+|y|)^\sigma (\frac{1}{(1+|y|)^2}+\varepsilon)(1+|y|)^{1-2\sigma}
    				\|u\|_{*,\Omega_0}\|h\|_{*,\partial\Omega_0} \varepsilon|\log\varepsilon|
    				\\
    				&\quad\leq C\varepsilon^{2-b(1-\sigma)}|\log\varepsilon|
    				\|u\|_{*,\Omega_0}\|h\|_{*,\partial\Omega_0},\quad\varepsilon^{-b}\leq|y|\leq2\varepsilon^{-b}
                    ,
    			\end{aligned}
                \displaybreak[1]\\[1ex]  
                &
    			\begin{aligned}
    				&
    				(1+|y|)^\sigma |\kappa h  \partial_s \hat u \partial_s \hat\psi_0[0]|
    				\\
    				&\quad\leq C(1+|y|)^\sigma (\frac{1}{(1+|y|)^2}+\varepsilon)(1+|y|)^{1-2\sigma}
    				\|u\|_{*,\Omega_0}\|h\|_{*,\partial\Omega_0} \frac{|\log\varepsilon|}{|y|^2}
    				\\
    				&\quad\leq C\varepsilon^{2+\sigma}|\log\varepsilon|
    				\|u\|_{*,\Omega_0}\|h\|_{*,\partial\Omega_0},\quad2\varepsilon^{-b}\leq|y|\leq\frac{\delta_1}{\varepsilon}
                    ,
    			\end{aligned}
                \displaybreak[1]\\[1ex]  
                &
    			\begin{aligned}
    				&
    				(\frac{\delta_1}{\varepsilon})^\sigma e^{\mu\varepsilon|y|}|\kappa h  \partial_s \hat u \partial_s \hat\psi_0[0]|
    				\\
    				&\quad\leq C (\frac{\varepsilon}{\delta_1})^{1+\sigma} e^{-\mu\varepsilon|y|} (\frac{1}{(1+|y|)^2}+\varepsilon)
    				\|u\|_{*,\Omega_0}\|h\|_{*,\partial\Omega_0} \varepsilon^2e^{-\mu\varepsilon|y|}
    				\\
    				&\quad\leq C\varepsilon^{4+\sigma}\|u\|_{*,\Omega_0}\|h\|_{*,\partial\Omega_0},\quad|y|\geq\frac{\delta_1}{\varepsilon}
                    ,
    			\end{aligned}
    			\intertext{and}
                &
    			\begin{aligned}
    				(1+|y|)^\sigma |h'\partial_s\hat u\partial_t\hat\psi_0[0]|
    				&\leq C(1+|y|)^\sigma (1+|y|)^{-2\sigma}
    				\|u\|_{*,\Omega_0}\|h\|_{*,\partial\Omega_0} 
    				\\
    				&\leq C
    				\|u\|_{*,\Omega_0}\|h\|_{*,\partial\Omega_0},\quad|y|\leq\frac{\delta_1}{\varepsilon}
                    ,
    			\end{aligned}
                \displaybreak[1]\\[1ex]  
                &
    			\begin{aligned}
    				(\frac{\delta_1}{\varepsilon})^\sigma e^{\mu\varepsilon|y|} |h'\partial_s\hat u\partial_t\hat\psi_0[0]|
    				&\leq C (\frac{\varepsilon}{\delta_1})^{1+\sigma} e^{-\mu\varepsilon|y|} 
    				\|u\|_{*,\Omega_0}\|h\|_{*,\partial\Omega_0} \varepsilon^2e^{-\mu\varepsilon|y|}
    				\\
    				&\leq C\varepsilon^{1+\sigma}\|u\|_{*,\Omega_0}\|h\|_{*,\partial\Omega_0},\quad|y|\geq\frac{\delta_1}{\varepsilon}.
    			\end{aligned}
            \end{align*}
			Arguing similarly for the H\"older part we find that
			\begin{align*}
				\|\hat B_h^{(1)}(\hat u,\hat\psi_0[0])\|_{**, \partial \Omega_0} \leq C \|u\|_{*,\Omega_0}^2,\quad u\in B.
			\end{align*}
			The other terms in $\hat Q_{2,c}(\hat u,h)$ are estimated similarly.
			
			\medskip
			Let us consider $\hat Q_{2,d}(\hat u,h)$ defined in \eqref{Q2d}, in particular $\hat B_h^{(1)}(\hat\Psi[0],\hat\Psi[0])$.
			We claim that for $u\in B$ and $h=\mathbf{h}(u,s)$, 
			\begin{align}
				\label{abd1}
				\|\hat B_h^{(1)}(\hat\Psi[0],\hat\Psi[0]) \|_{**, \partial \Omega_0 }
				\leq C \varepsilon^{2(1-\sigma b)}|\log\varepsilon|^2 \|u\|_{*,\Omega_0}.
			\end{align}
			Indeed, 
			\begin{align*}
				\hat \Psi[0]=u_0+u_1
			\end{align*}
			with $u_0$ and $u_1$ defined in \eqref{equ0} and \eqref{equ1}.
			We use Lemmas~\ref{lem:error-u1} and \ref{lem:error-u0} to get 
            \begin{align*}
                &
    			\begin{aligned}
    				&(1+|y|)^\sigma
    				|\kappa h (\partial_s \hat\Psi[0])^2|
    				\\
    				&\quad\leq (1+|y|)^\sigma (\frac{1}{|y|^2}+\varepsilon) (1+|y|)^{1-\sigma } (\varepsilon^{1-\sigma b} |\log\varepsilon| (1+|y|)^{-\sigma})^2 \|h\|_{*,\partial\Omega_0}
    				\\
    				&\quad\leq \varepsilon^{2(1-\sigma b)}|\log\varepsilon|^2 \|h\|_{*,\partial\Omega_0}
    				\quad\text{for } |y|\leq \varepsilon^{-\frac{1}{2}},
    			\end{aligned}
                \displaybreak[1]\\[1ex] &
    			\begin{aligned}
    				&(1+|y|)^\sigma
    				|\kappa h (\partial_s \Psi[0])^2|
    				\\
    				&\quad\leq (1+|y|)^\sigma (\frac{1}{|y|^2}+\varepsilon) (1+|y|)^{1-\sigma } (\varepsilon^{1-\sigma b} |\log\varepsilon| (1+|y|)^{-\sigma})^2 \|h\|_{*,\partial\Omega_0}
    				\\
    				&\quad\leq \varepsilon^{2(1-\sigma b)}|\log\varepsilon|^2 \|h\|_{*,\partial\Omega_0}
    				\quad\text{for } \varepsilon^{-\frac{1}{2}}\leq|y|\leq\frac{\delta_1}{\varepsilon},
    			\end{aligned}
                \displaybreak[1]\\[1ex] &
    			\begin{aligned}
    				&(\frac{\varepsilon}{\delta_1})^{-\sigma}e^{\mu\varepsilon|y|}
    				|\kappa h (\partial_s \Psi[0])^2|
    				\\
    				&\quad\leq (\frac{\varepsilon}{\delta_1})^{-\sigma}e^{\mu\varepsilon|y|}(\frac{1}{|y|^2}+\varepsilon) (1+|y|)^{1-\sigma } (\varepsilon^{1-\sigma b} |\log\varepsilon| (\frac{\varepsilon}{\delta_1})^{\sigma}e^{-\mu\varepsilon|y|} )^2 \|h\|_{*,\partial\Omega_0}
    				\\
    				&\quad\leq \varepsilon^{2(1-\sigma b)}|\log\varepsilon|^2 \|h\|_{*,\partial\Omega_0}\quad\text{for } |y|\geq\frac{\delta_1}{\varepsilon},
    			\end{aligned}
                \displaybreak[1]\\[1ex] &
    			\begin{aligned}
    				&(1+|y|)^\sigma |h'  \partial_s \hat\Psi[0]\partial_t \hat\Psi[0]|
    				\\
    				&\quad\leq C (1+|y|)^\sigma  (1+|y|)^{-\sigma } (\varepsilon^{1-\sigma b} |\log\varepsilon| (1+|y|)^{-\sigma})^2 \|h\|_{*,\partial\Omega_0}
    				\\
    				&\quad\leq C \varepsilon^{2(1-\sigma b)} |\log\varepsilon|^2\|h\|_{*,\partial\Omega_0}
    				\quad\text{for } |y|\leq \varepsilon^{-\frac{1}{2}},
    			\end{aligned}
                \displaybreak[1]\\[1ex] &
    			\begin{aligned}
    				&(1+|y|)^\sigma| h'  \partial_s \hat\Psi[0] \partial_t \hat\Psi[0]|
    				\\
    				&\quad\leq C\leq (1+|y|)^\sigma (1+|y|)^{-\sigma } (\varepsilon^{1-\sigma b} |\log\varepsilon| (1+|y|)^{-\sigma})^2 \|h\|_{*,\partial\Omega_0}
    				\\
    				&\quad\leq C \varepsilon^{2(1-\sigma b)}|\log\varepsilon|^2 \|h\|_{*,\partial\Omega_0}
    				\quad\text{for } \varepsilon^{-\frac{1}{2}}\leq|y|\leq\frac{\delta_1}{\varepsilon},
    			\end{aligned}
                \displaybreak[1]\\[1ex] &
    			\begin{aligned}
    				&(\frac{\varepsilon}{\delta_1})^{-\sigma}e^{\mu\varepsilon|y|}
    				|h'\partial_s\hat\Psi[0] \partial_t \hat\Psi[0]|
    				\\
    				&\quad\leq 
    				(\varepsilon^{1-\sigma b} |\log\varepsilon| (\frac{\varepsilon}{\delta_1})^{\sigma}e^{-\mu\varepsilon|y|} )^2 \|h\|_{*,\partial\Omega_0}
    				\\
    				&\quad\leq \varepsilon^{2(1-\sigma b)}|\log\varepsilon|^2 \|h\|_{*,\partial\Omega_0}\quad\text{for } |y|\geq\frac{\delta_1}{\varepsilon}.
    			\end{aligned}
            \end{align*}
			Arguing similarly for the H\"older part we obtain \eqref{abd1}.
			The other terms in $\hat Q_{2,d}(\hat u,h)$ are estimated similarly.
			
			\medskip
			Let us consider $\hat Q_{2,e}(\hat u,h,g)$ defined in \eqref{Q2e}, with $u\in B$ and $h=\mathbf{h}(u,y)$. The main term in $\hat B_h^{(1)}( \hat \Psi[0] , \hat\psi_0[h] )$ is:
			\begin{align*}
				\hat B_h^{(1)}(\hat\Psi[0],\hat\psi_0[0]) 
				&=2 \kappa h  \partial_s \hat \Psi[0] \partial_s\hat\psi_0[0] 
				-  h'  \partial_s\hat \Psi[0] \partial_t \hat\psi_0[0] 
				-  h'  \partial_t\hat \Psi[0] \partial_s \hat\psi_0[0] .
			\end{align*}
			By Lemmas~\ref{lem:error-u1} and \ref{lem:error-u0}, and \eqref{dPsi0}, we have 
            \begin{align*}
                &
    			\begin{aligned}
    				&(1+|y|)^\sigma |\kappa h  \partial_s \hat \Psi[0] \partial_s\hat\psi_0[0] |
    				\\
    				&\quad\leq
    				(1+|y|)^\sigma (\frac{1}{|y|^2}+\varepsilon) (1+|y|)^{1-\sigma } \varepsilon^{1-\sigma b} |\log\varepsilon| (1+|y|)^{-\sigma} 
    				\varepsilon\|h\|_{*,\partial\Omega_0}
    				\\
    				&\quad\leq \varepsilon^{2-\sigma b}|\log\varepsilon|\|h\|_{*,\partial\Omega_0}
    				\quad\text{for } |y|\leq \varepsilon^{-\frac{1}{2}},
    			\end{aligned}
                \displaybreak[1]\\[1ex] &
    			\begin{aligned}
    				&(1+|y|)^\sigma |\kappa h  \partial_s \hat \Psi[0] \partial_s\hat\psi_0[0] |
    				\\
    				&\quad\leq  (1+|y|)^\sigma (\frac{1}{|y|^2}+\varepsilon) (1+|y|)^{1-\sigma } \varepsilon^{1-\sigma b} |\log\varepsilon| (1+|y|)^{-\sigma} 
    				\varepsilon\|h\|_{*,\partial\Omega_0}
    				\\
    				&\quad\leq  (1+|y|)^\sigma \varepsilon^2 (1+|y|)^{1-\sigma } \varepsilon^{1-\sigma b} |\log\varepsilon| (1+|y|)^{-\sigma} \|h\|_{*,\partial\Omega_0}
    				\\
    				&\quad\leq \varepsilon^{3-b}|\log\varepsilon|\|h\|_{*,\partial\Omega_0}
    				\quad\text{for } \varepsilon^{-\frac{1}{2}}\leq|y|\leq \varepsilon^{-b},
    			\end{aligned}
                \displaybreak[1]\\[1ex] &
    			\begin{aligned}
    				&(1+|y|)^\sigma |\kappa h  \partial_s \hat \Psi[0] \partial_s\hat\psi_0[0] |
    				\\
    				&\quad\leq  (1+|y|)^\sigma (\frac{1}{|y|^2}+\varepsilon) (1+|y|)^{1-\sigma } \varepsilon^{1-\sigma b} |\log\varepsilon| (1+|y|)^{-\sigma} 
    				\varepsilon|\log\varepsilon| \|h\|_{*,\partial\Omega_0}
    				\\
    				&\quad\leq  (1+|y|)^\sigma \varepsilon^2 (1+|y|)^{1-\sigma } \varepsilon^{1-\sigma b} |\log\varepsilon|^2 (1+|y|)^{-\sigma} \|h\|_{*,\partial\Omega_0}
    				\\
    				&\quad\leq \varepsilon^{3-b}|\log\varepsilon|^2\|h\|_{*,\partial\Omega_0}
    				\quad\text{for } \varepsilon^{-b}\leq|y|\leq 2\varepsilon^{-b},
    			\end{aligned}
                \displaybreak[1]\\[1ex] &
    			\begin{aligned}
    				&(1+|y|)^\sigma |\kappa h  \partial_s \hat \Psi[0] \partial_s\hat\psi_0[0] |
    				\\
    				&\quad\leq  (1+|y|)^\sigma (\frac{1}{|y|^2}+\varepsilon) (1+|y|)^{1-\sigma } \varepsilon^{1-\sigma b} |\log\varepsilon| (1+|y|)^{-\sigma} 
    				\frac{|\log\varepsilon|}{|y|^2} \|h\|_{*,\partial\Omega_0}
    				\\
    				&\quad\leq  (1+|y|)^\sigma \varepsilon^2 (1+|y|)^{1-\sigma } \varepsilon^{1-\sigma b} |\log\varepsilon| (1+|y|)^{-\sigma} \|h\|_{*,\partial\Omega_0}
    				\\
    				&\quad\leq \varepsilon^{2+b}|\log\varepsilon|\|h\|_{*,\partial\Omega_0}
    				\quad\text{for } 2\varepsilon^{-b}\leq|y|\leq \frac{\delta_1}{\varepsilon}
    			\end{aligned}
    			\intertext{and}
                &
    			\begin{aligned}
    				&(\frac{\delta_1}{\varepsilon})^\sigma e^{\mu\varepsilon|y|}
    				|\kappa h  \partial_s \hat \Psi[0] \partial_s\hat\psi_0[0] |
    				\\
    				&\quad\leq 
    				(\frac{\delta_1}{\varepsilon})^\sigma e^{\mu\varepsilon|y|} (\frac{1}{|y|^2}+\varepsilon) (\frac{\delta_1}{\varepsilon})^{1-\sigma} e^{-\mu\varepsilon|y|} \varepsilon^{1-\sigma b}|\log\varepsilon|
    				(\frac{\delta_1}{\varepsilon})^{1-\sigma} e^{-\mu\varepsilon|y|} 
    				\varepsilon^2e^{-\mu\varepsilon|y|}
    				\|h\|_{*,\partial\Omega_0}
    				\\
    				&\quad\leq 
    				C\varepsilon^{3+\sigma(1-b)}
    				\|h\|_{*,\partial\Omega_0}
    				\quad\text{for } |y|\geq \frac{\delta_1}{\varepsilon}.
    			\end{aligned}
            \end{align*}
			With a similar proof for the H\"older part we get 
			\[
			\| \kappa h  \partial_s \hat \Psi[0] \partial_s\hat\psi_0[0] \|_{**, \partial \Omega_0}
			\leq C \varepsilon^{2-\sigma b}\|h\|_{*,\partial\Omega_0}.
			\]
			
			We use Lemmas~\ref{lem:error-u1} and \ref{lem:error-u0} to estimate $ h'  \partial_s\hat \Psi[0] \partial_t \hat\psi_0[0] $:
			\begin{align*}
				(1+|y|)^\sigma |h'  \partial_s\hat \Psi[0] \partial_t \hat\psi_0[0] |
				\leq C \varepsilon^{1-\sigma b}|\log\varepsilon|\|h\|_{*,\partial\Omega_0}
				\quad\text{for } |y|\leq\frac{\delta_1}{\varepsilon}
			\end{align*}
			and
			\begin{align*}
				(\frac{\delta_1}{\varepsilon})^\sigma e^{\mu\varepsilon|y|}
				|h'  \partial_s\hat \Psi[0] \partial_t \hat\psi_0[0] |
				\leq 
				(\frac{\delta_1}{\varepsilon})^\sigma 
				\|h\|_{*,\partial\Omega_0}
				\quad\text{for } |y|\geq\frac{\delta_1}{\varepsilon}.
			\end{align*}
			Estimating similarly the H\"older term in the norm, we get
			\begin{align*}
				\|h'  \partial_s\hat \Psi[0] \partial_t \hat\psi_0[0]\|_{**, \partial \Omega_0}
				\leq C \varepsilon^{1-\sigma b}|\log\varepsilon|\|h\|_{*,\partial\Omega_0}.
			\end{align*}
			The other terms in $\hat Q_{2,e}(\hat u,h,g)$ are estimated similarly.
			
			\medskip
			Let us consider finally $\hat Q_{2,f}(\hat u,h,g)$ \eqref{Q2f}, where, we recall, $g=\varepsilon g_0 + g_1$.
			For $u\in B$ and $h=\mathbf{h}(u,y)$
			\begin{align*}
				(1+|y|)^\sigma |\varepsilon g h|
				&\leq C \varepsilon^2 (1+|y|)^\sigma (1+|y|)^{1-\sigma}\|h\|_{*,\partial\Omega_0}
				\\
				&\leq C \varepsilon \|h\|_{*,\partial\Omega_0} \quad\text{for }|y|\leq\frac{\delta_1}{\varepsilon}
			\end{align*}
			and
			\begin{align*}
				(\frac{\delta_1}{\varepsilon})^\sigma e^{\mu\varepsilon|y|}
				|\varepsilon g h|
				&\leq C \varepsilon^2 (\frac{\delta_1}{\varepsilon})^\sigma (\frac{\delta_1}{\varepsilon})^{1-\sigma}\|h\|_{*,\partial\Omega_0}
				\\
				&\leq C \varepsilon \|h\|_{*,\partial\Omega_0}
				\quad\text{for }|y|\geq\frac{\delta_1}{\varepsilon}.
			\end{align*}
			With a similar estimate for the H\"older part, we get
			\begin{align}
				\label{estQ2f}
				\|\hat Q_{2,f}(\hat u,h,g)\|_{**, \partial\Omega_0}\leq C \varepsilon \|u\|_{*,\Omega_0}.
			\end{align}
			
			\medskip
			Among the other terms in $Q_2[u,g_1]$ we have $( \varepsilon\omega + \Delta \psi_0) \mathbf{h}(u,y)$. Using \eqref{decomp-err-la} and writing $h=\mathbf{h}(u,y)$ we get
			\begin{align*}
				\|( \varepsilon\omega + \Delta \psi_0) \mathbf{h}(u,y)\|_{**, \partial \Omega_0}
				&\leq C\varepsilon|\log\varepsilon|\|h\|_{*,\partial\Omega_0}.
			\end{align*}
			
			The proof of \eqref{Q2-est3} is similar to the estimate \eqref{estQ2f}.
			Let $u\in B$ and $h=\mathbf{h}(u,y)$. 
			Let $g_1,\tilde g_1\in I$. Then 
			\begin{align*}
				Q_2[u,\varepsilon g_0+g_1](y)-Q_2[u,\varepsilon g_0+\tilde g_1](y)
				=-2 \varepsilon \nu(y)\cdot e_2 h(y) (g_1-\tilde g_1).
			\end{align*}
			We have
			\begin{align*}
				(1+|y|)^\sigma \varepsilon |h| |g_1-\tilde g_1|
				&\leq C \varepsilon (1+|y|)^\sigma (1+|y|)^{1-\sigma}\|h\|_{*,\partial\Omega_0} |g_1-\tilde g_1|
				\\
				&\leq C \|h\|_{*,\partial\Omega_0} |g_1-\tilde g_1| \quad\text{for }|y|\leq\frac{\delta_1}{\varepsilon}
			\end{align*}
			and
			\begin{align*}
				(\frac{\delta_1}{\varepsilon})^\sigma e^{\mu\varepsilon|y|}
				\varepsilon |h| |g_1-\tilde g_1|
				&\leq C \varepsilon (\frac{\delta_1}{\varepsilon})^\sigma (\frac{\delta_1}{\varepsilon})^{1-\sigma}\|h\|_{*,\partial\Omega_0}|g_1-\tilde g_1|
				\\
				&\leq C \|h\|_{*,\partial\Omega_0}|g_1-\tilde g_1|
				\quad\text{for }|y|\geq\frac{\delta_1}{\varepsilon}.
			\end{align*}
			With a similar estimate for the H\"older part, we get
			\begin{align}
				\nonumber
				\|Q_2[u,\varepsilon g_0+g_1]-Q_2[u,\varepsilon g_0+\tilde g_1]\|_{**, \partial \Omega_0 } &\leq  C \|h\|_{*,\partial\Omega_0} |g_1-\tilde g_1|
			\end{align}
			and using \eqref{h-from-u} and $u\in B$ we get \eqref{Q2-est3}.
			
			\subsubsection{Existence of a fixed point}
			Now we can prove that $\mathcal{F}$ maps $B\times I$ into itself.
			Indeed, let $(u,g_1)\in B\times I$ and $(\bar u,\bar g_1) = \mathcal{F}(u,g_1)$.
			By Proposition~\ref{prop:main-linear}, 
			\begin{align*}
				\|\bar u\|_{*,\Omega_0} + \varepsilon^{-1} |\bar g_1|&\leq C \varepsilon^{-1+\sigma}( \|Q_1[u]\|_{**, \Omega_0 }+\|E_0(g_1)\|_{**, \partial \Omega_0}+\|Q_2[u,\varepsilon g_0+g_1]\|_{**, \partial \Omega_0 }).
			\end{align*}
			By \eqref{estQ1}, \eqref{E0g1}
			\begin{align*}
				\|Q_1[u]\|_{**, \Omega_0 } & \leq C  \|u\|_{*,\Omega_0}^2 \leq C M^2 \varepsilon^{2\sigma(1-b)}|\log\varepsilon|^2
				\\
				\|E_0(g_1)\|_{**, \partial \Omega_0} &\leq C\varepsilon^{1-\sigma b}|\log\varepsilon|
				\\
				\|Q_2[u,\varepsilon g_0+g_1]\|_{**, \partial \Omega_0 }&\leq C \varepsilon^{\Theta}\|u\|_{*,\Omega_0}+ C \|u\|_{*,\Omega_0}^2
				\\
				&\leq C M \varepsilon^{\Theta+\sigma(1-b)}|\log\varepsilon|+C M^2 \varepsilon^{2\sigma(1-b)}|\log\varepsilon|^2.
			\end{align*}
			Therefore
			\begin{align*}
				&
				\|\bar u\|_{*,\Omega_0} + \varepsilon^{-1} |\bar g_1|
				\\
				&\quad\leq C \varepsilon^{-1+\sigma}( \varepsilon^{1-\sigma b}|\log\varepsilon|+M \varepsilon^{\Theta+\sigma(1-b)}|\log\varepsilon|+ M^2\varepsilon^{2\sigma(1-b)}|\log\varepsilon|^2).
			\end{align*}
			We assume in what follows that
			\begin{align}
				\label{sigma-b}
				\sigma>\frac1{2-b}
			\end{align}
			(note that $\frac1{2-b}<1$ so there is a non-empty range for $\sigma$).
			Then, using that $\Theta>1-\sigma$, we see that there is $M>0$ so that for all $\varepsilon>0$ small,
			\begin{align*}
				\|\bar u\|_{*,\Omega_0} + \varepsilon^{-1} |\bar g_1|
				\leq M \varepsilon^{\sigma(1-b)}|\log\varepsilon|.
			\end{align*}
			Since $0<m<\sigma(1-b)$, this inequality implies that $(\bar u,\bar g_1)\in B\times I$ and so $\mathcal{F}$ maps $(\bar u,\bar g_1)\in B\times I$ into itself.
			
			\medskip
			We also get that  $\mathcal{F}$ is a contraction on $B\times I$.
			Let $(u,g_1), (\bar u,\bar g_1) \in B\times I$ and let us write $(\tilde u,\tilde g_1)=\mathcal{F}(u,g_1)-\mathcal{F}(\bar u,\bar g_1)$.
			Then by \eqref{L1b}, \eqref{L2b}
			\begin{align*}
				\|\tilde u\|_{*,\Omega_0}+\varepsilon^{-1} |g_1-\bar g_1|
				&\leq C \varepsilon^{\sigma-1}
				(\|Q_1[u-\bar u]\|_{**, \Omega_0 }+\|E_0(g_1)-E_0(\bar g_1)\|_{**, \partial \Omega_0 }
				\\
				&\quad
				+\|Q_2[u,\varepsilon g_0+g_1]-Q_2[\bar u,\varepsilon g_0+g_1]\|_{**, \partial \Omega_0 }
				\\
				&\quad
				+\|Q_2[\bar u,\varepsilon g_0+g_1]-Q_2[\bar u,\varepsilon g_0+\bar g_1]\|_{**, \partial \Omega_0 }).
			\end{align*}
			Using \eqref{estQ1-lip}, we find
			\begin{align*}
				\varepsilon^{\sigma-1}\|Q_1[u-\bar u]\|_{**, \Omega_0 }
				\leq C \varepsilon^{\sigma-1 + \sigma(1-b)}|\log\varepsilon \|u-\bar u\|_{*,\Omega_0},
			\end{align*}
			using \eqref{lipE0} we find
			\begin{align*}
				\varepsilon^{\sigma-1}\|E_0(g_1)-E_0(\bar g_1)\|_{**, \partial \Omega_0 }
				\leq C \varepsilon^{\sigma-1+b(1-\sigma)} |\log\varepsilon| \, |g_1-\bar g_1|,
			\end{align*}
			from \eqref{Q2-est2} we get
			\begin{align*}
				\varepsilon^{\sigma-1}\|Q_2[u,g]-Q_2[\bar u,g]\|_{**, \partial \Omega_0 } &\leq C \varepsilon^{\sigma-1+\Theta}\|u-\bar u\|_{*,\Omega_0}
				\\
				&\quad + C \varepsilon^{\sigma-1+\sigma(1-b)}|\log\varepsilon|  \|u-\bar u\|_{*,\Omega_0},
			\end{align*}
			and from \eqref{Q2-est3} we get
			\begin{align*}
				\varepsilon^{\sigma-1}\|Q_2[\bar u,\varepsilon g_0+g_1]-Q_2[\bar u,\varepsilon g_0+\bar g_1]\|_{**, \partial \Omega_0 } &\leq C \varepsilon^{\sigma-1+\sigma(1-b)}|\log\varepsilon| |g_1-\bar g_1| .
			\end{align*}
			Combining the previous estimates we get
			\begin{align*}
				\|\tilde u\|_{*,\Omega_0}+\varepsilon^{-1} |g_1-\bar g_1|
				&\leq C ( \varepsilon^{\sigma-1+\Theta}+\varepsilon^{\sigma-1+\sigma(1-b)}|\log\varepsilon| )\|u-\bar u\|_{*,\Omega_0},
			\end{align*}
			which shows that $\mathcal{F}$ is a contraction on $B\times I$, for $\varepsilon>0$ sufficiently small.
			
			It follows that there exists a fixed point $(u,g_1)$ of $\mathcal{F}$ in $B\times I$, and hence a solution to \eqref{main-problem}.

\subsection{Avoiding self-intersection}
We show here that the solution $u$ to \eqref{main-problem} constructed in Section~\ref{sect:solve}  leads to a solution of the overdetermined problem \eqref{overd0}.
For this we will prove that $F_h$ \eqref{def:Fh}, where $h=\mathbf{h}(u)$,  is a diffeomorphism from $\overline\Omega_0$ onto $\overline\Omega_h$.

As in Section~\ref{sect:operators}, we let $\gamma\colon \R \to \R^2$ be an arc-length parametrization of  $\mathcal{S}_0$ and let $\nu = \gamma'^\perp$ where $(a,b)^\perp = (b,-a)$ so that so $\gamma$ goes ``from right to left''.
We can also assume that $\gamma$ is even with respect to 0. We recall $X_h$ defined in \eqref{Xh}.

Let  $\tilde \rho\in C^\infty(\R)$ be a function with the properties described in  \eqref{properties-tilded}.
We claim that
\begin{align}
\label{no-self}
|h(\gamma(s))|\leq \tilde \rho(s).
\end{align}
To verify \eqref{no-self}, let $s\in\R$ and $y=\gamma(s)$.
If $|y|\leq \varepsilon^{-\frac{1}{2}}|\log\varepsilon|^{\frac{1}{2}}$, then
\begin{align*}
|h(y)|&\leq \|h\|_{*,\partial\Omega_0}(1+|y|)^{1-\sigma}
\\
&\leq 2 M \varepsilon^{\sigma(1-b)}|\log\varepsilon| ( \varepsilon^{-\frac{1}{2}}|\log\varepsilon|^{\frac{1}{2}})^{1-\sigma}
\\
&= 2 M \varepsilon^{\sigma(\frac32-b)-\frac12}|\log\varepsilon|^{1+\frac12(1-\sigma)}.
\end{align*}
We are assuming \eqref{sigma-b}, which implies
\begin{align}
\label{sigma-b2}
\sigma>\frac{1}{3-2b}.
\end{align}
Then \eqref{no-self} holds for $|y|\leq \varepsilon^{-\frac{1}{2}}|\log\varepsilon|^{\frac{1}{2}}$ and $\varepsilon>0$ sufficiently small.

If $ \varepsilon^{-\frac{1}{2}}|\log\varepsilon|^{\frac{1}{2}}\leq|y|\leq\frac{\delta_1}\varepsilon$, then \eqref{no-self} holds if 
\begin{align*}
2 M \varepsilon^{\sigma(1-b)}|\log\varepsilon| (1+|y|)^{1-\sigma}\leq  c_1\varepsilon|y|^2,
\end{align*}
which is true because of \eqref{sigma-b2}.
In the region $|y|\geq\frac{\delta_1}\varepsilon$ a similar argument shows that \eqref{no-self} is also valid.

To prove the injectivity of $F_h$ on $\overline\Omega_0$, we consider the regions 
\begin{align*}
U_1 &= \{ \, X_0(s,t) \, | \, 0\leq t \leq 2 \rho(s) \,\}
\\
U_2 &= \{ \, X_0(s,t) \, | \, 2 \rho(s)\leq t \leq 4 \rho(s) \,\}
\\ 
U_3 &= \Omega_0\setminus \{ \, X_0(s,t) \, | \, -\tilde \rho(s)\leq t < 4 \rho(s) \,\},
\end{align*}
so that $\overline\Omega_0=U_1\cup U_2\cup U_3$. 
As discussed in Section~\ref{sect:operators}, $F_h=\operatorname{id}$ on $U_2\cup U_3$, and $F_h=X_h\circ X_0^{-1}$ on $U_1\cup U_2$.

Let $y_1,y_2\in\overline\Omega_0$ and assume that $F_h(y_1)=F_h(y_2)$. We will show that $y_1=y_2$.

Suppose that $y_1,y_2 \in U_1\cup U_2$. Then
\begin{align*}
y_j = \gamma(s_j) - t_j \nu(s_j)
\end{align*}
for some $s_j\in\R$, $0\leq t_j\leq 2 \rho(s_j)$ and 
\[
F_h(y_j)= \gamma(s_j) - (t_j+h(s_j)\eta(s_j,t_j)) \nu(s_j).
\]
But we are assuming $0\leq t_j \leq 2\rho(s_j)$ and by \eqref{no-self}, we get
\begin{align*}
-\tilde \rho(s_j)\leq t_j+h(s_j)\eta(s_j,t_j) \leq 5\rho(s_j).
\end{align*}
By Lemma~\ref{lem:injective}, $s_1=s_2$ and $t_1+h(s_1)\eta(s_1,t_1)=t_2+h(s_2)\eta(s_2,t_2)$. Using the condition \eqref{kappa-d} with $\bar\delta$ small and \eqref{no-self},
the map $t\mapsto t+h(s)\eta(s,t)$ is strictly increasing. It follows that $t_1=t_2$ and so $y_1=y_2$.

If $y_1,y_2 \in U_2\cup U_3$, then, since $F_h(y_j)=y_j$ we directly get that $y_1=y_2$. 

Finally, suppose that $y_1\in U_1$, $y_2\in U_3$. Then 
\begin{align*}
y_1 = \gamma(s) - t \nu(s)
\end{align*}
for some $s\in\R$, $0\leq t\leq 2 \rho(s)$ and 
\[
F_h(y_1)= \gamma(s) - (t+h(s)\eta(s,t)) \nu(s)=F_h(y_2)=y_2.
\]
By \eqref{no-self},
\begin{align*}
-\tilde \rho(s)\leq t+h(s)\eta(s,t) \leq 3\rho(s)
\end{align*}
Then $y_0=X_0(s,t_1)$ with $t_1=t+h(s)\eta(s,t)$. 
This is not possible since $y_2 \in U_3$.

This proves that $F_h$ is injective on $\overline\Omega_0$. Since $F_h$ is $C^{2,\alpha}$ and a local diffeomorphism, it follows that $\Omega_h=F_h(\Omega_0)$ is a $C^{2,\alpha}$ domain that together with $\psi=\psi_0+\Psi[h]$ solve \eqref{overd0}.

Thus, so far we have shown that if $\varepsilon>0$ is sufficiently small, there exists $g = \varepsilon g_0 + g_1$ \eqref{decomp-g}, and a solution $u$ to \eqref{overd0}, that gives rise to a solution $\Omega_h$, $\psi$ of \eqref{overd}. To finish the proof of  Theorem~\ref{thm} as stated, we need to show that given any $g>0$ small there is $\varepsilon>0$ and a solution $\Omega_h$, $\psi$ of \eqref{overd}. The fixed point formulation \eqref{fixed} and the continuity of the functions involved shows that $g_1(\varepsilon)$ is continuous, and the construction gives $|g_1(\varepsilon)|\leq M \varepsilon^{1+m} |\log \varepsilon|$ for some $m>0$. An argument using the intermediate value theorem yields that for any $g>0$ small there is $\varepsilon>0$ such that $g= \varepsilon g_0 + g_1(\varepsilon)$, and a corresponding solution of $u$ to \eqref{overd0}.
This completes the proof of Theorem~\ref{thm}.

\appendix

\section{Proof of some estimates}
\label{sect-p2}

\subsection{\texorpdfstring{Expansion of $\psi^H$}{Expansion of psiH}}

\begin{proof}[Proof of Lemma~\ref{lemma:estGradPsiH}]
Let us write $f(z) = \cos(w)$ with $z = w+ \sin(w) \in \Omega^{H,+}_\delta$, so that $\psi^H(z) = \Re (f(w))$.
Write $z = z_1 + i z_2$ and $w = w_1 + i w_2$ with $z_1,z_2,w_1,w_2\in \R$. Then
\begin{align}
\label{change0b}
z_1 = w_1  + \sin(w_1) \cosh(w_2)  , \quad
z_2 = w_2 +  \cos(w_1) \sinh(w_2)  .
\end{align}
This implies
\[
(z_1-w_1)^2 + (z_2-w_2)^2 = \tfrac{1}{4} e^{2w_2}+\tfrac{1}{4} e^{-2w_2} - \tfrac{1}{2}\cos(2w_1).
\]
For  $w_1 \in [ -\frac{\pi}{2}-\delta,\frac{\pi}{2}+\delta]$, $w_2\geq 0$ such that $|z|>2$ this implies that
\begin{align}
\label{expansionw2b}
w_2 = \log(2 |z| ) + O\Bigl( \frac{\log |z|}{|z|}\Bigr) .
\end{align}

Using \eqref{change0b} we see that
\[
z_2 = w_2 + \cos(w_1) \sinh(w_2).
\]
Therefore, $\psi^H(z) = \Re(\cos(w))$ and \eqref{expansionw2b} imply that 
\begin{align*}
\psi^H(z) &= \cos(w_1) \cosh(w_2)
\\
&= \cos(w_1) \sinh(w_2) + O(e^{-w_2})
\\
&= z_2-w_2+ O(e^{-w_2})
\\
&= z_2-\log(2|z|) + O\Bigl( \frac{\log |z|}{|z|}\Bigr) .
\end{align*}
This proves \eqref{est0}.

We have
\begin{align*}
\frac{df}{dz} = \partial_{z_1} \psi - i \partial_{z_2} \psi
= \frac{ \frac{df}{dw} }{\frac{dz}{dw}}
= - \frac{\sin(w)}{1+\cos(w)} ,
\end{align*}
or, in terms of real and imaginary parts,
\begin{align*}
\frac{df}{dz} 
&= - \frac{\sin(w_1)\cosh(w_2)+\sin(w_1)\cos(w_1)}{(1+\cos(w_1)\cosh(w_2))^2 + \sin^2(w_1) \sinh^2(w_2)}
\\& \quad 
- i  \frac{  \sinh(w_2)\cosh(w_2) +\cos(w_1) \sinh(w_2)  }{(1+\cos(w_1)\cosh(w_2))^2 + \sin^2(w_1) \sinh^2(w_2)} .
\end{align*}
Note that the denominator is given by 
\begin{align*}
&(1+\cos(w_1)\cosh(w_2))^2 + \sin^2(w_1) \sinh^2(w_2)
\\
&\quad= 
1+2\cos(w_1)\cosh(w_2)+\cos^2(w_1)\cosh^2(w_2) + \sin^2(w_1) \sinh^2(w_2)
\\
&\quad= 
1+2\cos(w_1)\cosh(w_2)
+\frac{e^{2w_2}+e^{-2w_2}}{4}
+\frac{\cos(2w_1)}{2}
\end{align*}
and that
\begin{align*}
|z|^2 &= |w+\sin(w)|^2
\\
&=w_1^2+w_2^2+2w_1\sin(w_1)\cosh(w_2)
+ 2w_2\cos(w_1)\sinh(w_2)
\\
& \quad 
+ \frac{e^{2w_2}+e^{-2w_2}}{4} 
-\frac{\cos(2w_1)}{2}.
\end{align*}

Let us compute an expansion as $w_2\to\infty$.
We have
\begin{align*}
\Re \Bigl(\frac{df}{dz} \Bigr) 
&=- \frac{ \sin(w_1)( \cosh(w_2)+\cos(w_1))}{ 1+2\cos(w_1)\cosh(w_2)
+\frac{e^{2w_2}+e^{-2w_2}}{4}
+\frac{\cos(2w_1)}{2} }
\\
&= - \frac{\sin(w_1)(\frac{1}{2}e^{w_2}+O(1))}{\frac{1}{4}e^{2w_2}+O(e^{w_2})}
\end{align*}
and
\begin{align*}
\frac{|z|^2}{z_1}
&= \frac{w_1^2+w_2^2+2w_1\sin(w_1)\cosh(w_2)
+ 2w_2\cos(w_1)\sinh(w_2) }{w_1+\sin(w_1)\cosh(w_2)}
\\
& \quad +
\frac{ \frac{e^{2w_2}+e^{-2w_2}}{4} 
-\frac{\cos(2w_1)}{2} }{w_1+\sin(w_1)\cosh(w_2)}
\\
&= \frac{\frac{1}{4}e^{2w_2}+O(w_2 e^{w_2})}{\sin(w_1)(\frac{w_1}{\sin(w_1)}+\cosh(w_2))} .
\end{align*}
Therefore
\begin{align*}
\Re \Bigl(\frac{df}{dz} \Bigr) \cdot \frac{|z|^2}{z_1}
&= -\frac{\frac{1}{2}e^{w_2} +O(1)}{\frac{1}{4}e^{2w_2}+O(e^{w_2}) }
\cdot 
\frac{\frac{1}{4}e^{2w_2}+O(w_2 e^{w_2}) }{ \frac{1}{2}e^{w_2}+O(1)}
\\
&= -1 + O(w_2 e^{-w_2})
\\
&= -1 + O\Bigl( \frac{\log|z|}{|z|} \Bigr),
\end{align*}
by \eqref{expansionw2b}.
This shows that 
\begin{align*}
\partial_{z_1} \psi^H(z) = -\frac{z_1}{|z|^2} + O\Bigl(\frac{\log|z|}{|z|^2}\Bigr),
\end{align*}
as $w_2\to\infty$, which is the same as $|z|\to\infty$ with $z_2>0$.
The implied constant is uniform for $w_1  \in [ -\frac{\pi}{2}-\delta,\frac{\pi}{2}+\delta]$ for fixed $0 < \delta < \frac{\pi}{2}$.
This proves \eqref{est1a}.

We claim that
\begin{align}
\label{est20}
|\partial_{z_2} \psi^H(z) - 1|\leq \frac{C}{1+|z|},
\end{align}
for $z\in \Omega^{H,+}_\delta$.
Indeed, we have
\begin{align}
\nonumber
\Im\Bigl( \frac{df}{dz} \Bigr) 
&=  - \frac{  \sinh(w_2)\cosh(w_2) +\cos(w_1) \sinh(w_2)  }{(1+\cos(w_1)\cosh(w_2))^2 + \sin^2(w_1) \sinh^2(w_2)} 
\\
\label{form1}
&=1-\tfrac{1}{2} \cos(w_1) e^{-w_2} + O(e^{-2w_2}).
\end{align}
But from $ z = w+ \sin(w) $ we get
\begin{align}
\nonumber
\cos(w_1) e^{-w_2} 
&= 2 z_2 e^{-2w_2} - 2  w_2 e^{-2w_2} + \cos(w_1)  e^{-3w_2}
\\
\label{form2}
&= O\Bigl( \frac{z_2}{|z|^2} \Bigr) + O \Bigl( \frac{ \log |z|}{|z|^2} \Bigr)
\end{align}
by \eqref{expansionw2b}.
From \eqref{form1} and \eqref{form2} we then obtain 
\begin{align}
\nonumber
|\partial_{z_2} \psi^H(z) - 1|\leq C \frac{\log |z|+z_2}{1+|z|^2}\leq\frac{C}{1+|z|}.
\end{align}

Let us write \eqref{est1a} as 
\begin{align}
\nonumber
\left| \partial_{z_1} (\psi^H(z)+\log|z|) \right|
\leq C \frac{\log|z|}{|z|^2}
\end{align}
in the region $\Omega^{H,+}_\delta$.
Using that $ \partial_{z_1} (\psi^H(z) -\log|z|)$ is a harmonic function, by standard estimates for harmonic functions, for $j=1,2$,
\begin{align}
\nonumber
\left| \partial_{z_j,z_1}^2 (\psi^H(z)+\log|z|) \right| 
\leq C \frac{\log|z|}{|z|^3} ,
\end{align}
in a slightly smaller region $\Omega^{H,+}_{\delta'}$.
From here
\begin{align}
\nonumber
\left| \partial_{z_2}^2 (\psi^H(z)+\log|z|) \right| 
\leq C \frac{\log|z|}{|z|^3}
\end{align}
and hence
\begin{align}
\label{cota-nabla}
\left| \nabla \partial_{z_2} (\psi^H(z)+\log|z|) \right| 
\leq C \frac{\log|z|}{|z|^3}.
\end{align}

Combining \eqref{est1a} with \eqref{est20} we see that 
\begin{align}
\label{nabla2}
| \nabla \psi^H(z) - e_2 | \leq \frac{C}{1+|z|}.
\end{align}
This implies
\begin{align}
\nonumber
| \nabla \psi^H(z) - e_2 - \nabla \log |z|| \leq \frac{C}{1+|z|}.
\end{align}
Integrating over a ray $\{ z t \,|\, t\geq1\}$, using \eqref{cota-nabla} and \eqref{nabla2} we find that 
\begin{align}
\nonumber
\left| \partial_{z_2} (\psi^H(z) -z_2+\log|z|) \right| 
\leq C \frac{\log|z|}{|z|^2}.
\end{align}
Using standard estimates for harmonic functions we also deduce \eqref{D2psiH-1} and \eqref{D2psi}.
\end{proof}

\subsection{Proof of Lemma~\ref{lem:sol-disc2}}

\begin{proof}[Proof of Lemma~\ref{lem:sol-disc2}]
We can assume that $R=1$ and and that the disk is centered at 0. 
We consider the problem
\begin{align}
\label{lne-eq-disc}
\left\{
\begin{aligned}
\Delta u &= 0 \quad \text{in } B_1(0) \\
\frac{\partial u }{\partial \nu} - u &= h , \quad \text{on } \partial B_1(0) ,
\end{aligned}
\right.
\end{align}
where $h\colon \partial B_1(0)\to\R$.
We want to find a solution with $h = \frac{1}{\pi}\delta_P + g(x_2+d)$, where $P=(0,-1)$, $d$ is a constant and $g$ is to be adjusted.

First we discuss \eqref{lne-eq-disc} when $h$ is a sufficiently regular function. Using polar coordinates $r,\theta$ the boundary condition translates into
\begin{align}
\label{cb1}
\partial_r u - u = h \quad \text{at }r=1.
\end{align}
Since $u$ is harmonic, there is then an analytic function $f$ on $B_1(0)$ such that $u = \Re(f)$. 
We have
\begin{align*}
f' 
&= 
\partial_r u e^{-i\theta} - \frac{i}{r}\partial_\theta u e^{-i\theta}  .
\end{align*}
Let $H$ be analytic in $B_1(0)$ such that $\Re(H)=h$ on $\partial B_1(0)$.
Letting $z = r e^{i\theta}$ the boundary condition \eqref{cb1} becomes
\begin{align*}
\Re( z f'(z) - f(z) - H ) = 0 \quad \text{on } \partial B_1(0).
\end{align*}
Assuming that all functions are sufficiently regular, there is a constant $\kappa \in \R$ such that 
\begin{align}
\label{eq-f}
z f'(z) - f(z) = H + i \kappa.
\end{align}
We can assume that $\kappa=0$ because $u$ is the real part of $f$. We see that $z$ is in the kernel of equation \eqref{eq-f}. 
Let us write
\begin{align}
\label{power-s-H}
H(z)=\sum_{n=0}^\infty H_n z^n.
\end{align}
Then for \eqref{eq-f} to be solvable we need $H_1=0$, which is equivalent to the conditions
\begin{align*}
\int_{\partial B_1(0)} h Z_1^D
=
\int_{\partial B_1(0)} h Z_2^D
=0.
\end{align*}
If $H_1=0$ then a solution of \eqref{eq-f} is
\begin{align}
\label{f1}
f(z) = \sum_{n\not=1} \frac{H_n}{n-1} z^n .
\end{align}
There is another way of writing a solution of \eqref{eq-f}. Noting that 
\begin{align*}
z f'(z) - f(z) = z^2 \frac{d}{dz}\Bigl( \frac{1}{z}f(z) \Bigr),
\end{align*}
we get a particular solution to \eqref{eq-f} by the formula
\begin{align}
\label{f2}
f(z) = z \int_0^z \frac{H(\zeta)}{\zeta^2}d\zeta
\end{align}
where the integral is over any path inside $B_1(0)$ from $0$ to $z$. Asking that $H_1=0$ in \eqref{power-s-H} we get a well defined analytic function $f$ in $B_1(0)$ which satisfies \eqref{eq-f}.

We are interested in the particular right hand side $h=\pi\delta_{-i} + g (x_2+d)$ where $d$ is a constant and $g$ is a constant to be determined. Thus the real part of $f$ should be at main order
\begin{align*}
-\log(|z+i|) .
\end{align*}
This leads us to define
\begin{align*}
f_0(z) = -\log(i(z+i)) ,
\end{align*}
where the branch cut of the logarithm is taken along the positive real axis, so that  $f_0$ has the branch cut at  $\{ \, it \, | \, t\leq -1 \, \}$.
The parameter $g \in \C$ is to be adjusted.
We compute
\begin{align*}
zf_0'(z)-f_0(z) = -\frac{z}{z+i}+\log(i(z+i)) .
\end{align*}
The most singular part near $-i$ is 
\begin{align*}
\frac{i}{z+i}
\end{align*}
and we compute 
\begin{align*}
\Re\Bigl( \frac{i}{z+i} \Bigr) = 
 \frac{1}{2}
\quad \text{for } |z|=1.
\end{align*}
This means 
\begin{align*}
\Re\Bigl[ z \Bigl(f_0+\frac{1}{2}\Bigr)'-\Bigl(f_0+\frac{1}{2}\Bigr)\Bigr]
&=
\Re\Bigl[ -\frac{z}{z+i}+\log(i(z+i)) - \frac{i}{z+i}\Bigr]
\\
&=
\Re\left[ \log(i(z+i)) -1\right] .
\end{align*}
Therefore, the problem that we want to solve for $f_1$ is
\begin{align}
\label{eq:f1}
z f_1'(z)-f_1(z) =  H(z)
\end{align}
where
\begin{align*}
H(z) = \log(i(z+i)) -1- iz.
\end{align*}
The last term is added so that $H_1=0$ and corresponds to the choice $g=1$. 
Now we solve for $f_1$ in equation \eqref{eq:f1} using either \eqref{f1} or \eqref{f2}.
Define 
\begin{align*}
f=f_0+\tfrac{1}{2}+f_1 .
\end{align*}
Then $u=\Re(f)-d$ satisfies \eqref{lne-eq-disc} with $h = \pi\delta_P + x_2+d$. 
From the formula for $u$ and using the representation \eqref{f2} we see that $u$ is well defined and harmonic in $\R^2$ minus the half line $L=\{ i t , \ t\leq -1\}$ and $u$ has a continuous limit at points $i t$, $t<-1$ from either side of $L$. Moreover there are constants $a$, $C$ such that 
\begin{align*}
\left| u(z) - \log |z+i| -a \right| \leq C |z+i| \log\Bigl(\frac{1}{|z+i|}\Bigr) , \quad \text{ as } z\to -i.
\end{align*}
Then the function $u - a Z_2^D$ is the desired solution.
\end{proof}

\subsection{Proof of Lemma~\ref{lem:sol-strip2}}

\begin{proof}[Proof of Lemma~\ref{lem:sol-strip2}]
By shifting vertically we consider the problem in the strip $0<x_2<\A$.
We claim that if $0<\omega<1$ then there is a solution $u$ to
\begin{align}
\label{eq:strip2}
\left\{
\begin{aligned}
\Delta u &= 0 && \text{in } \{ 0<x_2<\A\} \\
\frac{\partial u}{\partial x_2} - \omega u &= \pi \delta_{(0,\A)}  && \text{on } x_2=\A
\\
u &=0 && \text{on } x_2=0 ,
\end{aligned}
\right.
\end{align}
with a log singularity at $(0,\A)$ and exponential decay.

Let $u_0(x)=-\log(|x-(0,\A)|) \chi_0(4 (x-(0,\A)))$ where $\chi_0$ is a smooth cut-off in $\R^2$ such that $\chi_0(z)=1$ for $|z|\leq 1$ and $\chi_0(z)=0$ for $|z|\geq 0$.
Let $u_1$ be the solution to 
\begin{align*}
\left\{
\begin{aligned}
\Delta u_1 & =0 && \text{in } x_2<\A
\\
\frac{\partial u_1}{\partial x_2} &= \omega u_0 && \text{on } x_2=\A ,
\end{aligned}
\right.
\end{align*}
obtained using the Neumann Green function of the half plane $\{x_2<\A\}$.
Note that $u_1$ is continuous in $x_2\leq \A$.

To obtain $u$ satisfying \eqref{eq:strip2}, we write $u = u_0 + u_1 \chi_0(4 (x-(0,\A))) + u_2$ and solve
\begin{align}
\label{eq:strip}
\left\{
\begin{aligned}
- \Delta u_2 &= f_1(x) && \text{in } \{ 0<x_2<\A\} \\
\frac{\partial u_2}{\partial x_2} - \omega u_2 &= f_2(x)  && \text{on } x_2=\A
\\
u_2 &=0 && \text{on } x_2=0 ,
\end{aligned}
\right.
\end{align}
where $f_1$, $f_2$ are continuous with compact support.

Let $\lambda>0$ be such that 
\begin{align*}
\omega \Ae < \frac{\lambda \Ae }{\tan(\lambda \Ae)}
\end{align*}
and $0<\mu<\lambda$.
Using the supersolution \eqref{superstrip} we can find a unique solution $u$ to \eqref{eq:strip} satisfying
\begin{align*}
|u_2(x)| \leq C e^{-\mu |x_1|} .
\end{align*}
Hence $u$ satisfies
\begin{align*}
|u(x)| \leq e^{-\mu |x_1|} , \quad |x_1|>1 ,
\end{align*}
and
\begin{align*}
\mathop{\lim_{x\to (0,\A)}}_{0<x_2<\A} ( u(x) + \log(|x-(0,\A)|) ) \quad\text{exists}.
\end{align*}

Next we prove that $u$ has an analytic extension to $\{ (x_1,x_2) \, | \, 0<x_2<2\Ae\}$ minus the half line $\{ (0,t) \, | \, t \geq \A\}$. 
To do this, write $u=\Re(f)$ with $f$ analytic in $\{ 0<\Im(z) <\A\}$.
The boundary condition in \eqref{eq:strip2} for $x_2=\A$, $x_1\not=0$ can be written as 
\begin{align*}
\Im\Big( \frac{d}{dz}\left( e^{-i\omega z}f(z) \right) \Bigr) =0.
\end{align*}
Then by the Schwarz reflection principle applied in $\{ 0<\Im(z) <\A, \ \Re(z)<0\}$ and $\{ 0<\Im(z) <\A, \ \Re(z)>0\}$, we find that $\frac{d}{dz}( e^{-i\omega z}f ) $ and hence $f$ have analytic extensions to $\{ 0<\Im(z) <2\Ae, \ \Re(z)<0\}$ and $\{ 0<\Im(z) <2\Ae, \ \Re(z)>0\}$. From this we get the desired analytic extension of $u$ to  $\{ 0<\Im(z) <2\Ae, \ \Re(z)<0\}$ and $\{ 0<\Im(z) <2\Ae, \ \Re(z)>0\}$.
\end{proof}

\subsection{Estimate for the harmonic conjugate in a strip}
\label{sec:holder-strip}
Let
\[
S_0 = \{ \, (x_1,x_2) \, | \, x_1\in\R,\ 0<x_2<\pi\}. 
\]
Suppose that $u \in C(\overline S_0)$ is harmonic in $S_0$ and satisfies, for some $0<\sigma<1$,
\[
|u(x_1,x_2)|\leq e^{-\sigma|x_1|},\quad (x_1,x_2)\in\overline S_0.
\]

Consider the harmonic conjugate $v$ of $u$.
By standard gradient estimates for harmonic functions, for any $x_2\in(0,\pi)$ there is $C(x_2)$ such that 
\begin{align}
\nonumber
|\nabla u(x_1,x_2)|\leq C(x_2)e^{-\sigma|x_1|},\quad x_1\in\R,
\end{align}
with $C(x_2) \leq C/\min(x_2,\pi-x_2)$.
Hence 
\begin{align}
\label{nabv}
|\nabla v(x_1,x_2)|\leq C(x_2)e^{-\sigma|x_1|},\quad x_1\in\R.
\end{align}
This implies that for all $x_2\in(0,\pi)$
\[
L_\pm = \lim_{x_1\to\pm\infty} v(x_1,x_2)\quad\text{exists}
\]
and is independent of $x_2$. 
\begin{lemma}
\label{lem:hc}
Suppose that there exists $\alpha\in(0,1)$ such that 
\begin{align}
\label{holder-exp}
\begin{aligned}
&\text{for all $x_1\in\R$, $x_2\in\{0,\pi\}$, $x_1',x_1''\in [x_1-1,x_1+1]$:}
\\
&|u(x_1',x_2)-u(x_1'',x_2)|\leq e^{-\sigma|x_1|} |x_1'-x_1''|^\alpha.
\end{aligned}
\end{align}
Then
\begin{align}
\nonumber
& |v(x_1,x_2)-L_+|\leq C e^{-\sigma x_1},\quad (x_1,x_2)\in\overline S_0, \ x_1>0
\\
\nonumber
&|v(x_1,x_2)-L_-|\leq C e^{-\sigma |x_1|},\quad (x_1,x_2)\in\overline S_0, \ x_1<0.
\end{align}
\end{lemma}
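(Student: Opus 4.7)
The bound \eqref{nabv} already gives exponential decay of $\nabla v$ in the interior, but with a constant that blows up at $\partial S_0$. The Hölder assumption \eqref{holder-exp} is precisely what is needed to promote this to a uniform bound up to the boundary, after which the result follows from the Cauchy--Riemann equations and a one-dimensional integration. Concretely, I will (i) combine interior and boundary Schauder estimates to prove
\[
|\nabla u(x_1,x_2)|\leq C e^{-\sigma|x_1|},\qquad (x_1,x_2)\in\overline{S_0},
\]
then (ii) use $\nabla v=(-\partial_{x_2}u,\partial_{x_1}u)$ to transfer this decay to $v$, and (iii) integrate $\partial_{x_1}v$ along horizontal lines.

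\textbf{Step 1 (uniform gradient decay of $u$).} Fix $x_1^0\in\R$ and consider, near the lower boundary, the half-disk $D=B_{1/2}(x_1^0,0)\cap\overline{S_0}$. On $D\cap\{x_2=0\}$ the function $u$ is $C^\alpha$ with seminorm $\leq e^{-\sigma|x_1^0|}$ by \eqref{holder-exp}, while $|u|\leq C e^{-\sigma|x_1^0|}$ on all of $D$ by the pointwise assumption. The ``curved'' part $D\cap\{x_2>0\}\cap\partial B_{1/2}$ lies at distance $\geq 1/4$ from $\partial S_0$, so standard interior estimates give $u\in C^{1,\alpha}$ there with the same exponential bound. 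Applying boundary Schauder estimates for harmonic functions in the half-ball with Hölder Dirichlet data (e.g.\ Gilbarg--Trudinger, Cor.~8.36) yields
\[
\|\nabla u\|_{L^\infty(B_{1/4}(x_1^0,0)\cap\overline{S_0})}\leq C e^{-\sigma|x_1^0|}.
\]
The analogous bound holds near $x_2=\pi$, and for interior points we use the standard interior gradient estimate for harmonic functions together with the bound on $|u|$. Taking the supremum over $x_1^0$ gives the displayed estimate.

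\textbf{Step 2 (conjugate function).} The Cauchy--Riemann equations imply $|\nabla v(x_1,x_2)|\leq C e^{-\sigma|x_1|}$ throughout $\overline{S_0}$. In particular, $v$ extends continuously to $\overline{S_0}$, and for any $x_2,x_2'\in[0,\pi]$ one has $|v(x_1,x_2)-v(x_1,x_2')|\leq C\pi e^{-\sigma|x_1|}\to 0$. Therefore the limit $\lim_{x_1\to+\infty}v(x_1,x_2)$ exists, is independent of $x_2\in[0,\pi]$, and must coincide with the constant $L_+$ defined earlier. Finally, for $x_1>0$ and $x_2\in[0,\pi]$,
\[
|v(x_1,x_2)-L_+|=\Bigl|\int_{x_1}^\infty\partial_s v(s,x_2)\,ds\Bigr|\leq\int_{x_1}^\infty C e^{-\sigma s}\,ds=\frac{C}{\sigma}e^{-\sigma x_1},
\]
with the analogous estimate for $x_1<0$ relative to $L_-$.

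\textbf{Expected obstacle.} The only substantive point is Step~1: one must carefully set up the Schauder estimate so that the Hölder control of $u$ on the flat piece $\{x_2=0\}$ propagates up to the boundary without needing any Hölder control on the ``interior'' curved portion of the chosen half-disk. This is routine because $u$ is smooth in $S_0$, but it is the place where assumption \eqref{holder-exp} is essential---without it, \eqref{nabv} alone would not suffice to give uniform-up-to-boundary decay of $\nabla v$.
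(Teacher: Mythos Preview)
Your Step~1 is incorrect: harmonic functions with merely $C^\alpha$ Dirichlet data do \emph{not} have bounded gradient up to the boundary. A clean counterexample in the upper half-plane is $u(r,\theta)=r^\alpha\cos(\alpha\theta)$ in polar coordinates; it is harmonic, its boundary values $r^\alpha$ and $r^\alpha\cos(\alpha\pi)$ on $\theta=0,\pi$ are $C^\alpha$, yet $|\nabla u|\sim r^{\alpha-1}$ blows up at the origin. The reference you invoke (Gilbarg--Trudinger, Cor.~8.36) requires $C^{1,\alpha}$ boundary data to yield a $C^{1,\alpha}$ solution; with only $C^\alpha$ data one gets at best $u\in C^\alpha(\overline\Omega)$ and a gradient bound of the form $|\nabla u(x)|\lesssim \operatorname{dist}(x,\partial\Omega)^{\alpha-1}$.

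That weaker bound is exactly what the paper proves (their Lemma~\ref{lem:grad}):
\[
|\nabla u(x_1,x_2)|\leq \frac{C}{\min(x_2,\pi-x_2)^{1-\alpha}}\,e^{-\sigma|x_1|},
\]
and it is all that is needed. Your Steps~2--3 survive with this correction: first integrate $\partial_{x_1}v$ along the midline $x_2=\pi/2$ (where \eqref{nabv} already gives the bound) to get $|v(x_1,\pi/2)-L_+|\leq Ce^{-\sigma x_1}$, then integrate $\partial_{x_2}v=\partial_{x_1}u$ vertically from $x_2$ to $\pi/2$; since $\int_0^{\pi/2}s^{-(1-\alpha)}\,ds<\infty$, the blow-up is integrable and you recover $|v(x_1,x_2)-L_+|\leq Ce^{-\sigma x_1}$ uniformly in $x_2\in[0,\pi]$. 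This is precisely the paper's route.
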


\begin{lemma}
\label{lem:grad}
Assume \eqref{holder-exp}. Then
\begin{align}
\nonumber
|\nabla u(x_1,x_2)|\leq C \frac{1}{\min(x_2,\pi-x_2)^{1-\alpha}}e^{-\sigma|x_1|},\quad (x_1,x_2)\in S_0.
\end{align}
\end{lemma}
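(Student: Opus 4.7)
Fix $p=(x_1,x_2)\in S_0$ and set $d=\min(x_2,\pi-x_2)$. The approach is to combine the interior gradient estimate for harmonic functions with a boundary $C^\alpha$ estimate coming from~\eqref{holder-exp}. In the easy case $d\ge 1/3$, the ball $B_{d/2}(p)\subset S_0$ has definite size, and the standard interior gradient estimate, together with $|u(y)|\le Ce^{-\sigma|x_1|}$ on $B_{d/2}(p)$ (from $|y_1|\ge|x_1|-d/2$), gives $|\nabla u(p)|\le Ce^{-\sigma|x_1|}\le Cd^{\alpha-1}e^{-\sigma|x_1|}$. So assume $d<1/3$; by the symmetry between the two boundary components of $S_0$, assume also $d=x_2$.

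The heart of the argument is the pointwise boundary $C^\alpha$ bound
\[
|u(y)-u(x_1,0)|\le C e^{-\sigma|x_1|}\,|y-(x_1,0)|^\alpha\quad\text{for }y\in B_{1/2}^+(x_1,0).
\]
Granting this, set $\tilde u(y)=u(y)-u(x_1,0)$, which is still harmonic. Since $B_{x_2/2}(p)\subset B_{1/2}^+(x_1,0)\cap S_0$ (as $x_2<1/3$), the interior gradient estimate for harmonic functions on the ball $B_{x_2/2}(p)$ applied to $\tilde u$, combined with the above bound and $|y-(x_1,0)|\le 3x_2/2$ on this ball, gives
\[
|\nabla u(p)|=|\nabla\tilde u(p)|\le\frac{C}{x_2}\sup_{B_{x_2/2}(p)}|\tilde u|\le\frac{C}{x_2}\cdot Ce^{-\sigma|x_1|}(3x_2/2)^\alpha=Ce^{-\sigma|x_1|}x_2^{\alpha-1},
\]
which is the required inequality.

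The remaining step is the boundary $C^\alpha$ bound, which the plan is to derive by rescaling and invoking the classical boundary $C^\alpha$ regularity: if $v$ is harmonic in $B_1^+=\{|z|<1,\,z_2>0\}$ with $\|v\|_{L^\infty(B_1^+)}\le M_1$ and $[v(\cdot,0)]_{C^\alpha(-1,1)}\le M_2$, then $[v]_{C^\alpha(\overline{B_{1/2}^+})}\le C(M_1+M_2)$. Applied to $v(z)=u(x_1+z_1,z_2)$ on $B_1^+(0)$ (which sits in $S_0$ since $1<\pi$), the hypothesis $|u(y)|\le e^{-\sigma|y_1|}$ combined with $|y_1|\ge|x_1|-1$ gives $M_1\le Ce^{-\sigma|x_1|}$, while~\eqref{holder-exp} gives $M_2\le e^{-\sigma|x_1|}$; unwinding the rescaling yields the asserted pointwise boundary $C^\alpha$ bound. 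The main technical input is the classical boundary $C^\alpha$ regularity, which can be obtained either via the explicit Poisson representation for the half-disk or via Schwarz reflection together with standard interior Schauder estimates after subtracting a harmonic extension of the boundary data; the rest is routine bookkeeping to track the exponential factor $e^{-\sigma|x_1|}$.
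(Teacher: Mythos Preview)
Your proof is correct. Both your argument and the paper's rest on the same underlying mechanism—extracting the boundary $C^\alpha$ regularity via a Poisson-kernel/harmonic-extension step—but they package it differently. The paper explicitly builds the Poisson extension $h$ of the (cut-off) boundary data to the half-plane, bounds $|\nabla h|\le Ce^{-\sigma\bar x_1}/x_2^{1-\alpha}$ directly from the kernel, and then treats the remainder $\hat u=u-h\eta$ (which vanishes on the nearby boundary) by a barrier and standard interior estimates. You instead invoke the classical boundary $C^\alpha$ estimate as a black box to get $|u(y)-u(x_1,0)|\le Ce^{-\sigma|x_1|}|y-(x_1,0)|^\alpha$, and then convert this to the gradient bound via the interior estimate on $B_{x_2/2}(p)$ applied to $u-u(x_1,0)$. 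Your route is a bit more economical and avoids the barrier step; the paper's version is more self-contained since it effectively reproves the boundary Schauder estimate in passing.
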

\begin{proof}
Let $\bar x_1>0$, $\bar x=(\bar x_1,0)$.
Let $\eta\in C^\infty(\R^2)$ be a smooth cut-off function such that $\eta(x)=1$ for $|x-\bar x|\leq1$ and  $\eta(x)=0$ for $|x-\bar x|\geq2$.

Let $h$ be the harmonic extension of $u\eta|_{x_2=0}$ to the upper half plane $\{(x_1,x_2)|x_2>0\}$. From a direct computation using the Poisson kernel we get
\[
|h(x_1,x_2)|\leq C e^{-\sigma \bar x_1},\quad x_1\in\R,\ x_2>0,
\]
\[
|\nabla h(x_1',x_2)|\leq C \frac{e^{-\sigma \bar x_1}}{x_2^{1-\alpha}},\quad x_1\in\R,\ x_2>0.
\]
Let
\[
\hat u = u - h \eta.
\]
Then
\begin{align*}
\begin{aligned}
\Delta\hat u &= -2\nabla h \cdot\nabla\eta - h\Delta\eta&&\text{in }S_0,
\\
\hat u &=u&&\text{on }x_2=\pi
\\
\hat u&=u (1-\eta^2)&&\text{on }x_2=0.
\end{aligned}
\end{align*}
Using barriers of the form
\[
\bar u(x_1,x_2)=e^{-\sigma x_1} \cos\Bigl(\lambda \Bigl(x_2-\frac\pi2\Bigr)\Bigr), \quad \sigma<\lambda<1,
\]
we get
\[
|\hat u(x_1,x_2)|\leq C e^{\sigma |x_1|}.
\]
By estimates for harmonic functions
\[
|\nabla \hat u(\bar x_1,x_2)|\leq  e^{-\sigma\bar x_1},\quad 0<x_2<\frac{1}{2}.
\]
We deduce that 
\[
|\nabla u(\bar x_1,x_2)|\leq C \frac{1}{x_2^{1-\alpha}}e^{-\sigma\bar x_1},\quad0<x_2<\frac{1}{2}.
\qedhere
\]
\end{proof}

\begin{proof}[Proof of Lemma~\ref{lem:hc}]
Let $x_1>0$, $x_2\in [0,\pi/2]$. 
Integrating $\partial_{x_1} v$ on the line $\{(t,\pi/2)\,|\,t>x_1\}$ and using \eqref{nabv}, we get
\[
|v(x_1,\tfrac\pi2)-L_+|\leq C e^{-\sigma x_1}.
\]
Integrating $\partial_{x_2} v$ on the line $\{(x_1,s)\,|\,x_2<s<\pi/2\}$ and using Lemma~\ref{lem:grad}, we get
\[
|v(x_1,\tfrac\pi2)-v(x_1,x_2)|\leq C e^{-\sigma x_1}.
\qedhere
\]
\end{proof}
The estimate of Lemma~\ref{lem:grad} can also be obtained from the results in \cite{craig-sternberg}.


\bigskip\noindent
{\bf Acknowledgements.}
J.~D\'avila has been supported  by  a Royal Society  Wolfson Fellowship, UK.
M.~del Pino has been supported by a Royal Society Research Professorship, UK.
M. Musso has been supported by EPSRC research Grant EP/T008458/1. 

\end{document}